    \newcommand{\BQ}{\mathbb{Q}}
    \newcommand{\Cl}{\mathrm{Cl}}
    \newcommand{\fo}{\mathfrak{o}}
    \newcommand{\fp}{\mathfrak{p}}
\theoremstyle{plain}
    \newtheorem{thm}{Theorem}[section] \newtheorem{cor}[thm]{Corollary}
    \newtheorem{lem}[thm]{Lemma} 
    \newtheorem{prop}[thm]{Proposition}
    \newtheorem {conj}[thm]{Conjecture}
    \theoremstyle{definition}
    \newtheorem{defn}[thm]{Definition}
    \theoremstyle{remark}
    \newtheorem {rem}[thm]{Remark}
    \newtheorem {example}[thm]{Example}
    \newtheorem {condition}[thm]{Condition}
    \numberwithin{equation}{section}
\begin{document}

\title{Hilbert modular forms and class numbers}
\author{Qinyun Tan\\
\small School of Mathematical Sciences, East China Normal
University, Shanghai, China \\
\small52265500010@stu.ecnu.edu.cn \\
 Bingyong Xie \footnote{This paper is
supported by the National Natural Science Foundation of China (grant
12231001), and by Science and Technology Commission of Shanghai
Municipality (no. 22DZ2229014).The authors are supported by
Fundamental Research
Funds for the Central Universities.} \\
\small School of Mathematical Sciences, East China Normal
University, Shanghai, China \\ \small byxie@math.ecnu.edu.cn}
\date{}
\maketitle

\begin{abstract} In 1975, Goldfeld gave an effective solution to
Gauss's conjecture on the class numbers of imaginary quadratic fields.
In this paper, we generalize Goldfeld's theorem to the setting of
totally real number fields.
\end{abstract}

\section*{Introduction}

In his famous book {\it Disquisitiones arithmeticae}, Gauss proposed
three conjectures on ideal class numbers of quadratic fields. One of
them can be stated as follows.

\begin{conj}\label{conj:Gauss} For an imaginary quadratic field $K$, let $D_K$ and $h(K)$
	denote its discriminant and class number respectively. Then
	$h(K)\rightarrow \infty$ when $D_K\rightarrow \infty$.
\end{conj}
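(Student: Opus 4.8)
The plan is to establish, following Goldfeld (and using the Gross--Zagier height formula to make the argument unconditional), the quantitative \emph{effective} statement
$$ h(K) \;\gg_{\varepsilon}\; (\log |D_K|)^{1-\varepsilon}, $$
from which Conjecture~\ref{conj:Gauss} is immediate. The point of departure is Dirichlet's class number formula: for an imaginary quadratic field $K$ one has $L(1,\chi_{D_K}) = \tfrac{2\pi\, h(K)}{w_K\sqrt{|D_K|}}$, where $\chi_{D_K}$ is the quadratic character of conductor $|D_K|$ and $w_K\in\{2,4,6\}$; equivalently, $h(K)/\sqrt{|D_K|}$ is, up to the bounded factor $w_K/2\pi$, the residue at $s=1$ of the Dedekind zeta function $\zeta_K(s)=\zeta(s)L(s,\chi_{D_K})$. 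So the whole problem is to bound this residue below, effectively, by $(\log|D_K|)^{1-\varepsilon}/\sqrt{|D_K|}$. (If one only wanted Conjecture~\ref{conj:Gauss} in its ineffective form there is the classical shortcut: if $L(s,\chi_{D_K})$ has no exceptional real zero near $s=1$, the standard zero-free region already gives $L(1,\chi_{D_K})\gg 1/\log|D_K|$; and if such a Siegel zero exists, a Deuring--Heilbronn argument with an auxiliary character forces $h(K)\to\infty$ just as well. But this yields no effective constant, which is the whole point of Goldfeld's theorem.)

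For the effective bound I would bring in an auxiliary arithmetic object. Fix once and for all a modular elliptic curve $E/\BQ$ whose $L$-function satisfies $\mathrm{ord}_{s=1}L(E,s)\geq 3$. For each $K=\BQ(\sqrt{D_K})$ form the product $L(E,s)\,L(E\otimes\chi_{D_K},s)$: this is the $L$-function of $E$ over $K$, it is entire with a known functional equation, its conductor is of size $|D_K|^2$, and its order of vanishing at the central point $s=1$ is at least $3$. Now assemble from $\zeta_K(s)$ and suitable powers of $L(E,s)$ and $L(E\otimes\chi_{D_K},s)$ a single $L$-function $\Phi(s)$ which is a ratio of automorphic $L$-functions with nonnegative Dirichlet coefficients, a simple pole at $s=1$ whose residue is a positive multiple of $h(K)/\sqrt{|D_K|}$, and with the triple central zero of $L(E,s)$ built into its numerator. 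The heart of the matter is then an explicit-formula (or Mellin-transform / positivity) estimate for $\Phi$: the high-order central zero produces an ``amplifying'' contribution of size a positive power of $\log|D_K|$, and weighing it against the pole and the prime-side terms — the latter controlled by the Ramanujan bound for $E$ — converts the tension between the pole and the zero into the effective inequality $h(K)\gg_{\varepsilon}(\log|D_K|)^{1-\varepsilon}$.

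It remains to exhibit the curve $E$, and this is where the Gross--Zagier formula enters: it expresses $L'(E,1)$ through the N\'eron--Tate height of a Heegner point, so that a single explicit modular elliptic curve of Mordell--Weil rank $\geq 3$ with root number $-1$ (the minimal-conductor example suffices) is certified to have $L(E,1)=L'(E,1)=0$, hence $\mathrm{ord}_{s=1}L(E,s)\geq 3$; this makes Goldfeld's conditional statement unconditional. I expect the main obstacle to be the analytic core of the second step: making the explicit-formula estimate genuinely \emph{effective} while extracting a nontrivial — if only logarithmic — gain from the high central vanishing forces one to control the contribution of \emph{every} zero of $\Phi$ and of its archimedean factors with essentially no slack. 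In the totally real generalization carried out in the sequel, it is precisely the arithmetic input of this scheme — constructing a Hilbert modular form over the base field whose $L$-function has a high-order zero at the centre, together with a Gross--Zagier--type height formula to certify it — that becomes the crux.
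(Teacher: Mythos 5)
The paper does not reprove Conjecture~\ref{conj:Gauss} itself; it cites the ineffective Hecke--Deuring--Heilbronn theorem and the effective Goldfeld--Gross--Zagier theorem, and then generalizes the latter via Oesterl\'e's machinery, which is exactly the strategy your sketch describes. So your proposal is aimed at the right target and captures the overall architecture, but two points deserve correction.

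First, the step ``a single explicit modular elliptic curve of Mordell--Weil rank $\geq 3$ with root number $-1$ is certified to have $L(E,1)=L'(E,1)=0$'' is not quite how Gross--Zagier is applied. Root number $-1$ gives $L(E,1)=0$, but high Mordell--Weil rank does not by itself force $L'(E,1)=0$ via Gross--Zagier (that direction would use Kolyvagin). What Gross and Zagier actually do is start from the rank-one curve of conductor $37$, twist by $\chi_{-139}$, and use the Heegner-point height formula for the base change $L$-function $L(E/K,s)$ to show the relevant derivative vanishes, yielding $\mathrm{ord}_{s=1}L(f,s)=3$ for $f=f'\otimes\chi_{-139}$. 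This is what the paper's Example~\ref{ex: modular form-1} records and then base-changes to solvable $F$.

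Second, ``assemble a single $\Phi(s)$ with nonnegative Dirichlet coefficients, a simple pole at $s=1$, and a triple central zero'' does not match the analytic mechanism the paper uses, and as stated it conflates two different objects. In the Goldfeld/Oesterl\'e set-up there are \emph{two} series: a $\varPhi$ with an Euler product whose coefficients are dominated term-by-term by those of $\zeta_K(s-\tfrac12)^2/\zeta_F(2s-1)^2$, and a $\varPsi$ (essentially the symmetric-square $L$-factor of $\mathbf f$) which supplies the simple zero at $s=1$; the completed product $\varGamma$-factor $\times\varPhi\varPsi$ is the object with the order-$\geq3$ zero at the center (Condition~\ref{condition}). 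The amplification is not a one-sided explicit-formula/positivity estimate for a single $\Phi$; it is a two-sided comparison of the Mellin integral $J(U)$: its lower bound comes from the residue/contour shift (the $(s-\tfrac12)^{-3}$ kernel interacting with the high-order zero to extract $\log|\mathfrak d_{K/F}|$), while its upper bound uses the positivity of $\zeta_K/\zeta_F(2\cdot)$ and lattice-point counts to insert $h_K$. A single $L$-function with nonnegative coefficients \emph{and} a pole \emph{and} a central zero of order $\geq 3$ would not have the right sign structure to run either estimate; keeping $\varPhi$ and $\varPsi$ separate is essential.
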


This conjecture was proved by Hecke, Deuring, Mordell and Heilbronn.

\begin{thm}\label{thm:not-effective} $($Hecke-Deuring-Heilbronn \cite{Landau, MD33, H}$)$ Let $K$ run
	through all imaginary quadratic fields. If $D_K\rightarrow \infty$,
	then $h(K)\rightarrow \infty$.
\end{thm}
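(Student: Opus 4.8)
The plan is to translate the statement, via Dirichlet's analytic class number formula, into a lower bound for $L(1,\chi_D)$, and then to establish that bound by Siegel's method. Let $\chi_D=\left(\frac{D}{\cdot}\right)$ be the Kronecker symbol attached to the (negative) fundamental discriminant $D=D_K$; it is a primitive odd Dirichlet character modulo $|D|$, and $\zeta_K(s)=\zeta(s)L(s,\chi_D)$. Comparing residues at $s=1$ gives
\[
h(K)=\frac{w_K\sqrt{|D|}}{2\pi}\,L(1,\chi_D),
\]
where $w_K\ge 2$ is the number of roots of unity of $K$. Hence $h(K)\to\infty$ will follow once $\sqrt{|D|}\,L(1,\chi_D)\to\infty$, and it suffices to prove the Siegel-type bound: for each $\varepsilon>0$ there is $c(\varepsilon)>0$ with $L(1,\chi_D)\ge c(\varepsilon)\,|D|^{-\varepsilon}$, which forces $h(K)\ge c(\varepsilon)\,|D|^{1/2-\varepsilon}/(2\pi)\to\infty$. (Hecke, Deuring and Heilbronn originally proceeded instead via a dichotomy on the Riemann Hypothesis for Dirichlet $L$-functions; Siegel's slightly later argument, sketched below, is self-contained and gives the stronger bound.)

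To prove the Siegel bound I would argue by a dichotomy. \emph{Either} there is $\varepsilon_0>0$ with $L(1,\chi)\ge\varepsilon_0$ for every primitive real character $\chi$, in which case the bound is immediate; \emph{or} for a suitably small $\varepsilon_0=\varepsilon_0(\varepsilon)$ some primitive real character $\chi_1$ modulo $d_1$ has $L(1,\chi_1)<\varepsilon_0$, and we fix one such $\chi_1$ once and for all. Given an arbitrary primitive real character $\chi_2$ modulo $d_2$ (one may assume $\gcd(d_1,d_2)=1$, the finitely many remaining $d_2$ being handled separately), consider the biquadratic field $E=\mathbb{Q}(\sqrt{D_1},\sqrt{D_2})$ and its Dedekind zeta function
\[
\zeta_E(s)=\zeta(s)\,L(s,\chi_1)\,L(s,\chi_2)\,L(s,\chi_1\chi_2).
\]
Three of its features drive the argument: $\zeta_E(s)=\sum_{\mathfrak{a}}(N\mathfrak{a})^{-s}=\sum_n a_n n^{-s}$ has non-negative coefficients with $a_1=1$; it has a simple pole at $s=1$ with residue $\lambda=L(1,\chi_1)L(1,\chi_2)L(1,\chi_1\chi_2)>0$; and $\zeta_E(0)=0$, because $\chi_1\chi_2$ is even and hence $L(0,\chi_1\chi_2)=0$.

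Writing $G(s)=\zeta_E(s)-\lambda/(s-1)$, an entire function with $G(0)=\lambda$, one combines the functional equation of $\zeta_E$, a Phragm\'{e}n--Lindel\"{o}f estimate for $G$, and the trivial inequality $\zeta_E(\sigma)\ge a_1=1$ for $\sigma>1$ to prove a lower bound of the shape
\[
\zeta_E(\sigma)\ \ge\ 1-c_1\,\lambda\,(d_1d_2)^{c_2(1-\sigma)}\qquad\Big(\tfrac12\le\sigma<1\Big),
\]
with $c_1,c_2$ absolute. Since $L(1,\chi_1)<\varepsilon_0$, $L(s,\chi_1)$ — and therefore $\zeta_E$ — has a real zero $\beta_1$ close to $1$; plugging $\sigma=\beta_1$ into the displayed inequality forces $\lambda\gg(d_1d_2)^{-c_2(1-\beta_1)}$. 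Dividing by $L(1,\chi_1)L(1,\chi_1\chi_2)\ll\varepsilon_0\log(d_1d_2)$ and using that $\chi_1$ is fixed while $1-\beta_1$ is small, one obtains $L(1,\chi_2)\gg_{\chi_1}(d_1d_2)^{-c_2(1-\beta_1)}/\log(d_1d_2)$; choosing $\varepsilon_0$ small enough in terms of $\varepsilon$ makes the right-hand side $\ge d_2^{-\varepsilon}$ for all large $d_2$, which together with a compactness argument for the finitely many small $d_2$ gives the Siegel bound, and with it the theorem.

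The step I expect to be the real obstacle — and the source of the theorem's ineffectivity — is the second alternative of the dichotomy: nothing in the argument decides whether a character $\chi_1$ with $L(1,\chi_1)<\varepsilon_0$ exists, and if it does we can neither identify it nor bound $d_1$, so the interlocking of the constants $\varepsilon_0$, $L(1,\chi_1)$ and $1-\beta_1$ that makes the last display work cannot be made explicit, and $c(\varepsilon)$ is not computable. The remaining analytic content — the lower bound for $\zeta_E(\sigma)$ on $(\tfrac12,1)$, obtained by Phragm\'{e}n--Lindel\"{o}f for $G$ anchored at $G(0)=\lambda$ together with convexity bounds for $\zeta_E$ on the line $\Re s=1$, and the standard relation between the smallness of $L(1,\chi_1)$ and the proximity of the real zero $\beta_1$ to $1$ — is by now routine. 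It is exactly this ineffectivity that Goldfeld's theorem, combined with the Gross--Zagier formula applied to an $L$-function vanishing to order $\ge 3$, was designed to remove.
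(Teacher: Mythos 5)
The paper does not prove this statement: Theorem~\ref{thm:not-effective} is cited as classical background (to \cite{Landau, MD33, H}) and used only as motivation for the effective Theorem~\ref{thm:effective}, so there is no in-paper proof to compare against. That said, your sketch is a correct route to the result via the class number formula $h(K)=\tfrac{w_K\sqrt{|D|}}{2\pi}L(1,\chi_D)$ together with Siegel's lower bound $L(1,\chi_D)\ge c(\varepsilon)|D|^{-\varepsilon}$, and you correctly flag that this is not the original Hecke--Deuring--Heilbronn argument (which ran through the dichotomy on the Riemann Hypothesis for $\zeta$ and for $L(s,\chi_D)$) but Siegel's later, cleaner dichotomy with the auxiliary biquadratic Dedekind zeta function $\zeta(s)L(s,\chi_1)L(s,\chi_2)L(s,\chi_1\chi_2)$; both give the same ineffective conclusion. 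Two small points worth tightening if you were to write this out in full: (i) the dichotomy is usually phrased on the existence of a real zero $\beta_1\in(1-\delta,1)$ of some $L(s,\chi_1)$ rather than directly on the smallness of $L(1,\chi_1)$, and passing from ``$L(1,\chi_1)$ small'' to ``$L(s,\chi_1)$ has a real zero near $1$'' requires the standard inequality $L(1,\chi)\gg(1-\beta)$ (or $\gg 1/\log d$ if no such zero exists), so one cannot take that step for free; (ii) the reduction to $\gcd(d_1,d_2)=1$ does not dispatch only ``finitely many'' remaining $d_2$ --- one handles general $d_2$ either by noting the auxiliary Dirichlet series still has nonnegative coefficients when $\chi_1\chi_2$ is replaced by the primitive character it induces, or by inducing $\chi_1,\chi_2$ to a common modulus. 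Your identification of the source of ineffectivity --- the possibility that $\chi_1$ exists without any computable bound on $d_1$ or on $1-\beta_1$ --- is exactly right and is precisely what Goldfeld's theorem and the Gross--Zagier input (an $L$-function with a forced triple zero) circumvent.
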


This theorem is not effective.

In 1975, Goldfeld \cite{G75} effectively solved Conjecture
\ref{conj:Gauss} assuming the existence of an elliptic curve over
$\BQ$ with analytic rank $3$. Such an elliptic curve $E$ was later found
by Gross and Zagier \cite{GZ86}.

\begin{thm}\label{thm:effective} $($Goldfeld-Gross-Zagier$)$ Let $K$ be an imaginary
	quadratic field, and let $p_{0}$ be the maximal prime divisor of $d(K)$. Then
	$$ h(D_K) > \frac{1}{7000} (\log |D_K|) \prod_{p\mid d(K),\ p\ne p_{0}} \left(1-\frac{[2\sqrt{p}]}{p+1}\right),
	$$ where $p$ denotes a prime number, and $[x]$ denotes the integer
	part of $x$.
\end{thm}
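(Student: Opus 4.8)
The plan is to combine Goldfeld's analytic argument with Gross and Zagier's production of an elliptic curve over $\mathbb{Q}$ whose $L$-function has a third-order zero. First I would reduce the statement to an effective lower bound for a Dirichlet $L$-value: by Dirichlet's analytic class number formula, for the imaginary quadratic field $K$ with discriminant $d(K) = D_K < 0$ and $w_K \le 6$ roots of unity one has $h(D_K) = \frac{w_K\sqrt{|D_K|}}{2\pi}\,L(1,\chi_{D_K})$ with $\chi_{D_K} = \left(\frac{D_K}{\cdot}\right)$, equivalently $L(1,\chi_{D_K}) = \operatorname{Res}_{s=1}\zeta_K(s)$, so that Theorem~\ref{thm:effective} is equivalent to an \emph{effective} estimate
\[
L(1,\chi_{D_K})\ \geq\ \frac{c_0}{\sqrt{|D_K|}}\,(\log|D_K|)\!\!\prod_{\substack{p\mid d(K)\\ p\ne p_0}}\!\!\Bigl(1-\frac{[2\sqrt p]}{p+1}\Bigr)
\]
with an explicit $c_0>0$ — an improvement by a factor of order $\log|D_K|$ over what $h(D_K)\ge 1$ gives for free.

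Next I would fix once and for all a modular elliptic curve $E_0/\mathbb{Q}$ of conductor $N_0$ (one may take $N_0 = 5077$) with $\operatorname{ord}_{s=1}L(E_0,s) = 3$; its existence is the deep arithmetic input and may be cited from \cite{GZ86}, where it follows from the Gross--Zagier height formula expressing $L'(E_0/F,1)$, for suitable auxiliary imaginary quadratic $F$, through the N\'eron--Tate height of a Heegner point. Write $f_0 = \sum_n a_n q^n$ for the associated weight-$2$ newform and $L(E_0/K,s)$ for the base-change $L$-function, which up to finitely many Euler factors equals $L(E_0,s)\,L(E_0\otimes\chi_{D_K},s)$: it is entire by modularity, its completed form $\Lambda_{D_K}(s) = Q_{D_K}^{\,s}\,\Gamma(s)^2\,L(E_0/K,s)$ with $Q_{D_K}\asymp N_0|D_K|$ satisfies $\Lambda_{D_K}(s) = \varepsilon_{D_K}\,\Lambda_{D_K}(2-s)$, and — the crucial point — it inherits from the factor $L(E_0,s)$ a zero of order $\ge 3$ at the central point $s=1$. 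The sign $\varepsilon_{D_K}$ and the primes dividing $N_0$ are dealt with by peeling off an auxiliary quadratic field via genus theory; I suppress that bookkeeping, noting only that it is one source of the product over ramified primes.

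The heart is then Goldfeld's analytic machine. Out of $\zeta_K(s)$ and $L(E_0/K,s)$ one assembles an auxiliary Dirichlet series with non-negative coefficients — most transparently a Rankin--Selberg-type square — whose meromorphic continuation has a simple pole at the edge of its critical strip, with residue proportional (by the class number formula) to $L(1,\chi_{D_K})$, and which, thanks to the triple zero of $L(E_0/K,s)$, vanishes to high order at $s=1$, killing what would otherwise be a spurious secondary main term. Studying a smoothed partial sum of its coefficients by a contour shift / Perron's formula — using the positivity of the coefficients, the Ramanujan bound $|a_p(E_0)|\le [2\sqrt p]$ to control the prime-power contributions (this is exactly what produces the factors $1-\frac{[2\sqrt p]}{p+1}$ at the ramified places), and the $\Gamma$-factors contributing $\asymp\log(N_0|D_K|) = \log|D_K|+O(1)$ — forces the residue, hence $L(1,\chi_{D_K})$, to be large, after optimizing the smoothing function and the truncation parameter $X$ (a suitable power of $|D_K|$). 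Equivalently, this step is a quantitative, \emph{effective} form of the Deuring--Heilbronn zero-repulsion principle: the third-order zero of $L(E_0/K,s)$ at the centre prevents $L(s,\chi_{D_K})$ from having a real zero too close to $s=1$, and since the triple zero is a \emph{verified} property of the fixed curve $E_0$, no ineffective Siegel constant enters. Reinserting the class number formula and tracking all constants down to $\tfrac1{7000}$ — with the largest ramified prime $p_0$ absorbed into the main term via $\log|D_K| = \sum_{p\mid D_K}\log p$ — then yields the theorem.

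The step I expect to be the main obstacle is this last one, precisely in its effectivity: everything must be carried out with fully explicit constants — the variational problem selecting the optimal smoothing function (whose solution fixes the numerical constant), explicit control of the tails of the contour integrals and of the error term in the underlying ideal-counting asymptotic, explicit estimates for the $\Gamma$-factors, and the uniform treatment of the functional-equation sign $\varepsilon_{D_K}$ and of the primes dividing $N_0$ — all so as to reach a clean inequality with the stated constant and with the product over every ramified prime except the largest. The one ingredient I would not attempt to reprove is the existence of $E_0$ with a third-order zero, which I would import wholesale from \cite{GZ86}.
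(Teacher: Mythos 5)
The paper does not actually prove Theorem \ref{thm:effective}; it is quoted as an external input from Goldfeld \cite{G75} and Gross--Zagier \cite{GZ86}, with Oesterl\'e's account \cite{JO 84} underlying the paper's own generalization in Sections \ref{sec:Gross-Zagier}--\ref{sec:proof}. Your outline captures the overall shape of that argument, but it diverges from the route the paper extends in a few substantive ways, and it is worth noting them because they matter for the effectivity and for the specific constant. You fix $E_{0}$ of conductor $5077$; the paper (Example \ref{ex: modular form-1}, after \cite[4.3]{JO 84}) instead uses the level-$37$ newform twisted by $(\cdot,\mathbb{Q}(\sqrt{-139})/\mathbb{Q})$, i.e.\ conductor $37\cdot139^{2}$, which is the form whose central triple zero the Gross--Zagier formula directly certifies, and the constant $1/7000$ is tuned to that choice. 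You reduce via the analytic class number formula to a lower bound on $L(1,\chi_{D_{K}})$; in Oesterl\'e's machine and in the paper, $h$ enters instead through reduction theory of quadratic forms, via the fact that the positive-coefficient Dirichlet series $\zeta_{K}(s)/\zeta_{F}(2s)$ has at most $h$ coefficients of index below $|\mathfrak{d}_{K/F}|_{\mathbb{R}}^{1/2}/2^{[F:\mathbb{Q}]}$ (Lemma \ref{thm:only-one}, Proposition \ref{prop:estimation of h} (\ref{it:estimation-3})), and that bound is what closes the upper estimate for the key integral $J(U)$; this route is more elementary than invoking the class number formula and carries the effectivity transparently. Finally, the auxiliary series you describe as ``most transparently a Rankin--Selberg-type square'' is not what is used: the construction is $\varPhi(s)=D(s,\mathbf{f})D(s,\mathbf{f}\otimes\chi)/\varPsi(s)$ with $\varPsi$ built from the symmetric-square $L$-function so that $\varPhi(s)\ll\zeta_{K}(s-\tfrac12)^{2}/\zeta_{F}(2s-1)^{2}$ (Condition \ref{condition}), and the positivity needed for the contour argument is inherited from that termwise domination (Proposition \ref{prop:measures}), not from squaring anything. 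The scheme you sketch is sound and you correctly locate the difficulty in the effectivity bookkeeping, but as written it is closer to Goldfeld's original 1975 presentation; carrying it to the stated $1/7000$ with all constants explicit is precisely what Oesterl\'e's reformulation -- the reduction-theoretic bound on small ideals, the symmetric-square $\varPsi$, and the specific twisted level-$37$ form -- is designed to make feasible.
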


The reader may consult \cite{G85} for the history on Conjecture
\ref{conj:Gauss} and its solutions.

The goal of the present paper is to generalize  Theorem
\ref{thm:effective} to the setting of totally real number fields.

A related result was obtained by Sunley \cite{Sun73}, who gave a
generalization of a result of Tatuzawa \cite{Tat51}.

\begin{thm} $($Sunley$)$ Let $F$ be a fixed totally real algebraic number field and let
	$K$ be a totally imaginary quadratic extension of $F$ having
	conductor $\mathfrak{f}_K$ and class number $h$. Then there exists
	an effectively computable constant $c=c(F, h)$ such that
	$N\mathfrak{f}_K < c$, with the possible exception of one field $K$.
\end{thm}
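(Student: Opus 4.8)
\quad
The plan is to reduce the desired bound $N\mathfrak{f}_K < c(F,h)$ to a statement about the size of $L(1,\chi)$, where $\chi = \chi_{K/F}$ is the quadratic Hecke character of $F$ cut out by $K/F$, and then to apply an effective Tatuzawa-type lower bound for such $L$-values over the fixed base field $F$ --- a bound that is valid for all but at most one character. Put $n = [F:\BQ]$, so that $K$ is totally imaginary of degree $2n$. The character $\chi$ is primitive, its finite conductor is $\mathfrak{f}_K = \mathfrak{d}_{K/F}$, and its infinite part is the product of the sign characters at the $n$ real places of $F$; the conductor--discriminant formula then gives $|d_K| = |d_F|^2\,N\mathfrak{f}_K$.

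First I would write down the relative class number formula. Factoring $\zeta_K(s) = \zeta_F(s)L(s,\chi)$ and comparing residues at $s=1$ through the analytic class number formulas for $K$ and for $F$ yields an identity of the shape
\[
h_K \;=\; \frac{\sqrt{|d_F|}\,h_F}{(2\pi)^n}\cdot w_K\cdot Q\cdot (N\mathfrak{f}_K)^{1/2}\cdot L(1,\chi),
\]
where $Q = [\mathcal{O}_K^\times : \mu_K\mathcal{O}_F^\times]\in\{1,2\}$ is the Hasse unit index; here I have used that $\mathcal{O}_K^\times$ and $\mathcal{O}_F^\times$ both have rank $n-1$, so that $R_K = 2^{\,n-1}R_F/Q$ after comparing the log-embedding lattices. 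For $F$ fixed, the quantities $n$, $d_F$, $h_F$, $R_F$ are constants, $w_K$ is bounded (it is the number of roots of unity in a CM field of degree $2n$, so $w_K\le\max\{m:\varphi(m)\le 2n\}$), and $Q\in\{1,2\}$; hence $h_K = h$ forces $L(1,\chi)\ll_F h\,(N\mathfrak{f}_K)^{-1/2}$.

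Next I would supply the matching lower bound. Following Tatuzawa's method \cite{Tat51}, in the form carried out over an arbitrary number field by Sunley \cite{Sun73}, one shows that for every $\varepsilon\in(0,\tfrac12)$ there is an effectively computable $c_1 = c_1(F,\varepsilon) > 0$ with
\[
L(1,\chi) \;>\; c_1\,(N\mathfrak{f}_\chi)^{-\varepsilon}
\]
for every real primitive Hecke character $\chi$ of $F$ of sufficiently large conductor norm, with at most one exceptional character $\chi_0$. The input is the non-negativity of the coefficients of the ideal-indexed Dirichlet series $\zeta_F(s)L(s,\chi) = \zeta_{K_\chi}(s)$ together with its simple pole of positive residue at $s=1$: this pins any Siegel zero of $L(s,\chi)$ close to $1$ whenever $L(1,\chi)$ is small, and the Deuring--Heilbronn repulsion phenomenon (a real zero of $L(s,\chi_1)$ and one of $L(s,\chi_2)$ with $\chi_1\ne\chi_2$ cannot both lie too close to $1$, as one reads off from the zero count of $\zeta_F(s)L(s,\chi_1)L(s,\chi_2)L(s,\chi_1\chi_2)$) then limits the exceptional characters to at most one. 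Taking $\varepsilon = \tfrac14$ and combining with the previous display gives $(N\mathfrak{f}_K)^{1/4}\ll_F h/c_1(F,\tfrac14)$, hence $N\mathfrak{f}_K < c(F,h)$ for an effectively computable $c(F,h)$, for every such $K$ except the one attached to $\chi_0$; the finitely many fields with conductor norm below the Tatuzawa threshold are trivially under control.

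I expect the main obstacle to be this analytic step: establishing the Tatuzawa-type lower bound, with only a single exception, for Hecke $L$-functions over the general totally real field $F$ rather than over $\BQ$. This requires uniform control (in the conductor) of the archimedean gamma-factors of $L(s,\chi)$, an effective zero-free region together with a zero count for $\zeta_F(s)L(s,\chi)$ with constants depending on $F$ alone, and a careful repulsion argument ruling out two exceptional characters. By comparison, the algebraic ingredients --- the relative class number formula, the boundedness of $w_K$, and the Hasse-index evaluation of $R_K/R_F$ --- are routine.
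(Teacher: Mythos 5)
The paper does not actually prove Sunley's theorem; it quotes it as background and cites \cite{Sun73}. Your proposal is therefore compared against Sunley's original argument rather than against anything in the paper, and it is in fact a faithful reconstruction of that argument: you reduce to a lower bound on $L(1,\chi_{K/F})$ via the relative class number formula, observing that for $F$ fixed the only unbounded quantities are $N\mathfrak{f}_K$ and $L(1,\chi)$ (with $w_K$ bounded in terms of $n$, $Q\in\{1,2\}$, and $R_K/R_F = 2^{n-1}/Q$), and then invoke the Tatuzawa-type lower bound $L(1,\chi)\gg_{F,\varepsilon}(N\mathfrak{f}_\chi)^{-\varepsilon}$ valid for all but at most one real primitive Hecke character of $F$. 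The ingredients you flag as the real work --- effective zero-free region and zero count for $\zeta_F(s)L(s,\chi)$, nonnegativity of its Dirichlet coefficients, and Deuring--Heilbronn repulsion to exclude two exceptional characters --- are exactly the content of Sunley's generalization of Tatuzawa. Two small checks you would want to make explicit in a full write-up: (i) the conductor--discriminant relation $|d_K|=|d_F|^2\,N\mathfrak{f}_K$ is the tower formula for discriminants together with $\mathfrak{f}_\chi = \mathfrak{d}_{K/F}$ (the nontrivial character of $\mathrm{Gal}(K/F)$ having conductor equal to the relative discriminant); (ii) the ``one exceptional field'' in the statement corresponds to the single possibly-exceptional character in the Tatuzawa argument, with the proviso that if that exceptional character is not among the $\chi_{K/F}$ in play (wrong archimedean signs, say), there is in fact no exception. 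Both are routine, and with them the sketch is correct. Note that this analytic route is orthogonal to what the present paper actually does for its own main theorem, which replaces the Tatuzawa/Siegel-zero machinery by Goldfeld--Oesterl\'e's use of a modular $L$-function with a high-order zero at the center, precisely so as to avoid the ineffective-or-exceptional dichotomy inherent in your step.
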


Our main result is the following

\begin{thm} \label{thm:main0}  Let $F$ be a fixed totally real number
	field that is Galois over $\BQ$ with solvable Galois group. When
	$[F:\BQ]$ is even, we further demand that the number of places of
	$F$ above $37$ is even.
	
	Then for any $\epsilon>0$, there exists an effective constant $C(\epsilon)>0$ such that, for any totally imaginary quadratic extension $K$ of
	$F$ we have
	$$h_{K}\ge C(\epsilon) \cdot ({\rm log}|\mathfrak{d}_{K/F}|_{\mathbb{R}})^{1-\epsilon}. $$
\end{thm}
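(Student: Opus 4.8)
The strategy is to adapt Goldfeld's method in the Gross–Zagier form to the Hilbert modular setting over $F$. The Dirichlet $L$-function $L(s,\chi_{D_K})$ attached to the imaginary quadratic extension $K/F$ factors—via the relation between quadratic characters of $F$ and the zeta functions—so that its behavior near $s=1$ controls $h_K$ through the analytic class number formula for $K$ (which gives $h_K R_K$, with the regulator bounded because $K/F$ is a CM extension whose unit group has the same rank as that of $F$). To get a lower bound for $h_K$, one wants to exploit a hypothetical (or, better, known) Hilbert modular form over $F$ whose associated $L$-function has a high-order zero at the central point, play it off against $L(s,\chi_{D_K})$ in a Rankin–Selberg / convolution $L$-function, and extract an effective lower bound for $L(1,\chi_{D_K})$ from the positivity of the resulting Dirichlet series coefficients together with a zero-free-region or Deuring–Heilbronn-type phenomenon. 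The role of the Galois/solvability hypothesis on $F$, and the parity condition on the places above $37$, is precisely to guarantee—by base change (Langlands for solvable extensions) from the modular elliptic curve of conductor $37$ used by Gross–Zagier, together with a sign-of-functional-equation computation—the existence over $F$ of an automorphic form (a Hilbert modular newform) of analytic rank $\ge 3$ with the needed local behavior. So the outline is: (i) set up the $L$-function $\mathcal{L}(s) = L(s, f)\, L(s, f\otimes\chi_{D_K})$ for the base-changed form $f$, check its functional equation forces a zero of order $\ge 4$ (or $\ge 3$) at $s=1$ (central point, suitably normalized); (ii) write $\mathcal{L}(s)$ as a Dirichlet series with nonnegative coefficients (Rankin–Selberg positivity) and apply an explicit Landau-type / contour argument to convert the high-order zero into the bound $L(1,\chi_{D_K}) \gg (\log|\mathfrak{d}_{K/F}|)^{-\epsilon}$ uniformly in $K$; (iii) combine with the class number formula and an upper bound for the regulator $R_K$ and for the other archimedean factors to conclude $h_K \gg (\log |\mathfrak{d}_{K/F}|_{\mathbb{R}})^{1-\epsilon}$.

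First I would fix the modular elliptic curve $E_{0}/\BQ$ of conductor $37$ with $\mathrm{ord}_{s=1} L(s,E_0) = 3$ (Gross–Zagier), and form its base change $E = E_0 \times_{\BQ} F$, equivalently the Hilbert modular newform $f$ over $F$ obtained by successive cyclic base change up the solvable tower $\BQ = F_0 \subset F_1 \subset \cdots \subset F_r = F$. I would verify that $f$ is cuspidal (not Eisenstein)—this is where solvability is used, via Langlands–Tunnell-style cyclic base change and the fact that $E_0$ is non-CM—and compute $\mathrm{ord}_{s=1} L(s, f)$ together with the sign of the functional equation of $L(s, f \otimes \chi_{D_K})$; the parity condition on the places above $37$ enters here, ensuring the product of local root numbers comes out so that $\mathcal{L}(s)$ vanishes to even order $\ge 4$ at its center. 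Then I would write the Rankin–Selberg convolution, or rather the product $\mathcal{L}(s)$ viewed through its Dirichlet series over integral ideals of $F$, establish nonnegativity of its coefficients after the standard manipulation (this is a purely local check at each prime, using the Ramanujan bound for $f$ which holds since $f$ comes from an elliptic curve), and run Goldfeld's effective argument: bound $\mathcal{L}(s)$ on a vertical segment, use the high-order central zero plus the trivial zero structure, and deduce by a Landau-type lemma the effective lower bound for $L(1, \chi_{D_K})$ with the loss of only $(\log|\mathfrak{d}_{K/F}|)^{\epsilon}$.

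The main obstacle I expect is the uniformity and effectivity of step (ii)—pushing Goldfeld's contour/positivity argument through over a general totally real base $F$ so that the implied constant $C(\epsilon)$ is genuinely effective and depends only on $F$ and $\epsilon$, not on $K$. Over $\BQ$ this is delicate (the $\frac{1}{7000}$ in Theorem~\ref{thm:effective} reflects exactly this), and over $F$ one must control the archimedean Euler factors of $\mathcal{L}(s)$ (which now involve $[F:\BQ]$ Gamma factors), the conductor of $f \otimes \chi_{D_K}$ (which is $\mathfrak{N}_f \cdot \mathfrak{d}_{K/F}^2$ up to ramified primes, forcing a careful treatment of primes dividing both), and the regulator bound for $K$. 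A secondary but nontrivial point is verifying that the base-changed $L$-function $L(s,f)$ still has order of vanishing $\ge 3$ at the center over $F$—base change can only increase the order, but one must make sure it does not become Eisenstein or acquire extra automorphy that spoils the positivity of the Rankin–Selberg coefficients; this is precisely why $F$ is required Galois with solvable group and why the conductor-$37$ curve (with its specific local behavior at $37$) together with the parity hypothesis is the right input. The remaining steps—the class number formula for $K/F$, bounding $R_K$ by $R_F$ times a controlled factor, and assembling the final inequality—are routine analytic number theory over number fields.
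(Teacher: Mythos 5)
Your base-change setup is essentially the paper's: the tower of cyclic base changes up a solvable $F/\BQ$, the conductor-$37$ curve with analytic rank $3$, checking cuspidality of the lift (the paper does this by showing the symmetric square stays cuspidal), and using the parity of the places above $37$ to force $\epsilon_{\mathbf f\otimes\chi}=\epsilon_{\mathbf f}$. That part of the plan is sound. But your analytic strategy has two genuine problems. First, the claimed intermediate bound $L(1,\chi_{D_K})\gg(\log|\mathfrak d_{K/F}|)^{-\epsilon}$ cannot be right: by the class number formula $h_K\asymp\sqrt{|\mathfrak d_{K/F}|_\BR}\,L(1,\chi)$ (with $R_K$ bounded), so such a lower bound for $L(1,\chi)$ would give $h_K\gg\sqrt{|\mathfrak d_{K/F}|}/(\log|\mathfrak d_{K/F}|)^\epsilon$, which is far beyond Goldfeld and would essentially dispose of Siegel zeros. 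The correct shape of the bound one extracts from the Goldfeld/Oesterl\'e machinery is $L(1,\chi)\gg(\log|\mathfrak d_{K/F}|)\cdot|\mathfrak d_{K/F}|^{-1/2}\cdot\prod_{\mathfrak p\mid\mathfrak d_{K/F}}(1-2\sqrt{|\mathfrak p|}/(1+|\mathfrak p|))$, and the Deuring--Heilbronn phenomenon you mention plays no role in this method. Second, and more seriously, your plan is silent on how to control that product over ramified primes, which can be exponentially small in the number $t$ of prime divisors of $\mathfrak d_{K/F}$. Over $\BQ$ one bounds $t$ via genus theory ($h(D)\ge 2^{t-1}$); over a general $F$ with $h_F>1$ the analogue is nontrivial, and a large part of the paper (all of Section~\ref{sec:quad-form}) is devoted to building a reduction theory of binary quadratic forms on locally free rank-$2$ $\mathfrak o_F$-modules (with discriminant taken as an ideal, defined locally) precisely to prove $h_K\ge 2^{t+n-1}/[\mathfrak o^\times:(\mathfrak o^\times)^2]$, which is what converts the product into a harmless $(\log|\mathfrak d_{K/F}|)^{-\epsilon}$ loss. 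Without this ingredient, or some substitute for it, the argument does not close.

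A further, less critical, difference is that the paper sidesteps the class number formula entirely: following Oesterl\'e it works directly with a contour integral $J(U)$ built from Mellin transforms of counting measures, whose upper bound involves $h_K$ through explicit lattice-point estimates (Lemmas~\ref{Lang} and~\ref{lem:norm estimation}) for elements of bounded norm in ideals of $K$, and whose lower bound involves $\log|\mathfrak d_{K/F}|_\BR$. This yields $h_K\gg\prod_{\mathfrak p\in P(K)}(1-2\sqrt{|\mathfrak p|}/(1+|\mathfrak p|))\,\log|\mathfrak d_{K/F}|_\BR$ without ever passing through $L(1,\chi)$, which makes the effectivity cleaner and avoids having to bound $R_K$ separately. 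Your class-number-formula route is not wrong in principle, but you would still need the quadratic-form counting to handle the ramified product, and you would need to replace the vague ``Landau-type lemma'' step with an actual explicit contour argument in the spirit of Proposition~\ref{prop: estimation of J(U*)} and Lemma~\ref{lem: J(U) and J(U*)}.
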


Suppose $F=\mathbb{Q}$. Assuming the existence of elliptic curve
with algebraic rank three, Goldfeld constructed two Dirichlet series
with certain properties, and then used them to give both upper and
lower bounds of a key integral $J(U)$. This integral $J(U)$ relates
$h_{K}$ and ${\rm log}|\mathfrak{d}_{K/F}|$, which leads to the
lower bound for $h_{K}$ in Theorem \ref{thm:effective}. In \cite{JO
	84} Oesterl\'{e} simplified Goldfeld's original proof.

Our proof of Theorem \ref{thm:main0} is similar to Oesterl\'{e}'s.

When $F$ is a solvable extension of $\mathbb{Q}$, we use the tool of
base change to lift the elliptic curve constructed by Gross-Zagier
(in fact the weight $2$ modular from attached to this elliptic
curve) to a Hilbert modular form $\mathbf{f}$ of parallel weight
$2$. Then we use this Hilbert modular from to construct two
Dirichlet series $\varPsi(s)$ and $\varPhi(s)$, and use them to
bound $|J(U)|$.

The lower bound for $|J(U)|$ follows from complex analysis. To give
its upper bound, the key point is to control $\varPsi(s)$ and
$\varPhi(s)$ by using Dirichlet series $\zeta_{F}(s)$ and
$\frac{\zeta_{K}(s)}{\zeta_{F}(2s)}$. To estimate $\zeta_{F}(s)$ and
$\frac{\zeta_{K}(s)}{\zeta_{F}(2s)}$, we prove some estimations on
the number of (certain) elements of $K$ (or $F$) with bounded norms.
Besides these estimations, we also need the following inequality
\begin{eqnarray}\label{eq:hk and t}
	h_{K}\ge \frac{2^{t+[F:\mathbb{Q}]-1}}{[\mathfrak{o}^{\times}_{F}:(\mathfrak{o}^{\times}_{F})^{2}]}.
\end{eqnarray}
Here $t$ is the number of different prime divisors of
$\mathfrak{d}_{K/F}$.

To build (\ref{eq:hk and t}) we need a theory of quadratic forms and
a reduction theory about these quadratic forms. When $F=\mathbb{Q}$,
such a reduction theory is due to Gauss. When $F$ is general, one
meets the difficulty that $h_F$ may be larger than $1$. We build a
theory of quadratic form on locally free $\mathfrak{o}_F$-modules
$M$ of rank $2$, instead of free modules. One difficulty is to
define the discriminant of a quadratic form. Luckily, for each finite
place $v$ of $F$, $M_v=M\otimes_{\mathfrak{o}_F}\mathfrak{o}_{F_v}$
is  free, so that the discriminant of $M_v$ can be defined. Then we
patch them to obtain an ideal of $\mathfrak{o}_F$. We regard this
ideal, instead of a number, as the discriminant of $M$. Another
difficulty is the definition of fundamental quadratic forms. We
again use the localization method to deal with this problem. Basing
on the reduction theory we show the number of (weak) equivalence classes is
at least $
\frac{2^{t+[F:\mathbb{Q}]-1}}{[\mathfrak{o}^{\times}_{F}:(\mathfrak{o}^{\times}_{F})^{2}]}$,
which deduces (\ref{eq:hk and t}).

Our paper is organized as follows. Section \ref{sec:quad-form} is
devoted to quadratic forms on $\mathfrak{o}_F$-modules rank $2$. In
Section \ref{sec:Gross-Zagier} we lift the elliptic curve with
analytic rank $3$ constructed by Gross and Zagier (more precisely,
the modular form attached to this elliptic curve) to solvable
totally real fields $F$, and use it to construct two Dirichlet
series ${\it \Phi}(s)$ and ${\it \Psi}(s)$ that satisfy certain
properties. Section \ref{sec:esti} is devoted to some estimates
based on ${\it \Phi}(s)$ and ${\it \Psi}(s)$. In Section
\ref{sec:proof} these estimates are used to prove Theorem
\ref{thm:main0}.

\section*{Notations}

Let $F$ be a totally real number field,  $\fo=\fo_F$ its ring of
integers.
Write
$\mathfrak{o}^*=\mathfrak{o} \backslash \{0\}$.
An element $a$ of $F$ is called {\it totally positive} if it is
positive for each embedding of $F$ into $\mathbb{R}$.

For a fractional ideal $\mathfrak{a}$ of $F$, we simply use
$|\mathfrak{a}|_\mathbb{R}$ to denote the real norm of
$N_{F/\BQ}(\mathfrak{a})$.

Let $\mathrm{Cl}_F$ and $\mathrm{Cl}_K$ be the groups of ideal
classes of $F$ and $K$ respectively. Put $h_K=\sharp(\mathrm{Cl}_F)$
and $h_F=\sharp(\mathrm{Cl}_F)$.

\section{Quadratic forms on $\mathfrak{o}$-modules}
\label{sec:quad-form}

\subsection{Definite quadratic forms}

Let $E$ be a domain. By a {\it (binary) quadratic form} $(M,Q)$ over $E$, we
mean that $M$ is a projective $E$-module of rank $2$, and $Q$ is a
symmetric bilinear maps
$$ Q: M\times M \rightarrow Frac(E). $$ Here $Frac(E)$ denotes the fractional field of $E$. We also say that $Q$ is a
quadratic form on $M$. We associate to $Q$ a quadratic function
$q_Q$ on $M$ by $ q_Q(m) =Q(m,m), $ ($m\in M$).

We say that two quadratic forms $(M_1, Q_1)$ and $(M_2, Q_2)$ over
$E$ are {\it equivalent} if  there exists a map $\phi:
M_1\rightarrow M_2 $ such that $Q_1=Q_2\circ (\phi\times \phi)$.

When $M$ is free, if $\{\alpha, \beta\}$ is a basis of $M$, there
exists $a$, $b$, $c\in Frac(E)$ such that
\begin{eqnarray*}  q_{\alpha, \beta}(x,y):=q_Q(x \alpha+ y\beta )& =
& ax^2+bxy+cy^2 \\ & = &
\left(\begin{array}{c} x \\
y\end{array}\right)\left[\begin{array}{cc} a & \frac{b}{2} \\
\frac{b}{2}  & c \end{array}\right] \left(\begin{array}{cc} x &
y\end{array}\right) \hskip 15pt (x, y\in E).  \end{eqnarray*} Put
$d_Q=b^2-4ac$.

\begin{lem}\label{lem:basis-to-basis}
If $\{\alpha',\beta'\}$ is another basis of $M$ with $$
q_{\alpha',\beta'}(x, y )
=\left(\begin{array}{c} x \\
y\end{array}\right)\left[\begin{array}{cc} a' & \frac{b'}{2} \\
\frac{b'}{2}  & c' \end{array}\right] \left(\begin{array}{cc} x &
y\end{array}\right) \hskip 15pt (x, y\in E),$$  then there exists a
matrix $T\in \mathrm{GL}_2(E)$ such that
$$ \left [\begin{array}{cc} a' & \frac{b'}{2} \\
\frac{b'}{2}& c' \end{array}\right] = T^t \left [\begin{array}{cc} a & \frac{b}{2} \\
\frac{b}{2}& c \end{array}\right] T ,$$ and there exists a unit
$u\in E^\times$ such that $b'^2-4a'c'= u^2( b^2-4ac )$.
\end{lem}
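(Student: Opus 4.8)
The plan is to take $T$ to be the change‑of‑basis matrix from $\{\alpha,\beta\}$ to $\{\alpha',\beta'\}$ and then transport the form $Q$ through it. Since $M$ is a free $E$‑module of rank $2$ and both $\{\alpha,\beta\}$ and $\{\alpha',\beta'\}$ are $E$‑bases, there is a matrix $T=\wvec{p}{q}{r}{s}\in M_2(E)$ with $\alpha'=p\alpha+r\beta$ and $\beta'=q\alpha+s\beta$; expressing $\{\alpha,\beta\}$ back in terms of $\{\alpha',\beta'\}$ produces a matrix in $M_2(E)$ which is a two‑sided inverse of $T$, so $T\in\mathrm{GL}_2(E)$ and in particular $\det T\in E^\times$.

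Next I would identify $A:=\begin{pmatrix} a & b/2\\ b/2 & c\end{pmatrix}$ with the Gram matrix $\bigl(Q(\alpha,\alpha),\,Q(\alpha,\beta);\,Q(\alpha,\beta),\,Q(\beta,\beta)\bigr)$ of the symmetric bilinear form $Q$ relative to $\{\alpha,\beta\}$: by bilinearity $q_Q(x\alpha+y\beta)=Q(\alpha,\alpha)x^2+2Q(\alpha,\beta)xy+Q(\beta,\beta)y^2$, so $a=Q(\alpha,\alpha)$, $b=2Q(\alpha,\beta)$, $c=Q(\beta,\beta)$, and similarly $A':=\begin{pmatrix} a' & b'/2\\ b'/2 & c'\end{pmatrix}$ is the Gram matrix relative to $\{\alpha',\beta'\}$. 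Computing each entry $Q(\alpha',\alpha')$, $Q(\alpha',\beta')$, $Q(\beta',\beta')$ from $A$ by bilinearity, using the columns of $T$, gives the usual transformation law
\[ A'=T^tAT, \]
which is the first assertion (here everything takes place over the field $Frac(E)$).

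Finally I would take determinants. One has $\det A=ac-\tfrac{b^2}{4}=-\tfrac14(b^2-4ac)=-\tfrac14 d_Q$ and likewise $\det A'=-\tfrac14(b'^2-4a'c')$, so $A'=T^tAT$ gives $-\tfrac14(b'^2-4a'c')=(\det T)^2\bigl(-\tfrac14(b^2-4ac)\bigr)$, hence $b'^2-4a'c'=u^2(b^2-4ac)$ with $u:=\det T$. Since $T\in\mathrm{GL}_2(E)$, $u\in E^\times$, which is the second assertion. I expect no real obstacle here: the only point deserving care is that $T$ and $T^{-1}$ have entries in $E$ rather than merely in $Frac(E)$, so that $u=\det T$ is genuinely a \emph{unit} of $E$ — this is exactly where freeness of $M$ over $E$ and the fact that both families are $E$‑bases enter, the remainder being routine bilinear algebra over $Frac(E)$.
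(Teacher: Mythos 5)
Your argument is correct and is exactly the elaboration the paper had in mind: the paper's proof is the one-liner ``This is an exercise of linear algebra. For the second relation we take $u=\det(T)$,'' and you supply the standard details (change-of-basis matrix $T\in\mathrm{GL}_2(E)$, Gram matrix transformation $A'=T^tAT$, determinants). No gap, same approach.
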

\begin{proof} This is an exercise of linear algebra. For the second
relation we take $u=\det(T)$.
\end{proof}

Let $(M,Q)$ be a  quadratic form over $\fo$. We extend $Q$
 to a quadratic form $Q_F$ on $M_F=M\otimes_\fo F$. We can
attach to $Q$ a field $F(Q):=F( \sqrt{d_{Q_F}})$.

For each real place $\iota: F\rightarrow \mathbb{R}$, we can extend
$Q_F$ to $$  Q_\iota: M_F\otimes_{\iota(F)} \mathbb{R}
\times M_F\otimes_{\iota(F)} \mathbb{R} \rightarrow \mathbb{R}.
$$ We say that $Q$ is {\it $($positive$)$ definite} if  $Q_\iota$ are all
(positive) definite.

\begin{lem}\label{lem:positive} If $Q$ is definite, then  $d_{Q_F}$ is totally negative. In particular $d_{Q_F}
\neq 0 $.
\end{lem}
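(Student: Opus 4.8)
The plan is to reduce the claim about the totally real field $F$ to the classical real case, one real place at a time. Concretely, fix a basis $\{\alpha,\beta\}$ of the free $F$-module $M_F=M\otimes_\fo F$, and write $q_Q(x\alpha+y\beta)=ax^2+bxy+cy^2$ with $a,b,c\in F$, so that $d_{Q_F}=b^2-4ac\in F$. For a real place $\iota\colon F\hookrightarrow\mathbb{R}$, the quadratic form $Q_\iota$ on $M_F\otimes_{\iota(F)}\mathbb{R}$ is, in the induced basis, the real binary quadratic form $\iota(a)x^2+\iota(b)xy+\iota(c)y^2$ with matrix $\left[\begin{smallmatrix}\iota(a)&\iota(b)/2\\ \iota(b)/2&\iota(c)\end{smallmatrix}\right]$, whose determinant is $\iota(a)\iota(c)-\iota(b)^2/4=-\tfrac14\,\iota(d_{Q_F})$.

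Next I would invoke the elementary fact from linear algebra that a real symmetric $2\times 2$ matrix is definite (positive or negative) if and only if its determinant is strictly positive: for a positive definite matrix both leading principal minors are positive, and for a negative definite one the $2\times 2$ determinant is still positive (it is $(-1)^2$ times a positive quantity), while for an indefinite or degenerate matrix the determinant is $\le 0$. Since by hypothesis $Q_\iota$ is definite, its matrix has positive determinant, i.e. $-\tfrac14\,\iota(d_{Q_F})>0$, hence $\iota(d_{Q_F})<0$. As this holds for every real embedding $\iota$ of $F$ — and $F$ being totally real, those are all the archimedean places — the element $d_{Q_F}\in F$ is totally negative. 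In particular $d_{Q_F}\neq 0$, since $0$ is not totally negative (its image under any embedding is $0$, not a negative number).

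One point that deserves a line of care is the well-definedness of the sign: a priori $d_{Q_F}$ depends on the chosen basis $\{\alpha,\beta\}$, but Lemma~\ref{lem:basis-to-basis} shows that changing the basis multiplies $d_{Q_F}$ by $u^2$ for a unit $u\in F^\times$, and $\iota(u)^2>0$ for every real $\iota$, so the property ``totally negative'' is independent of the basis. Likewise the identification of $Q_\iota$ with the real binary form attached to $(\iota(a),\iota(b),\iota(c))$ is just unwinding the definition of the scalar extension $M_F\otimes_{\iota(F)}\mathbb{R}$, using that $\{\alpha\otimes 1,\beta\otimes 1\}$ is an $\mathbb{R}$-basis.

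There is essentially no serious obstacle here; the only mildly delicate step is the sign bookkeeping in the negative-definite case, where one must remember that the determinant of a negative definite $2\times 2$ matrix is positive, not negative, so the conclusion is $d_{Q_F}$ totally \emph{negative} rather than totally positive. The rest is the standard dictionary between symmetric bilinear forms and symmetric matrices together with the Galois-theoretic remark about units being squares up to sign under real embeddings.
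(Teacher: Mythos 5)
Your proof is correct and follows essentially the same route as the paper, which simply appeals in one line to the fact that a definite real binary form has negative discriminant; you spell out the determinant computation and the basis-independence of the sign, but the underlying idea is identical.
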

\begin{proof} By the theory of real quadratic forms, since $Q_\iota$
is definite, it has negative discriminant.
\end{proof}

By Lemma \ref{lem:positive}, if $Q$ is definite, then $F(Q)$ is a
(totally) imaginary quadratic extension of $F$.

\begin{lem}\label{lem:crit} \begin{enumerate}
\item $Q$ is positive definite if and only if
$q_Q(m)$ is totally positive for any nonzero $m\in M$.
\item $Q$ is definite if and only if $q_Q(m)$ $(m\in M, m\neq 0)$ has the same sign for each real embedding.
\end{enumerate}
\end{lem}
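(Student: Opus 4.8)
The plan is to pass to the $F$-vector space $M_F=M\otimes_\fo F$: after choosing an $F$-basis $\{\alpha,\beta\}$ of $M_F$, the extended form $Q_F$ has the shape $q_Q(x\alpha+y\beta)=ax^2+bxy+cy^2$ $(x,y\in F)$ with $d_{Q_F}=b^2-4ac$ (we write $q_Q$ also for its extension to $M_F$). Two preliminary remarks will be used. First, since $M$ is projective over the domain $\fo$ it is torsion-free, so it embeds into $M_F$ and, via any real embedding $\iota$, into the two-dimensional real space $M_F\otimes_{\iota(F)}\mathbb{R}$, on which $q_{Q_\iota}$ restricts on $M$ to $m\mapsto\iota(q_Q(m))$. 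Second, since $F$ is the fraction field of $\fo$, every $m'\in M_F$ can be written $m/c$ with $m\in M$ and $c\in\fo^*$, so that $\iota(q_Q(m'))=\iota(c)^{-2}\iota(q_Q(m))$ has, for each $\iota$, the same sign as $\iota(q_Q(m))$ (and is nonzero iff $m\neq 0$).

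For (a): if $Q$ is positive definite, then for a nonzero $m\in M$ its image $\bar m$ in each $M_F\otimes_{\iota(F)}\mathbb{R}$ is nonzero, hence $\iota(q_Q(m))=q_{Q_\iota}(\bar m)>0$ for every real $\iota$, i.e. $q_Q(m)$ is totally positive. Conversely, suppose $q_Q(m)$ is totally positive for every nonzero $m\in M$; by the second remark the same holds for every nonzero $m'\in M_F$. Taking $m'=\alpha$ shows $a=q_Q(\alpha)$ is totally positive, in particular $a\neq 0$; taking $m'=-b\alpha+2a\beta$ (nonzero, since $a\neq 0$) and computing $q_Q(-b\alpha+2a\beta)=-a\,d_{Q_F}$ shows $-a\,d_{Q_F}$ is totally positive, hence $d_{Q_F}$ is totally negative. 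Then for each real $\iota$, completing the square gives $q_{Q_\iota}(x,y)=\iota(a)\big(x+\tfrac{\iota(b)}{2\iota(a)}y\big)^2-\tfrac{\iota(d_{Q_F})}{4\iota(a)}y^2$, whose coefficients $\iota(a)$ and $-\iota(d_{Q_F})/(4\iota(a))$ are both positive; thus $Q_\iota$ is positive definite. Hence $Q$ is positive definite.

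Part (b) follows by applying (a) to $Q$ and to $-Q$, using $q_{-Q}=-q_Q$ and $(-Q)_\iota=-(Q_\iota)$: the form $Q$ is definite exactly when $Q$ or $-Q$ is positive definite, which by (a) means that $q_Q$ is totally positive on $M\setminus\{0\}$ or totally negative on $M\setminus\{0\}$ — and this disjunction is precisely the statement that all the numbers $\iota(q_Q(m))$, over nonzero $m\in M$ and real embeddings $\iota$, carry one common sign. I do not expect a serious obstacle here: the whole argument is the completion-of-the-square identity together with sign bookkeeping. The only two points that need care are that $M$ need not be free (handled by descending the computation to the $F$-vector space $M_F$ and rescaling representatives back into $M$), and that the degenerate possibility $d_{Q_F}=0$ must be ruled out, which the explicit evaluations $q_Q(\alpha)=a$ and $q_Q(-b\alpha+2a\beta)=-a\,d_{Q_F}$ accomplish automatically.
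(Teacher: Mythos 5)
Your proof of part (a) is correct and takes essentially the same route as the paper: both arguments amount to diagonalizing the form (the paper takes abstract orthogonal elements $m,n$ with $Q(m,n)=0$; you produce them explicitly, since $\{\alpha,\,-b\alpha+2a\beta\}$ is exactly such an orthogonal pair), verify that the two diagonal entries are totally positive, and conclude.

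Part (b), however, has a genuine gap. You rewrite the hypothesis of (b) as ``all the numbers $\iota(q_Q(m))$, over nonzero $m\in M$ and real embeddings $\iota$, carry one common sign,'' and you claim ``$Q$ is definite exactly when $Q$ or $-Q$ is positive definite.'' Neither of these is correct once $F$ has more than one real place. The definition in the paper is that $Q$ is definite when \emph{each} $Q_\iota$ is definite, and this is allowed to be positive definite for some $\iota$ and negative definite for others; correspondingly, the condition in (b) is that \emph{for each fixed} $\iota$ the sign of $\iota(q_Q(m))$ is constant in $m$ --- the sign may vary with $\iota$. Indeed the paper's own proof reads ``Since $\iota(q_Q(m))$ and $\iota(q_Q(n))$ have same sign'' for each fixed $\iota$, with no uniformity across embeddings. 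For a concrete counterexample to your claimed equivalence, take $F=\mathbb{Q}(\sqrt{2})$, $M=\mathfrak{o}_F^2$ and $q_Q(x,y)=\sqrt{2}\,(x^2+y^2)$: under the embedding sending $\sqrt{2}\mapsto\sqrt{2}$ the form is positive definite, and under $\sqrt{2}\mapsto-\sqrt{2}$ it is negative definite, so $Q$ is definite but neither $Q$ nor $-Q$ is positive definite. The fix is cheap and stays entirely within your method: fix one embedding $\iota$, let $\varepsilon_\iota$ be the common sign of $\iota(q_Q(m))$; then $\iota(a)$ has sign $\varepsilon_\iota$ and $\iota(q_Q(-b\alpha+2a\beta))=-\iota(a)\iota(d_{Q_F})$ also has sign $\varepsilon_\iota$, forcing $\iota(d_{Q_F})<0$, and completing the square shows $Q_\iota$ is $\varepsilon_\iota$-definite. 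Run this for each $\iota$ separately instead of trying to reduce (b) to (a).
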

\begin{proof} The ``only if'' parts are clear.
For the ``if'' parts, we assume that $q_Q(m)\neq 0$ when $m\neq 0$.
We can take $m,n$ such that $Q(m,n)=0$. Then $m$ and $n$ are
linearly independent over $\fo$. For each real embedding $\iota$, we
have $q_{Q_{\iota}}(xm+yn)=x^2\iota(q_Q(m))+y^2\iota(q_Q(n))$. Since
$\iota(q_Q(m))$ and $\iota(q_Q(n))$ have same sign, when $xm+yn\neq
0$, $q_{Q_{\iota}}(xm+yn)$ has the same sign as $\iota(q_Q(m))$.
This proves the ``if'' parts.
\end{proof}

We define the discriminant of $Q$. For any place $\fp$ of $F$, put
$M_\fp=M\otimes_{\fo}\fo_\fp$. Localizing at $\fp$ we obtain a
symmetric bilinear map $ Q_\fp: M_\fp\times M_\fp \rightarrow
F_{\mathfrak{p}}. $  Note that $M_\fp$ is free of rank $2$ over $\fo_{\fp}$.
So $d_{Q_\mathfrak{p}}$ makes sense.  Put $\mathrm{ind}_\fp Q=v_\fp(
d_{Q_\mathfrak{p}}) $.

\begin{lem} For almost all $\fp$, $\mathrm{ind}_\fp Q=0$.
\end{lem}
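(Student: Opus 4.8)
The plan is to exhibit, once and for all, a pair of elements of $M$ that forms an $\fo_\fp$-basis of $M_\fp$ for all but finitely many $\fp$; with respect to such a basis $d_{Q_\fp}$ is literally a fixed element of $F$, and a nonzero element of $F$ is a $\fp$-unit at almost every $\fp$.

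First I would pick $\alpha,\beta\in M$ whose images in $M_F=M\otimes_\fo F$ form an $F$-basis. This is possible because $\fo$ is a domain and $M$ is projective of rank $2$, hence torsion free of $F$-rank $2$, so $M$ contains two $F$-linearly independent elements. Put $N=\fo\alpha\oplus\fo\beta\subseteq M$. Since $\{\alpha,\beta\}$ spans $M_F$ and $F$ is $\fo$-flat, the inclusion $N\hookrightarrow M$ induces an isomorphism $N_F\xrightarrow{\ \sim\ }M_F$; hence the finitely generated $\fo$-module $M/N$ is torsion, and therefore supported on a finite set $S$ of maximal ideals of $\fo$.

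Next, for $\fp\notin S$ I would argue that $\{\alpha,\beta\}$ is an $\fo_\fp$-basis of $M_\fp$. Indeed $N_\fp=M_\fp$ for such $\fp$, so $\alpha,\beta$ generate the free rank-$2$ module $M_\fp$ over the local ring $\fo_\fp$; by Nakayama their images span $M_\fp/\fp M_\fp\cong k(\fp)^2$, hence form a $k(\fp)$-basis, and lifting back shows $\{\alpha,\beta\}$ is an $\fo_\fp$-basis of $M_\fp$. By Lemma~\ref{lem:basis-to-basis} the quantity $v_\fp(d_{Q_\fp})$ does not depend on the chosen basis, so I may compute $d_{Q_\fp}$ with $\{\alpha,\beta\}$:
$$ d_{Q_\fp}=\bigl(2Q(\alpha,\beta)\bigr)^{2}-4\,Q(\alpha,\alpha)\,Q(\beta,\beta)=d_{Q_F}\in F, $$
which is independent of $\fp$.

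Finally, since $d_{Q_F}\neq 0$ (here one uses $d_{Q_F}\neq 0$, which holds e.g.\ when $Q$ is definite by Lemma~\ref{lem:positive}), $d_{Q_F}$ lies in $F^\times$, so $v_\fp(d_{Q_F})=0$ for every $\fp$ outside the finite set $S'$ of primes occurring in the fractional ideal $(d_{Q_F})$. Thus $\mathrm{ind}_\fp Q=v_\fp(d_{Q_\fp})=v_\fp(d_{Q_F})=0$ whenever $\fp\notin S\cup S'$, proving the lemma. The only step needing genuine (if mild) care is the finiteness of $S$, i.e.\ that the globally chosen pair $\{\alpha,\beta\}$ really is a local basis of $M_\fp$ away from finitely many $\fp$ — this is exactly where projectivity of $M$ and finite generation of $M/N$ enter; everything else is bookkeeping.
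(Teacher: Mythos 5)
Your proof is correct and takes essentially the same approach as the paper's: pick a single pair of elements of $M$ that is an $\fo_\fp$-basis of $M_\fp$ for almost all $\fp$, observe that the discriminant computed with respect to this fixed pair is a nonzero element of $F$ independent of $\fp$, and conclude that its $\fp$-valuation vanishes outside a finite set. You simply spell out in more detail why such a pair exists (torsion quotient $M/N$ plus Nakayama) and explicitly flag the nondegeneracy hypothesis $d_{Q_F}\neq 0$ that the paper's proof uses tacitly.
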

\begin{proof} Take two elements $m$, $n$ of
$M$ such that for almost all $\fp$, $\{m,n\}$ is a basis of $M_\fp$ over $\fo_{\mathfrak{p}}$. For
such a $\fp$,
$$\mathrm{ind}_\fp Q=v_\fp(4 (Q(m,m) Q(n,n) - Q(m,n)^2))= 2v_\fp(2)+
v_\fp( Q(m,m) Q(n,n) - Q(m,n)^2).$$ As $Q(m,m) Q(n,n) -
Q(m,n)^2\in\fo$ is nonzero, for almost all $\fp$,
$$v_\fp(Q(m,m) Q(n,n) - Q(m,n)^2)=0,$$ as expected.
\end{proof}

We define the {\it discriminant} of $Q $, denoted by $\mathfrak{d}_Q
$, to be the ideal $ \prod\limits_{\fp} \fp^{\mathrm{ind}_\fp Q}$.

\begin{lem} \label{lem:quad-1}
Let $M_1$ and $M_2$ be two projective $\fo$-modules locally free of
rank $1$, $Q $ a  definite quadratic form on $M_1\oplus M_2$. If
$\mathfrak{a}$ and $\mathfrak{b}$ are two ideals of $\fo_F$, then
$\mathfrak{d}_{Q|_{\mathfrak{a}M_1\oplus \mathfrak{b}M_2}}
=(\mathfrak{a}\mathfrak{b})^2 \mathfrak{d}_Q$.
\end{lem}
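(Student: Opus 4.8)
The plan is to reduce the identity to a purely local computation at each finite place, since by construction the discriminant ideal $\mathfrak{d}$ is assembled from the local indices $\mathrm{ind}_\fp$. Fix a finite place $\fp$ of $F$ with uniformizer $\pi$. Because $M_1$ and $M_2$ are locally free of rank $1$, choose generators $e_1$ of $M_{1,\fp}=M_1\otimes_\fo\fo_\fp$ and $e_2$ of $M_{2,\fp}$; then $\{e_1,e_2\}$ is an $\fo_\fp$-basis of $(M_1\oplus M_2)_\fp$. Setting $\alpha=v_\fp(\mathfrak{a})$ and $\beta=v_\fp(\mathfrak{b})$, we have $\mathfrak{a}_\fp=\pi^\alpha\fo_\fp$ and $\mathfrak{b}_\fp=\pi^\beta\fo_\fp$; since localization commutes with the formation of $\mathfrak{a}M_1\oplus\mathfrak{b}M_2$, this module is again a projective $\fo$-module locally free of rank $2$ with $(\mathfrak{a}M_1\oplus\mathfrak{b}M_2)_\fp=\pi^\alpha\fo_\fp e_1\oplus\pi^\beta\fo_\fp e_2$, which has $\fo_\fp$-basis $\{\pi^\alpha e_1,\pi^\beta e_2\}$. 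In particular it has the same $F$-span as $M_1\oplus M_2$, so $Q$ restricts to it (and stays definite), and its discriminant ideal is defined.

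Next I would compare the local discriminants in these two bases. Write $a=Q(e_1,e_1)$, $b=2Q(e_1,e_2)$, $c=Q(e_2,e_2)$, so that $d_{Q_\fp}=b^2-4ac$ and $\mathrm{ind}_\fp Q=v_\fp(b^2-4ac)$. By bilinearity, in the basis $\{\pi^\alpha e_1,\pi^\beta e_2\}$ the coefficients become $\pi^{2\alpha}a$, $\pi^{\alpha+\beta}b$, $\pi^{2\beta}c$, so the associated discriminant is
$$(\pi^{\alpha+\beta}b)^2-4(\pi^{2\alpha}a)(\pi^{2\beta}c)=\pi^{2(\alpha+\beta)}(b^2-4ac).$$
By Lemma \ref{lem:basis-to-basis} the $\fp$-valuation of this element does not depend on the choice of basis, hence
$$\mathrm{ind}_\fp\big(Q|_{\mathfrak{a}M_1\oplus\mathfrak{b}M_2}\big)=2\alpha+2\beta+\mathrm{ind}_\fp Q=2v_\fp(\mathfrak{a})+2v_\fp(\mathfrak{b})+\mathrm{ind}_\fp Q.$$

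Finally I would reassemble the global identity by taking the product over all finite places $\fp$:
$$\mathfrak{d}_{Q|_{\mathfrak{a}M_1\oplus\mathfrak{b}M_2}}=\prod_\fp\fp^{\,2v_\fp(\mathfrak{a})+2v_\fp(\mathfrak{b})+\mathrm{ind}_\fp Q}=\Big(\prod_\fp\fp^{v_\fp(\mathfrak{a})}\Big)^2\Big(\prod_\fp\fp^{v_\fp(\mathfrak{b})}\Big)^2\,\mathfrak{d}_Q=(\mathfrak{a}\mathfrak{b})^2\mathfrak{d}_Q.$$
There is no genuine obstacle beyond bookkeeping; the only points meriting care are that passing to the sublattice $\mathfrak{a}M_1\oplus\mathfrak{b}M_2$ is compatible with localization, so that $\{\pi^\alpha e_1,\pi^\beta e_2\}$ really is an $\fo_\fp$-basis of the localized sublattice, and that the local computation is independent of the choices of $e_1$, $e_2$, $\pi$ — which is precisely the content of Lemma \ref{lem:basis-to-basis}.
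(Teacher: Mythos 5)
Your proof is correct and is exactly the local computation the paper has in mind; the paper simply declares the lemma "obvious" and leaves this bookkeeping to the reader. Your careful write-up (localizing, scaling the basis by $\pi^\alpha,\pi^\beta$, tracking the factor $\pi^{2(\alpha+\beta)}$, and invoking Lemma \ref{lem:basis-to-basis} for basis-independence) fills in precisely those details.
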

\begin{proof} This is obvious.
\end{proof}

If $N$ is a submodule of $M$ locally of rank $1$, we use $I(Q, N)$ to
denote the (fractional) ideal generated by the set $\{q_Q(n): n\in N\}$. If $N$
is free with a generator $n_0$, then $I(Q, N)$ is the principal
ideal $(q_Q(n_0))$.

\begin{lem} \label{lem:quad-2}
We have $I(Q, \mathfrak{a} N) = \mathfrak{a}^2 I(Q, N)$.
\end{lem}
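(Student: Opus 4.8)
The plan is to reduce the identity to a purely local computation. Both $I(Q,\mathfrak{a}N)$ and $\mathfrak{a}^2 I(Q,N)$ are fractional ideals of $F$, and two fractional ideals coincide precisely when their localizations at every finite place $\fp$ agree; so it suffices to prove $I(Q,\mathfrak{a}N)_\fp = \mathfrak{a}_\fp^{2}\, I(Q,N)_\fp$ for each $\fp$.

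First I would record the two facts that make localization harmless. (i) The quadratic function $q_Q$ is homogeneous of degree $2$: for $c\in F$ and $m\in M_F$ one has $q_Q(cm)=Q(cm,cm)=c^2 q_Q(m)$. In particular $q_Q$ extends from $M$ to $M_F$ and then to $M_\fp=M\otimes_\fo\fo_\fp$, and the formation of $I(Q,-)$ commutes with localization: any element of $N_\fp$ has the form $n/s$ with $n\in N$ and $s\in\fo\setminus\fp$, whence $q_Q(n/s)=q_Q(n)/s^2$ lies in the $\fo_\fp$-ideal generated by $\{q_Q(n):n\in N\}$, and conversely. Thus $I(Q,N)_\fp=I(Q_\fp,N_\fp)$, the $\fo_\fp$-ideal generated by $\{q_{Q_\fp}(m):m\in N_\fp\}$, and likewise for $\mathfrak{a}N$. (ii) Over $\fo_\fp$ everything becomes free or principal: $N$ is locally free of rank $1$, so $N_\fp$ is free, say with generator $n_0$; and $\mathfrak{a}_\fp=\pi^{k}\fo_\fp$ with $k=v_\fp(\mathfrak{a})$ for a uniformizer $\pi$. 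Then $(\mathfrak{a}N)_\fp=\mathfrak{a}_\fp N_\fp$ is free with generator $\pi^{k}n_0$.

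Now the local computation is immediate. By the remark preceding the lemma (applied over the local ring $\fo_\fp$), a rank-$1$ free module with generator $g$ has $I(Q_\fp,\cdot)=(q_{Q_\fp}(g))$, so using (i), (ii) and the homogeneity from (i),
$$ I(Q,\mathfrak{a}N)_\fp=\bigl(q_{Q_\fp}(\pi^{k}n_0)\bigr)=\bigl(\pi^{2k}q_{Q_\fp}(n_0)\bigr)=\pi^{2k}\fo_\fp\cdot\bigl(q_{Q_\fp}(n_0)\bigr)=\mathfrak{a}_\fp^{2}\,I(Q,N)_\fp. $$
Since $\fp$ was arbitrary, the identity of fractional ideals follows.

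There is essentially no obstacle here: the only point requiring care is that neither $\mathfrak{a}$ nor $N$ is assumed free or principal globally (so one cannot simply write $N=\fo n_0$ and $\mathfrak{a}=(a)$ and compute directly), which is exactly why the argument is phrased locally; once localized, the statement collapses to $q_Q(\pi^{k}n_0)=\pi^{2k}q_Q(n_0)$. If one wanted to avoid localization, an alternative is to fix a nonzero $n_0\in N$, write each $n\in N$ as $n=\lambda_n n_0$ in the line $N_F\subseteq M_F$ (so $N=\mathfrak{b}n_0$ for the fractional ideal $\mathfrak{b}=\{\lambda_n:n\in N\}$), observe $I(Q,N)=\mathfrak{b}^{\langle 2\rangle}q_Q(n_0)$ with $\mathfrak{b}^{\langle 2\rangle}$ the ideal generated by squares of elements of $\mathfrak{b}$, and then note $\mathfrak{b}^{\langle 2\rangle}=\mathfrak{b}^2$ and similarly $(\mathfrak{a}\mathfrak{b})^{\langle 2\rangle}=\mathfrak{a}^2\mathfrak{b}^2$; but verifying $\mathfrak{b}^{\langle 2\rangle}=\mathfrak{b}^2$ itself needs a local check, so the direct local proof above is the cleanest route.
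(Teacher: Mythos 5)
Your proof is correct and rests on the same key fact the paper uses, namely the homogeneity $q_Q(cn)=c^2 q_Q(n)$; the paper's proof is exactly the one-line statement of this relation, and your localization argument is simply the careful completion of that observation, addressing the point that neither $\mathfrak{a}$ nor $N$ is globally principal/free. So this is essentially the paper's approach, spelled out in more detail.
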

\begin{proof} This follows from the relation $q_Q(an)=a^2q_Q(n)$, where $a\in \mathfrak{a}$ and $n\in N$.
\end{proof}

\begin{lem}\label{thm:only-one}
Let $Q$ be a definite quadratic form on $M$ with discriminant
$\mathfrak{d}$. Then there exists at most one saturated submodule
$N$ of $M$ that is locally free of rank $1$ such that $$ |I(Q,
N)|_{\mathbb{R}} <
\frac{|\mathfrak{d}_Q|_\mathbb{R}^{1/2}}{2^{[F:\BQ]}}.$$
\end{lem}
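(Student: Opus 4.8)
The plan is to argue by contradiction. Suppose there were two \emph{distinct} saturated submodules $N_1\neq N_2$ of $M$, each locally free of rank $1$, with $|I(Q,N_i)|_{\mathbb{R}}<|\mathfrak{d}_Q|_{\mathbb{R}}^{1/2}/2^{[F:\BQ]}$ for $i=1,2$; I will deduce the absurdity $|\mathfrak{d}_Q|_{\mathbb{R}}<|\mathfrak{d}_Q|_{\mathbb{R}}$. First observe that a saturated rank-$1$ submodule $N$ equals $(N\otimes_\fo F)\cap M$, so two saturated rank-$1$ submodules spanning the same line in $M\otimes_\fo F$ coincide; hence $N_1\neq N_2$ forces $N_1\cap N_2=0$. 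Fix any nonzero $n_i\in N_i$. Then $n_1,n_2$ are $\fo$-linearly independent, $M'':=\fo n_1\oplus\fo n_2$ is a free rank-$2$ submodule of $M$ of finite index, and I put $\mathfrak{h}=[M:M'']$ and $\mathfrak{g}_i=[N_i:\fo n_i]$, all integral ideals of $\fo$. Localizing at each $\fp$, where everything is a discrete valuation ring, one checks that $\fo n_i=\mathfrak{g}_iN_i$ and, since lengths are additive along $\fo n_1\oplus\fo n_2\subseteq N_1\oplus N_2\subseteq M$, that $\mathfrak{g}_1\mathfrak{g}_2\mid\mathfrak{h}$, hence $|\mathfrak{h}|_{\mathbb{R}}\ge|\mathfrak{g}_1|_{\mathbb{R}}|\mathfrak{g}_2|_{\mathbb{R}}$.

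Because $M''$ is free, its given basis $\{n_1,n_2\}$ is a local basis of $M''_\fp$ at every $\fp$, so $\mathfrak{d}_{Q|_{M''}}$ is the principal ideal generated by $\delta:=4Q(n_1,n_2)^2-4Q(n_1,n_1)Q(n_2,n_2)\in F^\times$, which is nonzero since $Q$ is definite (Lemma~\ref{lem:positive}). Applying the change-of-basis identity $d'=(\det T)^2 d$ from the proof of Lemma~\ref{lem:basis-to-basis} over $\fo_\fp$ to the inclusion $M''_\fp\subseteq M_\fp$ — whose transition matrix $T$ lies in $M_2(\fo_\fp)$ with $v_\fp(\det T)=v_\fp(\mathfrak{h})$ — gives $\mathrm{ind}_\fp Q=v_\fp(\delta)-2v_\fp(\mathfrak{h})$ for every $\fp$, i.e. $(\delta)=\mathfrak{h}^2\,\mathfrak{d}_Q$. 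Consequently $|\mathfrak{d}_Q|_{\mathbb{R}}=|N_{F/\BQ}(\delta)|\,/\,|\mathfrak{h}|_{\mathbb{R}}^2\le |N_{F/\BQ}(\delta)|\,/\,(|\mathfrak{g}_1|_{\mathbb{R}}^2|\mathfrak{g}_2|_{\mathbb{R}}^2)$.

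Next I estimate $|N_{F/\BQ}(\delta)|$ using definiteness. For each real embedding $\iota$ of $F$ the real symmetric matrix $\bigl(\iota(Q(n_a,n_b))\bigr)_{a,b}$ is definite, hence has positive determinant and diagonal entries of one common sign; so $\iota(\delta)=-4\bigl(\iota(Q(n_1,n_1))\iota(Q(n_2,n_2))-\iota(Q(n_1,n_2))^2\bigr)<0$ and $|\iota(\delta)|\le 4\,|\iota(Q(n_1,n_1))|\cdot|\iota(Q(n_2,n_2))|$. Multiplying over the $[F:\BQ]$ real embeddings gives $|N_{F/\BQ}(\delta)|\le 4^{[F:\BQ]}\,|N_{F/\BQ}(Q(n_1,n_1))|\cdot|N_{F/\BQ}(Q(n_2,n_2))|$. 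On the other hand, since $\fo n_i=\mathfrak{g}_iN_i$, Lemma~\ref{lem:quad-2} gives $(Q(n_i,n_i))=I(Q,\fo n_i)=\mathfrak{g}_i^2\,I(Q,N_i)$, whence $|N_{F/\BQ}(Q(n_i,n_i))|=|\mathfrak{g}_i|_{\mathbb{R}}^2\,|I(Q,N_i)|_{\mathbb{R}}$. Substituting these two bounds into the inequality of the previous paragraph, the factors $|\mathfrak{g}_i|_{\mathbb{R}}^2$ cancel and we obtain $|\mathfrak{d}_Q|_{\mathbb{R}}\le 4^{[F:\BQ]}\,|I(Q,N_1)|_{\mathbb{R}}\,|I(Q,N_2)|_{\mathbb{R}}$. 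By the assumed bounds on $|I(Q,N_i)|_{\mathbb{R}}$, the right-hand side is $<4^{[F:\BQ]}\cdot\bigl(|\mathfrak{d}_Q|_{\mathbb{R}}^{1/2}/2^{[F:\BQ]}\bigr)^2=|\mathfrak{d}_Q|_{\mathbb{R}}$, the desired contradiction.

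The one genuinely substantive ingredient is the archimedean inequality $|\iota(\delta)|\le 4\,|\iota(Q(n_1,n_1)Q(n_2,n_2))|$, which is just the classical fact that a definite real binary form $ax^2+bxy+cy^2$ satisfies $b^2<4ac$; this is what accounts for the constant $2^{[F:\BQ]}=|2|_{\mathbb{R}}$ in the statement. The main obstacle is instead organizational: since $h_F$ may exceed $1$, the modules $N_1,N_2$ need not be free, so there is no Gram matrix over $\fo$ to manipulate directly. The remedy is the descent to the free submodule $M''=\fo n_1\oplus\fo n_2$, computing the discriminant there as a principal ideal and verifying that the auxiliary index ideals $\mathfrak{g}_1,\mathfrak{g}_2$ produced by the descent cancel exactly against the matching factors coming from $I(Q,N_i)$. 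The points requiring care are the two localization claims $\fo n_i=\mathfrak{g}_iN_i$ and $(\delta)=\mathfrak{h}^2\mathfrak{d}_Q$, but each reduces at every $\fp$ to an elementary computation over the discrete valuation ring $\fo_\fp$.
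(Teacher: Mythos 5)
Your proof is correct and follows essentially the same route as the paper's: both pass to the free rank-two submodule $M''=\fo n_1\oplus\fo n_2$, relate its discriminant to $\mathfrak{d}_Q$ and the $I(Q,N_i)$ via index ideals, and close with the archimedean inequality for definite binary forms ($|b^2-4ac|_{\mathbb{R}}\le|4ac|_{\mathbb{R}}$). The only distinction is bookkeeping: you work directly with $\mathfrak{h}=[M:M'']$ and the identity $(\delta)=\mathfrak{h}^2\mathfrak{d}_Q$, whereas the paper factors the descent through the intermediate module $M'=N_1\oplus N_2$ and its discriminant $\mathfrak{d}'$, invoking its Lemmas~\ref{lem:quad-1} and~\ref{lem:quad-2} explicitly.
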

\begin{proof} Assume that there are two different saturated submodules $N_1$ and $N_2$ of $M$
of rank $1$ such that $$ |I(Q, N_1)|_{\mathbb{R}} \ < \
\frac{|\mathfrak{d}|_\mathbb{R}^{1/2}}{2^{[F:\BQ]}} , \hskip 10pt
|I(Q,  N_2)|_{\mathbb{R}} \ < \
\frac{|\mathfrak{d}|_\mathbb{R}^{1/2}}{2^{[F:\BQ]}}.$$ Put
$M'=N_1\oplus N_2$. Then $ \mathfrak{d}'= \mathfrak{d}_{ Q|_{M'}}  $
is a sub-ideal of $\mathfrak{d} $.

Take any two nonzero elements $n_1$ and $n_2$ in $N_1$ and $N_2$,
respectively. Assume that $\fo_{F }n_1=\mathfrak{a}N_1$ and $\fo_{F
} n_2=\mathfrak{b} N_2$.

Put $M''=\fo_{F }n_1 \oplus \fo_{F } n_2$ and
$\mathfrak{d}''=\mathfrak{d}_{Q|_{M''}}$. By Lemma \ref{lem:quad-1},
$\mathfrak{d}''=(\mathfrak{ab})^2 \mathfrak{d}'$. By Lemma
\ref{lem:quad-2}, $I(Q,   \fo_{F } n_1)= \mathfrak{a}^2 I(Q, N_1)$
and $I(Q,  \fo_{F } n_2) = \mathfrak{b}^2 I(Q, N_2)$. So we have
$$ |I(Q,  \fo_{F } n_1)|_{\mathbb{R}} = |\mathfrak{a}|_{\mathbb{R}}^2 |I(Q, N_1)|_{\mathbb{R}}
<
\frac{|\mathfrak{a}|^2_\mathbb{R}|\mathfrak{d}|_\mathbb{R}^{1/2}}{2^{[F:\BQ]}}
$$ and $$ |I(Q, \fo_{F} n_2)|_{\mathbb{R}} =
|\mathfrak{b}|_{\mathbb{R}}^2 |I(Q,  N_2)|_{\mathbb{R}} <
\frac{|\mathfrak{b}|^2_\mathbb{R}|\mathfrak{d}|_\mathbb{R}^{1/2}}{2^{[F:\BQ]}}
. $$

Assume that with respect to the basis $\{n_1, n_2\}$ of $N''$,
$Q|_{N''}$ is of the form $Q(xn_1+yn_2)=ax^2+bxy+cy^2$.  Then
$$ \mathfrak{d}'' =(b^2-4ac), \ \  I(Q, \fo_{F_\fp} n_1) =(a), \ \   I(Q, \fo_{F_\fp} n_2) =
(c) .$$ So
$$ |ac|_{\mathbb{R}} = |I(Q, \fo_{F_\fp} n_1)|_{\mathbb{R}} \cdot |I(Q, \fo_{F_\fp} n_2)|_{\mathbb{R}}
< \frac{|(\mathfrak{ab})^2\mathfrak{d}|_{\mathbb{R}}}{4^{[F:\BQ]}}
$$ and $$|4ac|_{\mathbb{R}} < |(\mathfrak{ab})^2\mathfrak{d}|_{\mathbb{R}} \leq |(\mathfrak{ab})^2\mathfrak{d}'|_{\mathbb{R}}
=|\mathfrak{d}''|_{\mathbb{R}}=|4ac-b^2|_{\mathbb{R}}. $$

Since $Q$ is definite, $4ac-b^2$ is totally positive. Since $b^2$ is
also totally positive, we have
$$|4ac-b^2|_{\mathbb{R}} \leq  |4ac|_{\mathbb{R}} ,$$ a
contradiction.
\end{proof}

\subsection{Fundamental quadratic forms}

\begin{defn}\label{def:fund} Let $d$ be an element of $\mathfrak{o}_\mathfrak{p}$.
\begin{enumerate}
\item In the case of $\mathfrak{p}\nmid 2$, if $d\notin\mathfrak{p}^2$, we call $d$ {\it fundamental}.
\item In the case of $\mathfrak{p}|2$, we call $d$ {\it fundamental} if $d$ is a quadratic residue $\mathrm{mod} \ 4$, and if either of the following two conditions holds:
\begin{quote}
 $\bullet$  $\pi_\mathfrak{p}^2\nmid d$, \\
 $\bullet$  $\pi_\mathfrak{p}^2| d $ and $\frac{d}{\pi_\mathfrak{p}^2}$ is not a quadratic residue modulo $4$.
\end{quote} Here $\pi_\mathfrak{p}$ is a uniformizer of
$\fo_\fp$.
\end{enumerate}
\end{defn}

\begin{lem}\label{lem:crit:fund} Let $d$ be an element of $ \mathfrak{o}_\mathfrak{p}$. Put $E=F_\mathfrak{p}(\sqrt{d})$.
\begin{enumerate}
\item\label{it:degree-1} If $E=F_\mathfrak{p}$, then $d$ is fundamental if and only if $d=u^2$, where $u\in \mathfrak{o}_\mathfrak{p}^\times$.
\item\label{it:degree-2} If $[E:F_\mathfrak{p}]=2$, writing
$\mathfrak{o}_E= \mathfrak{o}_\mathfrak{p}\oplus
\mathfrak{o}_\mathfrak{p} \omega$, and letting $d_\omega$ be the
discriminant for the basis $\{1, \omega\}$, then $d_\omega$ is
fundamental, and $d$ is fundamental if and only if $d=u^2d_\omega$
for some $u\in\mathfrak{o}_\mathfrak{p}^\times$.
\end{enumerate}
\end{lem}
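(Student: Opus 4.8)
The plan is to reduce everything to one fact about an element $\delta\in\fo_\fp\setminus\{0\}$ with $F_\fp(\sqrt\delta)=E$ --- namely, \emph{when $\delta$ is a quadratic residue modulo $4$} --- together with how this property behaves under multiplication by squares. Part (\ref{it:degree-1}) is immediate: write $d=w^2$ with $w\in F_\fp^\times$; since $d\in\fo_\fp$ we have $v_\fp(d)=2v_\fp(w)\ge 0$, so $w\in\fo_\fp$ and $v_\fp(d)$ is even. If $\fp\nmid 2$, then $d$ is fundamental iff $v_\fp(d)\le 1$, i.e. $v_\fp(d)=0$. If $\fp\mid 2$, then $d=w^2$ is automatically a quadratic residue modulo $4$, and when $v_\fp(d)\ge 2$ the element $d/\pi_\fp^2=(w/\pi_\fp)^2$ is again one, so $d$ is not fundamental. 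In either case $d$ is fundamental iff $v_\fp(d)=0$, i.e. iff $d=u^2$ with $u\in\fo_\fp^\times$.

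For part (\ref{it:degree-2}), let $x^2-sx+n$ be the minimal polynomial of $\omega$ over $F_\fp$, with $s=\mathrm{Tr}_{E/F_\fp}(\omega)$ and $n=\mathrm{N}_{E/F_\fp}(\omega)$ in $\fo_\fp$, so that $d_\omega=s^2-4n$ and $\sqrt{d_\omega}=2\omega-s\notin F_\fp$; hence $E=F_\fp(\sqrt{d_\omega})$, and any $\delta$ as above can be written $\delta=t^2d_\omega$ with $t\in F_\fp^\times$. The engine of the proof is the claim: \emph{$\delta$ is a quadratic residue modulo $4$ if and only if $t\in\fo_\fp$}. The ``if'' direction is the identity $\delta=t^2(s^2-4n)=(ts)^2-4t^2n\equiv(ts)^2\pmod 4$, with $ts\in\fo_\fp$. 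For ``only if'', write $\delta=z^2+4w$ with $z,w\in\fo_\fp$; then $\alpha:=(z+\sqrt\delta)/2$ satisfies $\alpha^2-z\alpha-w=0$, so $\alpha\in\fo_E=\fo_\fp\oplus\fo_\fp\omega$, say $\alpha=c_0+c_1\omega$ with $c_0,c_1\in\fo_\fp$; substituting $\omega=(s+\sqrt{d_\omega})/2$ into $\sqrt\delta=2\alpha-z$ gives $\sqrt\delta=(2c_0+c_1s-z)+c_1\sqrt{d_\omega}$, and comparing with $\sqrt\delta=\pm t\sqrt{d_\omega}$ (valid since both sides square into $F_\fp$ and $\{1,\sqrt{d_\omega}\}$ is an $F_\fp$-basis of $E$) forces $t=\pm c_1\in\fo_\fp$.

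Granting the claim, the proof is finished by cases. First $d_\omega\equiv s^2\pmod 4$ is a quadratic residue modulo $4$. For its fundamentality: if $\fp\nmid 2$, then $\fo_\fp\oplus\fo_\fp\omega$ is the maximal order and the ramification is tame, so $v_\fp(d_\omega)\in\{0,1\}$ and $d_\omega\notin\fp^2$; if $\fp\mid 2$ and $E/F_\fp$ is unramified, then $\overline\omega$ generates the residue extension, whose minimal polynomial --- being an irreducible quadratic over a field of characteristic $2$ --- has nonzero linear coefficient, forcing $s\in\fo_\fp^\times$ and hence $v_\fp(d_\omega)=0$; and if $\fp\mid 2$ and $E/F_\fp$ is ramified, then after replacing $\omega$ by $\omega-a$ for a suitable $a\in\fo_\fp$ (which does not change $d_\omega$) we may take $\omega$ to be a uniformizer of $E$, so that $x^2-sx+n$ is Eisenstein, $v_\fp(s^2)$ is even, $v_\fp(4n)$ is odd, and therefore $v_\fp(d_\omega)\ge 2$, while the claim applied to $\delta=d_\omega/\pi_\fp^2$ (whose $t$ is $\pi_\fp^{-1}\notin\fo_\fp$) shows $d_\omega/\pi_\fp^2$ is not a quadratic residue modulo $4$; thus $d_\omega$ is fundamental in every case. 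Finally, for general $d=t^2d_\omega$: if $t\in\fo_\fp^\times$, then $d$ is a quadratic residue modulo $4$ with $v_\fp(d)=v_\fp(d_\omega)$, and a short check in each of the three cases --- invoking the claim again, when $\fp\mid 2$ and $E/F_\fp$ is ramified, to see that $d/\pi_\fp^2=(t/\pi_\fp)^2d_\omega$ is not a quadratic residue modulo $4$ --- shows $d$ is fundamental; conversely, if $d$ is fundamental then $d$ is a quadratic residue modulo $4$, so $t\in\fo_\fp$ by the claim, and if $v_\fp(t)\ge 1$ then $\pi_\fp^2\mid d$ while $d/\pi_\fp^2=(t/\pi_\fp)^2d_\omega$ would be a quadratic residue modulo $4$ (again by the claim, since $t/\pi_\fp\in\fo_\fp$), contradicting fundamentality; hence $v_\fp(t)=0$ and $d=u^2d_\omega$ with $u=t\in\fo_\fp^\times$.

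The step I expect to need genuine care is the ``only if'' half of the central claim --- that an element of $\fo_\fp$ which generates $E$ and is a square modulo $4$ must differ from $d_\omega$ by the square of an \emph{integral} factor. This is precisely where the arithmetic of $\fo_E$ enters, through the integrality of $(z+\sqrt\delta)/2$, and where the hypothesis $\fp\mid 2$ turns ``quadratic residue modulo $4$'' into a substantive restriction rather than a vacuous one; the surrounding ramification bookkeeping (tame versus wild, ramified versus unramified) is routine by comparison.
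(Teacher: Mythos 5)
Your proof is correct. For part (\ref{it:degree-1}) your argument matches the paper's (and is actually slightly more complete, since the paper only explicitly argues the ``only if'' direction). For part (\ref{it:degree-2}) the paper punts to Zemkov\'a's lemmas \cite[Lemma 2.2, Lemma 2.3]{ZK 21}, whereas you give a self-contained proof. Your approach hinges on the clean ``central claim'' --- for $\delta=t^2d_\omega\in\fo_\fp$ generating $E$, $\delta$ is a quadratic residue mod $4$ iff $t\in\fo_\fp$ --- which you establish by the integrality of $(z+\sqrt\delta)/2$ in $\fo_E=\fo_\fp\oplus\fo_\fp\omega$ and a coordinate comparison in the $F_\fp$-basis $\{1,\sqrt{d_\omega}\}$. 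The claim then does double duty: it controls both the ``QR mod $4$'' condition and the ``$d/\pi_\fp^2$ not a QR mod $4$'' condition in Definition~\ref{def:fund}, reducing everything to a valuation computation of $t$, while the fundamentality of $d_\omega$ itself follows from the three standard ramification cases (tame, $\fp\mid 2$ unramified via the nonzero trace of a residue generator, $\fp\mid 2$ ramified via the Eisenstein parity argument). This is a more explicit and self-contained route than citing the reference, and it buys a transparent link between the discriminant normal form and the maximal-order hypothesis; the trade-off is that the ramification bookkeeping now has to be done in-line. One cosmetic remark: in the converse of the general case you say ``if $d$ is fundamental then $d$ is a QR mod $4$,'' which when $\fp\nmid 2$ is vacuous (everything is a square mod the unit $4$) rather than a consequence of the definition, and the real input there is just that $\delta\in\fo_\fp$ and $v_\fp(d_\omega)\le 1$ already force $t\in\fo_\fp$; it would help to flag this so the reader does not look for a QR hypothesis in Definition~\ref{def:fund}(a).
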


\begin{proof} If $E=F_\mathfrak{p}$, then $d=u^2$ for some $u\in \mathfrak{o}_\mathfrak{p}$.
When $\mathfrak{p}\nmid 2$, by definition, $d\notin \mathfrak{p}^2$
and so $u\notin \mathfrak{p}$. When $\mathfrak{p}| 2$, and if $u\in
\mathfrak{p}$, then $d\in \mathfrak{p}^2$. By definition
$\frac{d}{\pi_\mathfrak{p}^2}=(\frac{u}{\pi_\mathfrak{p}})^2$ is not
a quadratic residue modulo $4$, a contraction. This proves
(\ref{it:degree-1}).

The argument for (\ref{it:degree-2}) is similar to the proofs of
\cite[Lemma 2.2, Lemma 2.3]{ZK 21}.
\end{proof}

\begin{cor}\label{cor: prime power in discriminant }
	Let $K/F$ be as usual. For any prime $\mathfrak{p}\mid\mathfrak{d}_{K/F}$, let $e_{\mathfrak{p}}$ be its ramification index over $\mathbb{Q}$. Then $v_{\mathfrak{p}}(\mathfrak{d}_{K/F})=1$ if $\mathfrak{p}\nmid2$, and $v_{\mathfrak{p}}(\mathfrak{d}_{K/F})\le 2e_{\mathfrak{p}}+1$ if $\mathfrak{p}\mid2$. 
\end{cor}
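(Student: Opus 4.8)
The plan is to reduce to a purely local statement at $\mathfrak{p}$ and then read off the valuation from Definition~\ref{def:fund} via Lemma~\ref{lem:crit:fund}. Since $[K:F]=2$, a prime $\mathfrak{p}\mid\mathfrak{d}_{K/F}$ must ramify in $K$, so there is a unique prime $\mathfrak{P}$ of $K$ above $\mathfrak{p}$, with $e(\mathfrak{P}/\mathfrak{p})=2$ and $f(\mathfrak{P}/\mathfrak{p})=1$. Using the standard relation between the relative discriminant and its local components at the completions $K_{\mathfrak{P}'}/F_{\mathfrak{p}}$, together with the fact that unramified local extensions contribute nothing, one gets $v_{\mathfrak{p}}(\mathfrak{d}_{K/F})=v_{\mathfrak{p}}\bigl(\mathrm{disc}(\mathfrak{o}_{K_{\mathfrak{P}}}/\mathfrak{o}_{\mathfrak{p}})\bigr)$. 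Writing $\mathfrak{o}_{K_{\mathfrak{P}}}=\mathfrak{o}_{\mathfrak{p}}\oplus\mathfrak{o}_{\mathfrak{p}}\omega$ (possible since $\mathfrak{o}_{\mathfrak{p}}$ is a discrete valuation ring) and letting $d_\omega$ be the discriminant of the basis $\{1,\omega\}$, we have $K_{\mathfrak{P}}=F_{\mathfrak{p}}(\sqrt{d_\omega})$ and $v_{\mathfrak{p}}(\mathfrak{d}_{K/F})=v_{\mathfrak{p}}(d_\omega)$. As $[K_{\mathfrak{P}}:F_{\mathfrak{p}}]=2$, Lemma~\ref{lem:crit:fund}(\ref{it:degree-2}) tells us that $d_\omega$ is fundamental in the sense of Definition~\ref{def:fund}, so it remains only to bound $v_{\mathfrak{p}}$ of a fundamental element that is not a square.

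For $\mathfrak{p}\nmid2$, part (a) of Definition~\ref{def:fund} says that ``fundamental'' means $d_\omega\notin\mathfrak{p}^2$, hence $v_{\mathfrak{p}}(d_\omega)\le1$; and $v_{\mathfrak{p}}(d_\omega)=0$ cannot occur, for then $d_\omega$ would be a unit and $F_{\mathfrak{p}}(\sqrt{d_\omega})/F_{\mathfrak{p}}$ unramified, contradicting ramification at $\mathfrak{p}$. Thus $v_{\mathfrak{p}}(d_\omega)=1$. For $\mathfrak{p}\mid2$, first note $v_{\mathfrak{p}}(4)=2\,v_{\mathfrak{p}}(2)=2e_{\mathfrak{p}}$. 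The key observation is that every element of $4\mathfrak{o}_{\mathfrak{p}}$ is $\equiv0^2\pmod4$, hence is a quadratic residue modulo $4$; contrapositively, any $x\in\mathfrak{o}_{\mathfrak{p}}$ that is \emph{not} a quadratic residue modulo $4$ satisfies $v_{\mathfrak{p}}(x)\le 2e_{\mathfrak{p}}-1$. Now invoke part (b) of Definition~\ref{def:fund} for $d_\omega$: if $\pi_{\mathfrak{p}}^2\nmid d_\omega$ then $v_{\mathfrak{p}}(d_\omega)\le1\le2e_{\mathfrak{p}}+1$; and if $\pi_{\mathfrak{p}}^2\mid d_\omega$, then $d_\omega/\pi_{\mathfrak{p}}^2$ is not a quadratic residue modulo $4$, so $v_{\mathfrak{p}}(d_\omega/\pi_{\mathfrak{p}}^2)\le2e_{\mathfrak{p}}-1$ and therefore $v_{\mathfrak{p}}(d_\omega)\le2e_{\mathfrak{p}}+1$.

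The only genuine difficulty I anticipate is in the case $\mathfrak{p}\mid2$: converting the residue-theoretic definition of ``fundamental'' into a valuation estimate. The observation that anything divisible by $4$ is automatically a square modulo $4$ is exactly what turns the nontrivial clause of Definition~\ref{def:fund}(b) into the inequality $v_{\mathfrak{p}}(d_\omega/\pi_{\mathfrak{p}}^2)\le2e_{\mathfrak{p}}-1$, and it also shows that the bound $2e_{\mathfrak{p}}+1$ is sharp (e.g. $F=\BQ$, $\mathfrak{p}=2$, $K=\BQ(\sqrt{-2})$). The reduction carried out in the first paragraph is routine, but it is precisely where the hypothesis $[K:F]=2$ enters, guaranteeing a single ramified prime $\mathfrak{P}$ above $\mathfrak{p}$ and identifying $v_{\mathfrak{p}}(\mathfrak{d}_{K/F})$ with $v_{\mathfrak{p}}(d_\omega)$.
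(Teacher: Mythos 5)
Your proof is correct and follows the same route as the paper's: reduce to the local completion $K_{\mathfrak{P}}/F_{\mathfrak{p}}$, identify $v_{\mathfrak{p}}(\mathfrak{d}_{K/F})$ with $v_{\mathfrak{p}}(d_\omega)$, invoke Lemma~\ref{lem:crit:fund}(\ref{it:degree-2}) to know $d_\omega$ is fundamental, and read off the valuation bound from Definition~\ref{def:fund}. You are in fact slightly more careful than the paper, which skips the first alternative in clause (b) (the case $\pi_{\mathfrak{p}}^2\nmid d_\omega$) and leaves implicit why $v_{\mathfrak{p}}(d_\omega)\neq 0$ when $\mathfrak{p}\nmid 2$.
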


\begin{proof}
Let $\mathfrak{P}$ be the prime of $K$ lying over $\mathfrak{p}$ and we write $\mathfrak{o}_{K_\mathfrak{P}}=\mathfrak{o}_{F_\mathfrak{p}}\oplus\mathfrak{o}_{F_\mathfrak{p}}w$.
Then $K_{\mathfrak{P}}=F_{\mathfrak{p}}(\sqrt{d_{w}})$.  By Lemma \ref{lem:crit:fund} (\ref{it:degree-2}), $d_{w}$ is fundamental and generates
$\mathfrak{d}_{K_{\mathfrak{P}}/F_{\mathfrak{p}}}$. Thus when $\mathfrak{p}\nmid2$, we have $\mathfrak{p}\parallel d_{\omega}$ . If $\mathfrak{p}\mid2$,
let $\pi_{\mathfrak{p}}$ be a uniformizer of $\mathfrak{o}_{F_\mathfrak{p}}$. Then $d_{w}/\pi_{\mathfrak{p}}^{2}$ is not a quadratic residue modulo 4.
In particular, $4\nmid(d_{w}/\pi_{\mathfrak{p}}^{2})$ i.e $v_{\mathfrak{p}}(d_{w})< v_{\mathfrak{p}}(4)+2=2e_{\mathfrak{p}}+2$.	
\end{proof}

Let $(M,Q)$ be a quadratic form over $\fo$. Let $\mathfrak{n}_Q$ be
the ideal of $\mathfrak{o}$ generated by $\{q_Q(x):x\in M\}$.

\begin{defn}\label{defn:fund-quad} We call a quadratic form $(M,Q)$ over $\fo$ {\it fundamental}, if for each finite place $\mathfrak{p}$ of $F$,
$\frac{d_{Q_\mathfrak{p}}}{n^2_{Q_\mathfrak{p}}}$ is fundamental,
where $n_{Q_\mathfrak{p}}$ is a generator of
$\mathfrak{n}_{Q_\mathfrak{p}}$. Obviously, whether $(M,Q)$ is
fundamental or not does not depend on the choices of
$n_{Q_\mathfrak{p}}$.
\end{defn}

\begin{prop} \label{prop:fund} $(M,Q)$ is fundamental  if and only if $\mathfrak{d}_{F(Q)/F}=\mathfrak{d}_Q\mathfrak{n}_Q^{-2}$.
\end{prop}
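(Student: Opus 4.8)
The plan is to verify the claimed equality of ideals one finite place at a time, and at each place to invoke the local classification of fundamental elements in Lemma~\ref{lem:crit:fund}. Fix a finite place $\mathfrak{p}$ of $F$. Since $M_{\mathfrak{p}}$ is free of rank $2$ over $\mathfrak{o}_{\mathfrak{p}}$, choose a basis and write $q_Q$ in the shape $ax^2+bxy+cy^2$ with $a,b,c\in F_{\mathfrak{p}}$, so that $d_{Q_{\mathfrak{p}}}=b^2-4ac$. First I would record the routine localization facts: the fractional ideal $\mathfrak{n}_{Q_{\mathfrak{p}}}:=\mathfrak{n}_Q\mathfrak{o}_{\mathfrak{p}}$ is generated by $\{q_Q(x):x\in M_{\mathfrak{p}}\}$ and equals $(a,b,c)$ (using $2Q(m,n)=q_Q(m+n)-q_Q(m)-q_Q(n)$ to move between $M$ and $M_{\mathfrak{p}}$), and $\mathfrak{d}_Q\mathfrak{o}_{\mathfrak{p}}=(d_{Q_{\mathfrak{p}}})$ by the very definition of $\mathrm{ind}_{\mathfrak{p}}Q$. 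Since $\mathfrak{d}_Q$ is only defined when $d_{Q_F}\neq0$, I assume this (it is automatic when $Q$ is definite, by Lemma~\ref{lem:positive}), so that the localization of $F(Q)$ at $\mathfrak{p}$ is either $F_{\mathfrak{p}}$ or a quadratic field extension.

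Next set $\delta:=d_{Q_{\mathfrak{p}}}/n_{Q_{\mathfrak{p}}}^2$, where $n_{Q_{\mathfrak{p}}}$ generates $\mathfrak{n}_{Q_{\mathfrak{p}}}$; this is the element of Definition~\ref{defn:fund-quad}. Because $a,b,c\in\mathfrak{n}_{Q_{\mathfrak{p}}}$ one gets $v_{\mathfrak{p}}(d_{Q_{\mathfrak{p}}})\geq 2v_{\mathfrak{p}}(n_{Q_{\mathfrak{p}}})$, hence $\delta\in\mathfrak{o}_{\mathfrak{p}}$ and Lemma~\ref{lem:crit:fund} applies to $\delta$. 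Comparing a global $F$-basis of $M_F$ with the chosen $\mathfrak{o}_{\mathfrak{p}}$-basis of $M_{\mathfrak{p}}$ (both are $F_{\mathfrak{p}}$-bases of $M_F\otimes_F F_{\mathfrak{p}}$), Lemma~\ref{lem:basis-to-basis} over $F_{\mathfrak{p}}$ shows that $d_{Q_{\mathfrak{p}}}$ and $d_{Q_F}$ differ by a square in $F_{\mathfrak{p}}^\times$, hence so do $\delta$ and $d_{Q_F}$. Therefore $E_{\mathfrak{p}}:=F_{\mathfrak{p}}(\sqrt{\delta})=F_{\mathfrak{p}}(\sqrt{d_{Q_F}})$, and the usual local--global relation for the relative discriminant of a quadratic extension gives $v_{\mathfrak{p}}(\mathfrak{d}_{F(Q)/F})=v_{\mathfrak{p}}(\mathfrak{d}_{E_{\mathfrak{p}}/F_{\mathfrak{p}}})$ (read as $0$ when $E_{\mathfrak{p}}=F_{\mathfrak{p}}$). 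On the other side, $v_{\mathfrak{p}}(\mathfrak{d}_Q\mathfrak{n}_Q^{-2})=v_{\mathfrak{p}}(d_{Q_{\mathfrak{p}}})-2v_{\mathfrak{p}}(n_{Q_{\mathfrak{p}}})=v_{\mathfrak{p}}(\delta)$. Since ``$(M,Q)$ is fundamental'' means ``$\delta$ is fundamental for every $\mathfrak{p}$'' and ``$\mathfrak{d}_{F(Q)/F}=\mathfrak{d}_Q\mathfrak{n}_Q^{-2}$'' means ``$v_{\mathfrak{p}}(\mathfrak{d}_{E_{\mathfrak{p}}/F_{\mathfrak{p}}})=v_{\mathfrak{p}}(\delta)$ for every $\mathfrak{p}$'', it suffices to prove, place by place,
$$\delta\ \text{is fundamental}\quad\Longleftrightarrow\quad v_{\mathfrak{p}}(\delta)=v_{\mathfrak{p}}(\mathfrak{d}_{E_{\mathfrak{p}}/F_{\mathfrak{p}}}).$$

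For this equivalence I would split according to Lemma~\ref{lem:crit:fund}. If $E_{\mathfrak{p}}=F_{\mathfrak{p}}$, then $d_{Q_F}$, hence $\delta$, lies in $(F_{\mathfrak{p}}^\times)^2$ and $v_{\mathfrak{p}}(\mathfrak{d}_{E_{\mathfrak{p}}/F_{\mathfrak{p}}})=0$; by Lemma~\ref{lem:crit:fund}(\ref{it:degree-1}), $\delta$ is fundamental iff $\delta$ is a unit square, i.e.\ iff $v_{\mathfrak{p}}(\delta)=0$, which is exactly the right-hand side. If $[E_{\mathfrak{p}}:F_{\mathfrak{p}}]=2$, write $\mathfrak{o}_{E_{\mathfrak{p}}}=\mathfrak{o}_{\mathfrak{p}}\oplus\mathfrak{o}_{\mathfrak{p}}\omega$ and let $d_\omega$ be the discriminant attached to $\{1,\omega\}$; since $\{1,\omega\}$ is an integral basis, $d_\omega$ generates $\mathfrak{d}_{E_{\mathfrak{p}}/F_{\mathfrak{p}}}$ (standard discriminant computation). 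By Lemma~\ref{lem:crit:fund}(\ref{it:degree-2}), $\delta$ is fundamental iff $\delta=u^2d_\omega$ with $u\in\mathfrak{o}_{\mathfrak{p}}^\times$; but $\delta$ and $d_\omega$ cut out the same extension $E_{\mathfrak{p}}$, so $\delta=\lambda^2d_\omega$ for some $\lambda\in F_{\mathfrak{p}}^\times$, and $\lambda$ can be chosen a unit precisely when $v_{\mathfrak{p}}(\delta)=v_{\mathfrak{p}}(d_\omega)=v_{\mathfrak{p}}(\mathfrak{d}_{E_{\mathfrak{p}}/F_{\mathfrak{p}}})$. In both cases the displayed equivalence holds, and the proposition follows.

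The heart of the argument is really Lemma~\ref{lem:crit:fund}, which is already available; what remains is bookkeeping. The two points I would be most careful about are (i) the identification $\mathfrak{n}_{Q_{\mathfrak{p}}}=\mathfrak{n}_Q\mathfrak{o}_{\mathfrak{p}}$, so that the locally defined $\delta$ really is the quantity $d_{Q_{\mathfrak{p}}}/n_{Q_{\mathfrak{p}}}^2$ appearing in Definition~\ref{defn:fund-quad}, and (ii) the fact that $d_\omega$ generates the relative discriminant $\mathfrak{d}_{E_{\mathfrak{p}}/F_{\mathfrak{p}}}$ even when $\mathfrak{p}\mid2$ (equivalently, that the discriminant of an integral basis computes the relative discriminant of a ramified quadratic extension of a $2$-adic field). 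Both are standard but should be stated explicitly; everything else is the manipulation of $v_{\mathfrak{p}}$ carried out above.
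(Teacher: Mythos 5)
Your proof is correct and takes essentially the same approach as the paper's: work place by place, reduce to comparing $v_{\mathfrak{p}}(d_{Q_\mathfrak{p}}/n_{Q_\mathfrak{p}}^2)$ with $v_{\mathfrak{p}}(\mathfrak{d}_{F(Q)/F})$, and invoke Lemma~\ref{lem:crit:fund} with the same case split on whether $\mathfrak{p}$ splits in $F(Q)$. You are a bit more explicit than the paper about the two bookkeeping points you flag at the end (identifying $\mathfrak{n}_{Q_{\mathfrak{p}}}$ with $\mathfrak{n}_Q\mathfrak{o}_{\mathfrak{p}}$, and recognizing $d_\omega$ as a generator of the local relative discriminant), which the paper uses implicitly.
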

\begin{proof} The ``only if'' part follows from Lemma \ref{lem:crit:fund}.

Next we prove the ``if'' part. For each finite place $\mathfrak{p}$
of $F$, let $n_{Q_\mathfrak{p}}$ be a generator of
$\mathfrak{n}_{Q_\mathfrak{p}}$. The relation
$\mathfrak{d}_{F(Q)/F}=\mathfrak{d}_Q\mathfrak{n}_Q^{-2}$ says that
\begin{equation}\label{eq:fund}
v_\mathfrak{p}\left(\frac{d_{Q_\mathfrak{p}}}{n_{Q_\mathfrak{p}}^2}\right)=v_\mathfrak{p}(\mathfrak{d}_{F(Q)/F}).
\end{equation}

When $\mathfrak{p}$ is split in $K=F(Q)$, $d_{Q_\mathfrak{p}}$ is a
square, and thus $\frac{ d_{Q_\mathfrak{p}} }{ n_{Q_\mathfrak{p}}^2
}$ is a square of a unit of $\mathfrak{o}_\fp$. When $\mathfrak{p}$
is inert or ramified in $K$, let $\mathfrak{P}$ be the place of
$K$ above $\mathfrak{p}$. Let $1, \omega$ be a basis of
$\mathfrak{o}_{K_\mathfrak{P}}$ above $\fo_\mathfrak{p}$. By
(\ref{eq:fund}) there exists a unit $u$ of $\fo$ such that
$d_{Q_\mathfrak{p}} = u n_{Q_\mathfrak{p}}^2 d_\omega$. On the other
hand, since $F(\sqrt{d_{Q\mathfrak{p}}})=F(\sqrt{d_\omega})$, we can
write $$\sqrt{d_{Q\mathfrak{p}}}=x+y\sqrt{d_\omega}\hskip 20pt
x,y\in F, \ y\neq 0.$$  From
$d_{Q\mathfrak{p}}=(x+y\sqrt{d_\omega})^2=x^2+y^2d_\omega +2xy
\sqrt{d_\omega} \in F$ we obtain $x=0$ and thus $u=(\frac{y}{n_{Q_{\mathfrak{p}}}})^{2}$. Now by
Lemma \ref{lem:crit:fund} (\ref{it:degree-2}),
$\frac{d_{Q_\mathfrak{p}}}{n^2_{Q_\mathfrak{p}}}$ is fundamental.
\end{proof}

\subsection{Equivalence classes of fundamental quadratic forms}

Let $K$ be a totally imaginary quadratic extension of $F$. We use
$\mathfrak{Q}_{K/F}$ to denote the set of fundamental definite
quadratic forms $(M,Q)$ over $\mathfrak{o}_{F}$ with $F(Q)=K$.

\begin{defn} Two quadratic forms $(M_1, Q_1)$ and $(M_2, Q_2)$ are
said to be {\it weakly equivalent} if there exists an
$\mathfrak{o}$-module isomorphism $\phi: M_1\rightarrow M_2$ and an
element $u\in F^\times$ such that $Q_1= u \cdot Q_2\circ (
\phi\times \phi)$. We write $(M_1, Q_1)\sim (M_2, Q_2)$ or simply
$Q_1\sim Q_2$. If further $u=1$, then $(M_1, Q_1)$ and $(M_2, Q_2)$
are (strongly) equivalent.
\end{defn}

If $(M_1,Q_1)\sim (M_2,Q_2)$, then $F(Q_1)=F(Q_2)$; $Q_1$ is
fundamental if and only if so is $Q_2$; $Q_1$ is definite if and
only if so is $Q_2$. Hence, if $(M_1,Q_1)\in \mathfrak{D}_{K/F}$,
and if $(M_1,Q_1)\sim (M_2,Q_2)$, then $(M_2,Q_2)\in
\mathfrak{Q}_{K/F}$. Let $[\mathfrak{Q}_{K/F}]$ (resp. $[\mathfrak{Q}_{K/F}]_{0}$) be the quotient set
of $\mathfrak{Q}_{K/F}$ modulo $\sim$ (resp. strong equivalence).

To each  ideal $\mathfrak{A}$ of $\mathfrak{o}_K$ we associate a
quadratic form $(\mathfrak{A}, Q_\mathfrak{A})$ by
$$ Q_\mathfrak{A}(z,w)= \frac{\mathrm{Tr}_{K/F}(\bar{z}w)}{2}
.$$ Then $q_{Q_\mathfrak{A}}=N_{K/F}(\cdot)$ and
$\mathfrak{n}_{Q_\mathfrak{A}}=N_{K/F}(\mathfrak{A})$. Note that for
each $\mathfrak{A}$, $(Q_\mathfrak{A})_F$ is induced by the quadratic
form $  Q_{K/F}$ on $K$ defined by the same formula.


Let $\mathfrak{d}_{K/F}$ be the relative different of $K$ over $F$.

\begin{lem}\label{lem:relation-d} If $\mathfrak{A}$ is a fractional ideal of $K$, then
$\mathfrak{d}_{Q_{\mathfrak{A}}}=\mathfrak{d}_{K/F}
N_{K/F}(\mathfrak{A})^{2}=\mathfrak{d}_{K/F}\mathfrak{n}_{Q_\mathfrak{A}}^{2}$.
\end{lem}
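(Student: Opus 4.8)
The plan is to verify the identity \(\mathfrak{d}_{Q_{\mathfrak{A}}}=\mathfrak{d}_{K/F}N_{K/F}(\mathfrak{A})^2\) locally at each finite place \(\mathfrak{p}\) of \(F\), since both sides are ideals of \(\mathfrak{o}_F\) and equality of ideals can be checked place by place. Fix \(\mathfrak{p}\) and pass to the completions; write \(\mathfrak{A}_{\mathfrak{p}}=\mathfrak{A}\otimes_{\mathfrak{o}_F}\mathfrak{o}_{F_{\mathfrak{p}}}\), which is a fractional ideal of \(\mathfrak{o}_{K_{\mathfrak{p}}}:=\mathfrak{o}_K\otimes_{\mathfrak{o}_F}\mathfrak{o}_{F_{\mathfrak{p}}}\). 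Because \(\mathfrak{A}_{\mathfrak{p}}\) is free of rank \(2\) over \(\mathfrak{o}_{F_{\mathfrak{p}}}\) (being an invertible \(\mathfrak{o}_{K_{\mathfrak{p}}}\)-module, and \(\mathfrak{o}_{K_{\mathfrak{p}}}\) itself being free of rank \(2\) over the local ring \(\mathfrak{o}_{F_{\mathfrak{p}}}\)), the quantity \(d_{(Q_{\mathfrak{A}})_{\mathfrak{p}}}\) is defined via an explicit basis, and \(\mathrm{ind}_{\mathfrak{p}}Q_{\mathfrak{A}}=v_{\mathfrak{p}}(d_{(Q_{\mathfrak{A}})_{\mathfrak{p}}})\).

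The key computational step is to choose a convenient \(\mathfrak{o}_{F_{\mathfrak{p}}}\)-basis of \(\mathfrak{A}_{\mathfrak{p}}\) and compute the Gram determinant of \(Q_{\mathfrak{A}}(z,w)=\tfrac12\mathrm{Tr}_{K/F}(\bar z w)\). First I would treat \(\mathfrak{A}_{\mathfrak{p}}=\mathfrak{o}_{K_{\mathfrak{p}}}\) itself: picking a basis \(\{1,\omega\}\) with \(\mathfrak{o}_{K_{\mathfrak{p}}}=\mathfrak{o}_{F_{\mathfrak{p}}}\oplus\mathfrak{o}_{F_{\mathfrak{p}}}\omega\), the Gram matrix of \(\tfrac12\mathrm{Tr}_{K/F}(\bar z w)\) in this basis has determinant equal (up to a sign that disappears after taking \(v_{\mathfrak{p}}\), or more precisely exactly) to \(-\tfrac14\,\mathrm{disc}(1,\omega)\) expressed through the trace form; a short linear-algebra identity relating \(\det\bigl(\tfrac12\mathrm{Tr}(\bar e_i e_j)\bigr)\) to \(\mathrm{disc}_{K/F}(e_1,e_2)\) gives \(v_{\mathfrak{p}}(d_{(Q_{\mathfrak{o}_{K_{\mathfrak{p}}}})})=v_{\mathfrak{p}}(\mathfrak{d}_{K/F})\), using that \(\{1,\omega\}\) is an integral basis so the discriminant of this basis generates \(\mathfrak{d}_{K_{\mathfrak{p}}/F_{\mathfrak{p}}}\). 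This handles the case \(\mathfrak{A}=\mathfrak{o}_K\), i.e. \(\mathfrak{d}_{Q_{\mathfrak{o}_K}}=\mathfrak{d}_{K/F}\).

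For a general fractional ideal, I would reduce to the previous case via Lemma~\ref{lem:quad-1} together with the structure of invertible modules over \(\mathfrak{o}_{K_{\mathfrak{p}}}\). Locally \(\mathfrak{A}_{\mathfrak{p}}\) need not be free over \(\mathfrak{o}_{K_{\mathfrak{p}}}\) when \(\mathfrak{p}\) splits and \(\mathfrak{o}_{K_{\mathfrak{p}}}\) is a product of two local rings, but in every case \(\mathfrak{A}_{\mathfrak{p}}\) is of the form \(\mathfrak{a}_1\oplus\mathfrak{a}_2\) (in the split case) or \(\gamma\,\mathfrak{o}_{K_{\mathfrak{p}}}\) for some \(\gamma\in K_{\mathfrak{p}}^{\times}\) (in the inert/ramified case), and in either situation one computes directly that scaling multiplies the Gram determinant by \(N_{K/F}(\mathfrak{A})_{\mathfrak{p}}^2\): concretely \(q_{Q_{\mathfrak{A}}}(\gamma z)=N_{K/F}(\gamma)q_{Q_{\mathfrak{A}}}(z)\), so replacing a basis \(\{x,y\}\) by \(\{\gamma x,\gamma y\}\) multiplies the Gram matrix by \(\mathrm{diag}(N(\gamma),N(\gamma))\) up to the off-diagonal (which transforms the same way), hence the determinant by \(N_{K/F}(\gamma)^2\). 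Taking \(v_{\mathfrak{p}}\) and summing over \(\mathfrak{p}\) then yields \(\mathfrak{d}_{Q_{\mathfrak{A}}}=\mathfrak{d}_{K/F}N_{K/F}(\mathfrak{A})^2\), and since \(\mathfrak{n}_{Q_{\mathfrak{A}}}=N_{K/F}(\mathfrak{A})\) was already observed, the second equality follows. The main obstacle is bookkeeping in the split case, where \(\mathfrak{o}_{K_{\mathfrak{p}}}\) is not a domain and one must be slightly careful that \(d_{(Q_{\mathfrak{A}})_{\mathfrak{p}}}\) — defined purely in terms of the rank-\(2\) free \(\mathfrak{o}_{F_{\mathfrak{p}}}\)-module structure — still transforms correctly; but this is handled by writing everything in an explicit \(\mathfrak{o}_{F_{\mathfrak{p}}}\)-basis and invoking Lemma~\ref{lem:basis-to-basis} for independence of the choice.
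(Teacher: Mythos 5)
Your proposal is correct and takes essentially the same approach as the paper: both arguments localize at each finite place $\mathfrak{p}$, use that $\mathfrak{A}_\mathfrak{p}$ is free of rank $2$ over $\mathfrak{o}_{F_\mathfrak{p}}$, and compute the local discriminant case by case (split vs.\ inert/ramified), with the scaling identity $q_{Q_\mathfrak{A}}(\gamma z)=N_{K/F}(\gamma)\,q_{Q_\mathfrak{A}}(z)$ accounting for the factor $N_{K/F}(\mathfrak{A})^2$. The only (minor, organizational) difference is that you first establish the base case $\mathfrak{A}=\mathfrak{o}_K$ and then reduce the general case by scaling, whereas the paper directly picks a basis $\{\alpha,\beta\}$ of $\mathfrak{A}_\mathfrak{p}$ and computes $d_{Q_\mathfrak{p}}$ in one step.
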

\begin{proof} We write $Q$ for $Q_\mathfrak{A}$ and $d_{K_{\mathfrak{P}}/F_{\mathfrak{p}}}$ for a generator of $\mathfrak{d}_{K_{\mathfrak{P}}/F_{\mathfrak{p}}}$. We only need to show that $$(d_{Q_{\mathfrak{p}}})=\prod_{\mathfrak{P}/\mathfrak{p}}(d_{K_{\mathfrak{P}}/F_{\mathfrak{p}}})N_{K_{\mathfrak{P}}/F_{\mathfrak{p}}}(\mathfrak{A}\mathfrak{o}_{K_{\mathfrak{P}}})^{2}$$ for each prime ideal $\mathfrak{p}$ of $F$. $\mathfrak{A}\mathfrak{o}_{\mathfrak{p}}=\mathfrak{o}_{\mathfrak{p}}\alpha\oplus\beta\mathfrak{o}_{\mathfrak{p}}$ as $\mathfrak{o}_{\mathfrak{p}}$-modules.

If $\mathfrak{p}$ splits in $K$, then $d_{Q_{\mathfrak{p}}}=(\alpha\beta)^{2}=\alpha^{2}\beta^{2}=\prod_{\mathfrak{P}/\mathfrak{p}}N_{K_{\mathfrak{P}}/F_{\mathfrak{p}}}(\mathfrak{A}\mathfrak{o}_{K_{\mathfrak{P}}})^{2}$. If $\mathfrak{p}$ is either inert or ramified in $K$, then $\mathfrak{A}\mathfrak{o}_{K_{\mathfrak{P}}}=\mathfrak{A}\mathfrak{o}_{F_\mathfrak{p}}=\mathfrak{o}_{F_\mathfrak{p}}\alpha\oplus\mathfrak{o}_{F_\mathfrak{p}}\beta$. Let $\sigma$ be the generator of ${\rm Gal}(K/F)$. We have $$(d_{Q_{\mathfrak{p}}})=(\alpha\sigma(\beta)-\beta\sigma(\alpha))^{2}\mathfrak{o}_{F_{\mathfrak{p}}}=(d_{K_{\mathfrak{P}}/F_{\mathfrak{p}}})N_{K_{\mathfrak{P}}/F_{\mathfrak{p}}}(\mathfrak{A}\mathfrak{o}_{K_{\mathfrak{P}}})^{2}.$$ This proves the lemma.
\end{proof}

Recall that $\Cl_K$ is the group of ideal classes of $K$. For a fractional
ideal $\mathfrak{A}$ of $K$, we write $[\mathfrak{A}]$ for its
class. There exists a complex conjugate action on $\Cl_K$. Indeed,
if $[\mathfrak{A}]=[\mathfrak{B}]$, then
$[\bar{\mathfrak{A}}]=[\bar{\mathfrak{B}}]$.

\begin{prop} \label{lem:ideal-to-form}
\begin{enumerate}
\item \label{it:idea-form-a} $Q_\mathfrak{A}$ is positive definite.
\item \label{it:idea-form-b} $F(Q_\mathfrak{A})=K$.
\item \label{it:idea-form-e} $(\mathfrak{A},Q_\mathfrak{A})$ is
fundamental.
\item \label{it:idea-form-c} If $\mathfrak{A}$ and $\mathfrak{B}$ are two fractional ideals
of $K$ in the same ideal class, then
$$(\mathfrak{A},Q_{\mathfrak{A}})\sim (\mathfrak{B},
Q_{\mathfrak{B}}).$$
\item \label{it:idea-form-d} $(\mathfrak{A},Q_{\mathfrak{A}})\sim
(\bar{\mathfrak{A}}, Q_{\bar{\mathfrak{A}}})$.
\item \label{it:ideal-equivalent-class} If $(\mathfrak{A}, Q_\mathfrak{A})\sim (\mathfrak{B},
Q_\mathfrak{B})$, then either $[\mathfrak{A}]=[\mathfrak{B}]$ or
$[\mathfrak{A}]=[\bar{\mathfrak{B}}]$.
\end{enumerate}
\end{prop}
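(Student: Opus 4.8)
The plan is to handle parts (\ref{it:idea-form-a})--(\ref{it:idea-form-d}) by direct computation and to concentrate effort on (\ref{it:ideal-equivalent-class}). Throughout I use the identities $q_{Q_\mathfrak{A}}=N_{K/F}$ and $\mathfrak{n}_{Q_\mathfrak{A}}=N_{K/F}(\mathfrak{A})$ already recorded before Lemma \ref{lem:relation-d}. For (\ref{it:idea-form-a}): since $K$ is totally imaginary over the totally real field $F$, for every real place $\iota$ of $F$ the completion $K\otimes_{F,\iota}\mathbb{R}$ is $\mathbb{C}$ and $\iota(N_{K/F}(z))$ is the squared complex modulus of the image of $z$, hence $>0$ for $z\ne 0$; so $q_{Q_\mathfrak{A}}$ is totally positive on nonzero elements and Lemma \ref{lem:crit}(a) gives positive-definiteness. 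For (\ref{it:idea-form-b}): the $F$-space underlying $(Q_\mathfrak{A})_F$ is $K$, and in an $F$-basis $\{1,\omega\}$ with $K=F(\omega)$ one computes $q_{Q_{K/F}}(x+y\omega)=x^2+\mathrm{Tr}_{K/F}(\omega)\,xy+N_{K/F}(\omega)\,y^2$, so $d_{(Q_\mathfrak{A})_F}=\mathrm{Tr}_{K/F}(\omega)^2-4N_{K/F}(\omega)$ is the discriminant of the minimal polynomial of $\omega$ over $F$; hence $F(Q_\mathfrak{A})=F(\sqrt{d_{(Q_\mathfrak{A})_F}})=F(\omega)=K$.

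Part (\ref{it:idea-form-e}) is then formal: by (\ref{it:idea-form-b}) and Lemma \ref{lem:relation-d}, $\mathfrak{d}_{Q_\mathfrak{A}}=\mathfrak{d}_{K/F}\,\mathfrak{n}_{Q_\mathfrak{A}}^{2}$, i.e.\ $\mathfrak{d}_{F(Q_\mathfrak{A})/F}=\mathfrak{d}_{Q_\mathfrak{A}}\mathfrak{n}_{Q_\mathfrak{A}}^{-2}$, which is fundamentality by Proposition \ref{prop:fund}. For (\ref{it:idea-form-c}): if $\mathfrak{B}=\lambda\mathfrak{A}$ with $\lambda\in K^\times$, then $z\mapsto\lambda z$ is an $\mathfrak{o}$-module isomorphism $\phi\colon\mathfrak{A}\to\mathfrak{B}$ and $Q_\mathfrak{B}(\lambda z,\lambda w)=N_{K/F}(\lambda)\,Q_\mathfrak{A}(z,w)$, so $Q_\mathfrak{A}=N_{K/F}(\lambda)^{-1}\,Q_\mathfrak{B}\circ(\phi\times\phi)$ with $N_{K/F}(\lambda)^{-1}\in F^\times$. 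For (\ref{it:idea-form-d}): complex conjugation $z\mapsto\bar z$ is an $\mathfrak{o}$-module isomorphism $\mathfrak{A}\to\bar{\mathfrak{A}}$, and since the relative trace is Galois-invariant, $Q_{\bar{\mathfrak{A}}}(\bar z,\bar w)=\tfrac12\mathrm{Tr}_{K/F}(z\bar w)=\tfrac12\mathrm{Tr}_{K/F}(\overline{\bar z w})=\tfrac12\mathrm{Tr}_{K/F}(\bar z w)=Q_\mathfrak{A}(z,w)$, which is even a strong equivalence.

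The heart of the matter is (\ref{it:ideal-equivalent-class}). Suppose $\phi\colon\mathfrak{A}\to\mathfrak{B}$ is an $\mathfrak{o}$-module isomorphism and $u\in F^\times$ with $Q_\mathfrak{A}=u\,Q_\mathfrak{B}\circ(\phi\times\phi)$. I would extend $\phi$ to an $F$-linear isomorphism $\phi_F\colon K\to K$; the induced relation on quadratic functions reads $N_{K/F}(z)=u\,N_{K/F}(\phi_F(z))$ for all $z\in K$. Evaluating at $z=1$ gives $u=N_{K/F}(\phi_F(1))^{-1}$, so $\psi(z):=\phi_F(z)/\phi_F(1)$ is an $F$-linear automorphism of $K$ with $\psi(1)=1$ and $N_{K/F}(\psi(z))=N_{K/F}(z)$ for all $z$. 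Polarizing (we are in characteristic $0$), $\psi$ preserves the bilinear form $B(z,w)=\tfrac12\mathrm{Tr}_{K/F}(\bar z w)$; taking $w=1$ and using $\psi(1)=1$ together with $B(z,1)=\tfrac12\mathrm{Tr}_{K/F}(z)$ shows $\mathrm{Tr}_{K/F}(\psi(z))=\mathrm{Tr}_{K/F}(z)$ as well. Hence $\psi(z)$ and $z$ have the same characteristic polynomial over $F$; applied to a generator $\omega$ of $K/F$ this forces $\psi(\omega)\in\{\omega,\bar\omega\}$, so the $F$-linear map $\psi$, being determined by its values on the basis $\{1,\omega\}$, equals either $\mathrm{id}_K$ or the nontrivial element $\sigma\in\mathrm{Gal}(K/F)$. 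Correspondingly $\phi_F$ is multiplication by $\mu:=\phi_F(1)\in K^\times$, or is $z\mapsto\mu\bar z$, so $\mathfrak{B}=\phi_F(\mathfrak{A})$ equals $\mu\mathfrak{A}$ or $\mu\bar{\mathfrak{A}}$; that is, $[\mathfrak{A}]=[\mathfrak{B}]$ or $[\mathfrak{A}]=[\bar{\mathfrak{B}}]$.

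I expect the only genuine obstacle to be that last implication — that an $F$-linear self-map of $K$ which preserves $N_{K/F}$ and fixes $1$ is necessarily $\mathrm{id}_K$ or $\sigma$. The trace-and-norm (characteristic polynomial) argument above is exactly what keeps this elementary and avoids any appeal to the structure of the orthogonal group of a binary norm form; parts (\ref{it:idea-form-a})--(\ref{it:idea-form-d}) are then just bookkeeping with the already-established Lemmas \ref{lem:crit}, \ref{lem:relation-d} and Proposition \ref{prop:fund}.
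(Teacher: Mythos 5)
Your proof is correct, and on parts (\ref{it:idea-form-a})--(\ref{it:idea-form-e}) it coincides with the paper's. For part (\ref{it:ideal-equivalent-class}) the strategy is the same — normalize the $F$-linear extension of $\phi$ so it fixes an element of $F^\times$, observe it preserves the norm form, and conclude it is $\mathrm{id}_K$ or the Galois conjugation $\sigma$ — but the implementation differs in a small way worth recording. The paper replaces $\mathfrak{A}$ by the equivalent ideal $\mathfrak{B}'=(\gamma)\mathfrak{A}$ with $\gamma=\alpha/\phi^{-1}(\alpha)$ for a chosen $\alpha\in F^\times\cap\mathfrak{B}$, obtains a strong equivalence $\phi'\colon\mathfrak{B}'\to\mathfrak{B}$ fixing $\alpha$, and then works in the orthogonal basis $\{\alpha,\beta\}$ (with $\beta$ purely imaginary) to show $\phi'^{-1}(\beta)=\pm\beta$. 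You instead divide by $\phi_F(1)$ to get a norm- and unit-preserving $F$-linear map $\psi$, and after polarizing to see $\psi$ also preserves traces, invoke the characteristic-polynomial argument to force $\psi(\omega)\in\{\omega,\bar\omega\}$. The two formulations are equivalent and elementary in the same sense; yours is slightly more compact because the trace-and-norm observation absorbs in one step the two computations the paper performs (``fixes $\alpha$'' and ``scales $\beta$ by $\pm1$''), and it also sidesteps the paper's small typographical slip at the end, where the $w=-1$ case should read $\mathfrak{B}=\bar{\mathfrak{B}'}$ rather than $\mathfrak{B}=\mathfrak{B}'$.
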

\begin{proof} Assertion (\ref{it:idea-form-a}) follows from Lemma \ref{lem:crit} and the
obvious fact that for each nonzero $m\in \mathfrak{A}$, $N_{K/F}(m)$
is totally positive.

Let $\sigma$ be the nontrivial element of $\mathrm{Gal}(K/F)$. We
take $m=1$ and take $n$ to be a nonzero element of $\mathfrak{A}$
such that $\sigma(n)=-n$. Then $m$ and $n$ are linearly independent
over $\mathfrak{o}$. We have $$2Q_{\mathfrak{A}}(1,n)=
N_{K/F}(1+n)-N_{K/F}(1)-N_{K/F}(n)= (1+n)(1-n)-1 +n^2=0.$$ Thus
$$Q_{\mathfrak{A}}(1,n)^2-Q_{\mathfrak{A}}(1,1)Q_{\mathfrak{A}}(n,n)=-N_{K/F}(1)N_{K/F}(n)=n^2$$
and
$$ F(Q_\mathfrak{A})=F(\sqrt{Q_{\mathfrak{A}}(1,n)^2-Q_{\mathfrak{A}}(1,1)Q_{\mathfrak{A}}(n,n)})=F(n)=K
.$$ This proves (\ref{it:idea-form-b}).

Assertion (\ref{it:idea-form-e}) follows from Proposition
\ref{prop:fund} and Lemma \ref{lem:relation-d}.

If $\mathfrak{B}=\mathfrak{A}\cdot (\alpha)$ with $\alpha\in
K^\times$, we take $$\phi: \mathfrak{A}\rightarrow \mathfrak{B},
m\mapsto \alpha m ,$$ and put $u=N_{K/F}(\alpha^{-1})$. Then
$Q_\mathfrak{A}=u\cdot Q_\mathfrak{B}\circ \phi$. This shows
(\ref{it:idea-form-c}).

Assertion (\ref{it:idea-form-d}) is obvious.

We prove (\ref{it:ideal-equivalent-class}). Let $\phi:
\mathfrak{A}\rightarrow \mathfrak{B}$ be an isomorphism of $\mathfrak{o}$-modules such that
$Q_\mathfrak{B}\circ \phi = u\cdot Q_\mathfrak{A}$ for some $u\in
F^\times$. Take two nonzero elements $\alpha, \ \beta\in
\mathfrak{B}$ such that $Q_\mathfrak{B}(\alpha, \beta)=0$. We may
even take $\alpha\in F^\times \cap \mathfrak{B}$. Then $\beta$ is purely imaginary. Put
$\gamma=\frac{\alpha}{\phi^{-1}(\alpha)}$. Let $\mathfrak{B}'$ be
the ideal $(\gamma)\cdot \mathfrak{A}$. Then
$\alpha=\gamma\phi^{-1}(\alpha)\in \mathfrak{B}'$. Let $\phi'$ be
the map
$$ \mathfrak{B}' \rightarrow \mathfrak{B} , \hskip 10pt  a' \mapsto \phi ( a' \gamma^{-1}).
$$ Then $Q_{\mathfrak{B}} = v \cdot Q_{\mathfrak{B}'} \circ (\phi'^{-1}\times\phi'^{-1}) $
for some $v\in F^\times$. As
$\phi'(\alpha)=\phi(\phi^{-1}(\alpha))=\alpha$, we have $v=1$. From
$$\mathrm{Tr}_{K/F}(\alpha\phi'^{-1}(\beta))=2Q_{\mathfrak{B}'}(\alpha,\phi'^{-1}(\beta)
)= 2Q_{\mathfrak{B}}(\alpha,
\beta)=0,$$ we see there exists $w\in
F^\times$ such that $\phi'^{-1}(\beta)=w\beta$. From
$$ N_{K/F}(\beta) = Q_{\mathfrak{B}}(\beta, \beta) =
Q_{\mathfrak{B}'}(\phi'^{-1}(\beta), \phi'^{-1}(\beta)) = w^2
N_{K/F}(\beta) $$ we obtain $w=\pm 1$. When $w=1$, $\phi'$ is the
identity and $\mathfrak{B}=\mathfrak{B}'$. When $w=-1$, $\phi'$ is
the complex conjugate and $\mathfrak{B}=\mathfrak{B}'$.
\end{proof}

\begin{prop}\label{lem:form-to-ideal} For each fundamental  definite quadratic form $(M,Q)$ with $F(Q)=K$, there exists
a fractional ideal $\mathfrak{A}$ of $K$ such that $(M,Q)\sim
(\mathfrak{A}, Q_{\mathfrak{A}})$.
\end{prop}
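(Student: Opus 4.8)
The plan is to realize $(M,Q)$, up to weak equivalence, as $(\mathfrak{A},Q_\mathfrak{A})$ for a suitable fractional ideal $\mathfrak{A}$ of $\fo_K$. Since $Q$ is definite, $d_{Q_F}\ne 0$ by Lemma \ref{lem:positive}, so $K=F(Q)=F(\sqrt{d_{Q_F}})$ is a field. The first step is to equip $M_F=M\otimes_\fo F$ with its canonical structure of $K$-vector space: the even Clifford algebra $C_0(M_F,Q_F)$ is a quadratic \'etale $F$-algebra with discriminant class $d_{Q_F}$, hence canonically isomorphic to $K$, and $M_F=C_1(M_F,Q_F)$ is free of rank one over it. Concretely, in an $F$-basis with $q_{Q_F}(xe_1+ye_2)=ax^2+bxy+cy^2$, left Clifford multiplication by $e_1e_2$ acts on $M_F$ by the matrix $\wvec{b}{c}{-a}{0}$, with characteristic polynomial $t^2-bt+ac$; this operator and the scalars span the same subalgebra of $\mathrm{End}_F(M_F)$ for every choice of basis. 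The identity $v\alpha=\bar\alpha v$ for $v\in M_F$, $\alpha\in C_0$ (an identity in $C(M_F,Q_F)$, immediate on basis elements) then yields
$q_{Q_F}(\alpha v)=N_{K/F}(\alpha)\,q_{Q_F}(v)$ for all $\alpha\in K$, $v\in M_F$.

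Next, fix a nonzero $v_0\in M_F$ and set $c_0:=q_{Q_F}(v_0)$, which lies in $F^\times$ because $Q$ is anisotropic. The $\fo$-linear isomorphism $K\xrightarrow{\sim}M_F$, $\alpha\mapsto\alpha v_0$, carries $M$ onto a full-rank $\fo$-lattice $\mathfrak{A}_0\subset K$, and by the identity above it transports $Q$ to the restriction of $c_0\cdot Q_{K/F}$, where $Q_{K/F}(z,w)=\tfrac12\mathrm{Tr}_{K/F}(\bar z w)$. Hence, once we know that $\mathfrak{A}_0$ is actually a fractional ideal $\mathfrak{A}$ of $\fo_K$, we get $(M,Q)\sim(\mathfrak{A},Q_\mathfrak{A})$ — the weak equivalence having scalar $u=c_0$ — and $F(Q_\mathfrak{A})=K$ holds automatically by Proposition \ref{lem:ideal-to-form}. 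So the whole statement reduces to showing that the multiplier ring $\mathcal{O}=\{x\in K:x\mathfrak{A}_0\subseteq\mathfrak{A}_0\}$ equals $\fo_K$.

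As $\mathcal{O}$ is an order of $K$ contained in $\fo_K$, this is a local matter: it suffices to check, for each finite place $\fp$ of $F$, that $M_\fp$ is stable under $\fo_{K,\fp}:=\fo_K\otimes_\fo\fo_\fp$ acting through the base-changed Clifford action. The computation I would carry out is this. Choose $v_0\in M_\fp$ minimizing $v_\fp(q_{Q_\fp}(v_0))$; then $v_0$ is part of an $\fo_\fp$-basis $\{v_0,v_1\}$ of $M_\fp$ (otherwise $v_0=\pi w$ would strictly decrease the valuation), and with respect to it $q_{Q_\fp}(xv_0+yv_1)=ax^2+bxy+cy^2$ with $(a)=\mathfrak{n}_{Q_\fp}$, so $b/a,c/a\in\fo_\fp$. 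The element $\theta:=(e_1e_2)/a$ of $K_\fp$ then satisfies $\theta^2-(b/a)\theta+c/a=0$, hence is integral over $\fo_\fp$, and $\fo_\fp[\theta]$ is an order of $K_\fp$ whose discriminant generates $(d_{Q_\fp}/n_{Q_\fp}^2)$. Since $(M,Q)$ is fundamental, $d_{Q_\fp}/n_{Q_\fp}^2$ is fundamental, so by Lemma \ref{lem:crit:fund} — part (\ref{it:degree-1}) when $\fp$ splits in $K$, part (\ref{it:degree-2}) when $\fp$ is inert or ramified, in each case by a comparison of discriminants — we get $\fo_\fp[\theta]=\fo_{K,\fp}$. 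Finally $\theta v_0=(b/a)v_0-v_1$ and $\theta v_1=(c/a)v_0$, so $\{v_0,\theta v_0\}$ is again an $\fo_\fp$-basis of $M_\fp$, whence $M_\fp=\fo_\fp v_0+\fo_\fp\theta v_0=\fo_\fp[\theta]v_0=\fo_{K,\fp}v_0$; in particular $M_\fp$ is $\fo_{K,\fp}$-stable. Patching over all $\fp$ gives $\mathcal{O}=\fo_K$, which completes the argument.

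The step I expect to be the main obstacle is precisely this local analysis: choosing a ``primitive'' generator of $M_\fp$, correctly matching the discriminant of $\fo_\fp[\theta]$ with the quantity $d_{Q_\fp}/n_{Q_\fp}^2$ of Definition \ref{defn:fund-quad}, and then applying Lemma \ref{lem:crit:fund} uniformly — the touchy cases being the dyadic primes, where ``fundamental'' is the subtle dyadic condition, and the primes split in $K$, where $K_\fp$ is not a field and one is really verifying $\fo_\fp[\theta]\cong\fo_\fp\times\fo_\fp$. By contrast, the Clifford-module setup and the norm identity of the first paragraph are routine, though they deserve care because the operator representing $e_1e_2$ is basis-dependent while the subalgebra it generates is not.
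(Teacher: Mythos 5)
Your proof is correct and takes a genuinely different, more conceptual route than the paper's. The paper invokes the classification of projective $\fo$-modules to write $M=\fo\alpha\oplus\mathfrak{a}\beta$, constructs the ideal $\mathfrak{A}=\fo\cdot 2a\oplus\mathfrak{a}\cdot(-b+\sqrt{b^2-4ac})$ by an explicit formula, verifies $4a\cdot Q=Q_\mathfrak{A}\circ\phi$ by a direct norm computation, and then checks in Lemma \ref{lem:is-an-ideal}, by writing out $\omega\cdot 2a$ and $\omega\cdot\tilde{a}(-b+\sqrt{b^2-4ac})$ in coordinates, that $\mathfrak{A}_\fp$ is $\fo_{K_\fp}$-stable. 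You instead endow $M_F$ with its canonical rank-one module structure over $C_0(M_F,Q_F)\cong K$, so the norm identity $q_{Q_F}(\alpha v)=N_{K/F}(\alpha)q_{Q_F}(v)$ and the scalar $u=c_0$ of the weak equivalence come for free, and the whole proposition collapses to showing the multiplier ring of $\mathfrak{A}_0$ is $\fo_K$; the fundamental hypothesis then enters once, as the discriminant comparison $\mathrm{disc}(\fo_\fp[\theta])=(d_{Q_\fp}/n_{Q_\fp}^2)$, which by Lemma \ref{lem:crit:fund} forces $\fo_\fp[\theta]=\fo_{K,\fp}$ uniformly across the dyadic, split, inert, and ramified cases. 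Both arguments pivot on the same local fact (``fundamental $\Leftrightarrow$ the associated order is maximal''), but yours packages it more cleanly at the modest cost of introducing the even Clifford algebra, and has the bonus of never needing the global structure theorem $M\cong\fo\oplus\mathfrak{a}$: you only use that $M_\fp$ is $\fo_\fp$-free, which is automatic. The one point worth making explicit in a writeup is that the local $v_0$ minimizing $v_\fp(q_{Q_\fp})$ is generally not the global $v_0$ defining $\mathfrak{A}_0$, but this is harmless because $\fo_{K,\fp}$-stability of $M_\fp$ is intrinsic to the $C_0$-module structure and independent of the chosen generator.
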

\begin{proof} By the theory of classification of
$\mathfrak{o}$-modules, there exists an ideal $\mathfrak{a}$ of
$\mathfrak{o}$, and two elements $\alpha,\beta\in M$ such that
$M=\mathfrak{o}\alpha\oplus \mathfrak{a}\beta$. Let $a, b, c$ be
such that
$$ q_Q( x\alpha+ y \beta ) = ax^2+bxy+cy^2 $$ for $ x\in \mathfrak{o} $ and $ y\in
\mathfrak{a} $. We choose a square root $\sqrt{b^{2}-4ac}$ of $b^{2}-4ac$, and put $$ \mathfrak{A}= \mathfrak{o}\cdot 2a \oplus \mathfrak{a}\cdot(
-b+\sqrt{b^2-4ac} ) . $$ By the following Lemma
\ref{lem:is-an-ideal}, $\mathfrak{A}$ is an ideal of
$K$. Let $\phi$ be the map
$$ M\rightarrow \mathfrak{A} , \hskip 10pt  \alpha \mapsto -2a, \ \beta\mapsto -b+\sqrt{b^2-4ac} .
$$ Then $4a\cdot Q=Q_\mathfrak{A}\circ \phi$.
\end{proof}

\begin{lem}\label{lem:is-an-ideal} The above $\mathfrak{A}$  is a fractional ideal
of $\mathfrak{o}_K$.
\end{lem}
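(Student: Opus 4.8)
The plan is to verify the assertion locally and then reduce, at each place, to the classical Gauss construction for a \emph{primitive} form, where the fundamentality hypothesis enters through a congruence modulo $4$.

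First I would record that $\mathfrak{A}$ is a full $\mathfrak{o}$-lattice in $K$. Put $\theta=\sqrt{b^{2}-4ac}$; by Lemma \ref{lem:positive} we have $\theta^{2}=d_{Q_{F}}$ totally negative, so $\theta\notin F$ and $K=F+F\theta$, while $a=q_{Q}(\alpha)\neq 0$ by definiteness. Hence $\mathfrak{A}=\mathfrak{o}\cdot 2a\oplus\mathfrak{a}\cdot(-b+\theta)$ is a finitely generated $\mathfrak{o}$-module with $F\mathfrak{A}=K$, so it is a fractional $\mathfrak{o}_{K}$-ideal exactly when $\mathfrak{o}_{K}\mathfrak{A}=\mathfrak{A}$. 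Since this is a module-theoretic equality it may be checked after tensoring with $\mathfrak{o}_{F_{\mathfrak{p}}}$ at each finite place $\mathfrak{p}$ of $F$; that is, it suffices that $\mathfrak{A}_{\mathfrak{p}}:=\mathfrak{A}\otimes_{\mathfrak{o}}\mathfrak{o}_{F_{\mathfrak{p}}}$ be stable under $\mathfrak{o}_{K,\mathfrak{p}}:=\mathfrak{o}_{K}\otimes_{\mathfrak{o}}\mathfrak{o}_{F_{\mathfrak{p}}}=\prod_{\mathfrak{P}\mid\mathfrak{p}}\mathfrak{o}_{K_{\mathfrak{P}}}$.

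Next I would reduce to a primitive form. Over the DVR $\mathfrak{o}_{F_{\mathfrak{p}}}$ the ideal $\mathfrak{a}$ is principal, so $M_{\mathfrak{p}}$ is free; in a basis $Q_{\mathfrak{p}}$ is $a'x^{2}+b'xy+c'y^{2}$ with $a',b',c'\in F_{\mathfrak{p}}$, and unwinding the localization one checks $\mathfrak{A}_{\mathfrak{p}}=\mathfrak{o}_{F_{\mathfrak{p}}}\cdot 2a'\oplus\mathfrak{o}_{F_{\mathfrak{p}}}\cdot(-b'+\theta')$ with $\theta'^{2}=b'^{2}-4a'c'$, where $\theta'$ is the square root compatible with $\theta$ (the other choice yields the complex‑conjugate module, which is an ideal iff this one is). Let $n$ generate $\mathfrak{n}_{Q_{\mathfrak{p}}}=(a',b',c')$ and set $\tilde a=a'/n$, $\tilde b=b'/n$, $\tilde c=c'/n$; these lie in $\mathfrak{o}_{F_{\mathfrak{p}}}$, generate the unit ideal, and $\mathfrak{A}_{\mathfrak{p}}=n\cdot\widetilde{\mathfrak A}_{\mathfrak{p}}$ with $\widetilde{\mathfrak A}_{\mathfrak{p}}=\mathfrak{o}_{F_{\mathfrak{p}}}\cdot 2\tilde a\oplus\mathfrak{o}_{F_{\mathfrak{p}}}\cdot(-\tilde b+\tilde\theta)$, $\tilde\theta^{2}=\tilde d:=\tilde b^{2}-4\tilde a\tilde c$, and by Definition \ref{defn:fund-quad} the element $\tilde d=d_{Q_{\mathfrak{p}}}/n_{Q_{\mathfrak{p}}}^{2}$ is fundamental. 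Scaling by $n\in F_{\mathfrak{p}}^{\times}$ is harmless, so it remains to show $\widetilde{\mathfrak A}_{\mathfrak{p}}$ is an $\mathfrak{o}_{K,\mathfrak{p}}$-submodule. Write $\mathfrak{o}_{K,\mathfrak{p}}=\mathfrak{o}_{F_{\mathfrak{p}}}\oplus\mathfrak{o}_{F_{\mathfrak{p}}}\omega$ with $\omega$ a ring generator, a root of $X^{2}+pX+q$ ($p,q\in\mathfrak{o}_{F_{\mathfrak{p}}}$), as in Lemma \ref{lem:crit:fund} (in the split case one may take $\omega=(1,0)$ in $\mathfrak{o}_{F_{\mathfrak{p}}}\times\mathfrak{o}_{F_{\mathfrak{p}}}$). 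Since $\tilde\theta$ is integral ($\tilde\theta^{2}\in\mathfrak{o}_{F_{\mathfrak{p}}}$), expressing it in the basis $\{1,\omega\}$ and using $\tilde\theta^{2}\in F_{\mathfrak{p}}$ forces $\tilde\theta=u(2\omega+p)$ for some $u\in F_{\mathfrak{p}}$ with $\tilde d=u^{2}(p^{2}-4q)$; fundamentality of $\tilde d$ and Lemma \ref{lem:crit:fund} then give $u\in\mathfrak{o}_{F_{\mathfrak{p}}}^{\times}$, so $\omega=\tfrac{-pu+\tilde\theta}{2u}$. As $\widetilde{\mathfrak A}_{\mathfrak{p}}$ is already an $\mathfrak{o}_{F_{\mathfrak{p}}}$-module, stability reduces to $\omega\cdot 2\tilde a\in\widetilde{\mathfrak A}_{\mathfrak{p}}$ and $\omega\cdot(-\tilde b+\tilde\theta)\in\widetilde{\mathfrak A}_{\mathfrak{p}}$, and a direct expansion using $\tilde\theta^{2}=\tilde b^{2}-4\tilde a\tilde c$ gives
\[
\omega\cdot 2\tilde a=\tfrac{\tilde a}{u}\,(-\tilde b+\tilde\theta)+\tilde a\bigl(\tfrac{\tilde b}{u}-p\bigr),\qquad
\omega\cdot(-\tilde b+\tilde\theta)=\tfrac{-(pu+\tilde b)}{2u}\,(-\tilde b+\tilde\theta)-\tfrac{\tilde c}{u}\cdot 2\tilde a .
\]
All coefficients lie in $\mathfrak{o}_{F_{\mathfrak{p}}}$ once one knows $\tilde b\equiv up\pmod 2$; but $\tilde d=u^{2}(p^{2}-4q)$ gives $\tilde b^{2}\equiv u^{2}p^{2}\pmod 4$, and in $\mathfrak{o}_{F_{\mathfrak{p}}}$ one has $x^{2}\equiv y^{2}\pmod 4\Rightarrow x\equiv y\pmod 2$ (trivial for $\mathfrak{p}\nmid 2$; a short valuation argument for $\mathfrak{p}\mid 2$), which yields the congruence and finishes the proof.

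I expect the real obstacle to be the dyadic places: the congruence $\tilde b\equiv up\pmod 2$ is precisely the arithmetic content of fundamentality at $\mathfrak{p}\mid 2$ — without it, $\widetilde{\mathfrak A}_{\mathfrak{p}}$ would only be a proper fractional ideal of the non-maximal order $\mathfrak{o}_{F_{\mathfrak{p}}}[\tilde\theta]\subsetneq\mathfrak{o}_{K,\mathfrak{p}}$ — and one must be careful tracking how $\sqrt{b^{2}-4ac}$ and the module $\mathfrak{A}$ localize, including the split case where $\mathfrak{o}_{K,\mathfrak{p}}$ is a product ring (there an alternative is to bound $\widetilde{\mathfrak A}_{\mathfrak{p}}\subseteq I_{1}\times I_{2}$ with $I_{j}=(2\tilde a,\,-\tilde b\pm u)$, compute $I_{1}I_{2}=(4\tilde a)$ using the same congruence, and match indices). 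The non-dyadic and split cases, and the localization bookkeeping, are otherwise routine.
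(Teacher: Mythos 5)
Your proof is correct and follows essentially the same route as the paper's: localize at each $\mathfrak{p}$, use fundamentality (via Lemma \ref{lem:crit:fund}) to produce a unit $u$ with $d_{Q_\mathfrak{p}}/n_{Q_\mathfrak{p}}^2 = u^2 d_\omega$, and then verify that the coefficients in the expansion of $\omega\cdot(\text{generators})$ in the basis $\{2\tilde a,\,-\tilde b+\tilde\theta\}$ lie in $\mathfrak{o}_{F_\mathfrak{p}}$. The two divergences are worth flagging. First, where the paper establishes the $2$-integrality of the coefficients by observing that $\tfrac{-b\tilde a + un_{Q_\mathfrak{p}}\sqrt{d_\omega}}{2un_{Q_\mathfrak{p}}}$ has integral minimal polynomial over $\mathfrak{o}_\mathfrak{p}$ and then subtracting $\omega$, you instead derive the congruence $\tilde b\equiv up\pmod 2$ directly from $\tilde b^2\equiv u^2p^2\pmod 4$ via the (correct) implication $x^2\equiv y^2\pmod 4\Rightarrow x\equiv y\pmod 2$ in $\mathfrak{o}_{F_\mathfrak{p}}$; these are equivalent, and your version makes the role of the dyadic fundamentality condition more transparent. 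Second, the paper dismisses the split case with ``there is nothing to prove,'' which is too hasty: a rank-$2$ lattice in $F_\mathfrak{p}\times F_\mathfrak{p}$ need not be an $\mathfrak{o}_\mathfrak{p}\times\mathfrak{o}_\mathfrak{p}$-module, and the same congruence $\tilde b\equiv up\pmod 2$ (with $p=-1$, $d_\omega=1$) is genuinely needed there, exactly as your parenthetical remark and the product-ideal variant indicate. So your treatment is both correct and slightly more careful than the published argument.
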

\begin{proof}
We only need to show that for each prime $\mathfrak{p}$ of
$\mathfrak{o}$, $ \mathfrak{o}_{F_{\mathfrak{p}}}\cdot \mathfrak{A}
$ is a fractional ideal of $K_{\mathfrak{p}}$. When
$\mathfrak{p}$ is split in $K$, there is nothing to prove. So we
assume that $\mathfrak{p}$ is either inert or ramified in $K$. Let
$1$ and $\omega$ be a basis of $\mathfrak{o}_{K_\mathfrak{P}}$ over
$\mathfrak{o}_{F_\mathfrak{p}}$.

Write $\mathfrak{a}_\mathfrak{p}=(\tilde{a})$ with $\tilde{a}\in
\mathfrak{o}_\mathfrak{p}$. Then $\alpha$ and $\tilde{a}\beta$ is a
basis of $M_\mathfrak{p}$ over $\fo_\mathfrak{p}$. We have
$$ q_{Q_\mathfrak{p}}(x\alpha+y\tilde{a}\beta)= a x^2+ b\tilde{a} xy +
c\tilde{a}^2 y^2 . $$ Since $Q$ is fundamental, by Lemma
\ref{lem:crit:fund} (\ref{it:degree-2}) there exists $u\in
\fo_\mathfrak{p}^\times$ such that
$$ d_{Q_\mathfrak{p}} = \frac{\tilde{a}^2 (b^2-4ac)}{n_{Q_\mathfrak{p}}^2} = u^2
d_\omega.
$$ Replacing $u$ by $-u$ if necessary, we may assume
$$ \sqrt{d_\omega} =   \frac{\tilde{a}  \sqrt{b^2-4ac}}{un_{Q_\mathfrak{p}}} . $$

What we need to show is that \begin{equation}
\label{eq:check}\omega\cdot (\mathfrak{o}_\mathfrak{p}\cdot2a\oplus
\mathfrak{o}_\mathfrak{p} \tilde{a} ( -b+\sqrt{b^2-4ac})) \subset
\mathfrak{o}_\mathfrak{p}\cdot2a\oplus \mathfrak{o}_\mathfrak{p} \tilde{a}
( -b+\sqrt{b^2-4ac}).\end{equation} Let $X^2+eX+f$ be minimal
polynomial of $\omega$ over $\fo_\mathfrak{p}$. Then
$\omega=\frac{-e\pm \sqrt{d_\omega}}{2}$. Without loss of generality
we may assume $\omega=\frac{-e+\sqrt{d_\omega}}{2}$. Check that
$$ \omega \cdot 2a = \frac{b\tilde{a}-uen_{Q_\mathfrak{p}}}{2un_{Q_\mathfrak{p}}} \cdot 2a+ \frac{a}{un_{Q_\mathfrak{p}}}\cdot \tilde{a} ( -b+\sqrt{b^2-4ac})
$$ and
$$\omega \cdot \tilde{a} ( -b+\sqrt{b^2-4ac})) = -\frac{\tilde{a}^2 c }{un_{Q_\mathfrak{p}}} \cdot 2a
 - \frac{eun_{Q_\mathfrak{p}}+b\tilde{a}}{2un_{Q_\mathfrak{p}}} \cdot \tilde{a} ( -b+\sqrt{b^2-4ac})) .
 $$

 Note that $\frac{a}{n_{Q_\mathfrak{p}}}$,
 $\frac{\tilde{a}b}{n_{Q_\mathfrak{p}}}$ and
 $\frac{\tilde{a}^2c}{n_{Q_\mathfrak{p}}}$ are all in
 $\fo_\mathfrak{p}$. So $\frac{a}{un_{Q_\mathfrak{p}}},-\frac{\tilde{a}^2 c
 }{un_{Q_\mathfrak{p}}}\in\fo_\mathfrak{p}$, and
the minimal polynomial $X^2+\frac{\tilde{a}b}{un_{Q_\mathfrak{p}}}X+\frac{\tilde{a}^2ac}{u^2n^2_{Q_\mathfrak{p}}}$ of $\frac{-b\tilde{a}+
 un_{Q_\mathfrak{p}}\sqrt{d_\omega}}{2un_{Q_\mathfrak{p}}}$ over $\fo_\mathfrak{p}$ is integral. That is  $\frac{-b\tilde{a}+
 un_{Q_\mathfrak{p}}\sqrt{d_\omega}}{2un_{Q_\mathfrak{p}}}\in\mathfrak{o}_{\mathfrak{p}}$. Therefore,
$$ \frac{b\tilde{a}-uen_{Q_\mathfrak{p}}}{2un_{Q_\mathfrak{p}}} = -\frac{-b\tilde{a}+
 un_{Q_\mathfrak{p}}\sqrt{d_\omega}}{2un_{Q_\mathfrak{p}}}+\omega
 $$ lies in $\fo_\mathfrak{p}$. Then so is
 $\frac{eun_{Q_\mathfrak{p}}+b\tilde{a}}{2un_{Q_\mathfrak{p}}}=\frac{b\tilde{a}-uen_{Q_\mathfrak{p}}}{2un_{Q_\mathfrak{p}}}+e$.
We obtain (\ref{eq:check}), as desired.
\end{proof}

\begin{thm} The correspondence $\mathfrak{A}\mapsto (\mathfrak{A},
Q_\mathfrak{A})$ provides a bijection
$$\mathrm{Cl}_K/\{\text{complex conjugate}\} \rightarrow
[\mathfrak{Q}_{K/F}] . $$  \end{thm}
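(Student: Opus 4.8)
The plan is to read off the theorem from Propositions \ref{lem:ideal-to-form} and \ref{lem:form-to-ideal}, which between them already contain all the substantive content; what is left is bookkeeping.

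First I would check that the correspondence descends to a well-defined map $\Theta\colon \mathrm{Cl}_K/\{\text{complex conjugate}\}\to[\mathfrak{Q}_{K/F}]$. For a fractional ideal $\mathfrak{A}$ of $K$, parts (\ref{it:idea-form-a}), (\ref{it:idea-form-b}) and (\ref{it:idea-form-e}) of Proposition \ref{lem:ideal-to-form} say that $(\mathfrak{A},Q_\mathfrak{A})$ is a fundamental positive definite quadratic form over $\mathfrak{o}_F$ with $F(Q_\mathfrak{A})=K$, hence $(\mathfrak{A},Q_\mathfrak{A})\in\mathfrak{Q}_{K/F}$ and its weak equivalence class lies in $[\mathfrak{Q}_{K/F}]$. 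By part (\ref{it:idea-form-c}) this class depends only on the ideal class $[\mathfrak{A}]$, and by part (\ref{it:idea-form-d}) it is unchanged under $\mathfrak{A}\mapsto\bar{\mathfrak{A}}$. Since complex conjugation is a well-defined involution of $\mathrm{Cl}_K$, as recalled just before Proposition \ref{lem:ideal-to-form}, the assignment $[\mathfrak{A}]\mapsto[(\mathfrak{A},Q_\mathfrak{A})]$ factors through the quotient of $\mathrm{Cl}_K$ by that involution, giving $\Theta$.

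Next, surjectivity of $\Theta$ is exactly Proposition \ref{lem:form-to-ideal}: any fundamental definite quadratic form $(M,Q)$ with $F(Q)=K$ is weakly equivalent to some $(\mathfrak{A},Q_\mathfrak{A})$, so its class equals $\Theta([\mathfrak{A}])$. Injectivity of $\Theta$ is exactly part (\ref{it:ideal-equivalent-class}) of Proposition \ref{lem:ideal-to-form}: if $(\mathfrak{A},Q_\mathfrak{A})\sim(\mathfrak{B},Q_\mathfrak{B})$ then $[\mathfrak{A}]=[\mathfrak{B}]$ or $[\mathfrak{A}]=[\bar{\mathfrak{B}}]$, i.e. $[\mathfrak{A}]$ and $[\mathfrak{B}]$ have the same image in $\mathrm{Cl}_K/\{\text{complex conjugate}\}$. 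Putting the three assertions together yields the stated bijection.

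Since every ingredient has already been isolated in the two preceding propositions, I do not expect any genuine obstacle. The one point worth emphasizing in the write-up is why the target must be the set of \emph{weak} equivalence classes $[\mathfrak{Q}_{K/F}]$ and not $[\mathfrak{Q}_{K/F}]_0$: by Proposition \ref{lem:ideal-to-form}(\ref{it:idea-form-c}), for $\mathfrak{B}=(\alpha)\mathfrak{A}$ the natural isomorphism $\mathfrak{A}\to\mathfrak{B}$, $m\mapsto\alpha m$, identifies $Q_\mathfrak{A}$ with $Q_\mathfrak{B}$ only up to the scalar $N_{K/F}(\alpha)$, which is in general not $1$, so distinct ideals in a single ideal class need not give strongly equivalent forms. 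With weak equivalence in force, parts (\ref{it:idea-form-d}) and (\ref{it:ideal-equivalent-class}) show that the fibre over a weak class of the induced map $\mathrm{Cl}_K\to[\mathfrak{Q}_{K/F}]$ is exactly the pair $\{[\mathfrak{A}],[\bar{\mathfrak{A}}]\}$, which is precisely why the source is taken to be $\mathrm{Cl}_K$ modulo complex conjugation.
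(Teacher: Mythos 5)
Your proposal is correct and takes exactly the same route as the paper, which simply cites Propositions \ref{lem:ideal-to-form} and \ref{lem:form-to-ideal}; you have merely unpacked the three checks (well-definedness from parts (\ref{it:idea-form-a}), (\ref{it:idea-form-b}), (\ref{it:idea-form-e}), (\ref{it:idea-form-c}), (\ref{it:idea-form-d}); surjectivity from Proposition \ref{lem:form-to-ideal}; injectivity from part (\ref{it:ideal-equivalent-class})) that the paper leaves implicit.
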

\begin{proof} This follows from Propositions \ref{lem:ideal-to-form} and
 \ref{lem:form-to-ideal}.
\end{proof}

\begin{cor} \label{cor:estimation for hk by quadratic forms}We have $$ \sharp [\mathfrak{Q}_{K/F}] \leq h_K  \leq 2 \cdot \sharp
[\mathfrak{Q}_{K/F}] .$$
\end{cor}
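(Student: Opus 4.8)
The plan is to deduce the corollary directly from the bijection
\[ \Cl_K/\{\text{complex conjugate}\} \ \xrightarrow{\ \sim\ } \ [\mathfrak{Q}_{K/F}] \]
of the preceding theorem, by controlling the sizes of the fibres of the quotient map $\Cl_K \twoheadrightarrow \Cl_K/\{\text{complex conjugate}\}$.

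First I would recall that complex conjugation $c\colon [\mathfrak{A}]\mapsto[\bar{\mathfrak{A}}]$ is a well-defined automorphism of the finite abelian group $\Cl_K$ (this well-definedness was observed just before Proposition \ref{lem:ideal-to-form}), and that $c^2=\mathrm{id}$. Hence $\langle c\rangle$ is a group of order $1$ or $2$ acting on $\Cl_K$, so each of its orbits has exactly $1$ or $2$ elements. Writing $N$ for the number of orbits, partitioning $\Cl_K$ into orbits gives $N \le h_K \le 2N$: the left inequality because orbits are nonempty, the right because each orbit has at most two elements. Equivalently, $h_K/2 \le N \le h_K$.

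Finally I would invoke the bijection of the preceding theorem, which yields $\sharp[\mathfrak{Q}_{K/F}] = N = \sharp\big(\Cl_K/\{\text{complex conjugate}\}\big)$, and substitute this into the displayed double inequality to obtain $\sharp[\mathfrak{Q}_{K/F}] \le h_K \le 2\,\sharp[\mathfrak{Q}_{K/F}]$. I do not anticipate any real obstacle here: all the substance — surjectivity of $\mathfrak{A}\mapsto(\mathfrak{A},Q_{\mathfrak{A}})$ from Proposition \ref{lem:form-to-ideal}, and the description of its fibres (two ideals give weakly equivalent forms precisely when they are conjugate or equal in $\Cl_K$) from parts (\ref{it:idea-form-c})--(\ref{it:ideal-equivalent-class}) of Proposition \ref{lem:ideal-to-form} — is already packaged in that theorem, so what remains is only the elementary orbit-counting bound for an involution on a finite set.
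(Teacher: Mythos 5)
Your proposal is correct and is essentially the same argument the paper intends: the corollary follows immediately from the bijection of the preceding theorem together with the elementary observation that the involution given by complex conjugation partitions $\mathrm{Cl}_K$ into orbits of size $1$ or $2$. The paper states the corollary without further proof, so your orbit-counting justification is exactly the expected one.
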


\subsection{An estimate on $h_K$}

Let $(M,Q)$ be a quadratic form over $\fo$. We say that $s\in F$
is representable by $Q$, if there exists $m\in M\otimes_{\mathfrak{o}} F$ such that
$s=q_Q(m)$.

\begin{lem}\label{lem:e-non-depend} Let $(M,Q)$ be a definite quadratic form. If both $s_1$ and $s_2\in
\fo$ $(s_1s_2\neq 0)$ are representable by $Q$, then for each
place $v$ of $F$ and each place $\omega$ of $F(Q)$ above $v$, we have
$$ (s_1, F(Q)_{\omega}/F_{v} ) =(s_2, F(Q)_{\omega}/F_{v}).$$
\end{lem}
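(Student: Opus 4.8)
The plan is to express the values of $q_Q$ as norms from the quadratic extension $F(Q)=F(\sqrt{d_{Q_F}})$, so that the hypothesis becomes a statement about a single element of $F$ being a norm, from which the equality of local norm-residue symbols can be read off directly.

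First I would pass from $M$ to $M_F=M\otimes_\fo F$, which is a genuine $2$-dimensional $F$-vector space, and pick $\alpha,\beta\in M$ forming an $F$-basis of $M_F$; writing $q_{Q_F}(x\alpha+y\beta)=ax^2+bxy+cy^2$ with $a,b,c\in F$, one has $a=q_Q(\alpha)\ne 0$ by definiteness (Lemma \ref{lem:crit}), and $d_{Q_F}=b^2-4ac$ is totally negative by Lemma \ref{lem:positive}, hence not a square, so $F(Q)=F(\sqrt{d_{Q_F}})$ is genuinely a quadratic extension of $F$. The identity to exploit is
$$4a\,q_{Q_F}(x\alpha+y\beta)=(2ax+by)^2-d_{Q_F}\,y^2=N_{F(Q)/F}\!\big((2ax+by)+y\sqrt{d_{Q_F}}\big),$$
in which the argument of the norm is nonzero precisely when $x\alpha+y\beta\ne 0$ (since $d_{Q_F}$ is not a square).

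Granting this, the rest is short. If $s\in\fo^*$ is representable by $Q$, say $s=q_{Q_F}(m)$ with $m=x_0\alpha+y_0\beta$, the identity shows $4as=N_{F(Q)/F}(\gamma)$ with $\gamma=(2ax_0+by_0)+y_0\sqrt{d_{Q_F}}\in F(Q)^\times$; applying this to $s_1$ and $s_2$ and dividing, $s_1s_2^{-1}$ is a (global) norm from $F(Q)^\times$, hence a local norm from $F(Q)_\omega^\times$ for every place $v$ of $F$ and every place $\omega$ of $F(Q)$ above $v$. Therefore $(s_1s_2^{-1},F(Q)_\omega/F_v)=1$ — vacuously so when $v$ splits in $F(Q)$, where the symbol is already trivial — and since $t\mapsto(t,F(Q)_\omega/F_v)$ is a homomorphism from $F_v^\times$ into a group of order at most $2$, the desired equality $(s_1,F(Q)_\omega/F_v)=(s_2,F(Q)_\omega/F_v)$ follows.

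I do not foresee a genuine obstacle: the content is the norm identity above, and the only points needing a moment's care are that the basis must be taken over $F$ rather than over $\fo$ (since $M$ need not be free), that definiteness is precisely what guarantees $d_{Q_F}\ne 0$ so that $F(Q)/F$ is a bona fide quadratic extension, and the standard bookkeeping that a global norm is everywhere a local norm and that the norm-residue symbol is multiplicative and $2$-torsion, so that "$s_1s_2^{-1}$ is a local norm" upgrades to "$s_1$ and $s_2$ give the same symbol".
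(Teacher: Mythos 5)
Your proof is correct and rests on the same norm identity that underlies the reference the paper cites (Borevi\v{c}--\v{S}afarevi\v{c}, p.\ 63, Th\'eor\`eme 8) for its finite-place case: $4a\,q_{Q_F}(x\alpha+y\beta)=(2ax+by)^2-d_{Q_F}y^2=N_{F(Q)/F}(\gamma)$. The one structural difference is that you deduce $s_1s_2^{-1}$ is a global norm and conclude uniformly at every place, whereas the paper treats finite places by citation and infinite places by a separate sign argument; your self-contained, uniform version is a cosmetic improvement rather than a genuinely different route.
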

\begin{proof}If $v$ is finite, then we just repeat the argument of \cite[P63 Th\'{e}or\`{e}me
8]{BS}. If $v: F\hookrightarrow\mathbb{R}$ is an infinite place, then since $Q$ is definite, both sides are equal to the sign of $Q_{v}$. 
\end{proof}

\begin{defn} Let $(M,Q)$ be a definite quadratic form over $\fo$. For each place $v$ of $F$ we put
$$ e_{v}(Q)= (s, F(Q)_{\omega}/F_{v}), $$ where $s$ is a nonzero element of $F$ that is representable by $Q$, and $\omega$ is a place above $v$. By Lemma \ref{lem:e-non-depend} , $e_{v}(Q)$ does not depend on the
choice of $s$.
\end{defn}

It is easy to see that if $Q$ is strongly equivalent to $Q^{\prime}$, then $e_{v}(Q)=e_{v}(Q^{\prime})$ for all $v$.

In the following, we fix a (totally) imaginary quadratic extension $K$ of $F$.

\begin{prop}\label{prop:represent} $s\in \fo$ is representable by a fundamental definite quadratic
form of the form $\frac{Q_\mathfrak{A}}{\gamma}$ with $\mathfrak{A}$
an ideal of $K$ and $\gamma\in F$ such that
$N_{K/F}(\mathfrak{A})=(\gamma)$ is principal, if and only if for
each $\mathfrak{p}$ of $\mathfrak{o}$ inert in $K$,
$v_{\mathfrak{p}}(s)$ is even.
\end{prop}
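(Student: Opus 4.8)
The plan is to prove both directions by working place-by-place, using the fact that a quadratic form of the shape $\frac{Q_{\mathfrak A}}{\gamma}$ represents $s$ if and only if it does so locally at every place $v$ of $F$, and then translating "locally represented" into a condition on the Hilbert symbol $e_v$. First I would record the normalization: since $N_{K/F}(\mathfrak A)=(\gamma)$, Lemma~\ref{lem:relation-d} gives $\mathfrak d_{Q_{\mathfrak A}}=\mathfrak d_{K/F}\,N_{K/F}(\mathfrak A)^2=\mathfrak d_{K/F}(\gamma)^2$, so $\mathfrak d_{Q_{\mathfrak A}/\gamma}=\mathfrak d_{K/F}$ and $\mathfrak n_{Q_{\mathfrak A}/\gamma}=\mathfrak o_F$; thus $\frac{Q_{\mathfrak A}}{\gamma}$ is indeed a fundamental definite quadratic form with $F(\tfrac{Q_{\mathfrak A}}{\gamma})=K$, and conversely by Proposition~\ref{lem:form-to-ideal} every element of $\mathfrak Q_{K/F}$ with $\mathfrak n_Q=\mathfrak o_F$ arises this way. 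So the proposition reduces to: for which $s\in\mathfrak o_F$ does there exist such $\mathfrak A,\gamma$ with $s$ represented by $\frac{Q_{\mathfrak A}}{\gamma}$?

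For the "only if" direction, suppose $s=q_{Q_{\mathfrak A}/\gamma}(m)=\frac{N_{K/F}(z)}{\gamma}$ for some $z\in K^\times$ with $z\in\mathfrak A\otimes F$; I want to conclude $v_{\mathfrak p}(s)$ is even for each $\mathfrak p$ inert in $K$. At an inert prime $\mathfrak p$, the extension $K_{\mathfrak P}/F_{\mathfrak p}$ is unramified of degree $2$, so $v_{\mathfrak p}(N_{K/F}(z))=2\,v_{\mathfrak P}(z)$ is even, while $v_{\mathfrak p}(\gamma)=v_{\mathfrak p}(N_{K/F}(\mathfrak A))$ is also even since the local norm of any fractional ideal at an inert prime has even valuation (the residue degree is $2$). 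Hence $v_{\mathfrak p}(s)$ is even. This direction is essentially bookkeeping about valuations of norms at unramified places.

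The "if" direction is the substance. Assume $v_{\mathfrak p}(s)$ is even for every $\mathfrak p$ inert in $K$. The idea is to construct $\mathfrak A$ and $\gamma$ by the Hasse principle / local-global patching: I want to choose $\mathfrak A$ so that the local conditions $s\in q_{Q_{\mathfrak A}/\gamma}(K_{\mathfrak P}\otimes F_v)$ hold at every place $v$, and then invoke the Hasse–Minkowski theorem (in the form of the local-global principle for representation of elements by a binary quadratic form over a number field — equivalently, that an element is a norm from a quadratic extension iff it is a norm locally everywhere, i.e. the Hasse norm theorem) to get a global representation. At places $v$ split in $K$ or inert in $K$, the local form $\frac{Q_{\mathfrak A}}{\gamma}$ represents exactly the $s$ with the expected parity/Hilbert-symbol condition, using Lemma~\ref{lem:e-non-depend}: local representability of $s$ by a definite $Q$ with $F(Q)_\omega/F_v$ the given quadratic algebra is governed by $(s,F(Q)_\omega/F_v)=e_v(Q)$, which for the norm form $Q_{\mathfrak A}$ is trivial at unramified places — hence the only constraint is that $s$ be a local norm, which at an inert prime forces $v_{\mathfrak p}(s)$ even and at a split prime is automatic. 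At the ramified primes and the infinite places there is freedom to adjust the sign of $\gamma$ (replacing $\gamma$ by a totally positive unit multiple, and $\mathfrak A$ accordingly) so that the product formula for Hilbert symbols is satisfied, which is exactly what is needed to apply Hasse. I would then take $\mathfrak A$ to be an ideal in the appropriate class realizing these local choices, set $\gamma$ a totally positive generator of $N_{K/F}(\mathfrak A)$, and verify $s$ is globally represented.

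The main obstacle I anticipate is the ramified and infinite places: one must check that the parity condition at inert primes, together with the definiteness requirement (forcing a sign condition at the archimedean places), leaves enough slack that the Hilbert symbols $e_v(Q_{\mathfrak A}/\gamma)$ can be matched to the local obstructions for all $v$ simultaneously, i.e. that the relevant element of the Brauer-type obstruction group vanishes. Concretely this is where one uses that $K/F$ is ramified at the $\mathfrak p\mid\mathfrak d_{K/F}$ and that the product of all local Hilbert symbols is $1$, so a constraint "for free" at one ramified place compensates; one also has the freedom of multiplying $\mathfrak A$ by an arbitrary ideal class, i.e. of translating $\gamma$ by an arbitrary norm. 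Making this counting precise — and in particular handling the dyadic ramified primes, where "fundamental" was defined by the quadratic-residue-mod-$4$ condition of Definition~\ref{def:fund} — is the step that will require care; everything else is a routine application of the local-global principle for norms.
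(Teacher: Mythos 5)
Your approach and the paper's proof diverge substantially. The paper's argument is a short, purely ideal-theoretic computation that never touches the Hasse norm theorem, Hilbert symbols, or any local-global principle. It begins with the elementary observation that ``$v_{\mathfrak p}(s)$ even at all inert $\mathfrak p$'' is equivalent to ``$(s)$ is the norm of an ideal of $\mathfrak o_K$'' (at split and ramified primes any exponent is a norm; at inert primes only even exponents are). For ``only if'' it then just notes $s=N_{K/F}(z)/\gamma$ forces $(s)=N_{K/F}\bigl((z)\mathfrak A^{-1}\bigr)$. For ``if'' --- the direction you flagged as the substance and did not finish --- it writes $(s)=N_{K/F}(\mathfrak B)$, chooses $\mathfrak A$ in the inverse ideal class so that $\mathfrak A\mathfrak B=(z)$ is principal with $z\in\mathfrak A$, and then simply \emph{defines} $\gamma:=s^{-1}N_{K/F}(z)$; one checks $(\gamma)=N_{K/F}(\mathfrak A)$ as an equality of ideals, and $s=Q_{\mathfrak A}(z,z)/\gamma$ is automatic. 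No Hasse principle is invoked because $z$ is constructed explicitly, not via a local-global patching argument.

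The genuine gap in your proposal is exactly the ``if'' direction: you reduce it to matching local Hilbert-symbol conditions at ramified and archimedean places, acknowledge that making this precise (especially at dyadic ramified primes, where fundamentality involves the quadratic-residue-mod-$4$ condition of Definition~\ref{def:fund}) ``will require care,'' and stop. That missing step is not a routine verification --- it is the entire content of the direction. The reason you end up needing it is that you are trying to produce a \emph{global element} representing $s$ from purely local data, whereas the paper observes that the defining freedom in $\gamma$ (only the ideal $(\gamma)=N_{K/F}(\mathfrak A)$ is constrained, not $\gamma$ itself) lets one set $\gamma=s^{-1}N_{K/F}(z)$ after the fact, so that representability holds tautologically once an ideal $\mathfrak A$ with $z\in\mathfrak A$ is found. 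Your ``only if'' direction and the preliminary normalization using Lemma~\ref{lem:relation-d} are correct and agree in substance with the paper, but the route you chose for ``if'' replaces a two-line ideal manipulation with a deep theorem plus an unfinished combinatorial check, and as written the proof is incomplete.
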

\begin{proof}
Note that $v_{\mathfrak{p}}(s)$ is even for any prime $\mathfrak{p}$
of $\mathfrak{o}$ inert in $K$, if and only if $(s)$ is the norm
of an ideal of $\mathfrak{o}_K$.

If $s$ is representable by a quadratic form of the form
$\frac{Q_\mathfrak{A}}{\gamma}$, then there exists some $z\in
\mathfrak{A}$ such that $  s =  \frac{N_{K/F}(z)}{\gamma}. $ So $
(s) = N_{K/F}((z)\mathfrak{A}^{-1})  $ is the norm of an integral
ideal.

Conversely, if $(s)=N_{K/F}\mathfrak{B}$ is the norm of an ideal
$\mathfrak{B}$ of $\mathfrak{o}_K$. We take $\mathfrak{A}$ such that
$\mathfrak{A}\mathfrak{B}=(z)$ is a principal ideal. Put
$\gamma=s^{-1}N_{K/F}(z)$. Then $N_{K/F}\mathfrak{A}=(\gamma)$ is
principal. Write $\mathfrak{A}$ in the form
$\mathfrak{A}=\mathfrak{o}\alpha\oplus \mathfrak{a}\beta$. Then
$z\in \mathfrak{A}$ and $s = \frac{Q_\mathfrak{A}(z)}{\gamma} $.
\end{proof}

\begin{cor} \label{cor:ep(Q) at unramified primes}Let $\frac{Q_\mathfrak{A}}{\gamma}$ be as in Proposition
\ref{prop:represent}. Then for each prime $\mathfrak{p}$ of $\fo$
that is not ramified in $K$, we have
$e_\mathfrak{p}(\frac{Q_\mathfrak{A}}{\gamma})=1$.
\end{cor}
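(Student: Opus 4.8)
The plan is to argue by cases according to the splitting of $\mathfrak{p}$ in $K$, using throughout that $F(Q_\mathfrak{A}/\gamma)=F(Q_\mathfrak{A})=K$ (scaling a quadratic form by $\gamma\in F^\times$ multiplies $d_Q$ by $\gamma^{-2}$, hence does not change the attached field) and that, by Proposition \ref{lem:ideal-to-form}, $F(Q_\mathfrak{A})=K$. Since $\mathfrak{p}$ is not ramified in $K$, it is either split or inert, and in both cases I want to show that the chosen representable $s$ is a local norm from $F(Q)_\omega=K_\mathfrak{P}$.

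First I would handle the split case. If $\mathfrak{p}$ splits in $K$, then for the place $\mathfrak{P}$ of $K$ above $\mathfrak{p}$ we have $K_\mathfrak{P}=F_\mathfrak{p}$, so $F(Q)_\omega/F_\mathfrak{p}$ is the trivial extension; then the norm residue symbol $(s,F(Q)_\omega/F_\mathfrak{p})$ equals $1$ for every $s$, and hence $e_\mathfrak{p}(Q_\mathfrak{A}/\gamma)=1$. For the inert case I would first check that $Q_\mathfrak{A}/\gamma$ represents a nonzero element of $\fo$, so that $e_\mathfrak{p}$ is defined and Proposition \ref{prop:represent} applies: for any nonzero $z\in\mathfrak{A}$ we have $(N_{K/F}(z))=N_{K/F}((z))\subseteq N_{K/F}(\mathfrak{A})=(\gamma)$, so $s:=N_{K/F}(z)/\gamma=q_{Q_\mathfrak{A}/\gamma}(z)$ lies in $\fo^{*}$ and is representable by $Q_\mathfrak{A}/\gamma$. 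By Proposition \ref{prop:represent}, since $\mathfrak{p}$ is inert in $K$, $v_\mathfrak{p}(s)$ is even. On the other hand, when $\mathfrak{p}$ is inert the extension $K_\mathfrak{P}/F_\mathfrak{p}$ is the unramified quadratic extension, whose norm subgroup of $F_\mathfrak{p}^\times$ is precisely $\{x\in F_\mathfrak{p}^\times:v_\mathfrak{p}(x)\in 2\BZ\}$ (the norm map is surjective on units, and $N_{K_\mathfrak{P}/F_\mathfrak{p}}(\pi_\mathfrak{p})=\pi_\mathfrak{p}^{2}$). Since $v_\mathfrak{p}(s)$ is even, $s$ is a local norm, so $e_\mathfrak{p}(Q_\mathfrak{A}/\gamma)=(s,K_\mathfrak{P}/F_\mathfrak{p})=1$. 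Combining the two cases gives the corollary, with the well-definedness of $e_\mathfrak{p}$ guaranteed by the definition preceding the statement.

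I do not anticipate a real obstacle here: the argument is a direct combination of Proposition \ref{prop:represent} with the structure of the local norm group of an unramified quadratic extension. The only points that need a little care are (i) exhibiting an \emph{integral} element represented by $Q_\mathfrak{A}/\gamma$ so that Proposition \ref{prop:represent} is applicable, which the computation $N_{K/F}((z))\subseteq(\gamma)$ settles, and (ii) recalling the exact description of $N_{K_\mathfrak{P}/F_\mathfrak{p}}(K_\mathfrak{P}^\times)$ in the unramified case; both are standard local field facts.
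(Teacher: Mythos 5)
Your proof is correct and follows essentially the same route as the paper's: split into the cases where $\mathfrak{p}$ splits or is inert in $K$, use that $K_\mathfrak{P}=F_\mathfrak{p}$ in the split case, and in the inert case deduce from Proposition \ref{prop:represent} that $v_\mathfrak{p}(s)$ is even, so $s$ lies in the norm group of the unramified quadratic extension. Your version is slightly more scrupulous than the paper's in verifying that an integral nonzero $s$ is actually representable by $Q_\mathfrak{A}/\gamma$ (the paper silently takes $s\in\fo^{*}$), but the underlying argument is the same.
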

\begin{proof} Write $Q$ for $\frac{Q_\mathfrak{A}}{\gamma}$. Let $s\in
\fo$ be a nonzero element that is representable by $Q$. Then $e_\mathfrak{p}(Q)=(s, K_{\mathfrak{P}}/F_{\mathfrak{p}})$ for a prime $\mathfrak{P}$ over $\mathfrak{p}$. If $\mathfrak{p}$ is split in $K$, then $e_{\mathfrak{p}}(Q)=1$. If $\mathfrak{p}$ is inert in $K$, then by Proposition \ref{prop:represent},  $e_\mathfrak{p}(\frac{Q_\mathfrak{A}}{\gamma})=(\pi_\mathfrak{p}, K_{\mathfrak{P}}/F_{\mathfrak{p}})^{2}=1$. 
\end{proof}

\begin{cor}\label{cor:estimate} Associated to each system $
(i_{v})_{v\mid\infty\cdot\mathfrak{d}_{K/F}}$ with $i_{v}=\pm 1 $ (for each $v$) and $$\prod_{v\mid\infty\cdot\mathfrak{d}_{K/F}}i_{v}=1,$$ there
exists a fundamental definite quadratic form
$Q=\frac{Q_\mathfrak{A}}{\gamma}$ as in Proposition
\ref{prop:represent}  such that $e_{v}(Q)=i_{v}$
for any $v\mid\infty\cdot\mathfrak{d}_{K/F}$. In particular, $\sharp[\mathfrak{Q}_{K/F}]_{0}\ge 2^{t+n-1}$.
\end{cor}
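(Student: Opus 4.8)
The plan is to produce, for each admissible sign system, a quadratic form of the prescribed shape realizing it through its invariants $e_v$, and then count. Write $S=\{v:\ v\mid\infty\cdot\mathfrak{d}_{K/F}\}$; its cardinality is $n+t$, where $n=[F:\BQ]$ is the number of archimedean places of $F$ and $t$ the number of primes of $F$ ramified in $K$, so the sign systems $(i_v)_{v\in S}$ with $\prod_{v\in S}i_v=1$ form a group of order $2^{n+t-1}$. First I would record that for any $Q=\frac{Q_\mathfrak{A}}{\gamma}$ as in Proposition \ref{prop:represent} one has $e_v(Q)=1$ for $v\notin S$ by Corollary \ref{cor:ep(Q) at unramified primes}; since $e_v(Q)=(s,K_\omega/F_v)$ for a fixed representable $s\in F^{\times}$ and the Hilbert-symbol product formula holds for the principal idele $s$, this forces $\prod_{v\in S}e_v(Q)=1$. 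Thus the hypothesis $\prod i_v=1$ is exactly the reciprocity obstruction, and it suffices to realize every admissible system.

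Next I would reduce the realization to a single existence statement: it is enough to find $s\in\mathfrak{o}^{*}$ such that (i) $v_\mathfrak{p}(s)$ is even for every prime $\mathfrak{p}$ inert in $K$, and (ii) $(s,K_\omega/F_v)=i_v$ for every $v\in S$. Indeed, condition (i) lets us invoke Proposition \ref{prop:represent}: $s$ is then representable by some fundamental definite $Q=\frac{Q_\mathfrak{A}}{\gamma}$ with $N_{K/F}(\mathfrak{A})=(\gamma)$; since $F(Q)=F(Q_\mathfrak{A})=K$ we have $Q\in\mathfrak{Q}_{K/F}$, and then $e_v(Q)=(s,K_\omega/F_v)=i_v$ on $S$ by the definition of $e_v(Q)$ together with (ii).

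To produce such an $s$ I would use global class field theory. Let $\chi=\prod_v\chi_v:\mathbb{A}_F^{\times}\to\{\pm1\}$ be the quadratic idele-class character attached to $K/F$; then $\chi_v$ is nontrivial exactly when $v$ is inert or ramified in $K$, and for $v\in S$ (a real place, or a ramified finite place) the local character $\chi_v:F_v^{\times}\to\{\pm1\}$ is surjective because the local norm group has index $2$. Choose $a_v\in F_v^{\times}$ with $\chi_v(a_v)=i_v$ for $v\in S$ and $a_v=1$ for $v\notin S$; the idele $a=(a_v)$ satisfies $\chi(a)=\prod_{v\in S}i_v=1$, hence $a\in\ker\chi=F^{\times}\cdot N_{K/F}(\mathbb{A}_K^{\times})$, say $a=s_0\,N_{K/F}(b)$ with $s_0\in F^{\times}$, $b\in\mathbb{A}_K^{\times}$. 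Since each $\chi_v$ is trivial on local norms, $\chi_v(s_0)=\chi_v(a_v)$ for all $v$; thus $(s_0,K_\omega/F_v)=i_v$ for $v\in S$ and $(s_0,K_\omega/F_v)=1$ for $v\notin S$, and at an inert $\mathfrak{p}\notin S$ the latter forces $v_\mathfrak{p}(s_0)$ even because $K_\mathfrak{P}/F_\mathfrak{p}$ is unramified quadratic. Finally, replacing $s_0$ by $s=s_0x^2$ for a suitable $x\in F^{\times}$ --- for instance a sufficiently high power of a rational integer divisible by every rational prime below the finitely many $\mathfrak{p}$ with $v_\mathfrak{p}(s_0)<0$ --- arranges $s\in\mathfrak{o}^{*}$ while changing neither any Hilbert symbol nor the parity of any valuation, so $s$ satisfies (i) and (ii).

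It remains to count. The function $e_v$ is an invariant of strong-equivalence classes, so the forms attached above to distinct admissible systems $(i_v)_{v\in S}$ are pairwise strongly inequivalent (they are separated by $(e_v)_{v\in S}$); as there are $2^{n+t-1}$ admissible systems, $\sharp[\mathfrak{Q}_{K/F}]_0\ge 2^{n+t-1}$. I expect the only genuinely non-formal step to be the construction of $s$; the point that makes it go through is that taking the auxiliary idele $a$ supported on $S$ renders all the complementary local conditions --- triviality of the symbol at split primes and at inert primes outside $S$, i.e. the parity condition (i) --- automatic as soon as $\chi(a)=1$, so the sole substantial input is the class-field-theoretic identity $\ker\chi=F^{\times}N_{K/F}(\mathbb{A}_K^{\times})$ together with local solvability at the places of $S$.
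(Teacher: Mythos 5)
Your proof is correct, and it reaches the conclusion by a route that is recognizably the same reciprocity argument as the paper's, but packaged in a genuinely different way. The paper also first chooses local test units $e_{\mathfrak{p}}$ realizing $i_{\mathfrak{p}}$ at ramified primes, but it then invokes the Dirichlet density theorem to find a \emph{prime element} $e_{0}$ of $\mathfrak{o}$ with the right congruences and signs, deduces from the Hilbert product formula that the extra prime $\mathfrak{q}=(e_{0})$ must split, and feeds $e_{0}$ into Proposition \ref{prop:represent}. You instead feed the data into global class field theory directly: the hypothesis $\prod i_{v}=1$ puts the idele $a$ supported on $S$ into $\ker\chi=F^{\times}N_{K/F}(\mathbb{A}_{K}^{\times})$, so a global $s_{0}$ with the prescribed symbols at $S$ and trivial symbols elsewhere pops out in one step, and the parity condition at inert primes then comes for free; a final multiplication by a square clears denominators. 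The trade-off is that the paper's argument produces an explicit algebraic integer (a prime element) at the cost of invoking Dirichlet density, while yours uses only the norm-index description of $\ker\chi$ and needs a small extra step to make $s$ integral. Either way the key inputs are the same — surjectivity of the local symbol at each $v\in S$, the product formula (equivalently $\ker\chi=F^{\times}N_{K/F}(\mathbb{A}_{K}^{\times})$), Proposition \ref{prop:represent}, and the strong-equivalence invariance of the family $(e_{v})_{v}$ for the final count — so the two proofs should be regarded as reciprocity arguments that differ mainly in whether the globalizing element is produced via density or via the abstract kernel description.
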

\begin{proof} 
	For each $\mathfrak{p}|\mathfrak{d}_{K/F}$ we take $e_\mathfrak{p}\in \mathfrak{o}_\mathfrak{p}^\times$ such
that $(e_\mathfrak{p}, K_{\mathfrak{p}}/F_{\mathfrak{p}} )=i_{\mathfrak{p}}$. We have the fact that for each $e\in \mathfrak{o}$, if $e\equiv
e_\mathfrak{p} (\mathrm{mod} \ \mathfrak{d}_{K/F,\mathfrak{p}})$,
then $(e, K_{\mathfrak{p}}/F_{\mathfrak{p}})=(e_\mathfrak{p}, K_{\mathfrak{p}}/F_{\mathfrak{p}})$. By Dirichlet density theorem, there exists a principal prime $\mathfrak{q}=(e_{0})$ of $\mathfrak{o}$ with the property that $e_{0}\equiv
e_\mathfrak{p} (\mathrm{mod} \ \mathfrak{d}_{K/F,\mathfrak{p}})$ for every $\mathfrak{p}|\mathfrak{d}_{K/F}$ and the signs of $e_{0}$ at all infinite places are exactly given by $(i_{v})_{v\mid\infty}$. Then for
each prime $\mathfrak{p}'$ of $\fo$ such that $\mathfrak{p}'\nmid
\mathfrak{q}\mathfrak{d}_{K/F}$, we have
$(e_{0},K_{\mathfrak{p}^{\prime}}/F_{\mathfrak{p}^{\prime}})=1$. By Corollary \ref{cor:ep(Q) at unramified primes} as well as the reciprocity law, we have
$$1= (e_{0},K_{\mathfrak{q}}/F_{\mathfrak{q}})\times\prod_{v\mid\infty\cdot\mathfrak{d}_{K/F}}i_{v}=(e_{0},K_{\mathfrak{q}}/F_{\mathfrak{q}}).$$
This means
$\mathfrak{q}$ is split in $K$.

Now by Proposition \ref{prop:represent} there exists a fundamental
definite quadratic form $Q$ as desired such that $e_{0}$ is
representable by $Q$. We have $e_{v}(Q)=(e_{0}, K_{\omega}/F_{v})=i_{v}$ for each
$v\mid\infty\cdot\mathfrak{d}_{K/F}$.
\end{proof}

\begin{thm} \label{cor:estimate h by t}If $t$ is the number of different prime divisors of
$\mathfrak{d}_{K/F}$, then
$$ h_K \geq \frac{2^{t+n-1}}{[\mathfrak{o}^\times:(\mathfrak{o}^\times)^2]}.
$$ \end{thm}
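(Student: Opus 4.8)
The plan is to feed Corollary \ref{cor:estimate} into the lower bound $\sharp[\mathfrak{Q}_{K/F}]\le h_{K}$ of Corollary \ref{cor:estimation for hk by quadratic forms}. The subtle point is controlling how much finer strong equivalence is than weak equivalence: rescaling a form by a non-unit $u\in F^{\times}$ keeps it in $\mathfrak{Q}_{K/F}$ but produces a genuinely new strong class, so one cannot work with all of $\mathfrak{Q}_{K/F}$ at once and must restrict to forms of trivial content. Accordingly I would set $\mathfrak{Q}^{1}_{K/F}=\{(M,Q)\in\mathfrak{Q}_{K/F}:\mathfrak{n}_{Q}=\mathfrak{o}\}$ and first check that the $2^{t+n-1}$ forms $Q=\frac{Q_{\mathfrak{A}}}{\gamma}$ built in the proof of Corollary \ref{cor:estimate} all lie in $\mathfrak{Q}^{1}_{K/F}$, since $\mathfrak{n}_{Q_{\mathfrak{A}}/\gamma}=(\gamma)^{-1}N_{K/F}(\mathfrak{A})=\mathfrak{o}$ by the choice $N_{K/F}(\mathfrak{A})=(\gamma)$. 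As these forms realize $2^{t+n-1}$ distinct admissible systems $(i_{v})_{v\mid\infty\cdot\mathfrak{d}_{K/F}}$ as their invariants $(e_{v})_{v}$, and $(e_{v}(\cdot))_{v}$ is a strong-equivalence invariant, they are pairwise strongly inequivalent; hence $\sharp[\mathfrak{Q}^{1}_{K/F}]_{0}\ge 2^{t+n-1}$, where $n=[F:\BQ]$.

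Next I would bound the fibres of the natural map $[\mathfrak{Q}^{1}_{K/F}]_{0}\to[\mathfrak{Q}_{K/F}]$ (strong class $\mapsto$ weak class) by $[\mathfrak{o}^{\times}:(\mathfrak{o}^{\times})^{2}]$. If $(M',Q')$ and $(M,Q)$ in $\mathfrak{Q}^{1}_{K/F}$ are weakly equivalent, say $Q'=u\,Q\circ(\phi\times\phi)$ with $\phi\colon M'\to M$ an $\mathfrak{o}$-isomorphism, then $\mathfrak{n}_{Q'}=(u)\,\mathfrak{n}_{Q}$, so $u\in\mathfrak{o}^{\times}$ and $(M',Q')$ is strongly equivalent to $(M,uQ)$ via $\phi$; thus the fibre over $[Q]$ is exactly $\{[uQ]_{0}:u\in\mathfrak{o}^{\times}\}$. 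Moreover, for $u=v^{2}$ with $v\in\mathfrak{o}^{\times}$, multiplication by $v$ is an $\mathfrak{o}$-automorphism $\phi_{v}$ of $M$ with $Q\circ(\phi_{v}\times\phi_{v})=v^{2}Q=uQ$, so $uQ$ is strongly equivalent to $Q$; hence $[uQ]_{0}$ depends only on the class of $u$ in $\mathfrak{o}^{\times}/(\mathfrak{o}^{\times})^{2}$, which gives the fibre bound. Along the way one uses the elementary facts (immediate from the definitions and Proposition \ref{prop:fund}) that $uQ$ is again fundamental and definite with $F(uQ)=F(Q)$, and that $d_{(uQ)_{\mathfrak{p}}}=u^{2}d_{Q_{\mathfrak{p}}}$, $\mathfrak{n}_{uQ}=(u)\mathfrak{n}_{Q}$.

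Combining the two estimates with Corollary \ref{cor:estimation for hk by quadratic forms} yields
$$2^{t+n-1}\ \le\ \sharp[\mathfrak{Q}^{1}_{K/F}]_{0}\ \le\ [\mathfrak{o}^{\times}:(\mathfrak{o}^{\times})^{2}]\cdot\sharp[\mathfrak{Q}_{K/F}]\ \le\ [\mathfrak{o}^{\times}:(\mathfrak{o}^{\times})^{2}]\cdot h_{K},$$
which is the asserted inequality. The main obstacle is not computational but conceptual: one must recognize that strong and weak classes should be compared through the content filtration (so that rescaling is restricted to units and the fibres become finite), and that the subgroup acting trivially on strong classes is $(\mathfrak{o}^{\times})^{2}$ rather than $\mathfrak{o}^{\times}$ — this is precisely what produces the index $[\mathfrak{o}^{\times}:(\mathfrak{o}^{\times})^{2}]$ in the bound. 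Everything else is bookkeeping with the localized notions of discriminant and content already set up in this section.
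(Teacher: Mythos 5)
Your argument is the same in outline as the paper's (count strong equivalence classes realizing distinct invariant systems $(e_v)$, then divide by the action of $\mathfrak{o}^\times/(\mathfrak{o}^\times)^2$ to get down to weak classes and invoke Corollary \ref{cor:estimation for hk by quadratic forms}), but your version is in fact more careful than what is printed, and the refinement you introduce is necessary. The paper asserts that the natural map $[\mathfrak{Q}_{K/F}]_0 \to [\mathfrak{Q}_{K/F}]$ descends to an injection $[\mathfrak{Q}_{K/F}]_0 / (\mathfrak{o}^\times/(\mathfrak{o}^\times)^2) \hookrightarrow [\mathfrak{Q}_{K/F}]$ and deduces $\sharp[\mathfrak{Q}_{K/F}] \ge \sharp[\mathfrak{Q}_{K/F}]_0 / [\mathfrak{o}^\times:(\mathfrak{o}^\times)^2]$. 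But as you implicitly observe, $[\mathfrak{Q}_{K/F}]_0$ is infinite: scaling a form $Q \in \mathfrak{Q}_{K/F}$ by any nonunit $u \in F^\times$ keeps it in $\mathfrak{Q}_{K/F}$ (definiteness, fundamentality, and $F(Q)$ are all scaling-invariant) while changing the content $\mathfrak{n}_Q$ by $(u)$, which is a strong-equivalence invariant; so one weak class contains infinitely many strong classes, the asserted injection fails, and the paper's middle inequality reads ``finite $\ge$ infinite.'' Your fix — introducing $\mathfrak{Q}^1_{K/F} = \{Q \in \mathfrak{Q}_{K/F} : \mathfrak{n}_Q = \mathfrak{o}\}$, verifying that the $2^{t+n-1}$ forms $Q_{\mathfrak{A}}/\gamma$ from Corollary \ref{cor:estimate} lie in it, and then observing that within $\mathfrak{Q}^1_{K/F}$ the scalar $u$ realizing any weak equivalence is forced to be a unit so that the fibres of $[\mathfrak{Q}^1_{K/F}]_0 \to [\mathfrak{Q}_{K/F}]$ have size at most $[\mathfrak{o}^\times:(\mathfrak{o}^\times)^2]$ — is precisely what is needed to make the argument rigorous while preserving its structure and conclusion.
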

\begin{proof} By Corollary \ref{cor:estimate}, we have $\sharp[\mathfrak{Q}_{K/F}]_{0}\ge 2^{t+n-1}.$ Notice that $(u, Q)\mapsto
	u\cdot Q$ gives a well defined group action $(\fo^\times/(\fo^\times)^2)\times[\mathfrak{Q}_{K/F}]_{0}\rightarrow [\mathfrak{Q}_{K/F}]_{0}.$ The natural map $[\mathfrak{Q}_{K/F}]_{0}\rightarrow[\mathfrak{Q}_{K/F}]$ extends to an injection $[\mathfrak{Q}_{K/F}]_{0}/(\fo^\times/(\fo^\times)^2)\rightarrow[\mathfrak{Q}_{K/F}]$. Then by Corollary \ref{cor:estimation for hk by quadratic forms}, we have 
	$$h_{K}\ge\sharp [\mathfrak{Q}_{K/F}]\ge\frac{\sharp[\mathfrak{Q}_{K/F}]_{0}}{[\mathfrak{o}^{\times}:(\mathfrak{o}^{\times})^{2}]}\ge\frac{2^{t+n-1}}{[\mathfrak{o}^\times:(\mathfrak{o}^\times)^2]}.$$
	This proves the theorem.
\end{proof}

\subsection{An estimate on $h$}\label{An estimate on $h$}
First we have the following observation:
\begin{lem}\label{lem: non eq units}
	Let $u_{1},\cdots,u_{n-1}\in\mathfrak{o}_{F}^{\times}$ be a basis of the free multiplicative group $\mathfrak{o}_{F}^{\times}/\{\pm1\}$. Then the set of all totally imaginary quadratic extensions $K$ over $F$ with $\mathfrak{o}_{K}^{\times}\ne\mathfrak{o}_{F}^{\times}$ is $$\{F(\sqrt[3]{-1})\}\cup \bigg{\{}F(\sqrt{v}):\ v=\pm\prod_{i=1}^{n-1}u_{i}^{\epsilon_{i}}\ \text{is totally negative},\epsilon_{i}\in\{0,1\}\ (i=1,\cdots,n-1)\bigg{\}}.$$ 
\end{lem}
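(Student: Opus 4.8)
\emph{Proof plan.} The plan is to analyse the inclusion $\fo_F^\times\subseteq\fo_K^\times$. Since $F$ is totally real of degree $n$ and $K$ is a totally imaginary quadratic extension, $K$ has $n$ complex places and no real place, so by Dirichlet's unit theorem $\fo_F^\times$ and $\fo_K^\times$ both have free rank $n-1$; thus the failure of $\fo_K^\times=\fo_F^\times$ is governed entirely by torsion and by a finite unit index. I would detect it via the homomorphism
$$\psi\colon\fo_K^\times\longrightarrow\mu_K,\qquad \eta\longmapsto\eta/\sigma(\eta),$$
where $\sigma$ generates $\mathrm{Gal}(K/F)$. This is well defined: $K$ being a CM field with $K^+=F$ (as $F$ is totally real of index $2$ in $K$), $\sigma$ acts as complex conjugation under every embedding $K\hookrightarrow\BC$, so every conjugate of $\eta/\sigma(\eta)$ has absolute value $1$, and hence $\eta/\sigma(\eta)$ is a root of unity by Kronecker's theorem. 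Its kernel is $\fo_K^\times\cap F=\fo_F^\times$, whence
$$\fo_K^\times\ne\fo_F^\times\ \Longleftrightarrow\ \psi\ \text{is nontrivial}\ \Longleftrightarrow\ \exists\,\eta\in\fo_K^\times\ \text{with}\ \sigma(\eta)=\zeta\eta\ \text{for some}\ \zeta\in\mu_K\setminus\{1\},$$
and such an $\eta$ lies outside $F$, so that $K=F(\eta)$.

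Next I would dispose of the case $\zeta=-1$. Here $\eta^2=-\eta\sigma(\eta)=-N_{K/F}(\eta)$ is a unit of $\fo_F$, and it is totally negative since $\eta$ is purely imaginary in each embedding; so $K=F(\sqrt v)$ with $v:=\eta^2$ a totally negative unit. Because $F(\sqrt v)$ depends only on the class of $v$ in $\fo_F^\times/(\fo_F^\times)^2$, and $\{\pm1\}\cup\{u_1,\dots,u_{n-1}\}$ generates that group, one may normalize $v=\pm\prod_{i=1}^{n-1}u_i^{\epsilon_i}$ totally negative, which is exactly the second set in the statement. Conversely, if $v$ is a totally negative unit then $\sqrt v\in\fo_K^\times\setminus\fo_F^\times$, and for $K=F(\sqrt[3]{-1})=F(\sqrt{-3})$ (a genuine quadratic extension, since $-3$ is not a square in the totally real field $F$) the primitive sixth root of unity is a unit of $K$ outside $\fo_F^\times$. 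Hence every field on the right-hand side does satisfy $\fo_K^\times\ne\fo_F^\times$.

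It remains to handle $\zeta$ of order $\ge 3$, i.e. $\mu_K\supsetneq\{\pm1\}$, and I expect this to be the main obstacle. Writing $\mu_K=\langle\zeta_m\rangle$, the inclusion $\BQ(\zeta_m)\subseteq K$ together with $[K:F]=2$ and $K^+=F$ forces $\BQ(\zeta_m)^+\subseteq F$ and $K=F(\zeta_m)=F\bigl(\sqrt{(\zeta_m+\zeta_m^{-1})^2-4}\bigr)$, with $(\zeta_m+\zeta_m^{-1})^2-4$ totally negative. One must then identify this square class in $F^\times/(F^\times)^2$: for $m=4$ it is the class of $-1$ and falls into the second family, while for $m=3,6$ it is the class of $-3$; the point is to show that no further field is produced. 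This requires a careful local analysis of the valuations of $(\zeta_m+\zeta_m^{-1})^2-4$ at the primes ramifying in $K/F$, combined with the classification of which cyclotomic fields can occur as subfields of a CM field with prescribed maximal totally real subfield — this is where the arithmetic content sits. Assembling the three cases then yields the asserted description of $\{K:\fo_K^\times\ne\fo_F^\times\}$.
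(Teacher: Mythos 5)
Your $\psi$ is precisely the paper's construction in different clothing (write $\sigma(u)=\zeta_{m'}u$ for a unit $u\in\fo_K^\times\setminus\fo_F^\times$), and your treatment of the case $\zeta=-1$ and of the converse inclusion is correct. The genuine gap is the case $\mu_K\supsetneq\{\pm1\}$, which you leave as a plan --- and here the plan cannot succeed, because the statement you are trying to establish is false as written. You rightly deduce only $\mathbb{Q}(\zeta_m)^+\subseteq F$, which permits any $m$ with $\varphi(m)\le 2n$; the paper's own proof instead asserts $[F(\zeta_{m'}):F]=[\mathbb{Q}(\zeta_{m'}):\mathbb{Q}]=\varphi(m')$, which is wrong whenever $F\cap\mathbb{Q}(\zeta_{m'})\ne\mathbb{Q}$.

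A concrete counterexample: take $F=\mathbb{Q}(\sqrt 5)=\mathbb{Q}(\zeta_5)^+$, so $n=2$ and $u_1=\tfrac{1+\sqrt 5}{2}$. The only totally negative element among $\pm1,\pm u_1$ is $-1$, so the lemma's list consists of exactly $F(\sqrt{-3})$ and $F(\sqrt{-1})$. But $K=\mathbb{Q}(\zeta_5)$ is a totally imaginary quadratic extension of $F$ with $\zeta_5\in\fo_K^\times\setminus F$, hence $\fo_K^\times\ne\fo_F^\times$, and $K$ is neither of the two listed fields: $\mathfrak{d}_{K/F}$ is the unique prime of $F$ above $5$, while no odd prime can divide $\mathfrak{d}_{F(\sqrt v)/F}$ when $v$ is a unit (an odd prime $\mathfrak{p}$ ramifies in $F(\sqrt v)/F$ only if $v_{\mathfrak{p}}(v)$ is odd). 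The lemma should therefore be enlarged to include $F(\zeta_m)$ for the finitely many $m$ with $\mathbb{Q}(\zeta_m)^+\subseteq F$; the finiteness and effective computability invoked in the remark following the lemma are unaffected, but both your proof and the paper's require this repair in the $\mu_K\supsetneq\{\pm1\}$ branch before they can be completed.
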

\begin{proof}
	Let $u\in\mathfrak{o}_{K}^{\times}\setminus\mathfrak{o}_{F}^{\times}$. Then $K=F(u)$. Since both $\mathfrak{o}_{K}^{\times}$ and $\mathfrak{o}_{F}^{\times}$ have rank $n-1$, there exists $m>0$ such that $u^{m}\in F$. Therefore the minimal polynomial $P(X)$ of $u$ over $F$ is of degree 2, and divides $X^{m}-u^{m}$. Then we may write another root of $P(X)$ by $u^{\prime}=\zeta_{m^{\prime}}\cdot u$ for some $m^{\prime}$-th primitive root of unity ($m^{\prime}$ divides $m$). In particular, $\zeta_{m^{\prime}}=u^{\prime}/u\in K$ and then $2\ge[F(\zeta_{m^{\prime}}):F]=[\mathbb{Q}(\zeta_{m^{\prime}}):\mathbb{Q}]=\varphi(m^{\prime})$. Here $\varphi$ denotes the Euler function. We deduce that $m^{\prime}\in\{2,3,4,6\}$. 
	
	If $m^{\prime}=3$ or $6$, we have $K=F(\sqrt[3]{-1})$. In the case $m^{\prime}=2$ or $4$, $u^{\prime}=-u$, we have $u^{2}\in\mathfrak{o}_{F}^{\times}$, and then $K$ is of the form $$F(\sqrt{v}),\ v=\pm\prod_{i=1}^{n-1}u_{i}^{\epsilon_{i}}\ \text{is totally negative},\epsilon_{i}\in\{0,1\}\ (i=1,\cdots,n-1).$$
	This proves the lemma.
\end{proof}

\begin{rem}\label{rem:extensions by units}
By Lemma \ref{lem: non eq units}, the totally imaginary quadratic extensions $K$ over $F$ with $\mathfrak{o}_{K}^{\times}\ne\mathfrak{o}_{F}^{\times}$ form a finite set, and can be effectively computed. Then in the rest of our paper, we make the assumption that $ \mathfrak{o}_K^\times=\mathfrak{o}_F^\times $, and sometimes
write it by $\mathfrak{o}^{\times}$.	
\end{rem}

Let us consider the natural homomorphism
$$ \phi: \mathrm{Cl}_F \rightarrow \mathrm{Cl}_K,\ [\mathfrak{a}]\mapsto[\mathfrak{ao}_{K}] . $$  Put
$h=\sharp(\mathrm{Cl}_K/\mathrm{im}(\phi))$ and $h^{\prime}=\sharp(\mathrm{Cl}_{F}/{\rm ker}(\phi))$, and then $hh^{\prime}=h_{K}$. We take a set of fractional
ideals $\mathfrak{N}_1, \cdots \mathfrak{N}_{h}$ (resp. $\mathfrak{a}_{1},\cdots,\mathfrak{a}_{h^{\prime}}$) of $K$ (resp. $F$) that
represent $\mathrm{Cl}_K/\mathrm{im}(\phi)$ (resp. $\mathrm{Cl}_{F}/{\rm ker}(\phi)$).

For each $i$ ($1\leq i\leq h$), let $\mathscr{L}_i$ be the set of invertible
$\mathfrak{o}$-submodules $\mathcal{L}$ of $\mathfrak{N}_i$ defined
by
$$ \mathscr{L}_i = \{\mathcal{L}\subset \mathfrak{N}_i: \mathcal{L}=\mathfrak{a}_{j}\alpha\ \text{for some}\ j\in\{1,...,h^{\prime}\},\alpha\in K^{\ast}\} . $$
 We also put
$\mathscr{L}_{i}^{sa}=\{\mathcal{L}\in\mathscr{L}_{i}:\ \mathcal{L}\
\text{is saturated}\}$.

\begin{lem}\label{lem:ideal} Let $\mathfrak{M}$ be an ideal of $\mathfrak{o}_K$.
    \begin{enumerate}
    \item\label{it:ideal-1} There exists a unique $k\in\{1,\cdots,h\}$ and a unique $\mathcal{L}^{\prime}\in\mathscr{L}_{k}$ such that $\mathfrak{M}=\mathcal{L}^{\prime}\mathfrak{N}_{k}^{-1}$.
    \item\label{it:ideal-2} There exists a unique $i\in\{1,\cdots,
    h\}$, a unique $\mathcal{L}\in \mathscr{L}_i^{sa}$ and a unique ideal $\mathfrak{a}$ of $\mathfrak{o}_{F}$ such that $\mathfrak{M}=\mathfrak{a}\mathcal{L}\mathfrak{N}_{i}^{-1}$. \end{enumerate}
\end{lem}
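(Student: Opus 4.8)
The plan is to reduce both parts to ideal--class bookkeeping for the map $\phi\colon\Cl_F\to\Cl_K$, using the standing assumption $\fo_K^\times=\fo_F^\times$ from Remark~\ref{rem:extensions by units}, together with (for (b)) an ``$\fo_F$-content'' operation on fractional $\fo_K$-ideals. For (a): since $\mathfrak{N}_1,\dots,\mathfrak{N}_h$ represent $\Cl_K/\mathrm{im}(\phi)$, the classes $[\mathfrak{M}\mathfrak{N}_i]=[\mathfrak{M}]+[\mathfrak{N}_i]$ run once through all cosets of $\mathrm{im}(\phi)$, so exactly one index $k$ has $[\mathfrak{M}\mathfrak{N}_k]\in\mathrm{im}(\phi)$; and as $[\mathfrak{a}_1\fo_K],\dots,[\mathfrak{a}_{h'}\fo_K]$ enumerate $\mathrm{im}(\phi)$ bijectively, there is a unique $j$ with $[\mathfrak{M}\mathfrak{N}_k]=[\mathfrak{a}_j\fo_K]$. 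Writing $\mathfrak{M}\mathfrak{N}_k(\mathfrak{a}_j\fo_K)^{-1}=(\alpha)$ and setting $\mathcal{L}'=\mathfrak{a}_j\alpha$ gives $\mathcal{L}'\fo_K=\alpha\mathfrak{a}_j\fo_K=\mathfrak{M}\mathfrak{N}_k\subseteq\mathfrak{N}_k$ (as $\mathfrak{M}$ is integral), so $\mathcal{L}'\in\mathscr{L}_k$ and $\mathcal{L}'\mathfrak{N}_k^{-1}=\mathfrak{M}$. For uniqueness of $\mathcal{L}'$: two such submodules have the same $\fo_K$-closure $\mathfrak{M}\mathfrak{N}_k$, hence the same index $j$ (compare $\phi$-images of the $\mathfrak{a}_j$), hence values of $\alpha$ differing by a unit $u\in\fo_K^\times=\fo_F^\times$; since $u\mathfrak{a}_j=\mathfrak{a}_j$ the two submodules coincide. (Here the hypothesis $\fo_K^\times=\fo_F^\times$ is essential: without it uniqueness genuinely fails.)

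For (b): given a fractional $\fo_K$-ideal $\mathfrak{C}$, set $\mathfrak{C}^\flat:=F\cap\mathfrak{C}$, a fractional $\fo_F$-ideal. Using $F\cap\fo_K=\fo_F$ and, crucially, the identity $(\mathfrak{b}\fo_K\cdot\mathfrak{C})^\flat=\mathfrak{b}\,\mathfrak{C}^\flat$ for fractional $\fo_F$-ideals $\mathfrak{b}$, I would argue as follows. Put $\mathfrak{m}:=F\cap\mathfrak{M}^{-1}\supseteq\fo_F$ and $\mathfrak{a}:=\mathfrak{m}^{-1}$, an integral ideal of $\fo_F$. First check that $\mathfrak{a}$ is forced: in any decomposition $\mathfrak{M}=\mathfrak{a}'\mathcal{L}\mathfrak{N}_i^{-1}$ with $\mathcal{L}=\mathfrak{a}_j\beta\in\mathscr{L}_i^{sa}$ one computes $\mathfrak{M}\mathfrak{N}_i=\beta\,\mathfrak{a}'\mathfrak{a}_j\fo_K$, hence $\beta^{-1}\mathfrak{N}_i=\mathfrak{a}'\mathfrak{a}_j\mathfrak{M}^{-1}\fo_K$ as $\fo_K$-ideals, and the saturation condition $\mathfrak{a}_j\beta=F\beta\cap\mathfrak{N}_i$, i.e. $\mathfrak{a}_j=(\beta^{-1}\mathfrak{N}_i)^\flat$, becomes $\mathfrak{a}_j=\mathfrak{a}'\mathfrak{a}_j\mathfrak{m}$, forcing $\mathfrak{a}'\mathfrak{m}=\fo_F$, i.e. $\mathfrak{a}'=\mathfrak{a}$. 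Next, $\mathfrak{M}':=\mathfrak{a}^{-1}\mathfrak{M}=\mathfrak{m}\mathfrak{M}\subseteq\mathfrak{M}^{-1}\mathfrak{M}=\fo_K$ is an integral $\fo_K$-ideal that is ``primitive'' in the sense $F\cap(\mathfrak{M}')^{-1}=\mathfrak{a}\,\mathfrak{m}=\fo_F$. Applying (a) to $\mathfrak{M}'$ yields the unique $i$ and the unique $\mathcal{L}=\mathfrak{a}_{j_0}\alpha\in\mathscr{L}_i$ with $\mathcal{L}\mathfrak{N}_i^{-1}=\mathfrak{M}'$; from $\alpha\mathfrak{a}_{j_0}\fo_K=\mathfrak{M}'\mathfrak{N}_i$ one gets $(\alpha^{-1}\mathfrak{N}_i)^\flat=\mathfrak{a}_{j_0}(F\cap(\mathfrak{M}')^{-1})=\mathfrak{a}_{j_0}$, which is exactly the statement that this $\mathcal{L}$ is saturated in $\mathfrak{N}_i$, so $\mathcal{L}\in\mathscr{L}_i^{sa}$ and $\mathfrak{a}\mathcal{L}\mathfrak{N}_i^{-1}=\mathfrak{a}\mathfrak{M}'=\mathfrak{M}$. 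For uniqueness, a valid triple has $\mathfrak{a}'=\mathfrak{a}$ by the forcing step above, hence $\mathcal{L}\mathfrak{N}_i^{-1}=\mathfrak{M}'$ with $\mathcal{L}\in\mathscr{L}_i\supseteq\mathscr{L}_i^{sa}$, and uniqueness in (a) pins down $(i,\mathcal{L})$.

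The main obstacle will be the $\flat$-operation and the translation of ``$\mathcal{L}$ saturated in $\mathfrak{N}_i$'' into the content identity above. Proving $(\mathfrak{b}\fo_K\mathfrak{C})^\flat=\mathfrak{b}\,\mathfrak{C}^\flat$ requires a local analysis at the primes $\mathfrak{p}$ of $F$: over a split prime $K_\mathfrak{p}\cong F_\mathfrak{p}\times F_\mathfrak{p}$ one has $F_\mathfrak{p}\cap(\mathfrak{p}^a\times\mathfrak{p}^b)=\mathfrak{p}^{\max(a,b)}$, while over inert or ramified primes $K_\mathfrak{p}$ is a field and one takes a ceiling of $c/e_\mathfrak{p}$; this is also the step where one must verify carefully that $\mathfrak{a}=\mathfrak{m}^{-1}$ and $\mathfrak{M}'=\mathfrak{m}\mathfrak{M}$ are genuinely integral. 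Everything else is routine bookkeeping with $\Cl_F\to\Cl_K$.
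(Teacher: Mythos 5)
Your part (a) is essentially the paper's argument: locate $k$ via the coset $\mathrm{Cl}_K/\mathrm{im}(\phi)$, locate $j$ via $\mathrm{Cl}_F/\ker(\phi)$, and use $\fo_K^\times=\fo_F^\times$ for uniqueness of the submodule itself.

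Part (b) is where you diverge genuinely. The paper proves existence by choosing any saturated $\mathcal{L}''\supseteq\mathcal{L}'$ inside $\mathfrak{N}_k$ and reading off $\mathfrak{a}$ from $\mathcal{L}'=\mathfrak{a}\mathcal{L}''$; it then proves uniqueness separately via a coprime factorization $\mathfrak{a}^{-1}\mathfrak{b}=\mathfrak{a}'^{-1}\mathfrak{b}'$ and the observation that an integral $\fo_F$-ideal dividing $\tilde{\mathcal{L}}\mathfrak{N}_i^{-1}$ must be trivial when $\tilde{\mathcal{L}}$ is saturated. Your route instead introduces the content operation $\mathfrak{C}^\flat=F\cap\mathfrak{C}$ with the multiplicativity identity $(\mathfrak{b}\fo_K\cdot\mathfrak{C})^\flat=\mathfrak{b}\,\mathfrak{C}^\flat$ (checked locally; this is correct: split gives $\max$, inert/ramified gives $\lceil c/e_\mathfrak{p}\rceil$), which lets you pin down $\mathfrak{a}=(F\cap\mathfrak{M}^{-1})^{-1}$ canonically from $\mathfrak{M}$ before doing anything else, and then reduce both existence and uniqueness of $(i,\mathcal{L})$ to a single application of (a) to the primitive ideal $\mathfrak{M}'=\mathfrak{a}^{-1}\mathfrak{M}$. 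The trade is that you must prove the $\flat$-identity and reformulate saturation as $\mathfrak{a}_j=(\beta^{-1}\mathfrak{N}_i)^\flat$ — a small amount of extra machinery — but in exchange uniqueness becomes automatic rather than requiring the separate coprime-decomposition argument, and the ideal $\mathfrak{a}$ gets an intrinsic description independent of any chosen saturation. Both are valid; yours is slightly more structural, the paper's slightly more elementary.
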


\begin{proof} We first prove (\ref{it:ideal-1}). Let $[\mathfrak{M}]$ be the class of $\mathfrak{M}$ in
$\mathrm{Cl}_K$. Then there exists a $k$ such that
$[\mathfrak{M}\mathfrak{N}_k]$ lies in the image of $\phi$. Namely
there exists a $j\in\{1,...,h^{\prime}\}$ such that
$\phi([\mathfrak{a}_{j}])= [\mathfrak{M}\mathfrak{N}_k]$. This means
that $\mathfrak{M}=\alpha \mathfrak{a}_{j}  \mathfrak{o}_K
\mathfrak{N}_k^{-1}$ for some $\alpha\in K^{\ast}$. Since
$\mathfrak{M}\subset \mathfrak{o}_{K}$, we have
$\mathcal{L}^{\prime}=\mathfrak{a}_{j}\alpha\in\mathscr{L}_{k}$.
This shows the existence of $k$ and $\mathcal{L}^{\prime}$. To prove
the uniqueness, let $m$ and
$\tilde{\mathcal{L}}^{\prime}=\mathfrak{a}_{\ell}\beta\in\mathscr{L}_{m}$
be such that
$\mathfrak{M}=\tilde{\mathcal{L}}^{\prime}\mathfrak{N}_{m}^{-1}$.
Then $ \mathcal{L}^{\prime}\mathfrak{N}_{k}^{-1}
=\tilde{\mathcal{L}}^{\prime}\mathfrak{N}_{m}^{-1}$. By the
definition of $\mathfrak{N}_{1},...,\mathfrak{N}_{h}$ we have $k=m$.
Hence
$\mathfrak{a}_{j}\alpha\mathfrak{o}_{K}=\mathfrak{a}_{\ell}\beta\mathfrak{o}_{K}$.
By the definition of
$\mathfrak{a}_{1},...,\mathfrak{a}_{h^{\prime}}$ we have $j=\ell$.
Therefore, $\alpha$ differs from $\beta$ by a unit
$u\in\mathfrak{o}_{K}^{\times}$. The condition
$\mathfrak{o}_{K}^{\times}=\mathfrak{o}^{\times}$ implies that
$\mathcal{L}^{\prime}=\mathfrak{a}_{j}\alpha=\mathfrak{a}_{j}\beta=\tilde{\mathcal{L}}^{\prime}$.
This proves (\ref{it:ideal-1}).

We write $\mathfrak{M}=\mathcal{L}^{\prime}\mathfrak{N}_{k}^{-1}$
 as in (\ref{it:ideal-1}) and choose a saturated invertible submodule $\mathcal{L}^{\prime\prime}$ of $\mathfrak{N}_{k}$ containing $\mathcal{L}$.
 Then $\mathcal{L}^{\prime}=\mathfrak{a}\mathcal{L}^{\prime\prime}$ for some ideal $\mathfrak{a}$ of $\mathfrak{o}_{F}$. There exists $i\in\{1,...,h\}$ and $\mathcal{L}\in\mathscr{L}_{i}$ such that
 $\mathcal{L}\mathfrak{N}_{i}^{-1}=\mathcal{L}^{\prime\prime}\mathfrak{N}_{k}^{-1}$ (in fact, $i=k$). Clearly $\mathcal{L}\in\mathscr{L}_{i}^{sa}$, and
  $\mathfrak{M}=\mathfrak{a}\mathcal{L}\mathfrak{N}_{i}^{-1}$. This
  proves the existence part of (\ref{it:ideal-2}).
For the uniqueness, we assume that
$\mathfrak{a}\mathcal{L}\mathfrak{N}_{i}^{-1}=\mathfrak{b}\tilde{\mathcal{L}}\mathfrak{N}_{r}^{-1}$ with $\mathcal{L}\in\mathscr{L}_{i}^{sa}$ and $\tilde{\mathcal{L}}\in\mathscr{L}_{r}^{sa}$. So $i=r$.
  Write $\mathfrak{a}^{-1}\mathfrak{b}=\mathfrak{a}^{\prime-1}\mathfrak{b}^{\prime}$
  ($\mathfrak{a}^{\prime},\mathfrak{b}^{\prime}\subseteq\mathfrak{o}$, $\mathfrak{a}^{\prime}+\mathfrak{b}^{\prime}=\mathfrak{o}$).
 Then $\mathfrak{a}^{\prime}\mathfrak{o}_{K}$ divides
 $\mathfrak{b}^{\prime}\tilde{\mathcal{L}}\mathfrak{N}_{i}^{-1}$,
 and so divides $\tilde{\mathcal{L}}\mathfrak{N}_{i}^{-1}$. Since $\tilde{\mathcal{L}}$ is saturated, we have $\mathfrak{a}^{\prime}=\mathfrak{o}$.
  Similarly, $\mathfrak{b}^{\prime}=\mathfrak{o}$. Therefore $\mathfrak{a}=\mathfrak{b}$, and we have $\mathcal{L}\mathfrak{o}_{K}=\tilde{\mathcal{L}}\mathfrak{o}_{K}$. By (\ref{it:ideal-1}), $\mathcal{L}=\tilde{\mathcal{L}}$. This proves (\ref{it:ideal-2}).
\end{proof}

Let $\sigma$ be the nontrivial element of $\mathrm{Gal}(K/F)$. For
each $i=1,\cdots, h$, we consider the quadratic form
$$ Q_i : \mathfrak{N}_i\times \mathfrak{N}_i \rightarrow F, \hskip 10pt (x,y)\mapsto
\frac{x\sigma(y)+y\sigma(x)}{2}.
$$ By Lemma \ref{lem:relation-d}, its discriminant is $\mathfrak{d}_{K/F}N_{K/F}(\mathfrak{N}_i)^2$. As $K/F$ is totally
imaginary, $Q_i$ is positive definite.

Let $\zeta_F$ and $\zeta_K$ be the zeta functions defined by
$$ \zeta_F(s) = \sum_{\mathfrak{a}\subset\mathfrak{o}}
|\mathfrak{a}|_{\mathbb{R}}^{-s}
$$ and $$ \zeta_K(s) = \sum_{\mathfrak{M}\subset \mathfrak{o}_K}
|N_{K/\mathbb{Q}}(\mathfrak{M})|^{-s}$$ as usual, where $\mathfrak{a}$ and
$\mathfrak{m}$ run over the set of ideals of $\mathfrak{o}$ and
$\mathfrak{o}_K$ respectively. Obviously
$\frac{\zeta_K(s)}{\zeta_F(2s)}$ is a series with positive terms and
converges when $\mathrm{Re}(s)>1$.

\begin{prop} \label{prop:estimation of h} We have the following:
    \begin{enumerate}
        \item\label{it:estimation-1}For each $i\in\{1,...,h\}$, we fix a saturated invertible $\mathfrak{o}$-submodule $\mathcal{L}_{i}$ of $\mathfrak{N}_{i}$. Then we have
        \begin{eqnarray*}
        \zeta_{K}(s)&=&\zeta_{F}(2s)\sum_{i=1}^{h}\sum_{\mathcal{L}\in\mathscr{L}_{i}^{sa}}\bigg{|}N_{K/\mathbb{Q}}\bigg{(}\frac{\mathcal{L}\mathfrak{o}_{K}}{\mathfrak{N}_{i}}\bigg{)}\bigg{|}^{-s}\\
        &=&\zeta_{F}(2s)\sum_{i=1}^{h}\bigg{|}N_{K/\mathbb{Q}}\bigg{(}\frac{\mathcal{L}_{i}\mathfrak{o}_{K}}{\mathfrak{N}_{i}}\bigg{)}\bigg{|}^{-s}+\sum_{i=1}^{h}\sum_{\mathcal{L}\in\mathscr{L}_{i},\mathcal{L}\cap\mathcal{L}_{i}=0}\bigg{|}N_{K/\mathbb{Q}}\bigg{(}\frac{\mathcal{L}\mathfrak{o}_{K}}{\mathfrak{N}_{i}}\bigg{)}\bigg{|}^{-s}.\end{eqnarray*}
    \item\label{it:estimation-2} Let $\mathcal{L}_{i}$ be as in $\rm(a)$. Then
    $$ \sum_{i=1}^{h}\bigg{|}N_{K/\mathbb{Q}}\bigg{(}\frac{\mathcal{L}_{i}\mathfrak{o}_{K}}{\mathfrak{N}_{i}}\bigg{)}\bigg{|}^{-s}\ll \frac{\zeta_{K}(s)}{\zeta_{F}(2s)}. $$
        \item\label{it:estimation-3} If we write $\frac{\zeta_K(s)}{\zeta_F(2s)}$ in the form $\sum\limits_{n=1}^{+\infty}v_n
        n^{-s}$, then
        $$ \sum_{n<\frac{|\mathfrak{d}_{K/F}|^{1/2}}{2^{[F:\mathbb{Q}]}}} v_n \leq h. $$
    \end{enumerate}

\end{prop}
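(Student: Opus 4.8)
The plan is to read all three assertions off the ideal dictionary of Lemma~\ref{lem:ideal} together with the ``at most one small submodule'' statement of Lemma~\ref{thm:only-one}. For part (a) I would begin from Lemma~\ref{lem:ideal}(b): assigning to a triple $(i,\mathcal{L},\mathfrak{a})$ with $1\le i\le h$, $\mathcal{L}\in\mathscr{L}_i^{sa}$ and $\mathfrak{a}\subseteq\mathfrak{o}_F$ an integral ideal the integral ideal $\mathfrak{a}\mathcal{L}\mathfrak{N}_i^{-1}$ of $\mathfrak{o}_K$ (it is integral because $\mathcal{L}\subseteq\mathfrak{N}_i$ and $\mathfrak{a}\subseteq\mathfrak{o}_F$) defines a bijection onto the set of all integral ideals of $\mathfrak{o}_K$. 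Since $N_{K/\mathbb{Q}}$ is multiplicative on fractional ideals and $N_{K/\mathbb{Q}}(\mathfrak{a}\mathfrak{o}_K)=N_{F/\mathbb{Q}}(\mathfrak{a})^{2}=|\mathfrak{a}|_{\mathbb{R}}^{2}$, regrouping the defining series of $\zeta_K(s)$ by the pair $(i,\mathcal{L})$ and summing the remaining factor over all $\mathfrak{a}$ (which is exactly $\zeta_F(2s)$, and the rearrangement is harmless since all terms are positive and everything converges absolutely for $\mathrm{Re}(s)>1$) gives the first identity. For the second identity I would, for each fixed $i$, isolate the term $\mathcal{L}=\mathcal{L}_i$ and reorganize the rest by grouping each $\mathcal{L}'\in\mathscr{L}_i$ by its saturation $\overline{\mathcal{L}'}\in\mathscr{L}_i^{sa}$, so that $\mathcal{L}'=\mathfrak{a}\,\overline{\mathcal{L}'}$ for a unique integral ideal $\mathfrak{a}$ of $\mathfrak{o}_F$ and $\mathcal{L}'\cap\mathcal{L}_i\ne 0$ exactly when $\overline{\mathcal{L}'}=\mathcal{L}_i$; expanding $\zeta_K(s)=\sum_i\sum_{\mathcal{L}'\in\mathscr{L}_i}|N_{K/\mathbb{Q}}(\mathcal{L}'\mathfrak{o}_K/\mathfrak{N}_i)|^{-s}$ via Lemma~\ref{lem:ideal}(a) and using $|N_{K/\mathbb{Q}}((\mathfrak{a}\mathcal{L}_i)\mathfrak{o}_K/\mathfrak{N}_i)|=|\mathfrak{a}|_{\mathbb{R}}^{2}|N_{K/\mathbb{Q}}(\mathcal{L}_i\mathfrak{o}_K/\mathfrak{N}_i)|$, the terms with $\overline{\mathcal{L}'}=\mathcal{L}_i$ collapse to $\zeta_F(2s)|N_{K/\mathbb{Q}}(\mathcal{L}_i\mathfrak{o}_K/\mathfrak{N}_i)|^{-s}$, leaving precisely the terms with $\mathcal{L}'\cap\mathcal{L}_i=0$.

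Part (b) is then a one-line consequence of the first identity of (a): the Dirichlet series $\frac{\zeta_K(s)}{\zeta_F(2s)}=\sum_i\sum_{\mathcal{L}\in\mathscr{L}_i^{sa}}|N_{K/\mathbb{Q}}(\mathcal{L}\mathfrak{o}_K/\mathfrak{N}_i)|^{-s}$ has non-negative coefficients, and $\sum_i|N_{K/\mathbb{Q}}(\mathcal{L}_i\mathfrak{o}_K/\mathfrak{N}_i)|^{-s}$ is obtained from it by discarding the terms with $\mathcal{L}\ne\mathcal{L}_i$; hence it is dominated coefficient-by-coefficient, which gives the asserted $\ll$ (with implied constant $1$).

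For part (c), the real content, put $X=|\mathfrak{d}_{K/F}|_{\mathbb{R}}^{1/2}/2^{[F:\mathbb{Q}]}$. By the first identity of (a), $v_n=\#\{(i,\mathcal{L}):1\le i\le h,\ \mathcal{L}\in\mathscr{L}_i^{sa},\ |N_{K/\mathbb{Q}}(\mathcal{L}\mathfrak{o}_K/\mathfrak{N}_i)|=n\}$, and each $|N_{K/\mathbb{Q}}(\mathcal{L}\mathfrak{o}_K/\mathfrak{N}_i)|$ is a positive integer because $\mathcal{L}\subseteq\mathfrak{N}_i$; thus $\sum_{n<X}v_n=\sum_{i=1}^{h}\#\{\mathcal{L}\in\mathscr{L}_i^{sa}:|N_{K/\mathbb{Q}}(\mathcal{L}\mathfrak{o}_K/\mathfrak{N}_i)|<X\}$, and it is enough to bound each inner count by $1$. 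Fix $i$ and use the positive definite form $Q_i$ on $\mathfrak{N}_i$, for which $q_{Q_i}(x)=N_{K/F}(x)$ and, by Lemma~\ref{lem:relation-d}, $\mathfrak{d}_{Q_i}=\mathfrak{d}_{K/F}N_{K/F}(\mathfrak{N}_i)^{2}$. For $\mathcal{L}\in\mathscr{L}_i^{sa}$, write $\mathcal{L}=\mathfrak{b}\alpha$ with $\mathfrak{b}$ a fractional ideal of $\mathfrak{o}_F$ and $\alpha\in K^{\times}$; then Lemma~\ref{lem:quad-2} gives $I(Q_i,\mathcal{L})=\mathfrak{b}^{2}(N_{K/F}(\alpha))$, so $|I(Q_i,\mathcal{L})|_{\mathbb{R}}=|\mathfrak{b}|_{\mathbb{R}}^{2}|N_{K/\mathbb{Q}}(\alpha)|$ while $|N_{K/\mathbb{Q}}(\mathcal{L}\mathfrak{o}_K/\mathfrak{N}_i)|=|\mathfrak{b}|_{\mathbb{R}}^{2}|N_{K/\mathbb{Q}}(\alpha)|/N_{K/\mathbb{Q}}(\mathfrak{N}_i)$, and $|\mathfrak{d}_{Q_i}|_{\mathbb{R}}^{1/2}=|\mathfrak{d}_{K/F}|_{\mathbb{R}}^{1/2}N_{K/\mathbb{Q}}(\mathfrak{N}_i)$. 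Dividing, the condition $|N_{K/\mathbb{Q}}(\mathcal{L}\mathfrak{o}_K/\mathfrak{N}_i)|<X$ is equivalent to $|I(Q_i,\mathcal{L})|_{\mathbb{R}}<|\mathfrak{d}_{Q_i}|_{\mathbb{R}}^{1/2}/2^{[F:\mathbb{Q}]}$, so Lemma~\ref{thm:only-one} (applicable since $Q_i$ is definite and every $\mathcal{L}\in\mathscr{L}_i^{sa}$ is a saturated, locally free rank-$1$ submodule of $\mathfrak{N}_i$) yields at most one such $\mathcal{L}$; summing over $i$ gives $\sum_{n<X}v_n\le h$. The one step I expect to require genuine care is this threshold-matching computation in (c) — keeping exact track of $|N_{K/\mathbb{Q}}(\mathcal{L}\mathfrak{o}_K/\mathfrak{N}_i)|$, $|I(Q_i,\mathcal{L})|_{\mathbb{R}}$ and $|\mathfrak{d}_{Q_i}|_{\mathbb{R}}$ so that the bound in the statement becomes precisely the hypothesis of Lemma~\ref{thm:only-one}; the manipulations in (a) are purely formal rearrangements, the only mildly fussy point being the identification of the saturated part in the second identity.
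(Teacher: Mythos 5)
Your proposal is correct and follows the same route as the paper: part (a) by the two regroupings of $\zeta_K(s)$ coming from Lemma~\ref{lem:ideal}, part (b) by coefficient-wise positivity, and part (c) by translating the norm threshold via Lemma~\ref{lem:relation-d} and Lemma~\ref{lem:quad-2} into the hypothesis of Lemma~\ref{thm:only-one} and summing over $i$; your worked-out threshold computation in (c) is exactly the paper's "or equivalently" step made explicit. The only gloss worth flagging is in (b): the fixed $\mathcal{L}_i$ need not itself lie in $\mathscr{L}_i^{sa}$, so strictly speaking one discards all terms except the unique $\mathcal{L}'_i\in\mathscr{L}_i^{sa}$ with $\mathcal{L}'_i\mathfrak{N}_i^{-1}=\mathcal{L}_i\mathfrak{N}_i^{-1}$ (hence the same norm), which is exactly how the paper phrases it — but the conclusion is unchanged.
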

\begin{proof}
By Lemma \ref{lem:ideal}, $\zeta_{K}(s)$ can be writen in two forms
    \begin{eqnarray*}
        \zeta_{K}(s)&=&\zeta_{F}(2s)\sum_{i=1}^{h}\sum_{\mathcal{L}\in\mathscr{L}_{i}^{sa}}\bigg{|}N_{K/\mathbb{Q}}\bigg{(}\frac{\mathcal{L}\mathfrak{o}_{K}}{\mathfrak{N}_{i}}\bigg{)}\bigg{|}^{-s}\\
        &=&\sum_{i=1}^{h}\sum_{\mathcal{L}\in\mathscr{L}_{i},\mathcal{L}\subseteq\mathcal{L}_{i}}\bigg{|}N_{K/\mathbb{Q}}\bigg{(}\frac{\mathcal{L}\mathfrak{o}_{K}}{\mathfrak{N}_{i}}\bigg{)}\bigg{|}^{-s}+\sum_{i=1}^{h}\sum_{\mathcal{L}\in\mathscr{L}_{i},\mathcal{L}\cap\mathcal{L}_{i}=0}\bigg{|}N_{K/\mathbb{Q}}\bigg{(}\frac{\mathcal{L}\mathfrak{o}_{K}}{\mathfrak{N}_{i}}\bigg{)}\bigg{|}^{-s}.
    \end{eqnarray*}
For each $i$ and each $\mathcal{L}\in\mathscr{L}_{i}$ contained in $\mathcal{L}_{i}$, there exists $\mathfrak{a}\subseteq\mathfrak{o}$ such that $\mathfrak{a}\mathcal{L}_{i}=\mathcal{L}$. That is $\mathcal{L}\mathfrak{N}_{i}^{-1}=\mathfrak{a}\mathcal{L}_{i}\mathfrak{N}_{i}^{-1}$. On the other hand, if $\mathfrak{a}\mathcal{L}_{i}\mathfrak{N}_{i}^{-1}=\mathfrak{b}\mathcal{L}_{j}\mathfrak{N}_{j}^{-1}$ for $\mathfrak{a},\mathfrak{b}\subseteq\mathfrak{o}$, then $i=j$, and hence $\mathfrak{a}=\mathfrak{b}$.
This proves (\ref{it:estimation-1}).

By (\ref{it:estimation-1}), we have
$$ \frac{\zeta_K(s)}{\zeta_{F}(2s)} =\sum_{i=1}^{h}\sum_{\mathcal{L}\in\mathscr{L}_{i}^{sa}}\bigg{|}N_{K/\mathbb{Q}}\bigg{(}\frac{\mathcal{L}\mathfrak{o}_{K}}{\mathfrak{N}_{i}}\bigg{)}\bigg{|}^{-s}.$$ 
For each $i\in\{1,\cdots,h\}$, $\mathcal{L}_{i}$ is saturated. Then by Lemma \ref{lem:ideal}, there exists  a unique $\mathcal{L}^{\prime}_{i}\in\mathscr{L}_{i}^{sa}$ such that $\mathcal{L}_{i}\mathfrak{N}_{i}^{-1}=\mathcal{L}_{i}^{\prime}\mathfrak{N}_{i}^{-1}$ i.e. $|N_{K/\mathbb{Q}}(\mathcal{L}_{i}\mathfrak{N}_{i}^{-1})|^{-s}$ occures in the expansion of $\frac{\zeta_{K}(s)}{\zeta_{F}(2s)}$. This proves (\ref{it:estimation-2}).

By Lemma \ref{thm:only-one}, for each $i\in\{1,\cdots,h\}$ there exists at most one member  $\mathcal{L}$ of $\mathscr{L}_i^{sa}$ such
that
$$ |N_{K/\mathbb{Q}}(\mathcal{L} )| = I(Q_i, L) \leq \frac{|\mathfrak{d}_{K/F}|_\mathbb{R}^{1/2} |N_{K/\mathbb{Q}}(\mathfrak{N}_i)|}{2^{[F:\mathbb{Q}]}},
$$ or equivalently
$$ \left|N_{ K/\mathbb{Q} } ( \mathcal{L} \mathfrak{N}_i^{-1}) \right| \leq \frac{|\mathfrak{d}_{K/F}|_{\mathbb{R}}^{1/2} }{2^{[F:\mathbb{Q}]}}
,$$ which yields  (\ref{it:estimation-3}).
\end{proof}

\section{Hilbert modular form and Dirichlet series}
\label{sec:Gross-Zagier}

\subsection{Hilbert modular forms}\label{subsection 2.1}

We collect some facts on Hilbert modular forms. The main references
are \cite{TL 93} and \cite{Shi78}.

Let $\mathbb{A}_{F}$ be the ring of adeles of $F$. We write
$\mathfrak{d}$ for the absolute different of $F$. For each integral
ideal $\mathfrak{a}$ of $F$, let $W(\mathfrak{a})$ denote the
congruence subgroup
$$\Big{(}\mathrm{GL}_{2}(\mathbb{A}_{F,\infty})^{+}\times
\prod_{\mathfrak{p}}W_{\mathfrak{p}}(\mathfrak{a})\Big{)}\cap
\mathrm{GL}_{2}(\mathbb{A}_{F})$$ of
$\mathrm{GL}_{2}(\mathbb{A}_{F})$. Here, for each finite place
$\mathfrak{p}$ of $F$,
$$W_{\mathfrak{p}}(\mathfrak{a}):=\bigg{\{}
x=
\begin{pmatrix}
    a &b \\ c & d
\end{pmatrix}
\in
\begin{pmatrix}
    \mathfrak{o}_{\mathfrak{p}} &\mathfrak{d}_{\mathfrak{p}}^{-1} \\ (\mathfrak{ad})_{\mathfrak{p}} & \mathfrak{o}_{\mathfrak{p}}
\end{pmatrix}
\bigg{|}
a\mathfrak{o}_{\mathfrak{p}}+\mathfrak{a}\mathfrak{o}_{\mathfrak{p}}=\mathfrak{o}_{\mathfrak{p}},{\rm
det}(x)\in\mathfrak{o}_{\mathfrak{p}}^{\times}\bigg{\}}.   $$
Let $\{t_{1},\cdots,t_{h_{F}}\}$ be a set of finite ideles such that $t_{1}\mathfrak{o},\cdots,t_{h_{F}}\mathfrak{o}$ represent $Cl_{F}$. For each $\lambda\in\{1,\cdots,h_{F}\}$, we put $x_{\lambda}:={\rm diag}(1,t_{\lambda})\in GL_{2}(\mathbb{A}_{F})$. Then $$GL_{2}(\mathbb{A}_{F})=\coprod_{\lambda=1}^{h_{F}}GL_{2}(F)x_{\lambda}W(\mathfrak{a}).$$ Write $\varGamma_{\lambda}(\mathfrak{a}):=x_{\lambda}W(\mathfrak{a})x_{\lambda}^{-1}\cap GL_{2}(F)$. 

Let $\mathbf{f}$ be a normalized Hilbert newform of parallel weight
2 with respect to $W(\mathfrak{a})$, i.e. of level $\mathfrak{a}$. We write $\mathcal{H}$ for the usual Poincar\'{e} upper plane. Then $\mathbf{f}$ is uniquely determined by a tuple 
$(f_{1},\cdots,f_{h_{F}})$ of functions;
Here for each $\lambda\in\{1,\cdots,h_{F}\}$, $f_{\lambda}$ is a classical Hilbert modular form (defined on $\mathcal{H}^{n}$) with respect to $\varGamma_{\lambda}(\mathfrak{a})$.

Let $\lambda(\mathfrak{p},\mathbf{f})$ be Fourier coefficients of
$\mathbf{f}$. Then all 
$\lambda(\mathfrak{p},\mathbf{f})$ are real numbers
\cite[Propositions 2.5 and 2.8]{Shi78}.  When
$\mathfrak{p}\parallel\mathfrak{a}$,
$\lambda(\mathfrak{p},\mathbf{f})=\pm1$. When
$\mathfrak{p}^{2}\mid\mathfrak{a}$,
$\lambda(\mathfrak{p},\mathbf{f})=0$. By \cite[Theorem 1]{BD06}, when
$\mathfrak{p}\nmid\mathfrak{a}$,
$|\lambda(\mathfrak{p},\mathbf{f})|\le2\sqrt{|\mathfrak{p}|_{\mathbb{R}}}$.
One can attach to $\mathbf{f}$ a Dirichlet series $D(s,\mathbf{f})$,
which has an Euler product
$$  D(s,\mathbf{f})=\prod_{\mathfrak{p}\mid\mathfrak{a}}[1-\lambda(\mathfrak{p},\mathbf{f})
|\mathfrak{p}|_{\mathbb{R}}^{-s}]^{-1}\times
\prod_{\mathfrak{p}\nmid\mathfrak{a}}
[1-\lambda(\mathfrak{p},\mathbf{f})|\mathfrak{p}|_{\mathbb{R}}^{-s}+|\mathfrak{p}|_{\mathbb{R}}^{1-2s}]^{-1}.$$
The complete Dirichlet series
$$R(s,\mathbf{f}):=|\mathfrak{ad}^{2}|_{\mathbb{R}}^{\frac{s}{2}}(2\pi)^{-ns}\varGamma(s)^{n}D(s,\mathbf{f})$$
extends to an entire function and satisfies a functional equation
$$R(s,\mathbf{f})=\epsilon_{\mathbf{f}}\cdot
R(2-s,\mathbf{f}),\hskip 10pt \epsilon_{\mathbf{f}}\in\{1, -1\}.$$

Recall that $\mathbf{f}$ generates a cuspidal automorphic
representation $\pi_{\mathbf{f}}$ (with trivial central character)
of $\mathrm{GL}_{2}(\mathbb{A}_{F})$. The $L$-function
$L(s,\pi_{\mathbf{f}})$ attached to $\pi_{\mathbf{f}}$ satisfies
$$L\bigg{(}s-\frac{1}{2},\pi_{\mathbf{f}}\bigg{)}=|\mathfrak{ad}^{2}|_{\mathbb{R}}^{-\frac{s}{2}}R(s,\mathbf{f}).$$

Write $\vee^{2}\pi_{\mathbf{f}}$ for the second symmetric power
lifting (not necessarily cuspidal) of $\pi_{\mathbf{f}}$ to
$\mathrm{GL}_{3}(\mathbb{A}_{F})$; $\vee^{2}\pi_{\mathbf{f}}$ is
cuspidal if and only if $\pi_{\mathbf{f}}$ is not dihedral
\cite[Theorem 9.3 and Remark 9.9]{GJ 78}. Put
$$L_{2}(s,\pi_{\mathbf{f}}):=\frac{L(s,\pi_{\mathbf{f}}\times \pi_{\mathbf{f}})}{L_{F}(s,1)}.$$ It is always an entire function [loc. cit. Theorem 2.2 (4)]. When ${\rm
	Re}(s)\gg0$, we have
\begin{eqnarray*}   L(s,\vee^{2}\pi_{\mathbf{f}})&=& 2^{-n}\pi^{-\frac{3ns}{2}-2n}\varGamma\big{(}\frac{s+1}{2}\big{)}^{2n}\varGamma\big{(}\frac{s}{2}+1\big{)}^{n}\times\prod_{\mathfrak{p}\parallel\mathfrak{a}}[1-|\mathfrak{p}|_{\mathbb{R}}^{-1-s}]^{-1}\\
	&\times&\prod_{\mathfrak{p}\nmid\mathfrak{a}}[(1-\lambda(\mathfrak{p},\mathbf{f})|\mathfrak{p}|_{\mathbb{R}}^{-\frac{s+1}{2}}+|\mathfrak{p}|_{\mathbb{R}}^{-s})(1-|\mathfrak{p}|_{\mathbb{R}}^{-s})(1+\lambda(\mathfrak{p},\mathbf{f})|\mathfrak{p}|_{\mathbb{R}}^{-\frac{s+1}{2}}+|\mathfrak{p}|_{\mathbb{R}}^{-s})]^{-1}.
\end{eqnarray*}

If $\vee^{2}\pi_{\mathbf{f}}$ is cuspidal, then
$L(s,\vee^{2}\pi_{\mathbf{f}})=L_{2}(s,\pi_{\mathbf{f}})$, which does not vanish at $s=1$. 
We use $L_{\rm f}(s,\vee^{2}\pi_{\mathbf{f}})$ to denote the finite part of $L(s,\vee^{2}\pi_{\mathbf{f}})$. The value $L_{\rm f}(1,\vee^{2}\pi_{\mathbf{f}})$ can be computed as follows. As in \cite[2.3]{JO 84}, we have
$$L_{\rm f}(s-1,\vee^{2}\pi_{\mathbf{f}})=\zeta_{F}(2s-2)\prod_{\mathfrak{p}\mid\mathfrak{a}}(1+|\mathfrak{p}|_{\mathbb{R}}^{1-s})\times\frac{\sum_{\mathfrak{m}}\lambda(\mathfrak{m},\mathbf{f})^{2}|\mathfrak{m}|_{\mathbb{R}}^{-s}}{\zeta_{F}(s-1)}.$$
Note that $\sum_{\mathfrak{m}}\lambda(\mathfrak{m},\mathbf{f})^{2}|\mathfrak{m}|_{\mathbb{R}}^{-s}$ is nothing but the convolution Dirichlet series $D(s,\mathbf{f},\mathbf{f})$ defined in \cite[Section 4]{Shi78}. By [loc. cit. Proposition 4.9], it has a simple pole at $s=2$ with residue
$${\rm Res}_{s=2}D(s,\mathbf{f},\mathbf{f})= \frac{2^{n-2}\pi^{n}(4\pi)^{2n}R_{F}}{|d_{F}|^{3/2}\zeta_{F}(2)|\mathfrak{a}|_{\mathbb{R}}\prod_{\mathfrak{p}\mid\mathfrak{a}}(1+|\mathfrak{p}|_{\mathbb{R}}^{-1})}\sum_{\lambda=1}^{h_{F}}\int_{\varGamma_{\lambda}(\mathfrak{a})\backslash\mathcal{H}^{n}}f_{\lambda}\bar{f}_{\lambda}\prod_{\nu=1}^{n}{\rm d}x_{\nu}{\rm d}y_{\nu} .$$
Here $R_{F}$ denotes the regulator of $F$. Clearly $\zeta_{F}(s-1)$ has a simple pole at $s=2$ with residue $\frac{2^{n-1}h_{F}R_{F}}{\sqrt{|d_{F}|}}.$ Then finally we get $$L_{\rm f}(1,\vee^{2}\pi_{\mathbf{f}})=\frac{\pi^{n}(4\pi)^{2n}}{2h_{F}|d_{F}||\mathfrak{a}|_{\mathbb{R}}}\sum_{\lambda=1}^{h_{F}}\int_{\varGamma_{\lambda}(\mathfrak{a})\backslash\mathcal{H}^{n}}f_{\lambda}\bar{f}_{\lambda}\prod_{\nu=1}^{n}{\rm d}x_{\nu}{\rm d}y_{\nu}.$$

Let $\chi$ be a quadratic Hecke character (with conductor
$\mathfrak{d}_{\chi}$). We write $\mathbf{f}\otimes \chi$ for the
normalized Hilbert newform associated to $\pi_{\mathbf{f}}\otimes
\chi$ \cite[Theorems 1 and 3]{WC 73}, which is of parallel weight 2.
 We denote its level by $\mathfrak{a}_{\chi}$ ($\mathfrak{a}_{\chi}|\mathfrak{ad}^{2}_{\chi}$). Note that, in general, $\mathbf{f}\otimes \chi$ may be different to $\mathbf{f}_\chi$, the twist of $\mathbf{f}$ by $\chi$.
  Similarly, we have
$$R(s,\mathbf{f}\otimes\chi)=\epsilon_{\mathbf{f}\otimes\chi}\cdot
R(2-s,\mathbf{f}\otimes\chi),\hskip 10pt
\epsilon_{\mathbf{f}\otimes\chi}\in\{1,-1\}.$$

\begin{lem}\label{lem:epsilon-factor}
    Let $\mathbf{f}$ and $\chi$ be as above. We use $\chi^{\ast}$ $($resp. $\chi_{\rm f}$$)$ to denote the ideal character $($resp. Dirichlet character$)$ associated to $\chi$ \cite[Chapter 7, Section 6]{JN99}.
     Assume that $\mathfrak{a}$ is square free. If we write $\mathfrak{a}=\mathfrak{a}_{1}\mathfrak{a}_{2}$ $($${\rm g.c.d}(\mathfrak{a}_{1},\mathfrak{d}_{\chi})=\mathfrak{o}$
     and $\mathfrak{a}_{2}\mid\mathfrak{d}_{\chi}$$)$, then
    $$\epsilon_{\mathbf{f}\otimes\chi}=\chi_{\rm f}(-1)\chi^{\ast}(\mathfrak{a}_{1})\cdot\prod_{\mathfrak{p}\mid\mathfrak{a}_{2}}(-\lambda(\mathfrak{p}))\cdot\epsilon_{\mathbf{f}}.$$
\end{lem}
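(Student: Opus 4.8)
The plan is to read off $\epsilon_{\mathbf{f}}$ and $\epsilon_{\mathbf{f}\otimes\chi}$ as products of local root numbers and to cancel the places where nothing ramifies. Since $\pi_{\mathbf{f}}$ and $\pi_{\mathbf{f}}\otimes\chi$ are self-dual with trivial central character (here we use $\chi^{2}=1$), the constants in the functional equations of $R(s,\mathbf{f})$ and $R(s,\mathbf{f}\otimes\chi)$ are exactly the central automorphic $\epsilon$-factors, so that $\epsilon_{\mathbf{f}}=\prod_{v}\epsilon(\tfrac12,\pi_{\mathbf{f},v},\psi_{v})$ and $\epsilon_{\mathbf{f}\otimes\chi}=\prod_{v}\epsilon(\tfrac12,\pi_{\mathbf{f},v}\otimes\chi_{v},\psi_{v})$ for any nontrivial additive character $\psi=\prod_{v}\psi_{v}$ of $\mathbb{A}_{F}/F$. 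Because $\chi_{v}^{2}=1$, the central characters of $\pi_{\mathbf{f},v}\otimes\chi_{v}$ and $\pi_{\mathbf{f},v}$ coincide, so the local ratio $r_{v}:=\epsilon(\tfrac12,\pi_{\mathbf{f},v}\otimes\chi_{v},\psi_{v})/\epsilon(\tfrac12,\pi_{\mathbf{f},v},\psi_{v})$ does not depend on the choice of $\psi_{v}$; I would therefore evaluate each $r_{v}$ using the additive character of conductor $\mathfrak{o}_{v}$ at a finite place (resp. the standard character at an infinite place), whence $r_{v}=1$ for all $v\nmid\infty\cdot\mathfrak{a}\cdot\mathfrak{d}_{\chi}$.

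Next I would carry out the local computation. At an infinite place $v$, $\pi_{\mathbf{f},v}$ is the weight-$2$ discrete series and $\chi_{v}$ is trivial or the sign character; since $N_{\mathbb{C}/\mathbb{R}}$ takes positive values only, $\chi_{v}\circ N_{\mathbb{C}/\mathbb{R}}$ is trivial, hence $\pi_{\mathbf{f},v}\otimes\chi_{v}\cong\pi_{\mathbf{f},v}$ and $r_{v}=1$. At a prime $\mathfrak{p}\mid\mathfrak{a}_{1}$ (so $\mathfrak{p}\parallel\mathfrak{a}$ and $\chi_{\mathfrak{p}}$ unramified) the conductor exponent of $\pi_{\mathbf{f},\mathfrak{p}}$ is $1$, hence $\pi_{\mathbf{f},\mathfrak{p}}=\mathrm{St}\otimes\eta_{\mathfrak{p}}$ with $\eta_{\mathfrak{p}}$ an unramified quadratic character satisfying $\eta_{\mathfrak{p}}(\varpi_{\mathfrak{p}})=\lambda(\mathfrak{p},\mathbf{f})=\pm1$; the unramified-twist identity $\epsilon(\tfrac12,\sigma\otimes\mu,\psi_{v})=\mu(\varpi_{\mathfrak{p}})^{a(\sigma)}\epsilon(\tfrac12,\sigma,\psi_{v})$ with $\sigma=\pi_{\mathbf{f},\mathfrak{p}}$, $\mu=\chi_{\mathfrak{p}}$, $a(\sigma)=1$ gives $r_{\mathfrak{p}}=\chi_{\mathfrak{p}}(\varpi_{\mathfrak{p}})=\chi^{\ast}(\mathfrak{p})$. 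At a prime $\mathfrak{p}\mid\mathfrak{d}_{\chi}$ with $\mathfrak{p}\nmid\mathfrak{a}$, $\pi_{\mathbf{f},\mathfrak{p}}=\pi(\mu,\mu^{-1})$ is an unramified principal series, so $\pi_{\mathbf{f},\mathfrak{p}}\otimes\chi_{\mathfrak{p}}=\pi(\mu\chi_{\mathfrak{p}},\mu^{-1}\chi_{\mathfrak{p}})$; since $(\mu\chi_{\mathfrak{p}})^{-1}=\mu^{-1}\chi_{\mathfrak{p}}$ (as $\chi_{\mathfrak{p}}^{2}=1$), the identity $\epsilon(\tfrac12,\rho,\psi_{v})\epsilon(\tfrac12,\rho^{-1},\psi_{v})=\rho(-1)$ with $\rho=\mu\chi_{\mathfrak{p}}$ yields $\epsilon(\tfrac12,\pi_{\mathbf{f},\mathfrak{p}}\otimes\chi_{\mathfrak{p}},\psi_{v})=(\mu\chi_{\mathfrak{p}})(-1)=\chi_{\mathfrak{p}}(-1)$, while $\epsilon(\tfrac12,\pi_{\mathbf{f},\mathfrak{p}},\psi_{v})=1$, so $r_{\mathfrak{p}}=\chi_{\mathfrak{p}}(-1)$. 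Finally, at $\mathfrak{p}\mid\mathfrak{a}_{2}$ (so $\mathfrak{p}\parallel\mathfrak{a}$ and $\mathfrak{p}\mid\mathfrak{d}_{\chi}$) again $\pi_{\mathbf{f},\mathfrak{p}}=\mathrm{St}\otimes\eta_{\mathfrak{p}}$ with $\eta_{\mathfrak{p}}$ unramified, and $\pi_{\mathbf{f},\mathfrak{p}}\otimes\chi_{\mathfrak{p}}=\mathrm{St}\otimes\nu$ with $\nu=\eta_{\mathfrak{p}}\chi_{\mathfrak{p}}$ ramified; for a twist of Steinberg by a ramified character the relevant local $L$-factors are trivial, so $\epsilon(\tfrac12,\mathrm{St}\otimes\nu,\psi_{v})=\epsilon(\tfrac12,\nu|\cdot|^{1/2},\psi_{v})\epsilon(\tfrac12,\nu|\cdot|^{-1/2},\psi_{v})=\nu(-1)=\chi_{\mathfrak{p}}(-1)$, whereas $\epsilon(\tfrac12,\pi_{\mathbf{f},\mathfrak{p}},\psi_{v})=\eta_{\mathfrak{p}}(\varpi_{\mathfrak{p}})\,\epsilon(\tfrac12,\mathrm{St},\psi_{v})=-\eta_{\mathfrak{p}}(\varpi_{\mathfrak{p}})=-\lambda(\mathfrak{p},\mathbf{f})$; hence $r_{\mathfrak{p}}=-\lambda(\mathfrak{p},\mathbf{f})\chi_{\mathfrak{p}}(-1)$.

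To conclude, I would multiply the local contributions. The primes $\mathfrak{p}\mid\mathfrak{a}_{1}$ give $\prod_{\mathfrak{p}\mid\mathfrak{a}_{1}}r_{\mathfrak{p}}=\chi^{\ast}(\mathfrak{a}_{1})$; the primes $\mathfrak{p}\mid\mathfrak{a}_{2}$ give $\prod_{\mathfrak{p}\mid\mathfrak{a}_{2}}r_{\mathfrak{p}}=\big(\prod_{\mathfrak{p}\mid\mathfrak{a}_{2}}(-\lambda(\mathfrak{p},\mathbf{f}))\big)\big(\prod_{\mathfrak{p}\mid\mathfrak{a}_{2}}\chi_{\mathfrak{p}}(-1)\big)$; and the $\chi_{\mathfrak{p}}(-1)$ with $\mathfrak{p}\mid\mathfrak{a}_{2}$ together with those with $\mathfrak{p}\mid\mathfrak{d}_{\chi}$, $\mathfrak{p}\nmid\mathfrak{a}$ exhaust $\{\mathfrak{p}\mid\mathfrak{d}_{\chi}\}$, so they combine to $\prod_{\mathfrak{p}\mid\mathfrak{d}_{\chi}}\chi_{\mathfrak{p}}(-1)$, which equals $\chi_{\rm f}(-1)$ by the definition of the Dirichlet character attached to $\chi$ (the potential ambiguity of an inverse being harmless as $\chi^{2}=1$). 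Multiplying by $\epsilon_{\mathbf{f}}=\prod_{v}\epsilon(\tfrac12,\pi_{\mathbf{f},v},\psi_{v})$ then gives $\epsilon_{\mathbf{f}\otimes\chi}=\chi_{\rm f}(-1)\,\chi^{\ast}(\mathfrak{a}_{1})\prod_{\mathfrak{p}\mid\mathfrak{a}_{2}}(-\lambda(\mathfrak{p},\mathbf{f}))\cdot\epsilon_{\mathbf{f}}$, as asserted.

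The step I expect to be the real obstacle is the local bookkeeping at the bad primes rather than the global assembly: one must pin down the conventions identifying $\lambda(\mathfrak{p},\mathbf{f})=\pm1$ with the unramified-twist datum $\eta_{\mathfrak{p}}(\varpi_{\mathfrak{p}})$ (including $\epsilon(\tfrac12,\mathrm{St},\psi_{v})=-1$ and $a(\mathrm{St})=1$), handle the dyadic primes $\mathfrak{p}\mid\mathfrak{d}_{\chi}$ where $\chi_{\mathfrak{p}}$ may be wildly ramified --- here the point is only that the conductor of $\chi_{\mathfrak{p}}$ is irrelevant once $\chi_{\mathfrak{p}}$, hence $\nu$, is ramified, so the relevant $L$-factors vanish --- and justify that the constant in the functional equation of $R(s,\mathbf{f}\otimes\chi)$ really equals the automorphic $\epsilon$-factor at the centre. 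Each of these is standard but requires care with normalizations.
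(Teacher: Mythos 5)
Your proposal is correct and takes essentially the same route as the paper: both relate $\epsilon_{\mathbf{f}}$ and $\epsilon_{\mathbf{f}\otimes\chi}$ to the automorphic $\epsilon$-factors via the normalization of the completed $L$-function, and then reduce to local root-number calculations using that every local component of $\pi_{\mathbf{f}}$ is either an unramified principal series or a twist of Steinberg (the paper dispatches this step as ``simple calculations of automorphic $\epsilon$-factors'' with a citation to Godement's notes, whereas you carry out those local calculations explicitly). Your write-up is more detailed than the paper's, and the local bookkeeping — the unramified-twist identity at $\mathfrak{p}\mid\mathfrak{a}_{1}$, the $\mathrm{GL}_1$ functional equation giving $\chi_{\mathfrak{p}}(-1)$ at $\mathfrak{p}\mid\mathfrak{d}_{\chi}$, $\mathfrak{p}\nmid\mathfrak{a}$, the ramified-twist-of-Steinberg computation at $\mathfrak{p}\mid\mathfrak{a}_{2}$, and the reassembly of $\prod_{\mathfrak{p}\mid\mathfrak{d}_{\chi}}\chi_{\mathfrak{p}}(-1)$ into $\chi_{\rm f}(-1)$ — is precisely what the paper's reference to \cite{RG 70} is implicitly delegating.
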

\begin{proof}
Since $\mathfrak{a}$ is square free, the level of $\mathbf{f}\otimes\chi$ is equal to
$\mathfrak{a}_{1}\mathfrak{d}_{\chi}^{2}$. Comparing the functional
equations of $L_{2}(s,\pi_{\mathbf{f}})$ and $R(s,\mathbf{f})$, we
have
$$\epsilon(s,\pi_{\mathbf{f}})=|\mathfrak{ad}^{2}|_{\mathbb{R}}^{\frac{1}{2}-s}\cdot\epsilon_{\mathbf{f}}\hskip10pt
\text{and} \hskip10pt\epsilon(s,\pi_{\mathbf{f}}\otimes\chi)
=|\mathfrak{\mathfrak{a}_{1}}\mathfrak{d}_{\chi}^{2}\mathfrak{d}^{2}|_{\mathbb{R}}^{\frac{1}{2}-s}\cdot\epsilon_{\mathbf{f}\otimes\chi}.$$
Note that both $\pi_{\mathbf{f}}$ and $\pi_{\mathbf{f}}\otimes\chi$
have no supercuspidal local components. Then our assertion follows
from simple calculations of automorphic
$\epsilon$-factors(\cite[Chapter 1, Section 15]{RG 70}).
\end{proof}

\subsection{Certain Dirichlet series}
In this subsection, let $K/F$ be as before. We consider two complex Dirichlet series
$\varPhi(s)$ and $\varPsi(s)$ satisfying the following conditions.

\begin{condition}\label{condition} (Compare to \cite[Section 3 C1-C4]{JO
84}.)
    \begin{enumerate}
        \item\label{condition-1} \label{it-Dirichlet-1}\label{condition-2} $\varPsi(s)<<\zeta_{F}(2s-1)^{2}$ and $\varPhi(s)<<\frac{\zeta_{K}(s-\frac{1}{2})^{2}}{\zeta_{F}(2s-1)^{2}}$.
        \item\label{condition-3} \label{it-Dirichlet-2} $\varPhi$ has an Euler product $\varPhi(s)=\prod_{\mathfrak{p}}\varPhi_{\mathfrak{p}}(s)$.
        Moreover, each $\varPhi_{\mathfrak{p}}(s)$ has the form
        $$\frac{(1-\alpha |\mathfrak{p}|_{\mathbb{R}}^{-s})(1-\beta |\mathfrak{p}|_{\mathbb{R}}^{-s})}{(1-\alpha^{\prime}
        |\mathfrak{p}|_{\mathbb{R}}^{-s})(1-\beta^{\prime} |\mathfrak{p}|_{\mathbb{R}}^{-s})} $$ where $|\alpha|,|\beta|,|\alpha^{\prime}|,|\beta^{\prime}|\le\sqrt{|\mathfrak{p}|_{\mathbb{R}}}$.
        \item\label{condition-4} \label{it-Dirichlet-3} There exists a constant $M>0$ such that the
        function $(|\mathfrak{d}_{K/F}|_{\mathbb{R}}M)^{s}\varGamma(s)^{2n}\varPhi(s)\varPsi(s)$ can be extended to an entire function
        which is rapidly decreasing in every vertical strip (when $s$ tends to infinity). Moreover, it is invariant under the change of variable $s\mapsto 2-s$
        and has a zero of order $\ge$ 3 at $s=1$.
        \item \label{it-Dirichlet-4} $\varPsi(s)$ is holomorphic on the half-plane $\{{\rm Re}(s)\ge1\}$ and has a simple zero at $s=1$. Moreover,
        the function $M^{s}\varGamma(s)^{2n}\varPsi(s)$ is rapidly decreasing in every vertical strip (when $s$ tends to infinity) of the half-plane $\{{\rm Re}(s)\ge1\}$.
    \end{enumerate}
\end{condition}

\begin{lem}\label{lem:hilbert modular form}
Let $\mathbf{f}$ be as in Subsection \ref*{subsection
2.1} and $\chi=(\hskip10pt, K/F)$. Suppose that the following conditions hold.
\begin{enumerate}
    \item\label{it-modular form-2} ${\rm ord}_{s=1}D(s,\mathbf{f})\ge3.$
        \item\label{it-modular form-3} $\epsilon_{\mathbf{f}\otimes\chi}=\epsilon_{\mathbf{f}}$.
    \end{enumerate}
Put
$$M=\frac{\sqrt{|\mathfrak{a}\mathfrak{a}_{\chi}\mathfrak{d}^{4}|_{\mathbb{R}}}}{(2\pi)^{2n}|\mathfrak{d}_{\chi}|_{\mathbb{R}}}.$$
For every finite place $\mathfrak{p}$ of $F$, we put
 \begin{eqnarray*}  \varPsi(s)_{\mathfrak{p}}:=
    \begin{cases}
        &(1-|\mathfrak{p}|_{\mathbb{R}}^{1-2s})\times L_{\mathfrak{p}}(2s-1,\vee^{2}\pi_{\mathbf{f}}), \hskip10pt \text{if } \mathfrak{p}\nmid\mathfrak{a}\\
        &L_{\mathfrak{p}}(2s-1,\vee^{2}\pi_{\mathbf{f}}),\hskip10pt \text{if } \mathfrak{p}\mid\mathfrak{a}\ \text{and}\ \mathfrak{p}^{2}\nmid{\rm g.c.d}(\mathfrak{a},\mathfrak{d}_{\chi}^{2})\\
        &D(\mathbf{f}\otimes\chi,s)_{\mathfrak{p}},\hskip 10pt  \text{if } \mathfrak{p}^{2}\mid{\rm g.c.d}(\mathfrak{a},\mathfrak{d}_{\chi}^{2})
    \end{cases}
 \end{eqnarray*}
 and define $$\varPsi(s):=\prod_{\mathfrak{p}}\varPsi_{\mathfrak{p}}(s) \hskip10pt \text{and}\hskip10pt \varPhi(s):=\frac{D(s,\mathbf{f})D(s,\mathbf{f}\otimes\chi)}{\varPsi(s)}.$$
 Then the triple $(\varPsi(s), \varPhi(s), M)$ satisfies Condition \ref{condition}.
\end{lem}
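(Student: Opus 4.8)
The plan is to identify $\varPhi$ and $\varPsi$ with quotients of automorphic $L$-functions and then read off Condition~\ref{condition} from standard analytic facts. The basic observation is that, by the definition of $\varPhi$, one has identically
$$\varPhi(s)\varPsi(s)=D(s,\mathbf{f})\,D(s,\mathbf{f}\otimes\chi).$$
Since the conductor of $\chi=(\ ,K/F)$ is the relative discriminant, $|\mathfrak{d}_{\chi}|_{\mathbb{R}}=|\mathfrak{d}_{K/F}|_{\mathbb{R}}$, so substituting the value of $M$ gives the clean identity
$$(|\mathfrak{d}_{K/F}|_{\mathbb{R}}M)^{s}\,\varGamma(s)^{2n}\,\varPhi(s)\varPsi(s)=R(s,\mathbf{f})\,R(s,\mathbf{f}\otimes\chi).$$
This already yields Condition~\ref{condition}(\ref{it-Dirichlet-3}): the right-hand side is a product of entire functions, hence entire; each factor is rapidly decreasing in vertical strips (polynomial growth of $D(s,\cdot)$ beaten by the exponential decay of $\varGamma(s)^{n}$), hence so is the product; the functional equations $R(s,\mathbf{f})=\epsilon_{\mathbf{f}}R(2-s,\mathbf{f})$ and $R(s,\mathbf{f}\otimes\chi)=\epsilon_{\mathbf{f}\otimes\chi}R(2-s,\mathbf{f}\otimes\chi)$ together with the hypothesis $\epsilon_{\mathbf{f}\otimes\chi}=\epsilon_{\mathbf{f}}$ give $\epsilon_{\mathbf{f}}\epsilon_{\mathbf{f}\otimes\chi}=1$, so the product is invariant under $s\mapsto2-s$; and because $\varGamma(1)\ne0$ and $R(s,\mathbf{f}\otimes\chi)$ is holomorphic, its order of vanishing at $s=1$ is at least $\mathrm{ord}_{s=1}D(s,\mathbf{f})\ge3$.

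Next I would carry out the local computation behind Condition~\ref{condition}(\ref{it-Dirichlet-2}), dividing $D(s,\mathbf{f})_{\mathfrak{p}}\,D(s,\mathbf{f}\otimes\chi)_{\mathfrak{p}}$ by $\varPsi_{\mathfrak{p}}(s)$ prime by prime. At $\mathfrak{p}\nmid\mathfrak{a}$, writing the Satake parameters of $\pi_{\mathbf{f},\mathfrak{p}}$ as $|\mathfrak{p}|_{\mathbb{R}}^{1/2}\alpha_{\mathfrak{p}}$, $|\mathfrak{p}|_{\mathbb{R}}^{1/2}\beta_{\mathfrak{p}}$ with $\alpha_{\mathfrak{p}}\beta_{\mathfrak{p}}=1$ and $|\alpha_{\mathfrak{p}}|=|\beta_{\mathfrak{p}}|=1$ (the Ramanujan bound $|\lambda(\mathfrak{p},\mathbf{f})|\le2\sqrt{|\mathfrak{p}|_{\mathbb{R}}}$), one finds $\varPsi_{\mathfrak{p}}(s)=[(1-\alpha_{\mathfrak{p}}^{2}|\mathfrak{p}|_{\mathbb{R}}^{1-2s})(1-\beta_{\mathfrak{p}}^{2}|\mathfrak{p}|_{\mathbb{R}}^{1-2s})]^{-1}$, and separating whether $\mathfrak{p}$ splits, is inert, or ramifies in $K$ shows that $\varPhi_{\mathfrak{p}}(s)$ equals, respectively, $\tfrac{(1+\alpha_{\mathfrak{p}}X)(1+\beta_{\mathfrak{p}}X)}{(1-\alpha_{\mathfrak{p}}X)(1-\beta_{\mathfrak{p}}X)}$, $1$, and $(1+\alpha_{\mathfrak{p}}X)(1+\beta_{\mathfrak{p}}X)$, with $X=|\mathfrak{p}|_{\mathbb{R}}^{1/2-s}$. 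At $\mathfrak{p}\mid\mathfrak{a}$ I would use $\lambda(\mathfrak{p},\mathbf{f})=\pm1$ when $\mathfrak{p}\parallel\mathfrak{a}$ and $\lambda(\mathfrak{p},\mathbf{f})=0$ when $\mathfrak{p}^{2}\mid\mathfrak{a}$, the analogous statements for $\mathbf{f}\otimes\chi$, and the displayed shapes of $L_{\mathfrak{p}}(2s-1,\vee^{2}\pi_{\mathbf{f}})$ and $D(\mathbf{f}\otimes\chi,s)_{\mathfrak{p}}$; in each of the three defining cases of $\varPsi_{\mathfrak{p}}$ the cancellation leaves $\varPhi_{\mathfrak{p}}(s)$ a ratio $\tfrac{(1-\alpha|\mathfrak{p}|_{\mathbb{R}}^{-s})(1-\beta|\mathfrak{p}|_{\mathbb{R}}^{-s})}{(1-\alpha'|\mathfrak{p}|_{\mathbb{R}}^{-s})(1-\beta'|\mathfrak{p}|_{\mathbb{R}}^{-s})}$ with all four roots of absolute value $\le\sqrt{|\mathfrak{p}|_{\mathbb{R}}}$, which is Condition~\ref{condition}(\ref{it-Dirichlet-2}). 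Condition~\ref{condition}(\ref{it-Dirichlet-1}) then follows by dominating, coefficient by coefficient, $\varPsi_{\mathfrak{p}}$ by $(1-|\mathfrak{p}|_{\mathbb{R}}^{1-2s})^{-2}$ and $\varPhi_{\mathfrak{p}}$ by $(1+|\mathfrak{p}|_{\mathbb{R}}^{1/2-s})^{2}(1-|\mathfrak{p}|_{\mathbb{R}}^{1/2-s})^{-2}$ — the latter being exactly the local factor at a split prime of $\zeta_{K}(s-\tfrac12)^{2}/\zeta_{F}(2s-1)^{2}$, while inert and ramified primes contribute even smaller local factors on the right — and then multiplying over all $\mathfrak{p}$.

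For Condition~\ref{condition}(\ref{it-Dirichlet-4}) I would collect the Euler factors into
$$\varPsi(s)=\zeta_{F}(2s-1)^{-1}\,L(2s-1,\vee^{2}\pi_{\mathbf{f}})\prod_{\mathfrak{p}\mid\mathfrak{a}}\Big[(1-|\mathfrak{p}|_{\mathbb{R}}^{1-2s})^{-1}L_{\mathfrak{p}}(2s-1,\vee^{2}\pi_{\mathbf{f}})^{-1}\varPsi_{\mathfrak{p}}(s)\Big],$$
a finite product of rational functions of the $|\mathfrak{p}|_{\mathbb{R}}^{-s}$, each holomorphic and nowhere zero on $\{\mathrm{Re}(s)\ge1\}$. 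Since $\zeta_{F}(s)$ has only a simple pole at $s=1$ and no zero on $\mathrm{Re}(s)=1$, while $L(2s-1,\vee^{2}\pi_{\mathbf{f}})$ is holomorphic on $\{\mathrm{Re}(s)\ge1\}$ and $L(1,\vee^{2}\pi_{\mathbf{f}})\ne0$ (the symmetric-square lift is entire, and non-vanishing at the edge holds both when it is cuspidal, by Jacquet--Shalika, and, in the dihedral case, because it is then a product of Hecke $L$-functions), $\varPsi$ is holomorphic on $\{\mathrm{Re}(s)\ge1\}$ with a simple zero at $s=1$; and $M^{s}\varGamma(s)^{2n}\varPsi(s)$ is rapidly decreasing in vertical strips of that half-plane because $\varPsi$ has at most polynomial growth there while $\varGamma(s)^{2n}$ decays exponentially.

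The hard part will be the prime-by-prime bookkeeping of the second paragraph at the places dividing $\mathfrak{a}$ and $\mathfrak{d}_{\chi}$: one must list the possible local components of $\pi_{\mathbf{f}}$ and of $\pi_{\mathbf{f}}\otimes\chi$ there (unramified twist of Steinberg, ramified principal series, supercuspidal), confirm that $\mathbf{f}\otimes\chi$ has level $\mathfrak{a}_{1}\mathfrak{d}_{\chi}^{2}$ as in Lemma~\ref{lem:epsilon-factor}, and verify in every configuration that each of the three prescriptions for $\varPsi_{\mathfrak{p}}$ produces a $\varPhi_{\mathfrak{p}}$ of the bounded shape required by Condition~\ref{condition}(\ref{it-Dirichlet-2}); this is also the point where one needs $\mathfrak{a}$ to be square-free away from $\mathfrak{d}_{\chi}$. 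Granted this local analysis, parts (\ref{it-Dirichlet-1}), (\ref{it-Dirichlet-3}) and (\ref{it-Dirichlet-4}) are essentially formal.
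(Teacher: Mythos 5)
Your proof takes the same route the paper takes: the paper's entire proof is the one-line citation ``The proof is similar to the argument in \cite[Section 4.2]{JO 84},'' and what you write out is precisely the Oesterl\'e argument transposed to the Hilbert setting. The central observation, namely the identity $(|\mathfrak{d}_{K/F}|_{\mathbb{R}}M)^{s}\varGamma(s)^{2n}\varPhi(s)\varPsi(s)=R(s,\mathbf{f})R(s,\mathbf{f}\otimes\chi)$ after substituting $|\mathfrak{d}_{\chi}|_{\mathbb{R}}=|\mathfrak{d}_{K/F}|_{\mathbb{R}}$ into the definition of $M$, together with the prime-by-prime identification of $\varPhi_{\mathfrak{p}}$ (split, inert, ramified) and the factorization of $\varPsi$ through $\zeta_{F}(2s-1)^{-1}L(2s-1,\vee^{2}\pi_{\mathbf{f}})$, is exactly the mechanism used there; your computations of the local factors at $\mathfrak{p}\nmid\mathfrak{a}$ and the deduction of the functional equation and the order-$\ge 3$ zero at $s=1$ from $\epsilon_{\mathbf{f}}\epsilon_{\mathbf{f}\otimes\chi}=1$ and $\mathrm{ord}_{s=1}D(s,\mathbf{f})\ge 3$ are correct. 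The only part you explicitly defer --- the case-by-case bookkeeping at $\mathfrak{p}\mid\mathfrak{a}$ needed to confirm the bounded shape in Condition~\ref{condition}(\ref{it-Dirichlet-2}) and the coefficientwise majorization in Condition~\ref{condition}(\ref{it-Dirichlet-1}) --- is likewise left implicit in the paper, so this is not a gap relative to the paper's own treatment.
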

\begin{proof}
The proof is similar to the argument in \cite[Section 4.2]{JO 84}.
\end{proof}

\begin{example}\label{ex: modular form-1} (Gross-Zagier's example. See \cite[4.3]{JO
84}.) Let $f' $ be the elliptic newform of level 37 (of weight 2)
associated to the weil curve $y^{2}+y=x^{3}+x^{2}-23x-50.$ Note that
the fourier coefficient $\lambda(37,f')$ is equal to 1 i.e. the weil curve on the above has split multiplicative reduction at $p=37$. Put
$$f:=f'\otimes\chi_{0}\hskip10pt\text{where}\hskip10pt\chi_{0}=(\cdot
,\mathbb{Q}(\sqrt{-139})/\mathbb{Q}).$$ Then $f$ is an
 elliptic newform of level $37\times 139^{2}$, and ${\rm ord}_{s=1}L(f,s)=3$.

We claim that $\pi_{f}$ is not dihedral. Otherwise, there exists a nontrivial Hecke character $\eta$
(with conductor $d_{\eta}$) such that
$\pi_{f}\otimes\eta\cong\pi_{f}$. Then $\eta^{2}=1$ and
$f'\otimes\eta=f'$. The exact level of $f'\otimes\eta$ is ${\rm
l.c.m}(37,d_{\eta}^{2})$, which implies that $d_{\eta}=1$. Then the quadratic field associated to $\eta$ is unramified, and hence is equal to $\mathbb{Q}$ i.e. $\eta=1$. A contradiction. \end{example}

\begin{example}\label{ex: modular form-2}
We construct $\mathbf{f}$ that satisfies (\ref{it-modular form-2}) of Lemma \ref{lem:hilbert modular
form} in the case when  $F/\mathbb{Q}$ is solvable. The tool we use
is Arthur and Clozel's base change \cite{JA and LC}.

Since $F/\mathbb{Q}$ is solvable, we can find a chain of fields
extensions $$\mathbb{Q}=F_{1}\subset F_{2}\subset\cdots\subset
F_{r}=F$$ such that each $F_{i+1}/F_{i}$ is cyclic of prime order
$\ell_{i}$. We inductively construct a cuspidal representation
$\pi_{i}$ of $\mathrm{GL}_{2}(\mathbb{A}_{F_{i}})$ (with trivial
central character) for $(i=1,\cdots, r)$ that satisfies
$${\rm ord}_{s=1}L(s,\pi_{i})\ge3 , \hskip10pt   \pi_{i}\ \text{is not dihedral},$$
 and   satisfies the following condition $(*_\mathfrak{p})$ for all
but finitely many finite places $\mathfrak{p}$ of $F_{i}$:

$(*_\mathfrak{p})$  the local component $\pi_{i,\mathfrak{p}}$ is
unramified principal series
$\pi_{\mathfrak{p}}(\chi_{i,\mathfrak{p},1},\chi_{i,\mathfrak{p},2})$
such that
$$\lambda_{\pi_{i}}(\mathfrak{p}):=|N_{F_{i}/\mathbb{Q}}(\mathfrak{p})|^{1/2}(\chi_{i,\mathfrak{p},1}(\pi_{\mathfrak{p}})+\chi_{i,\mathfrak{p},2}(\pi_{\mathfrak{p}}))\
\text{is a rational integer}.$$

When $i=1$, put $\pi_{1}:=\pi_{f}$. We have already known that
$\pi_{1}$ is not dihedral and ${\rm ord}_{s=1}L(s,\pi_{1})=3$. When
$p\ne37,139$, $\pi_{1,\mathfrak{p}}$ is unramified principal series
and we have $\lambda_{\pi_{1}}(p)\in\mathbb{Z}$.

Assume that we have already constructed $\pi_i$. Put $\eta_{i}=(\cdot
,F_{i+1}/F_{i})$. Firstly, since $\pi_{i}$ is not dihedral, $\vee^{2}\pi_{i}$ is cuspidal (Subsection \ref{subsection 2.1}). We claim that
$\vee^{2}\pi_{i}\otimes\eta_{i}\ncong\vee^{2}\pi_{i}$. Indeed, we take a
finite place $\mathfrak{p}$ of $F_i$ such that $\pi_i$ satisfies
$(*_\mathfrak{p})$. We may choose $\mathfrak{p}$ that is inert
in $F_{i+1}$. Then $\eta_{i,\mathfrak{p}}(\pi_\mathfrak{p})\ne1$
where $\pi_\mathfrak{p}$ is a uniformizer of
$F_{i,\mathfrak{p}}$, and
 $$(\vee^{2}\pi_{i})_{\mathfrak{p}}\cong{\rm Ind}(\mathrm{GL}_{3}(F_{i,\mathfrak{p}}),B_{3}
 (F_{i,\mathfrak{p}}),\chi_{i,\mathfrak{p},1}\chi_{i,\mathfrak{p},2}^{-1},1,\chi_{i,\mathfrak{p},2}\chi_{i,\mathfrak{p},1}^{-1}).$$
The condition
$(\vee^{2}\pi_{i})_{\mathfrak{p}}\otimes\eta_{i,\mathfrak{p}}\cong(\vee^{2}\pi_{i})_{\mathfrak{p}}$
is equivalent (without loss of generality) to
$$\eta_{i,\mathfrak{p}}=\chi_{i,\mathfrak{p},1}\chi_{i,\mathfrak{p},2}^{-1}\hskip10pt\text{and}\hskip10pt\eta_{i,\mathfrak{p}}^{3}=1.$$
 Then by the relations $\lambda_{\pi_{i}}(\mathfrak{p})=|N_{F_{i}/\mathbb{Q}}(\mathfrak{p})|^{1/2}(\chi_{i,\mathfrak{p},1}(\pi_{\mathfrak{p}})+\chi_{i,\mathfrak{p},2}(\pi_{\mathfrak{p}}))$
  and $\chi_{i,\mathfrak{p},1}(\pi_{\mathfrak{p}})\chi_{i,\mathfrak{p},2}(\pi_{\mathfrak{p}})=1$, $\eta_{i,\mathfrak{p}}(\pi_{\mathfrak{p}})$
  satisfies a complex equation $$X^{2}+(2-\lambda_{\pi_{i}}(\mathfrak{p})^{2}|N_{F_{i}/\mathbb{Q}}(\mathfrak{p})|^{-1})X+1=0.$$
As $\eta_{i,\mathfrak{p}}(\pi_{\mathfrak{p}})^3=1$,
$\eta_{i,\mathfrak{p}}(\pi_{\mathfrak{p}})$ fits into another
equation $X^{2}+X+1=0$. To conclude, we have
$\lambda_{\pi_{i}}(\mathfrak{p})=\pm
|N_{F_{i}/\mathbb{Q}}(\mathfrak{p})|^{1/2}$, which contradicts to
$\lambda_{\pi_{i}}(\mathfrak{p})\in\mathbb{Z}$. Then claim holds.

 Since $\pi_{i}$ is dihedral, $\pi_{i}\otimes\eta_{i}\ncong\pi_{i}$. Then using base change lifting, we can lift $\pi_{i}$ to a cuspidal representation $\pi_{i+1}$ (with trivial central character) of $\mathrm{GL}_{2}(\mathbb{A}_{F_{i+1}})$. Moreover we have
 $$L(s,\pi_{i+1})=\prod_{j=0}^{\ell_{i}-1}L(s,\pi_{i}\otimes\eta_{i}^{j})$$
 (\cite[Ch.1 Proposition 6.9, Ch.3 Theorem 4.2 and Theorem 5.1]{JA and LC}).
 In particular,
 $${\rm ord}_{s=1}L(s,\pi_{i+1})\ge{\rm ord}_{s=1}L(s,\pi_{i})\ge3.$$
 
 By the claim in the above, using base change lifting again, $\vee^{2}\pi_{i}$ lifts to a cuspidal representation $\varPi_{i+1}$ of $\mathrm{GL}_{3}(\mathbb{A}_{F_{i+1}})$. Then we have $\varPi_{i+1}=\vee^{2}\pi_{i+1}$. In particular, $\vee^{2}\pi_{i+1}$ is cuspidal, which implies that $\pi_{i+1}$
 is not dihedral (Subsection \ref{subsection 2.1}).

 Next we show that, if $\pi_i$ satisfies $(*_\mathfrak{p})$, then
 $\pi_{i+1}$ satisfies $(*_\mathfrak{q})$ for each finite prime $\mathfrak{q}$ of $F_{i+1}$ above $\mathfrak{p}$.
Of course, the local components $\pi_{i+1,\mathfrak{q}}$ is an
unramified principal series. Put $\alpha=|N_{F_{i}/\mathbb{Q}}(\mathfrak{p})|^{1/2}
\chi_{i,\mathfrak{p},1}(\pi_{\mathfrak{p}})$ and
$\beta=|N_{F_{i}/\mathbb{Q}}(\mathfrak{p})|^{1/2}\chi_{i,\mathfrak{p},2}(\pi_{\mathfrak{p}})$.
Then
$$\lambda_{\pi_{i+1}}(\mathfrak{q})=
|N_{F_{i}/\mathbb{Q}}(\mathfrak{p})|^{f_\mathfrak{p}/2}(\chi_{i,\mathfrak{p},1}(\pi_{\mathfrak{p}})^{f_\mathfrak{p}}+\chi_{i,\mathfrak{p},2}(\pi_{\mathfrak{p}})^{f_\mathfrak{p}})
=\alpha^{f_\mathfrak{p}}+\beta^{f_\mathfrak{p}}.$$ Here,
  $f_\mathfrak{p}\in\{1,\ell_{i}\}$ is the inertial degree of $\mathfrak{q}$ over $\mathfrak{p}$.
As $\alpha+\beta$ and $\alpha\beta$ are in $\mathbb{Z}$, so is
  $\lambda_{\pi_{i+1}}(\mathfrak{q})=\alpha^{f_\mathfrak{p}}+\beta^{f_\mathfrak{p}}$.

By induction, $\pi_{1}$ lifts to a cuspidal representation $\pi_{r}$ of $GL_{2}(\mathbb{A}_{F})$, which  corresponds to a Hilbert newform $\mathbf{f}$ over $F$. Clearly, $\mathbf{f}$ satisfies (\ref{it-modular form-2}) of Lemma
\ref{lem:hilbert modular form}. This complete the construction.

\begin{rem}\label{rem: Hilbert modular form}
We can also lift $f'$ in the same way to $\mathbf{f}^{\prime}$ over
$F$. Then $\mathbf{f}=\mathbf{f}^{\prime}\otimes(\chi_{0}\circ
N_{\mathbb{A}_{F}/\mathbb{A}_{\mathbb{Q}}})$. The level of
$\mathbf{f}^{\prime}$ is equal to
$\prod_{\mathfrak{p}\mid37}\mathfrak{p}$, and for
$\mathfrak{p}\mid37$ we have
$\lambda(\mathfrak{p},\mathbf{f}^{\prime})=\lambda(37,f')^{f_\mathfrak{p}}=1$.
Here $f_\mathfrak{p}$ denotes the inertial degree of $\mathfrak{p}$. Note that when $F$ is fixed, the Fourier coefficients of $\mathbf{f}$ (resp. $\mathbf{f}^{\prime}$) can be effectively computed in terms of $f$ (resp. $f^{\prime}$).
\end{rem}

\end{example}

\begin{prop}\label{prop:epsilon factors}
Suppose that $F/\mathbb{Q}$ is solvable of degree $n$. Let
$\mathbf{f}$ be as in Example \ref{ex: modular form-2}, and put
$\chi:=(\hskip10pt,K/F)$. Write
$37\mathfrak{o}_{F}=\mathfrak{p}_{1}^{e}\cdots\mathfrak{p}_{s}^{e}$.
If $n+s$ is even, and if none of
$\mathfrak{p}_{1},\cdots\mathfrak{p}_{s}$ split in $K$, then
$(\mathbf{f},\chi)$ satisfies
$(\ref{it-modular form-2})$ and $(\ref{it-modular form-3})$ of Lemma
\ref{lem:hilbert modular form}. In particular, $\varPsi(s)$ and
$\varPhi(s)$ constructed in Lemma \ref{lem:hilbert modular form} by
using $(\mathbf{f},\chi)$ satisfy Condition \ref{condition}.
\end{prop}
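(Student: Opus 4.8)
The plan is to reduce everything to Lemma~\ref{lem:hilbert modular form}: it is enough to verify its hypotheses (\ref{it-modular form-2}) and (\ref{it-modular form-3}) for the pair $(\mathbf{f},\chi)$, after which the concluding sentence of the proposition follows at once. Hypothesis (\ref{it-modular form-2}), ${\rm ord}_{s=1}D(s,\mathbf{f})\ge 3$, is all but stated in Example~\ref{ex: modular form-2}, where $\pi_{\mathbf{f}}=\pi_r$ is constructed with ${\rm ord}_{s=1}L(s,\pi_r)\ge 3$ (each cyclic base change has $L(s,\pi_{i+1})=\prod_j L(s,\pi_i\otimes\eta_i^{j})$, which has $L(s,\pi_i)$ among its entire factors, and ${\rm ord}_{s=1}L(s,\pi_f)=3$ by Example~\ref{ex: modular form-1}); via the relation $L(s-\tfrac12,\pi_{\mathbf{f}})=(2\pi)^{-ns}\varGamma(s)^{n}D(s,\mathbf{f})$ of Subsection~\ref{subsection 2.1}, the archimedean factor being nowhere zero, this yields ${\rm ord}_{s=1}D(s,\mathbf{f})\ge 3$.

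The substance is hypothesis (\ref{it-modular form-3}), $\epsilon_{\mathbf{f}\otimes\chi}=\epsilon_{\mathbf{f}}$. The tool is Lemma~\ref{lem:epsilon-factor}, which needs square-free level, so the first move is to rewrite, using Remark~\ref{rem: Hilbert modular form}, $\mathbf{f}=\mathbf{f}^{\prime}\otimes\chi_0^{\prime}$, where $\mathbf{f}^{\prime}$ is the base change to $F$ of $f'$, of the square-free level $\mathfrak{a}^{\prime}=\mathfrak{p}_1\cdots\mathfrak{p}_s$ with $\lambda(\mathfrak{p}_j,\mathbf{f}^{\prime})=\lambda(37,f')^{f_{\mathfrak{p}_j}}=1$, and $\chi_0^{\prime}=\chi_0\circ N_{\mathbb{A}_F/\mathbb{A}_{\mathbb{Q}}}$ is the quadratic Hecke character cutting out $K_0:=F(\sqrt{-139})$. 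Then $\pi_{\mathbf{f}}\otimes\chi=\pi_{\mathbf{f}^{\prime}}\otimes\chi^{\prime}$ with $\chi^{\prime}:=\chi_0^{\prime}\chi$, so $\epsilon_{\mathbf{f}}=\epsilon_{\mathbf{f}^{\prime}\otimes\chi_0^{\prime}}$ and $\epsilon_{\mathbf{f}\otimes\chi}=\epsilon_{\mathbf{f}^{\prime}\otimes\chi^{\prime}}$. I would then invoke the following facts. (i) Since $-139\equiv 9=3^{2}\pmod{37}$, $-139$ is a square in $\mathbb{Q}_{37}$, hence in every $F_{\mathfrak{p}_j}$, so each $\mathfrak{p}_j$ splits in $K_0$; thus $(\chi_0^{\prime})_{\mathfrak{p}_j}$ is trivial, giving $(\chi_0^{\prime})^{\ast}(\mathfrak{p}_j)=1$, $\mathfrak{p}_j\nmid\mathfrak{d}_{\chi_0^{\prime}}$, and $(\chi^{\prime})_{\mathfrak{p}_j}=\chi_{\mathfrak{p}_j}$ for every $j$; in particular, as no $\mathfrak{p}_j$ splits in $K$, we have $K_0\ne K$, so $\chi^{\prime}\ne 1$. (ii) Hence $\mathfrak{p}_j\mid\mathfrak{d}_{\chi^{\prime}}$ precisely when $\mathfrak{p}_j\mid\mathfrak{d}_{K/F}$, and for the remaining $\mathfrak{p}_j$, which are inert in $K$ by hypothesis, $(\chi^{\prime})^{\ast}(\mathfrak{p}_j)=\chi^{\ast}(\mathfrak{p}_j)=-1$. (iii) Since $K$ and $K_0$ are totally imaginary, $\chi_{\rm f}(-1)=(\chi_0^{\prime})_{\rm f}(-1)=(-1)^{n}$, and so $(\chi^{\prime})_{\rm f}(-1)=(-1)^{2n}=1$.

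Now I would apply Lemma~\ref{lem:epsilon-factor} twice. For $(\mathbf{f}^{\prime},\chi_0^{\prime})$: by (i), $\mathfrak{a}^{\prime}_1=\mathfrak{a}^{\prime}$ and $\mathfrak{a}^{\prime}_2=\mathfrak{o}$, so, using (i) and (iii), $\epsilon_{\mathbf{f}}=(\chi_0^{\prime})_{\rm f}(-1)\,(\chi_0^{\prime})^{\ast}(\mathfrak{a}^{\prime})\,\epsilon_{\mathbf{f}^{\prime}}=(-1)^{n}\epsilon_{\mathbf{f}^{\prime}}$. For $(\mathbf{f}^{\prime},\chi^{\prime})$: write $\mathfrak{a}^{\prime}_2$ for the product of the $u$ primes $\mathfrak{p}_j$ dividing $\mathfrak{d}_{K/F}$ and $\mathfrak{a}^{\prime}_1$ for the product of the other $s-u$; then, using $\lambda(\mathfrak{p}_j,\mathbf{f}^{\prime})=1$, (ii) and (iii), $\epsilon_{\mathbf{f}\otimes\chi}=(\chi^{\prime})_{\rm f}(-1)\,(\chi^{\prime})^{\ast}(\mathfrak{a}^{\prime}_1)\prod_{\mathfrak{p}\mid\mathfrak{a}^{\prime}_2}(-\lambda(\mathfrak{p},\mathbf{f}^{\prime}))\,\epsilon_{\mathbf{f}^{\prime}}=(-1)^{s-u}(-1)^{u}\epsilon_{\mathbf{f}^{\prime}}=(-1)^{s}\epsilon_{\mathbf{f}^{\prime}}$. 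Hence $\epsilon_{\mathbf{f}\otimes\chi}/\epsilon_{\mathbf{f}}=(-1)^{s-n}=1$ because $n+s$ is even, which is (\ref{it-modular form-3}); feeding (\ref{it-modular form-2}) and (\ref{it-modular form-3}) into Lemma~\ref{lem:hilbert modular form} shows that $(\varPsi,\varPhi,M)$ satisfies Condition~\ref{condition}.

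I expect the main obstacle to be the bookkeeping of the middle step: the passage to the square-free newform $\mathbf{f}^{\prime}$, and the matching of the conductor and quadratic-symbol data of $\chi^{\prime}=\chi_0^{\prime}\chi$ with those of $\chi$ at the primes above $37$. The one genuinely delicate input is the numerical coincidence $-139\equiv 3^{2}\pmod{37}$: it is exactly what makes $\chi_0^{\prime}$ locally trivial at every $\mathfrak{p}_j$, hence $(\chi_0^{\prime})^{\ast}(\mathfrak{p}_j)=1$, and what lets the archimedean contribution $(-1)^{n}$ and the finite contribution $(-1)^{s}$ combine into $(-1)^{n+s}$. Everything else — the two applications of Lemma~\ref{lem:epsilon-factor} — is routine.
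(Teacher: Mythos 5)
Your proof is correct and follows essentially the same route as the paper's: both reduce to Remark \ref{rem: Hilbert modular form} (writing $\mathbf{f}=\mathbf{f}'\otimes\chi_0'$ with $\mathbf{f}'$ of square-free level $\mathfrak{p}_1\cdots\mathfrak{p}_s$ and $\lambda(\mathfrak{p}_j,\mathbf{f}')=1$), apply Lemma \ref{lem:epsilon-factor} to compare $\epsilon_{\mathbf{f}\otimes\chi}=\epsilon_{\mathbf{f}'\otimes\chi'}$ with $\epsilon_{\mathbf{f}}=\epsilon_{\mathbf{f}'\otimes\chi_0'}$, and use the facts $\chi_{\rm f}(-1)=(-1)^{n}$ and $\chi_0^{\ast}((37))=1$ (equivalently, $-139\equiv 3^{2}\pmod{37}$) to arrive at $\epsilon_{\mathbf{f}\otimes\chi}=(-1)^{n+s}\epsilon_{\mathbf{f}}$. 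The only cosmetic difference is that you spell out two separate applications of Lemma \ref{lem:epsilon-factor} and take their ratio, whereas the paper folds both into a single chain of equalities, but the algebra is identical.
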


\begin{proof} We have already known that $\mathbf{f}$ satisfies (\ref{it-modular form-2}) of Lemma \ref{lem:hilbert
modular form}. We check that $(\mathbf{f},\chi)$ also satisfies
(\ref{it-modular form-3}) of Lemma \ref{lem:hilbert modular form}.

    Suppose that none of $\mathfrak{p}_{1},\cdots,\mathfrak{p}_{s}$ split in $K$, then $\chi^{\ast}(\mathfrak{p}_{i})=-1$ or $0$ for each $i$.
    We have $\mathfrak{d}_{\chi}=\mathfrak{d}_{K/F}$ and $\mathfrak{d}_{\chi(\chi_{0}\circ N_{\mathbb{A}_{F}/\mathbb{A}_{\mathbb{Q}}})}$ divides $(139)\mathfrak{d}_{K/F}$. In particular, $\mathfrak{p}_{i}\mid\mathfrak{d}_{\chi(\chi_{0}\circ N_{\mathbb{A}_{F}/\mathbb{A}_{\mathbb{Q}}})}$ if and only if $\mathfrak{p}_{i}\mid\mathfrak{d}_{K/F}$.
     Without loss of generality,
    we may assume that $\mathfrak{p}_{1},\cdots,\mathfrak{p}_{t}\nmid\mathfrak{d}_{K/F}$ and
    $\mathfrak{p}_{t+1},\cdots\mathfrak{p}_{s}\mid\mathfrak{d}_{K/F}$. Notice that $\chi_{0}^{\ast}((37))=1$ and $\chi_{\rm f}(-1)=(-1)^{n}$. By Lemma \ref{lem:epsilon-factor} and Remark \ref{rem: Hilbert modular form}, we have
    \begin{eqnarray*}
        \epsilon_{\mathbf{f}\otimes\chi}&=&\epsilon_{\mathbf{f}^{\prime}\otimes\chi(\chi_{0}\circ N_{\mathbb{A}_{F}/\mathbb{A}_{\mathbb{Q}}})}\\
        &=&\epsilon_{\mathbf{f}^{\prime}}\cdot(\chi_{0}\circ N_{\mathbb{A}_{F}/\mathbb{A}_{\mathbb{Q}}})^{\ast}(\mathfrak{\mathfrak{p}}_{1}\cdots\mathfrak{\mathfrak{p}}_{t})\cdot(\chi_{0}\circ N_{\mathbb{A}_{F}/\mathbb{A}_{\mathbb{Q}}})_{\rm f}(-1)\cdot\chi^{\ast}(\mathfrak{p}_{1}\cdots\mathfrak{\mathfrak{p}}_{t})\chi_{\rm f}(-1)\cdot \prod_{i=t+1}^{s}(-\lambda(\mathfrak{p}_{i},\mathbf{f}^{\prime}))\\
        &=&\epsilon_{\mathbf{f}}\cdot\chi_{0}^{\ast}(N_{F/\mathbb{Q}}(\mathfrak{p}_{t+1}\cdots\mathfrak{p}_{s}))^{-1}\cdot(-1)^{t+n+(s-t)}\\
        &=&\epsilon_{\mathbf{f}}\cdot(-1)^{n+s}.
    \end{eqnarray*}
Clearly $\epsilon_{\mathbf{f}\otimes\chi}=\epsilon_{\mathbf{f}}$ if and only if $n+s$ is even.
\end{proof}

\section{Some estimations} \label{sec:esti}
\subsection{Number of lattice points}

\ \ \ \ \ Let $K$ be a quadratic imaginary extension of $F$. We assume that
$\mathfrak{o}^{\times}=\mathfrak{o}_{K}^{\times}$. Fix a set $\{\mathfrak{a}_{1},\cdots,\mathfrak{a}_{h_{F}}\}$ of fractional ideals that represents $\mathrm{Cl}_{F}$. We may assume that $\{\mathfrak{a}_{1},\cdots,\mathfrak{a}_{h^{\prime}}\}$ represents $\mathrm{Cl}_{F}/{\rm ker}(\phi)$ (see Section \ref{An estimate on $h$}).

Let $\sigma_{1},...,\sigma_{n}$ be the set of all embeddings from
$F$ into $\mathbb{R}$ ($n=[F:\mathbb{Q}]$). Then there is a natural
embedding
$$j:F\rightarrow\mathbb{R}^{n},\ a\mapsto (\sigma_{1}(a),...,\sigma_{n}(a)),$$
under which the image of any fractional ideal of $F$ is a complete lattice in
$\mathbb{R}^{n}$. For each $1\le i\le h_{F}$, we fix a fundamental domain $\mathbf{F}_{i}$ of $j(\mathfrak{a}_{i})$.

Similarly, we consider the embedding of the
multiplicative group $$\lambda: F^{\ast}\rightarrow\mathbb{R}^{n},\
a\mapsto ({\rm log}|\sigma_{1}(a)|,...,{\rm log}|\sigma_{n}(a)|).$$
Then $\lambda(\mathfrak{o}^{\times})$ is a complete lattice of the
hyperplane $$\mathbf{H}=\{(x_{1},...,x_{n})\in\mathbb{R}^{n}|\
x_{1}+...+x_{n}=0\}.$$ We fix a fundamental domain $\mathbf{U}$ of $\lambda(\mathfrak{o}^{\times})$, and let $d_{0}=d_{0}(\mathbf{U})$ be its diameter.

\begin{lem}\label{Lang}
    Let $\mathfrak{a}$ be a fractional ideal of $F$, $\mathbf{x}_{0}=(x_{0,1},...,x_{0,n})\in\mathbb{R}^{n}$ and $c_{1},...,c_{n}$ be positive real numbers.
For each $T_{0}>0$, put \begin{eqnarray*}
     C_{T_{0}}&:=&
        2n\cdot\max\limits_{i\in\{1,\cdots,h_{F}\}}\sup\limits_{\mathbf{E}\subseteq\mathbb{R}^{n},d(\mathbf{E})\le
        4\mathrm{e}^{\frac{\sqrt{n-1}d_{0}}{2\sqrt{n}}}(T_{0}|\mathfrak{a}_{i}|_{\mathbb{R}})^{1/n}\sqrt{n-1}}\#\{x\in\mathfrak{a}_{i}|\ (j(x)+\mathbf{F}_{i})\cap\mathbf{E}\ne\emptyset\},\end{eqnarray*}
where $d(\mathbf{E})$ denotes the diameter of $\mathbf{E}$.
\begin{enumerate} \item The constant $C_{T_{0}}$ depends only on $T_{0}$, $\mathbf{U}$ and the
choice of $(\mathfrak{a}_{i},\mathbf{F}_{i}),i=1,\cdots,h_{F}$.
\item If $c_{1}\cdots c_{n}\ge T_{0}|\mathfrak{a}|_{\mathbb{R}}$,
then
    \begin{equation} \label{eq:esti-latticepoint}
        \#\{x\in\mathfrak{a}|\ |\sigma_{j}(x)-x_{0,j}|\le c_{j}\ \forall
        j\}
        \le
            \bigg{(}\frac{2^{n}}{\sqrt{|d_{F}|}}+2nC_{T_{0}}T_{0}^{-1}\bigg{)}\frac{c_{1}...c_{n}}{|\mathfrak{a}|_{\mathbb{R}}}.
  \end{equation} \end{enumerate}
\end{lem}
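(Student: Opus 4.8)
The plan is to reduce to one of the fixed representatives $\fa_i$, to use the action of $\fo^\times$ to bring the region into bounded aspect ratio, and then to bound the lattice point count by a packing estimate (the main term) plus a covering of a thin boundary shell (the error term). Part (a) is the easy half: fix each $\mathbf{F}_i$ to be a bounded fundamental domain of $j(\fa_i)$ containing $0$, say a half-open fundamental parallelepiped. If $d(\mathbf{E})\le D$, then $(j(x)+\mathbf{F}_i)\cap\mathbf{E}\ne\emptyset$ forces $j(x)$ into the bounded set of points at distance $\le D+\mathrm{diam}(\mathbf{F}_i)$ from $\mathbf{E}$, so only finitely many $x\in\fa_i$ qualify, and their number is bounded uniformly over all $\mathbf{E}$ with $d(\mathbf{E})\le D$ (translate $\mathbf{E}$). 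Applying this with $D$ equal to the explicit bound in the definition of $C_{T_0}$ — which involves only $n=[F:\BQ]$, $T_0$, $d_0$ and $|\fa_i|_{\mathbb{R}}$ — and taking the maximum over $i$ shows that $C_{T_0}$ is finite and depends only on $T_0$, on $\mathbf{U}$ (through $d_0$) and on the chosen data $(\fa_i,\mathbf{F}_i)$.

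For (b), first write $\fa=\alpha\fa_i$ with $\alpha\in F^\times$. The substitution $x\mapsto x/\alpha$ identifies the set in question with $\{x'\in\fa_i:\ |\sigma_j(x')-x_{0,j}/\sigma_j(\alpha)|\le c_j'\ \forall j\}$, where $c_j':=c_j/|\sigma_j(\alpha)|$; since $\prod_j c_j'=(|\fa_i|_{\mathbb{R}}/|\fa|_{\mathbb{R}})\prod_j c_j$, both the hypothesis $\prod_j c_j'\ge T_0|\fa_i|_{\mathbb{R}}$ and the claimed inequality (whose right side becomes $(\cdots)\prod_j c_j'/|\fa_i|_{\mathbb{R}}$) are unchanged, so we may assume $\fa=\fa_i$. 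Moreover $\alpha$ is only determined up to $\fo^\times$, and replacing $\alpha$ by $u\alpha$ shifts $(\log c_j')_j$ by $-\lambda(u)$; since $\lambda(\fo^\times)$ is a complete lattice of $\mathbf{H}$ with fundamental domain $\mathbf{U}$ of diameter $d_0$, we may choose $u$ so that $(\log c_j')_j$ minus its mean $\tfrac1n\log M_0$ (where $M_0:=\prod_j c_j'$) has Euclidean norm at most an explicit constant times $d_0$. Together with $\sum_j v_j=0$ and Cauchy--Schwarz this bounds each coordinate, giving $|\log c_j'-\tfrac1n\log M_0|\le\delta$ for all $j$, where $\delta$ depends only on $n$ and $d_0$ (one may take $\delta=\tfrac{\sqrt{n-1}\,d_0}{2\sqrt n}$). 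Hence $e^{-\delta}M_0^{1/n}\le c_j'\le e^{\delta}M_0^{1/n}$, so the box $B':=\prod_j[\,x_{0,j}/\sigma_j(\alpha)-c_j',\,x_{0,j}/\sigma_j(\alpha)+c_j'\,]$ is within bounded aspect ratio of a cube of side $\asymp M_0^{1/n}$, and in particular $c_j'\ge e^{-\delta}(T_0|\fa_i|_{\mathbb{R}})^{1/n}$ because $M_0\ge T_0|\fa_i|_{\mathbb{R}}$.

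Now split $\{x\in\fa_i:j(x)\in B'\}=A\sqcup A^{\partial}$ with $A=\{x:\ j(x)+\mathbf{F}_i\subseteq B'\}$. For $x\in A$ the translates $j(x)+\mathbf{F}_i$ are pairwise disjoint and lie in $B'$, so $\sharp A\cdot\mathrm{covol}(j(\fa_i))\le\mathrm{vol}(B')$; since $\mathrm{covol}(j(\fa_i))=\sqrt{|d_F|}\,|\fa_i|_{\mathbb{R}}$ and $\mathrm{vol}(B')=2^nM_0$, we obtain $\sharp A\le\tfrac{2^n}{\sqrt{|d_F|}}\cdot\tfrac{M_0}{|\fa_i|_{\mathbb{R}}}=\tfrac{2^n}{\sqrt{|d_F|}}\cdot\tfrac{\prod_j c_j}{|\fa|_{\mathbb{R}}}$, the main term. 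If $x\in A^{\partial}$ then, as $0\in\mathbf{F}_i$, $j(x)\in B'$ while $j(x)$ is within $\mathrm{diam}(\mathbf{F}_i)$ of $\mathbb{R}^n\setminus B'$, so $j(x)$ lies in the inner shell $S=\bigcup_{j=1}^n(S_j^{+}\cup S_j^{-})$, where $S_j^{\pm}$ is a box of width $\mathrm{diam}(\mathbf{F}_i)$ in the $j$-th coordinate and $2c_k'$ in the others. If $\mathrm{diam}(B')$ does not exceed the bound $4e^{\delta}\sqrt{n-1}\,(T_0|\fa_i|_{\mathbb{R}})^{1/n}$ used to define $C_{T_0}$ — which happens when $M_0$ is within an explicit $n$-dependent multiple of $T_0|\fa_i|_{\mathbb{R}}$ — then $\sharp(\fa_i\cap B')\le C_{T_0}/(2n)$, which one checks is $\le 2nC_{T_0}T_0^{-1}\tfrac{\prod_j c_j}{|\fa|_{\mathbb{R}}}$. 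Otherwise cover each $S_j^{\pm}$ by at most $2\prod_{k\ne j}\lceil 2c_k'/\ell\rceil$ boxes of common side $\ell\asymp(T_0|\fa_i|_{\mathbb{R}})^{1/n}$, chosen so that $\ell\ge\mathrm{diam}(\mathbf{F}_i)$ and each box has diameter at most the bound above; each such box meets $j(x)$, hence $j(x)+\mathbf{F}_i$, for at most $\sup(\cdots)=C_{T_0}/(2n)$ points $x\in\fa_i$. Summing over $j$ and the two signs, and controlling each $\prod_{k\ne j}\lceil 2c_k'/\ell\rceil$ via $c_k'\le e^{\delta}M_0^{1/n}$ and $M_0\ge T_0|\fa_i|_{\mathbb{R}}$, gives $\sharp A^{\partial}\le 2nC_{T_0}T_0^{-1}\tfrac{M_0}{|\fa_i|_{\mathbb{R}}}=2nC_{T_0}T_0^{-1}\tfrac{\prod_j c_j}{|\fa|_{\mathbb{R}}}$, and adding the two contributions yields \eqref{eq:esti-latticepoint}. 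The packing estimate is routine; the delicate point — and where the exact shape of the constant ($4$, $\sqrt{n-1}$, $e^{\sqrt{n-1}d_0/(2\sqrt n)}$, the leading $2n$) gets fixed — is coordinating the normalization width $\delta$, the covering scale $\ell$ and the constraint $\ell\ge\mathrm{diam}(\mathbf{F}_i)$, which is precisely what forces the case distinction on the size of $B'$. That is where I expect the real work to be.
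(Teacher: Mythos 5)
Your proposal follows the same overall strategy as the paper: reduce to one of the fixed representatives $\mathfrak{a}_i$ via a scalar $\alpha$ (the paper's $a$), use a unit to bring the rescaled box edges $c_j'$ into bounded logarithmic aspect ratio with $\delta=\frac{\sqrt{n-1}\,d_0}{2\sqrt n}$, and then split the lattice count into a volume (packing) main term and a boundary error term controlled by $C_{T_0}$. The main term estimate $\sharp A\le\frac{2^n}{\sqrt{|d_F|}}\frac{\prod c_j}{|\mathfrak{a}|_{\mathbb{R}}}$ is the same in both.

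Where you diverge is the boundary error. The paper parametrizes $\mathbf{P}$ by $\varphi:[0,1]^n\to\mathbf{P}$, records the Lipschitz bound $\|\varphi(\mathbf{x})-\varphi(\mathbf{y})\|\le 2e^{\delta}(c_1\cdots c_n)^{1/n}\|\mathbf{x}-\mathbf{y}\|$, subdivides each of the $2n$ faces of $[0,1]^n$ into $m^{n-1}$ subcubes with $m=\big[(c_1\cdots c_n/(T_0|\mathfrak{a}_i|_{\mathbb{R}}))^{1/n}\big]\ge 1$, and pushes these forward to cover the $(n-1)$-dimensional set $\partial\mathbf{P}$. Each image has diameter $\le 4e^{\delta}\sqrt{n-1}(T_0|\mathfrak{a}_i|_{\mathbb{R}})^{1/n}$, and any $x$ with $(j(x)+\mathbf{F}_i)\cap\mathbf{P}\ne\emptyset$ but $j(x)+\mathbf{F}_i\not\subseteq\operatorname{int}\mathbf{P}$ has $(j(x)+\mathbf{F}_i)\cap\partial\mathbf{P}\ne\emptyset$ by connectedness, hence is caught by one of those image pieces. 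The count per piece is then bounded by $C_{T_0}/(2n)$ by definition, and $m^{n-1}\le c_1\cdots c_n/(T_0|\mathfrak{a}_i|_{\mathbb{R}})$ finishes it. There is no case split and no dependence on $\operatorname{diam}(\mathbf{F}_i)$.

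You instead cover a full-dimensional shell of thickness $\operatorname{diam}(\mathbf{F}_i)$ inside $B'$ by boxes of side $\ell$, and this is where the genuine gap sits: you need simultaneously $\ell\ge\operatorname{diam}(\mathbf{F}_i)$ (one layer suffices) and $\ell\sqrt n\le 4e^{\delta}\sqrt{n-1}(T_0|\mathfrak{a}_i|_{\mathbb{R}})^{1/n}$ (each box fits inside the diameter cap in the definition of $C_{T_0}$). These two constraints are jointly satisfiable only when $\operatorname{diam}(\mathbf{F}_i)\lesssim (T_0|\mathfrak{a}_i|_{\mathbb{R}})^{1/n}$, i.e.\ only for $T_0$ above a threshold depending on the chosen fundamental domains; the lemma as stated must hold for every $T_0>0$, and your ``small $B'$'' case does not absorb the remaining range. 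This is not an incidental bookkeeping point but the place where your route breaks. The fix is essentially to do what the paper does: cover the $(n-1)$-dimensional set $\partial B'$ rather than a thickened shell, observe that for boundary $x$ the connected set $j(x)+\mathbf{F}_i$ meets $\partial B'$, and let $C_{T_0}$ (whose defining condition is about intersection, not containment) absorb $\operatorname{diam}(\mathbf{F}_i)$ automatically; then the constraint $\ell\ge\operatorname{diam}(\mathbf{F}_i)$ disappears, and with $\ell\asymp (T_0|\mathfrak{a}_i|_{\mathbb{R}})^{1/n}$ the cube count per face is $m^{n-1}\le c_1\cdots c_n/(T_0|\mathfrak{a}_i|_{\mathbb{R}})$ exactly as in the paper. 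You correctly flagged this as ``where I expect the real work to be''; the resolution is to change what you cover, not to tune $\ell$.
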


\begin{proof}
The special case when $\mathbf{x}_{0}=0$ is treated in \cite[Theorem
3.7]{JN99}, except that the effective constant $C_{T_0}$ is not
clearly provided there. We will adjust its argument in loc. cit. to
show our effective bound in the general case.

    Let $a\in F^{\times}$ such that  $\mathfrak{a}(a)=\mathfrak{a}_{i}$ for some $1\le i\le h_{F}$.
    The vector $$\bigg{(}\mathrm{log}\frac{c_{1}\sigma_{1}(a)}{(c_{1}\sigma_{1}(a))^{1/n}\cdots (c_{n}\sigma_{n}(a))^{1/n}},
    \cdots,\mathrm{log}\frac{c_{n}\sigma_{n}(a)}{(c_{1}\sigma_{1}(a))^{1/n}\cdots (c_{n}\sigma_{n}(a))^{1/n}}\bigg{)}$$ lies in $\mathbf{H}$. Thus there exists $u\in\mathfrak{o}^{\times}$ such that $$\bigg{|}\mathrm{log}\frac{c_{j}\sigma_{j}(a)}{(c_{1}\sigma_{1}(a))^{1/n}\cdots (c_{n}\sigma_{n}(a))^{1/n}}-\mathrm{log}(\sigma_{j}u)\bigg{|}<\frac{\sqrt{n-1}d_{0}}{2\sqrt{n}}\ \forall 1\le j\le n.$$
Replacing $\mathfrak{a}$ by $\mathfrak{a}(au^{-1})=\mathfrak{a}(a)$,
$c_{j}$ by $c_{j}|\sigma_{j}(au^{-1})|$ and $x_{0,j}$ by
$x_{0,j}\sigma_{j}(au^{-1})\ (j=1,...,n)$ does not change the values
of both sides of (\ref{eq:esti-latticepoint}). So we may assume that
$\mathfrak{a}=\mathfrak{a}_{i}$ and
$$c_{j}\le\mathrm{e}^{\frac{\sqrt{n-1}d_{0}}{2\sqrt{n}}}(c_{1}\cdots
c_{n})^{1/n}\ (j=1,...,n).$$

Put $\mathbf{P}=\{\mathbf{x}\in\mathbb{R}^{n}|\ |x_{j}-x_{0,j}|\le
c_{j}\ \forall j\}$, and use $\partial(\mathbf{P})$ to denote its boundary. Then we have
    \begin{eqnarray*}
   &&\#\{x\in\mathfrak{a}_{i}|\ |\tau_{j}(x)-x_{0,j}|\le c_{j}\ \forall j\}\\
   &\le&\# \{x\in\mathfrak{a}_{i}|\ (j(x)+\mathbf{F}_{i})\cap\mathbf{P}\ne\emptyset\}\\
   &\le&\# \{x\in\mathfrak{a}_{i}|\ (j(x)+\mathbf{F}_{i})\subseteq\mathrm {int}(\mathbf{P})\}+\# \{x\in\mathfrak{a}_{i}|\ (j(x)+\mathbf{F}_{i})\cap\partial(\mathbf{P})\ne\emptyset\}\\
   &\le& \frac{\mathrm{vol}(\mathbf{P})}{\mathrm{vol}(\mathbf{F}_{i})}+\# \{x\in\mathfrak{a}_{i}|\ (j(x)+\mathbf{F}_{i})\cap\partial(\mathbf{P})\ne\emptyset\}   \end{eqnarray*}
\begin{equation}\label{eq:lattice points 1}
    \hskip -80pt =\frac{2^{n}c_{1}\cdots c_{n}}{\sqrt{|d_{F}|}|\mathfrak{a}_{i}|_{\mathbb{R}}}+\# \{x\in\mathfrak{a}_{i}|\ (j(x)+\mathbf{F}_{i})\cap\partial(\mathbf{P})\ne\emptyset\}.
\end{equation}

To estimate $\# \{x\in\mathfrak{a}_{i}|\
(j(x)+\mathbf{F}_{i})\cap\partial(\mathbf{P})\ne\emptyset\}$, we
consider the parametrization of $\mathbf{P}$:
    $$\varphi:\ [0,1]^{n}\rightarrow \mathbf{P},\ (t_{1},\cdots,t_{n})\mapsto\mathbf{x}_{0}+\Big{(}2c_{1}\Big{(}t_{1}-\frac{1}{2}\Big{)},\cdots,2c_{n}\Big{(}t_{n}-\frac{1}{2}\Big{)}\Big{)}. $$
    Let $\lVert \cdot \rVert$ be the euclidean norm on $\mathbb{R}^{n}$. Then we have
    \begin{equation}\label{eq:parametrization}
      \lVert\varphi(\mathbf{x})-\varphi(\mathbf{y})\rVert\le 2\mathrm{e}^{\frac{\sqrt{n-1}d_{0}}{2\sqrt{n}}}(c_{1}\cdots c_{n})^{1/n}\lVert \mathbf{x}-\mathbf{y}\rVert\ (\forall\ \mathbf{x},\mathbf{y}\in I^{n}).
    \end{equation}
Each  face of $\mathbf{P}$ is the image by $\varphi$ of a face of
$[0,1]^{n}$.

   If $c_{1}\cdots c_{n}\ge T_{0}|\mathfrak{a}_{i}|_{\mathbb{R}}$, we put $m=\big{[}\big{(}\frac{c_{1}...c_{n}}{T_{0}|\mathfrak{a}_{i}|_{\mathbb{R}}}\big{)}^{1/n}\big{]}\ge 1$.
One divides $[0,1]$ into the union of $m$ subintervals:
$$[0,1]=[0,\frac{1}{m} ]\cup [\frac{1}{m}, \frac{2}{m}]\cup \cdots
\cup [\frac{m-1}{m}, 1].$$ In this way, one can divide each face
$[0,1]^{n-1}$ of $[0,1]^{n}$ into $m^{n-1}$ smaller cubes. Each
smaller cube has diameter $\sqrt{n-1}/m$. Hence, by
(\ref{eq:parametrization}) the diameter of its image is upperly
bounded by
$$4\mathrm{e}^{\frac{\sqrt{n-1}d_{0}}{2\sqrt{n}}}(T_{0}|\mathfrak{a}_{i}|_{\mathbb{R}})^{1/n}\sqrt{n-1}.$$
Put
     $$C_{T_{0},i}:=\sup\limits_{\mathbf{E}\subseteq\mathbb{R}^{n},d(\mathbf{E})\le4\mathrm{e}^{\frac{\sqrt{n-1}d_{0}}{2\sqrt{n}}}(T_{0}|\mathfrak{a}_{i}|_{\mathbb{R}})^{1/n}\sqrt{n-1} }\#\{x\in\mathfrak{a}_{i}|\ (j(x)+\mathbf{F}_{i})\cap\mathbf{E}\ne\emptyset\}.$$
     Then for each face $[0,1]^{n-1}$, the number of $x\in\mathfrak{a}_{i}$ such that $(j(x)+\mathbf{F}_{i})\cap\varphi(I^{n-1})\ne\emptyset$ is bounded by $$C_{T_{0},i}m^{n-1}\le C_{T_{0},i}\Big{(}\frac{c_{1}...c_{n}}{T_{0}|\mathfrak{a}_{i}|_{\mathbb{R}}}\Big{)}^{1-1/n}\le C_{T_{0},i}\frac{c_{1}...c_{n}}{T_{0}|\mathfrak{a}_{i}|_{\mathbb{R}}}.$$  Then we have
     \begin{equation}\label{eq:lattice points 2}
       \# \{x\in\mathfrak{a}_{i}|\ (j(x)+\mathbf{F}_{i})\cap\partial(\mathbf{P})\ne\emptyset\}\le 2nC_{T_{0},i}\frac{c_{1}...c_{n}}{T_{0}|\mathfrak{a}_{i}|_{\mathbb{R}}}.    \end{equation}

Finally  (\ref{eq:lattice points 1}) and (\ref{eq:lattice points 2})
yield the conclusion.
\end{proof}

\begin{lem} \label{lem:norm estimation}Let $t>0$. We have the following.
    \begin{enumerate}
    \item \label{it-norm-a}Let $\mathfrak{N}$ be a fractional ideal of $K$ and $\mathcal{L}$ be a saturated invertible $\mathfrak{o}$-submodule of $\mathfrak{N}$ such that $|N_{K/\mathbb{Q}}(\mathcal{L}\mathfrak{o}_{K})|$ is minimal. We have
    $$\#\{[\alpha]\in(\mathfrak{N}\setminus\mathcal{L})/\mathfrak{o}^{\times}|
    \ |N_{K/\mathbb{Q}}(\alpha)|\le t |N_{K/\mathbb{Q}}(\mathfrak{N})| \}\le \frac{A_{1}t}{\sqrt{|\mathfrak{d}_{K/F}|_{\mathbb{R}}}}.$$ Here if we let $T_{0}=\frac{\pi^{n/2}{\rm e}^{\frac{\sqrt{n(n-1)}d_{0}}{2}}}{2^{n}\sqrt{|d_{F}|}}$, then
    $$A_{1}=2^{n-1}{\rm e}^{\sqrt{n(n-1)}d_{0}}\bigg{(}\frac{2^{n}}{\sqrt{|d_{F}|}}+2nC_{T_{0}}T_{0}^{-1}\bigg{)}\bigg{(}\frac{2^{n}}{\sqrt{|d_{F}|}}+2nC_{1}\bigg{)}.$$
    \item \label{it-norm-b}Let $\mathfrak{a}$ be a fractional ideal of $F$. We have
    $$\#\{[a]\in(\mathfrak{a}\setminus\{0\})/\mathfrak{o}^{\times}|\  |N_{F/\mathbb{Q}}(a)|\le t|\mathfrak{a}|_{\mathbb{R}}\}\le A_{2}t $$ where $$A_{2}=\mathrm{e}^{\frac{\sqrt{n(n-1)}d_{0}}{2}}\bigg{(}\frac{2^{n}}{\sqrt{|d_{F}|}}+2nC_{1}\bigg{)}.$$
    \end{enumerate}
\end{lem}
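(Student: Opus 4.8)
For part (b), the plan is to apply Lemma \ref{Lang} once. First, for any nonzero $a\in\mathfrak{a}$ the ideal $(a)$ lies in $\mathfrak{a}$, so $|N_{F/\mathbb{Q}}(a)|\ge|\mathfrak{a}|_{\mathbb{R}}$ and the set is empty -- hence the bound trivial -- when $t<1$; so I may assume $t\ge1$. Using that $\lambda(\mathfrak{o}^{\times})$ is a full lattice in $\mathbf{H}$ with fundamental domain $\mathbf{U}$ of diameter $d_{0}$, every class $[a]$ has a representative with $|\sigma_{j}(a)|\le\mathrm{e}^{\delta}|N_{F/\mathbb{Q}}(a)|^{1/n}\le\mathrm{e}^{\delta}(t|\mathfrak{a}|_{\mathbb{R}})^{1/n}$ for all $j$, where $\delta=\frac{\sqrt{n-1}\,d_{0}}{2\sqrt{n}}$ is the quantity occurring in the proof of Lemma \ref{Lang}; hence the number of classes is at most $\#\{a\in\mathfrak{a}:|\sigma_{j}(a)|\le c_{j}\ \forall j\}$ with $c_{j}=\mathrm{e}^{\delta}(t|\mathfrak{a}|_{\mathbb{R}})^{1/n}$. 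Since $c_{1}\cdots c_{n}=\mathrm{e}^{\sqrt{n(n-1)}d_{0}/2}\,t\,|\mathfrak{a}|_{\mathbb{R}}\ge|\mathfrak{a}|_{\mathbb{R}}$, Lemma \ref{Lang} with $T_{0}=1$ and $\mathbf{x}_{0}=0$ applies and gives the bound $\bigl(\frac{2^{n}}{\sqrt{|d_{F}|}}+2nC_{1}\bigr)\frac{c_{1}\cdots c_{n}}{|\mathfrak{a}|_{\mathbb{R}}}=A_{2}t$.

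For part (a), I would first remove the case of small $t$. Put $Q=Q_{\mathfrak{N}}$, so $q_{Q}=N_{K/F}$. Lemmas \ref{lem:relation-d} and \ref{lem:quad-2} give $|\mathfrak{d}_{Q}|_{\mathbb{R}}=|\mathfrak{d}_{K/F}|_{\mathbb{R}}|N_{K/\mathbb{Q}}(\mathfrak{N})|^{2}$ and $|I(Q,\mathcal{L})|_{\mathbb{R}}=|N_{K/\mathbb{Q}}(\mathcal{L}\mathfrak{o}_{K})|$; as the latter is minimal among saturated rank-one submodules, Lemma \ref{thm:only-one} forces $|I(Q,N)|_{\mathbb{R}}\ge\frac{|\mathfrak{d}_{Q}|_{\mathbb{R}}^{1/2}}{2^{n}}$ for every saturated rank-one $N\ne\mathcal{L}$. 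If $\alpha\in\mathfrak{N}\setminus\mathcal{L}$, the saturation $N_{\alpha}$ of $\mathfrak{o}\alpha$ in $\mathfrak{N}$ is such an $N$, and $(N_{K/F}(\alpha))\subseteq I(Q,N_{\alpha})$ gives $|N_{K/\mathbb{Q}}(\alpha)|\ge|I(Q,N_{\alpha})|_{\mathbb{R}}\ge\frac{|\mathfrak{d}_{K/F}|_{\mathbb{R}}^{1/2}|N_{K/\mathbb{Q}}(\mathfrak{N})|}{2^{n}}$; thus the set in question is empty once $t<\frac{|\mathfrak{d}_{K/F}|_{\mathbb{R}}^{1/2}}{2^{n}}$, and I may assume $t\ge\frac{|\mathfrak{d}_{K/F}|_{\mathbb{R}}^{1/2}}{2^{n}}$.

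In that range I would write $\mathfrak{N}=\mathcal{L}\oplus\mathcal{L}'$ (possible since $\mathcal{L}$ is saturated, hence a direct summand), $\mathcal{L}=\mathfrak{b}\beta$, $\mathcal{L}'=\mathfrak{c}\gamma$, and set $a=N_{K/F}(\beta)$, $c=N_{K/F}(\gamma)$, $b=\mathrm{Tr}_{K/F}(\bar{\beta}\gamma)$, $d=b^{2}-4ac$ (totally negative, $Q$ being definite). Completing the square gives, for $\alpha=b'\beta+c'\gamma\in\mathfrak{N}\setminus\mathcal{L}$ and each real embedding $\sigma$, $\sigma(N_{K/F}(\alpha))=\sigma(a)\bigl(\sigma(b')+\frac{\sigma(b)}{2\sigma(a)}\sigma(c')\bigr)^{2}+\frac{-\sigma(d)}{4\sigma(a)}\sigma(c')^{2}$, a sum of two nonnegative terms. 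Since $\ker(\lambda|_{\mathfrak{o}^{\times}})=\{\pm1\}$, every $\mathfrak{o}^{\times}$-orbit in $\mathfrak{N}\setminus\mathcal{L}$ has exactly two \emph{reduced} representatives, those whose vector $(\log|\sigma(\alpha)|)_{\sigma}$ differs from its mean by an element of $\mathbf{U}$; for a reduced $\alpha$ with $|N_{K/\mathbb{Q}}(\alpha)|\le t|N_{K/\mathbb{Q}}(\mathfrak{N})|$ one gets $\sigma(N_{K/F}(\alpha))\le\rho:=\mathrm{e}^{2\delta}(t|N_{K/\mathbb{Q}}(\mathfrak{N})|)^{1/n}$ for all $\sigma$, hence $\rho^{n}=\mathrm{e}^{\sqrt{n(n-1)}d_{0}}t|N_{K/\mathbb{Q}}(\mathfrak{N})|$ and, by the displayed identity, $|\sigma(c')|\le2\sqrt{\sigma(a)\rho/(-\sigma(d))}$ and $|\sigma(b')+\frac{\sigma(b)}{2\sigma(a)}\sigma(c')|\le\sqrt{\rho/\sigma(a)}$ for all $\sigma$. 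Therefore the quantity in (a) equals half the number of such reduced $\alpha$, which is at most (the number of $c'\in\mathfrak{c}$ in the box of half-widths $2\sqrt{\sigma(a)\rho/(-\sigma(d))}$ about $0$) times (the largest, over those $c'$, of the number of $b'\in\mathfrak{b}$ in the box of half-widths $\sqrt{\rho/\sigma(a)}$ about $\bigl(-\frac{\sigma(b)}{2\sigma(a)}\sigma(c')\bigr)_{\sigma}$). I would bound the first factor by Lemma \ref{Lang} for the ideal $\mathfrak{c}$ with $\mathbf{x}_{0}=0$ and $T_{0}=\frac{\pi^{n/2}\mathrm{e}^{\sqrt{n(n-1)}d_{0}/2}}{2^{n}\sqrt{|d_{F}|}}$, and the second by Lemma \ref{Lang} for $\mathfrak{b}$ with $T_{0}=1$ (the right-hand side of Lemma \ref{Lang} does not see $\mathbf{x}_{0}$); a short computation using $\prod_{\sigma}(-\sigma(d))=|N_{F/\mathbb{Q}}(d)|$ together with the relations $|N_{F/\mathbb{Q}}(a)|\,|\mathfrak{b}|_{\mathbb{R}}^{2}=|N_{K/\mathbb{Q}}(\mathcal{L}\mathfrak{o}_{K})|$ and $|N_{F/\mathbb{Q}}(d)|\,|\mathfrak{b}\mathfrak{c}|_{\mathbb{R}}^{2}=|\mathfrak{d}_{Q}|_{\mathbb{R}}$ (Lemmas \ref{lem:quad-1}, \ref{lem:quad-2}) shows that all dependence on $a,b,c,\mathfrak{b},\mathfrak{c}$ cancels and that the product of the two estimates, divided by $2$, is exactly $\frac{A_{1}t}{\sqrt{|\mathfrak{d}_{K/F}|_{\mathbb{R}}}}$, the factor $2^{n-1}$ in $A_{1}$ being precisely the ``half'' coming from the $\pm$ ambiguity of reduced representatives.

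The step I expect to be the main obstacle is verifying, uniformly in $K$ and $\mathfrak{N}$, the hypothesis $c_{1}\cdots c_{n}\ge T_{0}|\mathfrak{a}|_{\mathbb{R}}$ of Lemma \ref{Lang} in the two applications above: the value of $T_{0}$ and the reduction $t\ge\frac{|\mathfrak{d}_{K/F}|_{\mathbb{R}}^{1/2}}{2^{n}}$ of the second paragraph are tailored to this, but making it work seems to require choosing the decomposition $\mathfrak{N}=\mathcal{L}\oplus\mathcal{L}'$ and the generators $\beta,\gamma$ so that the binary form $(a,b,c)$ is reduced at every real place -- so that both boxes have comparable half-widths across the places -- together with a Minkowski-type comparison of $|N_{F/\mathbb{Q}}(a)|$, $|N_{F/\mathbb{Q}}(d)|$, $|\mathfrak{b}|_{\mathbb{R}}$, $|\mathfrak{c}|_{\mathbb{R}}$ with $|\mathfrak{d}_{K/F}|_{\mathbb{R}}$ and $|N_{K/\mathbb{Q}}(\mathfrak{N})|$, or else a direct (elementary) treatment of the boxes too small for Lemma \ref{Lang} to apply; once this reduction theory is in hand, the remaining bookkeeping with the explicit constants is routine.
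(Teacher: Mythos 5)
Your overall strategy for both parts is the same as the paper's: for (b), reduce to $t\ge 1$ via $|N_{F/\mathbb{Q}}(a)|\ge|\mathfrak{a}|_{\mathbb R}$, choose a reduced representative, and apply Lemma~\ref{Lang} with $T_{0}=1$ (this part is essentially correct and matches the paper); for (a), reduce to $t\ge \frac{|\mathfrak{d}_{K/F}|_{\mathbb R}^{1/2}}{2^n}$ via Lemma~\ref{thm:only-one}, split $\mathfrak{N}=\mathcal{L}\oplus\mathcal{L}'$, complete the square, and bound each coordinate by Lemma~\ref{Lang}.

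However, in part (a) you have \emph{swapped} the two $T_{0}$-assignments, and this is precisely the point you flag at the end as the main obstacle. You apply Lemma~\ref{Lang} with $T_{0}=1$ to the $\mathcal{L}$-coordinate $b'\in\mathfrak{b}$ and with $T_{0}=\frac{\pi^{n/2}\mathrm{e}^{\sqrt{n(n-1)}d_{0}/2}}{2^n\sqrt{|d_F|}}$ to the $\mathcal{L}'$-coordinate $c'\in\mathfrak{c}$. Neither hypothesis can be verified with this pairing. For the $b'$-box, since $b'$ may be $0$, the only lower bound on $N_{\mathbb R^n}(\mathbf{z}_{2})/|\mathfrak{b}|_{\mathbb R}$ available is the Minkowski one, which equals exactly the constant $\frac{\pi^{n/2}\mathrm{e}^{\sqrt{n(n-1)}d_0/2}}{2^n\sqrt{|d_F|}}$ and in general is $<1$ (already $<1$ for $F=\mathbb{Q}$), so $T_{0}=1$ is not justified there. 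For the $c'$-box, the hypothesis $N_{\mathbb R^n}(\mathbf{z}_{1})/|\mathfrak{c}|_{\mathbb R}\ge T_0$ with your Minkowski-type $T_0$ would require a lower bound on $|N_{F/\mathbb Q}(a)|\,|\mathfrak{b}|_{\mathbb R}^2$, which neither minimality nor your reduction on $t$ provides in general. The correct pairing — the one the paper uses — is the other way around: the minimality of $|N_{K/\mathbb{Q}}(\mathcal{L}\mathfrak{o}_K)|=|N_{F/\mathbb{Q}}(a)|\,|\mathfrak{b}|_{\mathbb R}^2$ yields (via Minkowski) the \emph{upper} bound $|N_{F/\mathbb{Q}}(a)|\,|\mathfrak{b}|_{\mathbb R}^{2}\le(\tfrac{2}{\pi})^n\sqrt{|d_K|}\,|N_{K/\mathbb{Q}}(\mathfrak{N})|$, which makes the $b'$-box (half-widths $\sqrt{\rho/\sigma(a)}$) large enough, and combined with $t\ge|\mathfrak{d}_{K/F}|_{\mathbb R}^{1/2}/2^n$ and $|d_K|=|d_F|^2|\mathfrak{d}_{K/F}|_{\mathbb R}$ gives exactly $N_{\mathbb R^n}(\mathbf{z}_2)/|\mathfrak{b}|_{\mathbb R}\ge T_{0}^{\mathrm{Mink}}$; while for the $c'$-box one uses $c'\ne 0\Rightarrow|N_{F/\mathbb Q}(c')|\ge|\mathfrak{c}|_{\mathbb R}$, so that whenever the box contains a nonzero point one automatically has $N_{\mathbb R^n}(\mathbf{z}_1)\ge|\mathfrak{c}|_{\mathbb R}$, allowing $T_0=1$. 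With the assignments interchanged as you propose, neither estimate is available. Your suggested remedy (choosing $\beta,\gamma$ so the form is ``reduced at every real place'') is more elaborate than needed and is not what the paper does; the Minkowski step above handles it directly. Minor additional remark: the factor $2^{n-1}$ in $A_1$ is $\tfrac{1}{2}\cdot 2^n$ where $\tfrac12$ is the $\pm$-ambiguity you mention, but the $2^n$ comes from the half-widths of the $c'$-box ($2\sqrt{\sigma(a)\rho/(-\sigma(d))}$), not from the sign ambiguity alone.
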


\begin{proof}
  We first prove  (\ref{it-norm-a}). Denote $t|N_{K/\mathbb{Q}}(\mathfrak{N})|$ by $t^{\prime}$. By Lemma \ref{thm:only-one} and Lemma \ref{lem:relation-d} we may assume that $$t^{\prime}\ge \frac{|\mathfrak{d}_{F/\mathbb{Q}}|_{\mathbb{R}}^{1/2}}{2^{n}}|N_{K/\mathbb{Q}}(\mathfrak{N})|.$$

    Translating $2\mathbf{U}$ (it is a fundamental domain for $\lambda((\mathfrak{o}^{\times})^{2})$) by some suitable vector $\mathbf{v}_{0}\in\mathbf{H}$, we obtain a subset $\mathcal{F}^{\prime}$ of $\mathbf{H}$ which is symmetric with respect to the origin. For any vector $\mathbf{v}=(v_{1},...,v_{n})\in\mathbb{R}^{n}$, we write ${\rm exp}(\mathbf{v})=({\rm e}^{v_{1}},...,{\rm e}^{v_{n}})$ and put $$y=(1/n,...,1/n),\ \mathcal{F}_{t^{\prime}}^{\prime}:=\{\mathbf{x}+s\mathbf{y}|\ \mathbf{x}\in\mathcal{F}^{\prime}, s\in(-\infty,{\rm log}t^{\prime}]\}\ \text{and}\ \mathcal{F}_{t^{\prime}}:={\rm exp}\mathcal{F}_{t^{\prime}}^{\prime}.$$ Then $\mathcal{F}_{t^{\prime}}$ lies in the first quadrant of $\mathbb{R}^{n}$. For any totally positive element $a\in F^{\ast}$, $|N_{F/\mathbb{Q}}(a)|\le t^{\prime}$ if and only if there exists (a unique) $v=u^{2}\in(\mathfrak{o}^{\times})^{2}$ such that $j(av)\in\mathcal{F}_{t^{\prime}}$. It follows that
    $$\#\{[\alpha]\in(\mathfrak{N}\setminus\mathcal{L})/\mathfrak{o}^{\times}|\ |N_{K/\mathbb{Q}}(\alpha)|\le t^{\prime}\}=\frac{1}{2}\#\{\alpha\in\mathfrak{N}\setminus\mathcal{L}|\ j(N_{K/F}(\alpha))\in\mathcal{F}_{t^{\prime}}\}.$$
    For any $\mathbf{a}=(a_{1},...,a_{n})\in\mathbb{R}^{n}_{+}$, we put $C_{\mathbf{a}}:=\{(x_{1},...,x_{n})\in\mathbb{R}^{n}|\ |x_{j}|\le a_{j}\ \forall j\}$. By a fundamental culculation, we see that $\mathcal{F}_{t^{\prime}}\subseteq C_{A\mathbf{t^{\prime}}^{1/n}}$ where $A={\rm e}^{\frac{\sqrt{n-1}d_{0}}{\sqrt{n}}}$ and $\mathbf{t}^{\prime1/n}=(t^{\prime1/n},...,t^{\prime1/n})$. We have
    $$\#\{[\alpha]\in(\mathfrak{N}\setminus\mathcal{L})/\mathfrak{o}^{\times}|
    \ |N_{K/\mathbb{Q}}(\alpha)|\le t^{\prime}\}\le\frac{1}{2}\#\{\alpha\in\mathfrak{N}\setminus\mathcal{L}|\ j(N_{K/F}(\alpha))\in C_{A\mathbf{t}^{\prime1/n}}\} .$$

    Since $\mathcal{L}$ is saturated, $\mathfrak{N}/\mathcal{L}$ is a projective $\mathfrak{o}$-module.
    Then we can write $\mathfrak{N}=\mathcal{L}\oplus\mathcal{L}^{\prime}$ for $\mathcal{L}=\mathfrak{a}\alpha_{1}$,
    $\mathcal{L}^{\prime}=\mathfrak{b}\alpha_{2}$. Let
    $\alpha=x\alpha_{1}+y\alpha_{2}$ be in $\mathfrak{N}\setminus\mathcal{L}$ with $j(N_{K/F}(\alpha))\in
    C_{A\mathbf{t}^{\prime1/n}}$. We put $N_{K/F}(x\alpha_{1}+y\alpha_{2})=ax^{2}+bxy+cy^{2}$ and
$\varDelta=4ac-b^{2}$, and fix notations
    \begin{eqnarray*}
        \mathbf{y}_{0}&=j\Big{(}\frac{by}{2a}\Big{)},\ \mathbf{z}_{0}=\frac{1}{2j(a)}\sqrt{4Aj(a)\mathbf{t}^{\prime1/n}-j(\varDelta y^{2})},\ \mathbf{z}_{1}=\sqrt{\frac{4Aj(a)\mathbf{t}^{\prime1/n}}{j(\varDelta)}},\\
        &\text{and}\
        \mathbf{z}_{2}=\sqrt{\frac{A\mathbf{t}^{\prime1/n}}{j(a)}}.
    \end{eqnarray*}
   Here (and in the rest of the proof) all the operations on vectors are defined in terms of
   components. From the relation
$$j(N_{K/F}( x\alpha_{1}+y\alpha_{2})) = \frac{4j(a)^2 (j(x)+\mathbf{y}_0)^2 +  j(\varDelta y^2)}{4j(a)}, $$
  we obtain that $$j(N_{K/F}(x\alpha_{1}+y\alpha_{2}))\in C_{A\mathbf{t}^{\prime1/n}}\ \text{if and only if}\ j(x)+\mathbf{y}_{0}\in
  C_{\mathbf{z}_{0}}.$$ Moreover, we have $j(y)\in C_{\mathbf{z}_1}$ and so $|N_{F/\mathbb{Q}}(y)|\leq |N_{\mathbb{R}^{n}}(\mathbf{z}_1)|$.
  Since $\alpha\notin\mathcal{L}$, we have $y\neq 0$. Thus $|N_{F/\mathbb{Q}}(y)|\ge |\mathfrak{b}|_{\mathbb{R}}$.

    Let $d_{K}$ (resp. $d_{F}$) denote the absolute discriminant of $K$ (resp. $F$). Then $|d_{K}|=|d_{F}|^{2}|\mathfrak{d}_{K/F}|_{\mathbb{R}}$. By Minkowski's theory
    \cite[Ch.1, Lemma 6.2]{JN99}, the minimality of $|N_{K/\mathbb{Q}}(\mathcal{L}\mathfrak{o}_{K})|$ implies that
    $$ |N_{F/\mathbb{Q}}(a)|\le\Big{(}\frac{2}{\pi}\Big{)}^{n}\sqrt{|d_{K}|}|\mathfrak{a}|_{\mathbb{R}}^{-2}|N_{K/\mathbb{Q}}(\mathfrak{N})|.$$ Then one can deduce that $$\frac{N_{\mathbb{R}^{n}}(\mathbf{z}_{2})}{|\mathfrak{a}|_{\mathbb{R}}}\ge\frac{\pi^{n/2}A^{n/2}}{2^{n}\sqrt{|d_{F}|}}.$$
    Applying Lemma \ref{Lang} with $T_{0}:=\frac{\pi^{n/2}A^{n/2}}{2^{n}\sqrt{|d_{F}|}}$, we have
    \begin{eqnarray*}
        \#\{x\in\mathfrak{a}|\ j(x)+\mathbf{y}_{0}\in C_{\mathbf{z}_{0}}\}&\le&\#\{x\in\mathfrak{a}|\ j(x)+\mathbf{y}_{0}\in C_{\mathbf{z}_{2}}\} \\
        &\le&   \bigg{(}\frac{2^{n}}{\sqrt{|d_{F}|}}+2nC_{T_{0}}T_{0}^{-1}\bigg{)}\frac{N_{\mathbb{R}^{n}}(\mathbf{z}_{2})}{|\mathfrak{a}|_{\mathbb{R}}}.
    \end{eqnarray*}
Again by Lemma \ref{Lang} with $T_0=1$,
   \begin{eqnarray*}
        \#\{y\in\mathfrak{b}|\ j(y)\in C_{\mathbf{z}_{1}}\setminus\{0\}\}&\le&  \bigg{(}\frac{2^{n}}{\sqrt{|d_{F}|}}+2nC_{1}\bigg{)}\frac{N_{\mathbb{R}^{n}}(\mathbf{z}_{1})}{|\mathfrak{b}|_{\mathbb{R}}}\\
    \end{eqnarray*}
    It is easy to see that
    $$\frac{N_{\mathbb{R}^{n}}(\mathbf{z}_{2})}{|\mathfrak{a}|_{\mathbb{R}}}\cdot\frac{N_{\mathbb{R}^{n}}(\mathbf{z}_{1})}
    {|\mathfrak{b}|_{\mathbb{R}}}=\frac{2^{n}A^{n}t^{\prime}}{\sqrt{|\varDelta|_{\mathbb{R}}}|\mathfrak{ab}|_{\mathbb{R}}}
    =\frac{2^{n}A^{n}t^{\prime}}{\sqrt{|\mathfrak{d}_{K/F}|_{\mathbb{R}}}|N_{K/\mathbb{Q}}(\mathfrak{N})|}=\frac{2^{n}A^{n}t}{\sqrt{|\mathfrak{d}_{K/F}|_{\mathbb{R}}}}.$$
    Therefore, we have
    \begin{eqnarray*}
        & &\#\{[\alpha]\in(\mathfrak{N}\setminus\mathcal{L})/\mathfrak{o}^{\times}|\ |N_{K/\mathbb{Q}}(\alpha)|\le t|N_{K/\mathbb{Q}}(\mathfrak{N})|\}
        \\ &\leq & \frac{1}{2}\#\{x\in\mathfrak{a}|\ j(x)+\mathbf{y}_{0}\in C_{\mathbf{z}_{0}}\} \cdot \#\{y\in\mathfrak{b}|\ j(y)\in C_{\mathbf{z}_{1}}\setminus\{0\}\}
        \\ &\leq &
        \frac{1}{2}\bigg{(}\frac{2^{n}}{\sqrt{|d_{F}|}}+2nC_{T_{0}}T_{0}^{-1}\bigg{)}\frac{N_{\mathbb{R}^{n}}(\mathbf{z}_{2})}{|\mathfrak{a}|_{\mathbb{R}}}
        \cdot \bigg{(}\frac{2^{n}}{\sqrt{|d_{F}|}}+2nC_{1}\bigg{)}\frac{N_{\mathbb{R}^{n}}(\mathbf{z}_{1})}{|\mathfrak{b}|_{\mathbb{R}}}
        \\
        &\le&\frac{1}{2}\cdot\bigg{(}\frac{2^{n}}{\sqrt{|d_{F}|}}+2nC_{T_{0}}T_{0}^{-1}\bigg{)}\bigg{(}\frac{2^{n}}{\sqrt{|d_{F}|}}+2nC_{1}\bigg{)}\times\frac{2^{n}A^{n}t}{\sqrt{|\mathfrak{d}_{K/F}|_{\mathbb{R}}}}\\
        &=&\frac{A_{1}t}{\sqrt{|\mathfrak{d}_{K/F}|_{\mathbb{R}}}}.
    \end{eqnarray*}

Next we prove (\ref{it-norm-b}). Write $|\mathfrak{a}|_{\mathbb{R}}t$ by $t^{\prime}$ again. Replacing
$\lambda((\mathfrak{o}^{\times})^{2})$ by
$\lambda(\mathfrak{o}^{\times})$, an argument similar to the proof
of (\ref{it-norm-a}) implies that
$$\#\{[a]\in(\mathfrak{a}\setminus\{0\})/\mathfrak{o}^{\times}|\  |N_{F/\mathbb{Q}}(a)|\le t|\mathfrak{a}|_{\mathbb{R}}\}\le\#\{a\in\mathfrak{a}\setminus\{0\}|\ j(a)\in C_{A^{1/2}\mathbf{t}^{\prime1/n}}\}. $$
Since $a\ne0$, we have $|N_{F/\mathbb{Q}}(a)|\ge
|\mathfrak{a}|_{\mathbb{R}}$. Then we may assume that $(A^{n/2}t^{\prime}\ge) t^{\prime}\ge |\mathfrak{a}|_{\mathbb{R}}$.
Assertion (\ref{it-norm-b}) follows from Lemma \ref{Lang} directly.
\end{proof}


\subsection{Measures and their Mellin transforms}

We recall some results of \cite{JO 84}.

Let  $\mu_{1},...,\mu_{r}$ be a family of positive Borel regular
measures on $\mathbb{R}^{\ast}_{+}$ with respect to which
$t^{-\sigma}$ ($\sigma>0$) is measurable. Let $x$ be a positive
number and $s$ be a complex number with ${\rm Re}(s)=\sigma$.  For
each $k\in \{1\cdots,  r\}$, let $\hat{\mu}_k$ be the Mellin
transform of $\mu_k$ defined by
$$
\hat{\mu}_{k}(s)=\int_{\mathbb{R}^{\ast}_{+}}t^{-s}\mu_{k}.$$ Put
$$
J(\mu_{1},...,\mu_{r};x)=\int_{\sigma-i\infty}^{\sigma+i\infty}\hat{\mu}_{1}(s)...\hat{\mu}_{r}(s)x^{-s}\bigg{(}s-\frac{1}{2}\bigg{)}^{-3}\frac{{\rm
d}s}{2\pi i}. $$

\begin{prop}\label{prop:measures}(\cite[3.3, Lemma 2 and Lemma 3]{JO 84})
 \begin{enumerate} \item  $J(\mu_{1},...,\mu_{r};x)\ (x>0)$ is a positive
 function.
\item  Let $\mu^{\prime}_{1},...,\mu^{\prime}_{r}$ be another family of measures with the same assumptions.  If
    $$\int_{0}^{x}\mu_{k}([0,t]){\rm dt}\le\int_{0}^{x}\mu^{\prime}_{k}([0,t]){\rm
    dt}$$ for each $k=1,\cdots, r$,
    then $$J(\mu_{1},...,\mu_{r};x)\le
    J(\mu^{\prime}_{1},...,\mu^{\prime}_{r};x).$$
    \end{enumerate}\end{prop}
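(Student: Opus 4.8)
The plan is to reduce both assertions to a single transparent formula for $J$ in terms of a fixed nonnegative kernel. First I would evaluate, for $\sigma>\tfrac12$ and $y>0$, the Mellin--Barnes integral
$$\psi(y):=\frac{1}{2\pi i}\int_{\sigma-i\infty}^{\sigma+i\infty}\frac{y^{-s}}{(s-\tfrac12)^{3}}\,ds=\begin{cases}\tfrac12\,y^{-1/2}(\log y)^{2},&0<y<1,\\[2pt]0,&y\ge1,\end{cases}$$
by shifting the line of integration to $\mathrm{Re}(s)\to+\infty$ when $y>1$ (no pole is crossed, so $\psi(y)=0$) and to $\mathrm{Re}(s)\to-\infty$ when $0<y<1$ (picking up the residue $\tfrac12(\log y)^{2}y^{-1/2}$ of the triple pole at $s=\tfrac12$); the horizontal sides of the rectangles drop out because $(s-\tfrac12)^{-3}$ decays like $|\mathrm{Im}\,s|^{-3}$. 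In particular $\psi\ge0$, $\psi(y)=0$ for $y\ge1$, and $\psi\in C^{1}(\mathbb{R}^{\ast}_{+})$.

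Next I would rewrite $J$ using the multiplicative convolution $\nu=\mu_{1}\ast\cdots\ast\mu_{r}$, i.e.\ the image of $\mu_{1}\otimes\cdots\otimes\mu_{r}$ under $(t_{1},\dots,t_{r})\mapsto t_{1}\cdots t_{r}$: Fubini's theorem gives $\hat\nu(s)=\prod_{k}\hat\mu_{k}(s)$, and since $\int_{\sigma-i\infty}^{\sigma+i\infty}|s-\tfrac12|^{-3}\,|ds|<\infty$ and $\hat\nu(\sigma)=\prod_{k}\hat\mu_{k}(\sigma)<\infty$ on the chosen line, a further application of Fubini yields
$$J(\mu_{1},\dots,\mu_{r};x)=\int_{\mathbb{R}^{\ast}_{+}}\Big(\frac{1}{2\pi i}\int_{\sigma-i\infty}^{\sigma+i\infty}(tx)^{-s}(s-\tfrac12)^{-3}\,ds\Big)d\nu(t)=\int_{\mathbb{R}^{\ast}_{+}}\psi(tx)\,d\nu(t),$$
an expression independent of $\sigma$ in the admissible range. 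Since $\psi\ge0$ and $\nu$ is a positive measure, this proves (a). Moreover $J$ is symmetric in its measure--arguments, so for (b) it suffices, changing one measure at a time, to treat the case $\mu_{k}=\mu_{k}'$ for $k\ge2$; expanding the convolution and using $\psi\ge0$ together with $\mu_{2},\dots,\mu_{r}\ge0$, it is then enough to show $\int_{\mathbb{R}^{\ast}_{+}}\psi(ct)\,d\mu_{1}(t)\le\int_{\mathbb{R}^{\ast}_{+}}\psi(ct)\,d\mu_{1}'(t)$ for every $c>0$.

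For this last inequality the decisive property is the convexity of $\psi$: differentiating the explicit formula gives $\psi''(y)=y^{-5/2}\big(\tfrac38(\log y)^{2}-2\log y+1\big)>0$ on $(0,1)$ (the three summands are positive there, as $\log y<0$), while $\psi''\equiv0$ on $(1,\infty)$ and $\psi'$ is continuous with $\psi'(1)=0$; hence $\psi(y)=\int_{y}^{1}(w-y)\,\psi''(w)\,dw$ for $0<y<1$ and $\psi(y)=0$ for $y\ge1$. Substituting this, applying Tonelli, and using the elementary identity $\int_{0}^{a}(a-t)\,d\mu(t)=\int_{0}^{a}\mu([0,s])\,ds$, I obtain
$$\int_{\mathbb{R}^{\ast}_{+}}\psi(ct)\,d\mu_{1}(t)=c\int_{0}^{1}\psi''(w)\Big(\int_{0}^{w/c}\mu_{1}([0,s])\,ds\Big)dw$$
and the same with $\mu_{1}'$. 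Since $\psi''(w)>0$ on $(0,1)$ and, by hypothesis, $\int_{0}^{x}\mu_{1}([0,s])\,ds\le\int_{0}^{x}\mu_{1}'([0,s])\,ds$ for all $x$, the inequality follows, which proves (b).

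The main obstacle, beyond the contour--shift fact already invoked, is really just the convexity of the kernel $\psi$ — equivalently the positivity of $\tfrac38(\log y)^{2}-2\log y+1$ on $(0,1)$ — since that is exactly what transmutes the hypothesis on the iterated averages $\int_{0}^{x}\mu_{k}([0,t])\,dt$ into monotonicity of $J$; this is the genuine content of the lemma. The remainder is bookkeeping: one must choose the abscissa $\sigma$ both $>\tfrac12$ and inside the common half-plane of absolute convergence of the $\hat\mu_{k}$, and then verify that the two Fubini interchanges, the contour evaluation of $\psi$, and the integrability near $0$ of the integrand displayed above are all legitimate — which is automatic for the measures occurring in this paper, whose Mellin transforms are dominated by $\zeta_{F}(s)$, $\zeta_{K}(s)/\zeta_{F}(2s)$ and $D(s,\mathbf{f})$ and hence converge absolutely for $\mathrm{Re}(s)$ large.
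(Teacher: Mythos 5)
Your proof is correct and reproduces the standard argument (Oesterl\'e's Lemmas 2 and 3), which the paper cites in lieu of a proof: Mellin inversion produces the kernel $\psi$, positivity of $\psi$ gives (a), and expressing $\psi$ via its nonnegative second derivative together with Tonelli converts the hypothesis on $\int_0^x\mu_k([0,t])\,dt$ into (b). The residue computation giving $\psi(y)=\tfrac12 y^{-1/2}(\log y)^2$ for $y\in(0,1)$, the $C^1$ matching at $y=1$, and the positivity of $\psi''(y)=y^{-5/2}\big(\tfrac38(\log y)^2-2\log y+1\big)$ on $(0,1)$ all check out.
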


 In the rest of the paper, for each $1\le i\le h$, we fix a saturated invertible $\mathfrak{o}$-submodule $\mathcal{L}_{i}$
of $\mathfrak{N}_{i}$ such that $|N_{K/\mathbb{Q}}(\mathcal{L}_{i}\mathfrak{o}_{K})|$ is minimal.

For each $x>0$, let $\delta(x)$ denote the Dirac measure
concentrated at $x$.

\begin{lem}\label{lem:measures}
    \begin{enumerate}
    \item\label{it-measures-1} If $\mu={\rm e}^{-t^{-1}}t^{u-1}{\rm d}t$, then $\hat{\mu}(s)=\varGamma(s-u)$.
    \item\label{it-measures-2}
     We write $t_{0}=\frac{\sqrt{|\mathfrak{d}_{K/F}
    |_{\mathbb{R}}}}{2^{n}}$. Put $$\mu_{K}=\sum_{i=1}^{h}\sum_{\mathcal{L}\in\mathscr{L}_{i},\mathcal{L}\cap\mathcal{L}_{i}=0}
    \delta(|N_{K/\mathbb{Q}}(\mathcal{L}\mathfrak{N}_{i}^{-1})|)$$ and $$ \mu^{\prime}_{K}=\frac{A_{1}h_{K}}{\sqrt{|\mathfrak{d}_{K/F}
    |_{\mathbb{R}}}}{\rm d}t\bigg{|}_{[t_{0},\infty]}+\frac{A_{1}h_{K}}{2^{n}}\delta(t_{0}).$$
    Then for every $x>0$ we have
    \begin{equation}\label{eq:measure-b}\int_{0}^{x}\mu_{K}([0,t]){\rm dt}\le\int_{0}^{x}\mu^{\prime}_{K}([0,t]){\rm
    dt}.\end{equation}
    Moreover, we have
    $$\hat{\mu}_{K}(s)=\sum_{i=1}^{h}\sum_{\mathcal{L}\in\mathscr{L}_{i},\mathcal{L}\cap\mathcal{L}_{i}=0}\bigg{|}N_{K/\mathbb{Q}}\bigg{(}\frac{\mathcal{L}\mathfrak{o}_{K}}{\mathfrak{N}_{i}}\bigg{)}\bigg{|}^{-s}$$ and $$\hat{\mu}^{\prime}_{K}(s)=h_{K}\frac{A_{1}2^{ns-n}s}{(s-1)\sqrt{|\mathfrak{d}_{K/F}|_{\mathbb{R}}}^{s}}.$$
\item\label{it-measures-3}
Put
$$\mu_{F}=\sum_{\mathfrak{m}\subseteq\mathfrak{o}}\delta(|N_{F/\mathbb{Q}}(\mathfrak{m})|^{2})$$
and $$\mu^{\prime}_{F}=\frac{h_{F}\sqrt{A_{2}}}{2\sqrt{t}}{\rm
dt}\big{|}_{[1,\infty]}+\sqrt{A_{2}}h_{F}\delta(1).$$ Then for every
$x>0$ we have
\begin{equation}\label{eq:measure-c} \int_{0}^{x}\mu_{F}([0,t]){\rm
dt}\le\int_{0}^{x}\mu^{\prime}_{F}([0,t]){\rm dt}.\end{equation}
Moreover,
$$\hat{\mu}_{F}(s)=\zeta_{F}(2s) \ \text{ and } \ \ \ \hat{\mu}^{\prime}_{F}(s)=\frac{h_{F}\sqrt{A_{2}}s}{s-\frac{1}{2}}.        $$
        \end{enumerate}
\end{lem}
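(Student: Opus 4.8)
The plan is to treat the three items in turn; the first is a direct integral evaluation, the two Mellin-transform formulas in (\ref{it-measures-2}) and (\ref{it-measures-3}) are formal, and the two inequalities are what require an argument. For (\ref{it-measures-1}), $\hat\mu(s)=\int_{\mathbb{R}^{\ast}_{+}}t^{-s}\mathrm{e}^{-1/t}t^{u-1}\,\mathrm{d}t$ becomes $\int_{\mathbb{R}^{\ast}_{+}}v^{s-u-1}\mathrm{e}^{-v}\,\mathrm{d}v=\varGamma(s-u)$ under the substitution $v=1/t$. For the Mellin transforms in (\ref{it-measures-2}): taking the transform of a countable sum of Dirac masses replaces $\sum_{j}\delta(c_{j})$ by $\sum_{j}c_{j}^{-s}$, so $\hat\mu_{K}(s)$ is exactly the sum appearing on the right-hand side of Proposition \ref{prop:estimation of h}(\ref{it:estimation-1}); and $\hat\mu'_{K}(s)=\frac{A_{1}h_{K}}{\sqrt{|\mathfrak{d}_{K/F}|_{\mathbb{R}}}}\int_{t_{0}}^{\infty}t^{-s}\,\mathrm{d}t+\frac{A_{1}h_{K}}{2^{n}}t_{0}^{-s}$, which collapses to the stated expression on using $t_{0}=\sqrt{|\mathfrak{d}_{K/F}|_{\mathbb{R}}}/2^{n}$. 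Similarly in (\ref{it-measures-3}) one has $\hat\mu_{F}(s)=\sum_{\mathfrak{m}\subseteq\mathfrak{o}}|N_{F/\mathbb{Q}}(\mathfrak{m})|^{-2s}=\zeta_{F}(2s)$ by definition, and $\hat\mu'_{F}(s)=\frac{h_{F}\sqrt{A_{2}}}{2}\int_{1}^{\infty}t^{-s-1/2}\,\mathrm{d}t+\sqrt{A_{2}}h_{F}=\frac{h_{F}\sqrt{A_{2}}\,s}{s-\frac12}$.

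For the inequality (\ref{eq:measure-b}) I would first show that both cumulative distributions vanish for $t<t_{0}$. Passing to saturations and invoking Lemma \ref{thm:only-one} for $(\mathfrak{N}_{i},Q_{i})$ (with $|\mathfrak{d}_{Q_{i}}|_{\mathbb{R}}$ computed by Lemma \ref{lem:relation-d}), any $\mathcal{L}\in\mathscr{L}_{i}$ with $\mathcal{L}\cap\mathcal{L}_{i}=0$ must have $|N_{K/\mathbb{Q}}(\mathcal{L}\mathfrak{N}_{i}^{-1})|\ge t_{0}$, since otherwise its saturation would be a second saturated submodule of small norm besides the minimal $\mathcal{L}_{i}$. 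For $t\ge t_{0}$ I would write $\mu_{K}([0,t])=\sum_{i=1}^{h}\#\{\mathcal{L}\in\mathscr{L}_{i}:\mathcal{L}\cap\mathcal{L}_{i}=0,\ |N_{K/\mathbb{Q}}(\mathcal{L}\mathfrak{N}_{i}^{-1})|\le t\}$, bound each term by Lemma \ref{lem:norm estimation}(\ref{it-norm-a}) after matching these $\mathcal{L}$ to element classes in $(\mathfrak{N}_{i}\setminus\mathcal{L}_{i})/\mathfrak{o}^{\times}$, and conclude $\mu_{K}([0,t])\le hA_{1}t/\sqrt{|\mathfrak{d}_{K/F}|_{\mathbb{R}}}\le h_{K}A_{1}t/\sqrt{|\mathfrak{d}_{K/F}|_{\mathbb{R}}}=\mu'_{K}([0,t])$, the last equality because $\frac{A_{1}h_{K}}{2^{n}}=\frac{A_{1}h_{K}t_{0}}{\sqrt{|\mathfrak{d}_{K/F}|_{\mathbb{R}}}}$; integrating then yields (\ref{eq:measure-b}).

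For (\ref{eq:measure-c}) I would use $\int_{0}^{x}\mu_{F}([0,t])\,\mathrm{d}t=\sum_{\mathfrak{m}\subseteq\mathfrak{o}}(x-|N_{F/\mathbb{Q}}(\mathfrak{m})|^{2})^{+}$, split the sum over the $h_{F}$ ideal classes, and in the class making $\mathfrak{m}\mathfrak{a}_{j}$ principal rewrite its contribution, by an integration by parts, as $\sum_{[a]\in(\mathfrak{a}_{j}\setminus\{0\})/\mathfrak{o}^{\times}}\bigl(x-(|N_{F/\mathbb{Q}}(a)|/|\mathfrak{a}_{j}|_{\mathbb{R}})^{2}\bigr)^{+}=\int_{1}^{\sqrt{x}}2u\,N_{j}(u)\,\mathrm{d}u$, where $N_{j}(u)=\#\{[a]\in(\mathfrak{a}_{j}\setminus\{0\})/\mathfrak{o}^{\times}:|N_{F/\mathbb{Q}}(a)|\le u|\mathfrak{a}_{j}|_{\mathbb{R}}\}$ vanishes for $u<1$. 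Inserting the bound for $N_{j}(u)$ from Lemma \ref{lem:norm estimation}(\ref{it-norm-b}) and integrating produces an upper bound of the shape $\mathrm{const}\cdot h_{F}(x^{3/2}-1)$, which one matches against $\int_{0}^{x}\mu'_{F}([0,t])\,\mathrm{d}t=\tfrac{2}{3}h_{F}\sqrt{A_{2}}\,(x^{3/2}-1)$.

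The step I expect to be the main obstacle is the matching used in (\ref{it-measures-2}): one must verify that counting the invertible rank-one submodules $\mathcal{L}\in\mathscr{L}_{i}$ by $|N_{K/\mathbb{Q}}(\mathcal{L}\mathfrak{N}_{i}^{-1})|$ is dominated by counting element classes $[\alpha]\in(\mathfrak{N}_{i}\setminus\mathcal{L}_{i})/\mathfrak{o}^{\times}$ by $|N_{K/\mathbb{Q}}(\alpha)|$, so that Lemma \ref{lem:norm estimation}(\ref{it-norm-a}) applies to $\mu_{K}([0,t])$ with exactly the stated constant. This is the same bookkeeping with the fixed representatives $\mathfrak{a}_{1},\dots,\mathfrak{a}_{h'}$ that underlies Lemma \ref{lem:ideal}, and getting it to interact correctly with the saturated submodule $\mathcal{L}_{i}$ and the threshold $t_{0}$ — and, in (\ref{it-measures-3}), keeping the final constant in line with $A_{2}$ — is where the real care lies.
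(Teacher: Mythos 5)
Your proposal follows the same route as the paper's proof: item (a) by substitution, the Mellin transforms by direct computation, and the cumulative-distribution inequalities via Lemma~\ref{lem:ideal} (to parametrize the submodules $\mathcal{L}$ by element classes over the representatives $\mathfrak{a}_1,\dots,\mathfrak{a}_{h'}$) together with Lemma~\ref{lem:norm estimation}. Your explicit observation that both $\mu_K([0,\cdot])$ and $\mu'_K([0,\cdot])$ vanish on $[0,t_0)$ (via Lemma~\ref{thm:only-one}) is a point the paper's proof glosses over, and is correct. One small slip: the matching must be to $\coprod_{j=1}^{h'}(\mathfrak{a}_j^{-1}\mathfrak{N}_i\setminus\mathfrak{a}_j^{-1}\mathcal{L}_i)/\mathfrak{o}^\times$, not just to $(\mathfrak{N}_i\setminus\mathcal{L}_i)/\mathfrak{o}^\times$; applying Lemma~\ref{lem:norm estimation}(a) once per $(i,j)$ gives $\mu_K([0,t])\le h_K A_1 t/\sqrt{|\mathfrak{d}_{K/F}|_\mathbb{R}}$, not the $h A_1 t/\sqrt{|\mathfrak{d}_{K/F}|_\mathbb{R}}$ you write as an intermediate step (which, if it were true, would already imply the result, but the argument you sketch only produces the bound with $h_K=hh'$).

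The more serious issue is the one you half-flag at the end, in item~(c): carrying out your computation honestly gives $\int_0^x\mu_F([0,t])\,{\rm d}t\le\tfrac{2}{3}h_F A_2\,(x^{3/2}-1)$, whereas $\int_0^x\mu'_F([0,t])\,{\rm d}t=\tfrac{2}{3}h_F\sqrt{A_2}\,(x^{3/2}-1)$. These do not match: the needed inequality would require $A_2\le\sqrt{A_2}$, i.e.\ $A_2\le 1$, but from the formula $A_2=\mathrm{e}^{\sqrt{n(n-1)}d_0/2}\big(2^n/\sqrt{|d_F|}+2nC_1\big)$ one has $A_2\ge 2nC_1\ge 4n^2>1$. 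So (\ref{eq:measure-c}) as stated is not a consequence of Lemma~\ref{lem:norm estimation}(\ref{it-norm-b}) with the given $\mu'_F$; the $\sqrt{A_2}$ appearing in $\mu'_F$ (and hence in $\hat\mu'_F$) should be $A_2$. This looks like an error in the paper itself — the paper's proof simply says ``(\ref{eq:measure-c}) follows from Lemma~\ref{lem:norm estimation}(\ref{it-norm-b})'' without checking the constant — and fixing it would propagate (e.g.\ $A_2^2$ instead of $A_2$ in $B_1$ of Proposition~\ref{prop: estimation of J(U*)}). Your instinct that ``keeping the final constant in line with $A_2$'' is where the real care lies was exactly right; you should push that through rather than asserting the match.
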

\begin{proof}
   Assertion (\ref{it-measures-1}) is clear.

Lemma \ref{lem:ideal} implies that
    $$\mu_{K}=\sum_{i=1}^{h}\sum_{j=1}^{h^{\prime}}\sum_{[\alpha]\in(\mathfrak{a}_{j}^{-1}\mathfrak{N}_{i}
    \setminus\mathfrak{a}_{j}^{-1}\mathcal{L}_{i})/\mathfrak{o}^{\times}}\delta(|N_{K/\mathbb{Q}}(\alpha\mathfrak{a}_{j}\mathfrak{N}_{i}^{-1})|). $$
By cosidering (for all $i,j$)
$\mathfrak{N}=\mathfrak{a}_{j}^{-1}\mathfrak{N}_{i}$ and
$\mathcal{L}=\mathfrak{a}_{j}^{-1}\mathcal{L}_{i}$ in Lemma
\ref{lem:norm estimation} (\ref{it:idea-form-a}), we have for every
$t>0$,
$$\mu_{K}([0,t])\le h_{K}\frac{A_{1}t}{\sqrt{|\mathfrak{d}_{K/F}|_{\mathbb{R}}}} \le \mu'_{K}([0,t]) ,$$ which yields (\ref{eq:measure-b}). The equalities for $\hat{\mu}_K$ and $\hat{\mu}'_K$
follow  from  direct calculations.

Similarly,  (\ref{eq:measure-c}) follows from Lemma \ref{lem:norm
estimation} (\ref{it-norm-b}). The equalities for $\hat{\mu}_F$ and
$\hat{\mu}'_F$ follow from direct calculations.
\end{proof}

Let $\varPsi(s)$ and $\varPhi(s)$ satisfy Condition \ref{condition}. As in \cite{JO 84}, for
every real number $U>1$, we put
$$\varPhi(U,s)=\prod_{\mathfrak{p}:|\mathfrak{p}|_{\mathbb{R}}<U}\varPhi_{\mathfrak{p}}(s)\hskip10pt\text{and}\hskip10pt
\varPhi(U^{\ast},s)=\varPhi(s)-\varPhi(U,s).$$
We write $\gamma(s)=M^{s}\varGamma(s)^{2n}.$ When $\sigma>1$, we put
\begin{eqnarray*}
&&J(U)=\int_{\sigma-i\infty}^{\sigma+i\infty}|\mathfrak{d}_{K/F}|_{\mathbb{R}}^{s-1/2}\gamma\bigg{(}s+\frac{1}{2}\bigg{)}
\varPsi\bigg{(}s+\frac{1}{2}\bigg{)}\varPhi\bigg{(}U,s+\frac{1}{2}\bigg{)}\bigg{(}s-\frac{1}{2}\bigg{)}^{-3}\frac{{\rm d}s}{2\pi i}\\
&&  J(U^{\ast})=\int_{\sigma-i\infty}^{\sigma+i\infty}|\mathfrak{d}_{K/F}|_{\mathbb{R}}^{s-1/2}\gamma\bigg{(}s+\frac{1}{2}\bigg{)}\varPsi\bigg{(}s+\frac{1}{2}\bigg{)}\varPhi\bigg{(}U^{\ast},s+\frac{1}{2}\bigg{)}\bigg{(}s-\frac{1}{2}\bigg{)}^{-3}\frac{{\rm d}s}{2\pi i}.
\end{eqnarray*}
Let $\varDelta$ be the oriented path
$$\bigg{(}\frac{1}{2}-i\infty,\frac{1}{2}-i\eta^{\prime}\bigg{]}+\bigg{[}\frac{1}{2}-i\eta^{\prime},\frac{1}{2}-\eta-i\eta^{\prime}\bigg{]}+\bigg{[}\frac{1}{2}-\eta-i\eta^{\prime},\frac{1}{2}-\eta+i\eta^{\prime}\bigg{]}+\bigg{[}\frac{1}{2}-\eta+i\eta^{\prime},\frac{1}{2}+i\eta^{\prime}\bigg{]}+\bigg{[}\frac{1}{2}+i\eta^{\prime},\frac{1}{2}+i\infty\bigg{)}$$
from $\frac{1}{2}-i\infty$ to $\frac{1}{2}+i\infty$. Here $\eta$, $\eta^{\prime}\in(0,1/4)$ are sufficiently small such that $\varPsi(s)$ is holomorphic on the region between $\varDelta$ and $\{{\rm Re}(s)=\sigma\}$. Put
$$J_{1}(U):=\int_{\varDelta}|\mathfrak{d}_{K/F}|_{\mathbb{R}}^{s-1/2}\gamma\bigg{(}s+\frac{1}{2}\bigg{)}
\varPsi\bigg{(}s+\frac{1}{2}\bigg{)}\varPhi\bigg{(}U,s+\frac{1}{2}\bigg{)}\bigg{(}s-\frac{1}{2}\bigg{)}^{-3}\frac{{\rm d}s}{2\pi i}.$$

\begin{lem}\label{lem: J(U) and J(U*)}
    Suppose that $\varPhi(s)$ and $\varPsi(s)$ satisfy Condition \ref{condition}. Then we have the following.
    \begin{enumerate}
        \item\label{it-J(U)-1}$J(U)=-J(U^{\ast})$.
        \item\label{it-J(U)-2}We have
        $$J(U)=G_{1}(M,\varPsi)\varPhi(U,1)\bigg{(}{\rm log}|\mathfrak{d}_{K/F}|_{\mathbb{R}}+\frac{\varPhi^{\prime}(U,1)}{\varPhi(U,1)}+G_{2}(M,\varPsi)\bigg{)}+J_{1}(U)$$ where $G_{1}(M,\varPsi)=\frac{\gamma(s)\varPsi(s)}{s-1}\big{|}_{s=1}$ and $G_{2}(M,\varPsi)=\frac{\rm dlog}{\rm dt}\big{(}\frac{\gamma(s)\varPsi(s)}{s-1}\big{)}\big{|}_{s=1}$. Moreover if we put $$G_{3}(M,\varPsi)=\int_{\varDelta}\bigg{|}\gamma\bigg{(}s+\frac{1}{2}\bigg{)}\varPsi\bigg{(}s+\frac{1}{2}\bigg{)}\bigg{(}s-\frac{1}{2}\bigg{)}^{-3}\bigg{|}\frac{\rm |ds|}{2\pi},$$ then
        $|J_{1}(U)|\le G_{3}(M,\varPsi){\rm sup}_{\varDelta}|\varPhi(U,s+\frac{1}{2})|$. 
        \end{enumerate}

Moreover, if $\varPhi(s)$ and $\varPsi(s)$ are constructed in terms of ($\mathbf{f},\chi$) in Lemma \ref{lem:hilbert modular form}, then $$|G_{1}(M,\varPsi)|\ge G_{1},\hskip10pt|G_{2}(M,\varPsi)|\le G_{2}\hskip10pt\text{and}\hskip10pt G_{3}(M,\varPsi)\le G_{3}.$$ Here if we let $\zeta_{F,\mathfrak{a}}(s)=\prod_{\mathfrak{p}\nmid\mathfrak{a}}(1-|\mathfrak{p}|_{\mathbb{R}}^{-s})^{-1}$, then $G_{1}$, $G_{2}$ and $G_{3}$ are given as follows:
$$  G_{1}=\bigg{|}\frac{2|\mathfrak{d}^{2}|_{\mathbb{R}}}{(2\pi)^{2n}|\mathfrak{a}^{n}|_{\mathbb{R}}}\times L_{\rm f}(1,\vee^{2}\pi_{\mathbf{f}})\times(\zeta_{F,\mathfrak{a}}^{-1})^{\prime}(1)\times\prod_{\mathfrak{p}^{2}\mid\mathfrak{a}}\frac{|\mathfrak{p}|_{\mathbb{R}}}{(\sqrt{|\mathfrak{p}|_{\mathbb{R}}}+1)^{2}}\bigg{|},$$
$$ G_{2}=\bigg{|}{\rm log}\bigg{(} \frac{|\mathfrak{ad}^{2}|_{\mathbb{R}}}{(2\pi)^{2n}} \bigg{)}\bigg{|}+\frac{2n\varGamma^{\prime}(1)}{\varGamma(1)}+\bigg{|}\frac{2L_{\mathbf{f}}^{\prime}(1,\vee^{2}\pi_{\mathbf{f}})}{L_{\mathbf{f}}(1,\vee^{2}\pi_{\mathbf{f}})}\bigg{|}+\bigg{|}\frac{(\zeta_{F,\mathfrak{a}}^{-1})^{\prime\prime}(1)}{(\zeta_{F,\mathfrak{a}}^{-1})^{\prime}(1)}\bigg{|}+\sum_{\mathfrak{p}^{2}\mid\mathfrak{a}}\frac{2\sqrt{|\mathfrak{p}|_{\mathbb{R}}}+2}{(\sqrt{|\mathfrak{p}|_{\mathbb{R}}}-1)^{2}}{\rm log}|\mathfrak{p}|_{\mathbb{R}} $$    
and
$$ G_{3}={\rm max}\Bigg{\{}\frac{|\mathfrak{ad}^{2}|_{\mathbb{R}}}{(2\pi)^{2n}},\frac{|\mathfrak{d}|^{\frac{3}{2}}_{\mathbb{R}}}{(2\pi)^{\frac{3n}{2}}|\mathfrak{a}|^{\frac{3n}{4}}_{\mathbb{R}}}\Bigg{\}}\cdot\prod_{\mathfrak{p}^{2}\mid\mathfrak{a}}\frac{\sqrt{|\mathfrak{p}|_{\mathbb{R}}}}{(|\mathfrak{p}|^{\frac{1}{4}}_{\mathbb{R}}-1)^{2}}\times\int_{\varDelta}\bigg{|}\varGamma\Big{(}s+\frac{1}{2}\Big{)}^{2n} \frac{L_{\mathbf{f}}(2s,\vee^{2}\pi_{\mathbf{f}})}{\zeta_{F,\mathfrak{a}}(2s)}\cdot\bigg{(}s-\frac{1}{2}\bigg{)}^{-3}\bigg{|}\frac{\rm |ds|}{2\pi}.$$
$G_{1}$, $G_{2}$ and $G_{3}$ depend only on $\mathbf{f}$.

\end{lem}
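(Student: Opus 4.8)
The plan is to add the two integrals. Since $\varPhi(s)=\varPhi(U,s)+\varPhi(U^{\ast},s)$, the sum $J(U)+J(U^{\ast})$ is the integral defining $J(U)$ with $\varPhi(U,\cdot)$ replaced by $\varPhi$. Substituting $w=s+\tfrac12$ turns its integrand into $|\mathfrak{d}_{K/F}|_{\mathbb{R}}^{-1}H(w)(w-1)^{-3}$, where $H(w)=(|\mathfrak{d}_{K/F}|_{\mathbb{R}}M)^{w}\varGamma(w)^{2n}\varPhi(w)\varPsi(w)$ is, by Condition \ref{condition}(\ref{condition-4}), entire, has a zero of order $\ge3$ at $w=1$, is invariant under $w\mapsto 2-w$, and is rapidly decreasing in vertical strips. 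Hence $H(w)(w-1)^{-3}$ is holomorphic and changes sign under $w\mapsto 2-w$. I would shift the contour $\mathrm{Re}(w)=\sigma+\tfrac12$ to its mirror $\mathrm{Re}(w)=\tfrac32-\sigma$ (crossing no pole, since the triple pole of $(w-1)^{-3}$ is killed by $H$, and the horizontal pieces vanishing by rapid decay), and then apply the change of variable $w\mapsto 2-w$ to see the integral equals minus itself, so it vanishes. This gives $J(U)=-J(U^{\ast})$.

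\textbf{Part (b).} Here I would move the contour of $J(U)$ from $\mathrm{Re}(s)=\sigma$ to $\varDelta$. In the region swept out, the only singularity of the integrand is at $s=\tfrac12$: the finite product $\varPhi(U,s+\tfrac12)$ is holomorphic there (its Euler factors have poles only in $\mathrm{Re}(s)\le0$), $\varPsi(s+\tfrac12)$ is holomorphic (by the choice of $\eta,\eta'$ via Condition \ref{condition}(\ref{it-Dirichlet-4})) with a simple zero at $s=\tfrac12$, and $(s-\tfrac12)^{-3}$ contributes a triple pole, leaving a double pole. Expanding the integrand at $s=\tfrac12+\varepsilon$, using $\varPsi(1+\varepsilon)=\varPsi'(1)\varepsilon+\tfrac12\varPsi''(1)\varepsilon^{2}+\cdots$ and $|\mathfrak{d}_{K/F}|_{\mathbb{R}}^{\varepsilon}=1+\varepsilon\log|\mathfrak{d}_{K/F}|_{\mathbb{R}}+\cdots$, the residue comes out as $\gamma(1)\varPsi'(1)\varPhi(U,1)\big(\log|\mathfrak{d}_{K/F}|_{\mathbb{R}}+\tfrac{\gamma'(1)}{\gamma(1)}+\tfrac{\varPsi''(1)}{2\varPsi'(1)}+\tfrac{\varPhi'(U,1)}{\varPhi(U,1)}\big)$, and one recognizes $\gamma(1)\varPsi'(1)=\tfrac{\gamma(s)\varPsi(s)}{s-1}\big|_{s=1}=G_1(M,\varPsi)$ and $\tfrac{\gamma'(1)}{\gamma(1)}+\tfrac{\varPsi''(1)}{2\varPsi'(1)}=\tfrac{\mathrm{dlog}}{\mathrm{d}s}\big(\tfrac{\gamma(s)\varPsi(s)}{s-1}\big)\big|_{s=1}=G_2(M,\varPsi)$, which is the asserted formula with $J_1(U)$ the remaining integral over $\varDelta$. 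For the bound on $J_1(U)$, I note that $\mathrm{Re}(s)\le\tfrac12$ on $\varDelta$ and $|\mathfrak{d}_{K/F}|_{\mathbb{R}}\ge1$, so $\big||\mathfrak{d}_{K/F}|_{\mathbb{R}}^{s-1/2}\big|\le1$ there; pulling $\sup_{\varDelta}|\varPhi(U,s+\tfrac12)|$ out of the integral leaves precisely $G_3(M,\varPsi)$.

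\textbf{The ``moreover'' part: $G_1,G_2$.} Now I would take $\varPsi,\varPhi,M$ from Lemma \ref{lem:hilbert modular form} applied to $(\mathbf{f},\chi)$, where $\mathfrak{d}_{\chi}=\mathfrak{d}_{K/F}$ (Proposition \ref{prop:epsilon factors}) and $\mathfrak{a}_{\chi}\mid\mathfrak{ad}_{\chi}^{2}$. Regrouping the local factors in the definition of $\varPsi_{\mathfrak{p}}$ gives
$\varPsi(s)=\zeta_{F,\mathfrak{a}}(2s-1)^{-1}\,L_{\mathrm{f}}(2s-1,\vee^{2}\pi_{\mathbf{f}})\prod_{\mathfrak{p}^{2}\mid\mathrm{g.c.d}(\mathfrak{a},\mathfrak{d}_{\chi}^{2})}\tfrac{D(\mathbf{f}\otimes\chi,s)_{\mathfrak{p}}}{L_{\mathfrak{p}}(2s-1,\vee^{2}\pi_{\mathbf{f}})}$, and since $\zeta_{F,\mathfrak{a}}(2s-1)^{-1}$ carries the simple zero at $s=1$ with $\tfrac{\zeta_{F,\mathfrak{a}}(2s-1)^{-1}}{s-1}\big|_{s=1}=2(\zeta_{F,\mathfrak{a}}^{-1})'(1)$ and $\varGamma(1)=1$, part (b) produces closed expressions for $G_1(M,\varPsi)$ and $G_2(M,\varPsi)$ in $M$, $(\zeta_{F,\mathfrak{a}}^{-1})'(1)$, $(\zeta_{F,\mathfrak{a}}^{-1})''(1)$, $L_{\mathrm{f}}(1,\vee^{2}\pi_{\mathbf{f}})$, $L_{\mathbf{f}}'(1,\vee^{2}\pi_{\mathbf{f}})$, the Gamma factors, and the bad-prime factors. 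I would then substitute the value of $L_{\mathrm{f}}(1,\vee^{2}\pi_{\mathbf{f}})$ computed in Subsection \ref{subsection 2.1}; bound $M$ above and below by $K$-free multiples of $|\mathfrak{ad}^{2}|_{\mathbb{R}}(2\pi)^{-2n}$ (the $|\mathfrak{d}_{K/F}|_{\mathbb{R}}$ cancels against $\mathfrak{d}_{\chi}$, the leftover $\sqrt{|\mathfrak{a}_{\chi}\mathfrak{d}_{\chi}^{-2}|_{\mathbb{R}}}$ being bounded); and estimate each factor $D(\mathbf{f}\otimes\chi,1)_{\mathfrak{p}}/L_{\mathfrak{p}}(1,\vee^{2}\pi_{\mathbf{f}})$ and its logarithmic derivative from the Ramanujan-type bound $|\lambda(\mathfrak{p},\cdot)|\le2\sqrt{|\mathfrak{p}|_{\mathbb{R}}}$ recalled in Subsection \ref{subsection 2.1}, which yields the factors $\tfrac{|\mathfrak{p}|_{\mathbb{R}}}{(\sqrt{|\mathfrak{p}|_{\mathbb{R}}}+1)^{2}}$ and $\tfrac{2\sqrt{|\mathfrak{p}|_{\mathbb{R}}}+2}{(\sqrt{|\mathfrak{p}|_{\mathbb{R}}}-1)^{2}}\log|\mathfrak{p}|_{\mathbb{R}}$. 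Extending these products and sums from the $K$-dependent set $\{\mathfrak{p}^{2}\mid\mathrm{g.c.d}(\mathfrak{a},\mathfrak{d}_{\chi}^{2})\}$ to all $\mathfrak{p}^{2}\mid\mathfrak{a}$ is harmless (each extra product factor is $\le1$, each extra summand $\ge0$), and delivers the stated $K$-free $G_1,G_2$.

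\textbf{$G_3$, and the main obstacle.} For $G_3$ I would insert the same factorization of $\varPsi$ into $G_3(M,\varPsi)=\int_{\varDelta}\big|\gamma(s+\tfrac12)\varPsi(s+\tfrac12)(s-\tfrac12)^{-3}\big|\tfrac{|\mathrm{d}s|}{2\pi}$, bound $|M^{s+1/2}|$ on $\varDelta$ by the larger of its values at the two extreme real parts $\tfrac12$ and $\tfrac12-\eta$ with $\eta<\tfrac14$ (this produces the $\max$ in the formula for $G_3$), bound the bad-prime factors on $\varDelta$ by $\prod_{\mathfrak{p}^{2}\mid\mathfrak{a}}\tfrac{\sqrt{|\mathfrak{p}|_{\mathbb{R}}}}{(|\mathfrak{p}|_{\mathbb{R}}^{1/4}-1)^{2}}$ (again via the $|\lambda|$-bound, the $|\mathfrak{p}|^{1/4}$ coming from $\eta<\tfrac14$), and reduce to the $K$-free integral $\int_{\varDelta}\big|\varGamma(s+\tfrac12)^{2n}L_{\mathbf{f}}(2s,\vee^{2}\pi_{\mathbf{f}})\zeta_{F,\mathfrak{a}}(2s)^{-1}(s-\tfrac12)^{-3}\big|\tfrac{|\mathrm{d}s|}{2\pi}$, which converges by the Gamma-factor decay together with Condition \ref{condition}(\ref{it-Dirichlet-4}). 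Parts (a) and (b) are essentially contour manipulations and a residue computation; the substantive work is this last step, where the delicate point is that the local Euler factors of $\varPsi$ at the finitely many primes dividing both $\mathfrak{a}$ and $\mathfrak{d}_{K/F}$ genuinely depend on $K$, so one must replace them by uniform bounds --- this is exactly the role of the products over all $\mathfrak{p}^{2}\mid\mathfrak{a}$ in $G_1,G_2,G_3$, which is why those three constants depend only on $\mathbf{f}$.
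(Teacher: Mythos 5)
Your proposal is correct and follows the same route as the paper's. Parts (a) and (b) are exactly the contour-shift and residue computations that the paper defers to Oesterl\'e's argument (\cite[3.2]{JO 84}), which you have reconstructed faithfully: using the $s\mapsto 2-s$ invariance and the order-$\ge 3$ zero of $H$ at $s=1$ for (a), and extracting the residue of the double pole at $s=\tfrac12$ (coming from $(s-\tfrac12)^{-3}$ against the simple zero of $\varPsi(s+\tfrac12)$) for (b). For the ``moreover'' part, your factorization of $\varPsi$ into $\zeta_{F,\mathfrak{a}}(2s-1)^{-1}$, the symmetric-square $L$-function, and the finitely many bad-prime Euler factors, followed by the Ramanujan bound $|\lambda|\le 2\sqrt{|\mathfrak{p}|_{\mathbb{R}}}$ at $\eta<\tfrac14$, the two-sided bound on $M$ from $(3.5)$, and the harmless enlargement of the $K$-dependent product $\{\mathfrak{p}^2\mid\gcd(\mathfrak{a},\mathfrak{d}_{\chi}^2)\}$ to $\{\mathfrak{p}^2\mid\mathfrak{a}\}$, is exactly what the paper does.
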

\begin{proof}
    The proof is similar to \cite[3.2]{JO 84} by using Condition \ref{condition}. We focus on the estimations of $G_{1}(M,\varPsi)$, $G_{2}(M,\varPsi)$ and $G_{3}(M,\varPsi)$. If $\varPhi$ and $\varPsi$ are constructed by ($\mathbf{f},\chi$) in Lemma \ref{lem:hilbert modular form}, we have
    $$\varPsi(s)=L_{\mathbf{f}}(2s-1,\vee^{2}\pi_{\mathbf{f}})\times\zeta_{F,\mathfrak{a}}(2s-1)^{-1}\times\prod_{\mathfrak{p}^{2}\mid{\rm g.c.d}(\mathfrak{a},\mathfrak{d}_{\chi}^{2})}D(s,\mathbf{f}\otimes\chi)_{\mathfrak{p}}.$$
     Then $$G_{1}(M,\varPsi)=2\gamma(1)\times L_{\rm f}(1,\vee^{2}\pi_{\mathbf{f}})\times(\zeta_{F,\mathfrak{a}}^{-1})^{\prime}(1)\times\prod_{\mathfrak{p}^{2}\mid{\rm g.c.d}(\mathfrak{a},\mathfrak{d}_{\chi}^{2})}D(1,\mathbf{f}\otimes\chi)_{\mathfrak{p}}$$ and 
    $$G_{2}(M,\varPsi)=\frac{\gamma^{\prime}(1)}{\gamma(1)}+\frac{2L_{\mathbf{f}}^{\prime}(1,\vee^{2}\pi_{\mathbf{f}})}{L_{\mathbf{f}}(1,\vee^{2}\pi_{\mathbf{f}})}+\frac{(\zeta_{F,\mathfrak{a}}^{-1})^{\prime\prime}(1)}{(\zeta_{F,\mathfrak{a}}^{-1})^{\prime}(1)}+\sum_{\mathfrak{p}^{2}\mid{\rm g.c.d}(\mathfrak{a},\mathfrak{d}_{\chi}^{2})}\frac{D^{\prime}(1,\mathbf{f}\otimes\chi)_{\mathfrak{p}}}{D(1,\mathbf{f}\otimes\chi)_{\mathfrak{p}}}.$$ 
    
    We decompose $\mathfrak{d}_{K/F}$ into a product $\mathfrak{d}_{K/F,1}\cdot\mathfrak{d}_{K/F,2}$. Here for each $\mathfrak{p}\mid\mathfrak{d}_{K/F,1}$, $v_{\mathfrak{p}}(\mathfrak{a})\le1$, while for each $\mathfrak{p}\mid\mathfrak{d}_{K/F,2}$, $\mathfrak{p}^{2}\mid\mathfrak{a}$. Then we have
    $$|\mathfrak{d}_{K/F,1}^{2}|_{\mathbb{R}}\le |\mathfrak{a}_{\chi}|_{\mathbb{R}}\le |\mathfrak{ad}_{K/F}^{2}|_{\mathbb{R}}.$$
     By Corollary \ref{cor: prime power in discriminant }, each prime divisor of $\mathfrak{d}_{K/F,2}$ has multiplicity $\le 2e_{\mathfrak{p}}+1(\le 2n+1)$. Then 
     $|\mathfrak{d}_{K/F,2}|_{\mathbb{R}}\le |\mathfrak{a}|_{\mathbb{R}}^{n+\frac{1}{2}}$. Therefore we obtain
    \begin{equation}\label{eq:estimation of M} \frac{|\mathfrak{d}^{2}|_{\mathbb{R}}}{(2\pi)^{2n}|\mathfrak{a}^{n}|_{\mathbb{R}}}\le\frac{\sqrt{|\mathfrak{ad}^{4}|_{\mathbb{R}}}}{(2\pi)^{2n}|\mathfrak{d}_{K/F,2}|_{\mathbb{R}}}\le M=\frac{\sqrt{|\mathfrak{a}\mathfrak{a}_{\chi}\mathfrak{d}^{4}|_{\mathbb{R}}}}{(2\pi)^{2n}|\mathfrak{d}_{K/F}|_{\mathbb{R}}}\le \frac{|\mathfrak{ad}^{2}|_{\mathbb{R}}}{(2\pi)^{2n}}.
    	\end{equation}
    Moreover, by elementary calculations (by using the facts in Section \ref{subsection 2.1}) we have
    $$  |D(1,\mathbf{f}\otimes\chi)_{\mathfrak{p}}|\ge\frac{|\mathfrak{p}|_{\mathbb{R}}}{(\sqrt{|\mathfrak{p}|_{\mathbb{R}}}+1)^{2}}$$ 
    $${\rm sup}_{\varDelta}\Big{|}D\Big{(}s+\frac{1}{2},\mathbf{f}\otimes\chi\Big{)}_{\mathfrak{p}}\Big{|}\le \frac{\sqrt{|\mathfrak{p}|_{\mathbb{R}}}}{(|\mathfrak{p}|^{\frac{1}{4}}_{\mathbb{R}}-1)^{2}}\hskip10pt \text{(we have used the assumption  $\eta<\frac{1}{4}$)} $$
    and
    $$   \bigg{|}\frac{D^{\prime}(1,\mathbf{f}\otimes\chi)_{\mathfrak{p}}}{D(1,\mathbf{f}\otimes\chi)_{\mathfrak{p}}}\bigg{|}\le\frac{2\sqrt{|\mathfrak{p}|_{\mathbb{R}}}+2}{(\sqrt{|\mathfrak{p}|_{\mathbb{R}}}-1)^{2}}{\rm log}|\mathfrak{p}|_{\mathbb{R}}$$
  for all $\mathfrak{p}\mid {\rm g.c.d}(\mathfrak{a},\mathfrak{d}_{K/F}^{2})$. Combining all the above together, we deduce the last assertion. 
\end{proof}


\begin{prop}\label{prop: estimation of J(U*)}
 Suppose $\varPsi(s)$ and $\varPhi(s)$ are constructed by $(\mathbf{f},\chi)$ in Lemma \ref{lem:hilbert modular form}. Put $M^{\prime}=(2\pi)^{-2n}|\mathfrak{ad}^{2}|_{\mathbb{R}}$. Then we have
    $$|J(U^{\ast})|\le\frac{B_{1}h^{2}_{K}}{\sqrt{U}}\bigg{(}\bigg{|}{\rm log}\frac{M^{\prime}|\mathfrak{d}_{K/F}|_{\mathbb{R}}}{U}\bigg{|}+2n+6\bigg{)}^{4}+B_{2}h_{K}\sum_{i=1}^{h}\bigg{|}N_{K/\mathbb{Q}}\Big{(}\frac{\mathcal{L}_{i}\mathfrak{o}_{K}}{\mathfrak{N}_{i}}\Big{)}\bigg{|}^{-1}+B_{3}\frac{h_{K}^{2}}{\sqrt{|\mathfrak{d}_{K/F}|_{\mathbb{R}}}}$$
where $$B_{1}=\frac{A_{2}h_{F}^{2}M^{\prime}}{96},\hskip10pt B_{2}=16A_{1}M^{\prime3/2}\varGamma\Big{(}\frac{3}{2}\Big{)}\zeta_{F}(2)\hskip10pt\text{and}\hskip10pt B_{3}=4A_{1}^{2}{\rm e}M^{\prime3/2}{\rm max}\big{\{}2, {\rm
log}(4^{n}M^{\prime})\big{\}}.$$
\end{prop}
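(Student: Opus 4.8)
**Proof plan for Proposition \ref{prop: estimation of J(U*)}.**

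The plan is to estimate $J(U^{\ast})$ by substituting $J(U^{\ast})=-J(U)$ from Lemma \ref{lem: J(U) and J(U*)} (\ref{it-J(U)-1}) and then bounding each of the three pieces produced by Lemma \ref{lem: J(U) and J(U*)} (\ref{it-J(U)-2}): the ``main'' term involving $G_1(M,\varPsi)\varPhi(U,1)$, the logarithmic derivative correction $\varPhi'(U,1)/\varPhi(U,1)$, and the contour integral $J_1(U)$. First I would expand $J(U^{\ast})$ via the contour $\varDelta$ (shifting the line of integration from $\sigma>1$ to $\varDelta$, picking up the triple pole at $s=1$ contributed by $(s-1/2)^{-3}$ after the shift $s\mapsto s+1/2$), using Condition \ref{condition} to justify that $\varPsi$ is holomorphic on $\{\mathrm{Re}(s)\ge 1\}$ and rapidly decreasing, so the horizontal pieces of $\varDelta$ contribute negligibly. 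The residue at $s=1$ produces exactly the expression $G_1(M,\varPsi)\varPhi(U,1)\big(\log|\mathfrak{d}_{K/F}|_{\mathbb{R}}+\varPhi'(U,1)/\varPhi(U,1)+G_2(M,\varPsi)\big)$.

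Next I would bound each factor. For $\varPhi(U,1)$ and $\varPhi'(U,1)/\varPhi(U,1)$, I would use the Euler product form of $\varPhi$ from Condition \ref{condition} (\ref{it-Dirichlet-2}) together with the relation $\varPhi(s)=D(s,\mathbf{f})D(s,\mathbf{f}\otimes\chi)/\varPsi(s)$ and Condition \ref{condition} (\ref{condition-1}): since $\varPhi(s)\ll \zeta_K(s-\tfrac12)^2/\zeta_F(2s-1)^2$, Proposition \ref{prop:estimation of h} and Proposition \ref{prop:measures} let me compare the truncated product $\varPhi(U,1)$ against a sum over ideals of bounded norm, giving a bound in terms of $h$ and the minimal-norm ideals $\mathcal{L}_i$. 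The factor $\sqrt U$ in the denominator of $B_1$ comes from shifting to $\mathrm{Re}(s)=1/2$ (the line $\varDelta$), where $\varPhi(U^{\ast},s)$ carries a factor $U^{-1/2}$ from the Euler factors with $|\mathfrak{p}|_{\mathbb{R}}\ge U$; this is where the $A_2$, $h_F^2$ and the $\big(|\log(M'|\mathfrak{d}_{K/F}|_{\mathbb{R}}/U)|+2n+6\big)^4$ terms enter, tracking the polynomial-in-$\log$ growth of $\gamma(s+\tfrac12)\varPsi(s+\tfrac12)(s-\tfrac12)^{-3}$ along $\varDelta$ (the exponent $4$ arising from the triple pole plus one extra log from differentiating). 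For $G_1(M,\varPsi)$, $G_2(M,\varPsi)$, $G_3(M,\varPsi)$ I would invoke the explicit bounds $G_1$, $G_2$, $G_3$ already recorded at the end of Lemma \ref{lem: J(U) and J(U*)}, together with the estimate $M'|\mathfrak{d}_{K/F}|_{\mathbb{R}}^{-1}\le M\le M'$ from \eqref{eq:estimation of M}, so that $M$ can be replaced by $M'$ at the cost of controlled powers.

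The term $B_2 h_K\sum_i |N_{K/\mathbb{Q}}(\mathcal{L}_i\mathfrak{o}_K/\mathfrak{N}_i)|^{-1}$ will come from the main residue term: $\varPhi(U,1)$ (or rather its comparison with $\varPhi(1)$) produces $\sum_i|N_{K/\mathbb{Q}}(\mathcal{L}_i\mathfrak{o}_K/\mathfrak{N}_i)|^{-1}$ via Proposition \ref{prop:estimation of h} (\ref{it:estimation-2}), and $h_K$ enters because $\varPsi(1)=0$ forces us to use $G_1(M,\varPsi)$ and the derivative structure, which in Goldfeld–Oesterl\'e's argument contributes an $h$ (hence $\le h_K$). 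The term $B_3 h_K^2/\sqrt{|\mathfrak{d}_{K/F}|_{\mathbb{R}}}$ is the ``trivial'' contribution of the full (untruncated) Euler product estimated by $A_1^2/\sqrt{|\mathfrak{d}_{K/F}|_{\mathbb{R}}}$ from Lemma \ref{lem:norm estimation} (\ref{it-norm-a}), the square coming from $\varPhi\ll(\zeta_K/\zeta_F(2s))^2$. Throughout, the constants $\zeta_F(2)$, $\varGamma(3/2)$, $\mathrm{e}$, $\max\{2,\log(4^nM')\}$ appear from bounding the relevant Mellin transforms (Lemma \ref{lem:measures}) and the $G_3$-integral along $\varDelta$.

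The main obstacle I anticipate is the bookkeeping for $\varPhi(U,1)$ and $\varPhi(U^{\ast},s+\tfrac12)$ on the contour $\varDelta$: one must simultaneously control (i) the truncated Euler product $\varPhi(U,1)$ from above by a sum over $\mathcal{L}_i$-type ideals using Condition \ref{condition} (\ref{condition-1}) and Proposition \ref{prop:estimation of h}, (ii) the supremum $\sup_\varDelta|\varPhi(U^{\ast},s+\tfrac12)|$, which needs the bound $|\mathfrak{p}|_{\mathbb{R}}\ge U$ on the omitted Euler factors to extract the $U^{-1/2}$ saving while keeping the remaining factors bounded by a constant (here $\eta<1/4$ is used, exactly as noted in Lemma \ref{lem: J(U) and J(U*)}), and (iii) the logarithmic-derivative term $\varPhi'(U,1)/\varPhi(U,1)$, whose size is governed by $\log U$ and by the local factors' derivatives. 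Matching all of these against the precise numerical constants $B_1=A_2h_F^2M'/96$, $B_2=16A_1M'^{3/2}\varGamma(3/2)\zeta_F(2)$, $B_3=4A_1^2\mathrm{e}\,M'^{3/2}\max\{2,\log(4^nM')\}$ requires following the analogous computation in \cite[3.2--3.3]{JO 84} carefully, with the extra Hilbert-modular factors ($n$-th powers of $\varGamma$, the product over $\mathfrak{p}^2\mid\mathfrak{a}$, and $M$ versus $M'$) tracked at each step.
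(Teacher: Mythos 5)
Your plan starts by substituting $J(U^*)=-J(U)$ and then bounding $J(U)$ through the residue decomposition of Lemma \ref{lem: J(U) and J(U*)} (\ref{it-J(U)-2}).  That decomposition expresses $J(U)$ as $G_1(M,\varPsi)\varPhi(U,1)\bigl(\log|\mathfrak{d}_{K/F}|_{\mathbb{R}}+\varPhi'(U,1)/\varPhi(U,1)+G_2(M,\varPsi)\bigr)+J_1(U)$, whose dominant term is of size $\log|\mathfrak{d}_{K/F}|_{\mathbb{R}}$ with $\varPhi(U,1)$ bounded above \emph{and} below by constants (by Lemma \ref{lem: estimation of Phi(s)}).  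There is no mechanism there to produce the $1/\sqrt{U}$ and $1/\sqrt{|\mathfrak{d}_{K/F}|_{\mathbb{R}}}$ decays appearing in $B_1$ and $B_3$, so this route can only yield an upper bound of order $\log|\mathfrak{d}_{K/F}|_{\mathbb{R}}$, not the much smaller bound asserted in the Proposition.  In fact, in the proof of Theorem \ref{thm:main}, the residue decomposition is used to get the \emph{lower} bound $|J(U)|\gtrsim\log|\mathfrak{d}_{K/F}|_{\mathbb{R}}$, while the present Proposition supplies the \emph{independent} upper bound on $|J(U^*)|$; comparing these two is the whole point of the argument. Deriving one from the other, as you propose, makes the comparison circular.

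The paper instead estimates $J(U^*)$ directly.  Using Condition \ref{condition}(\ref{condition-1}) together with Proposition \ref{prop:estimation of h}(\ref{it:estimation-1}),(\ref{it:estimation-2}), one majorizes $\varPsi(s+\tfrac12)\varPhi(U^*,s+\tfrac12)$ by a positive Dirichlet series written in terms of $\hat{\mu}_F$, $\hat{\mu}_K$ and a finite correction (inequality (\ref{eq:esti-diri})).  The contour integral $J(U^*)$ then becomes a sum of Mellin-type integrals $J(\mu,\ldots,\mu,\mu_F,\mu_F;x)$, $J(\mu,\ldots,\mu,\mu_F,\mu_K;x)$, $J(\mu,\ldots,\mu,\mu_K,\mu_K;x)$ over lattice-point counts, where $\hat{\mu}(s)=\varGamma(s+\tfrac12)$ supplies the $\gamma$-factor.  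Proposition \ref{prop:measures} (positivity plus monotonicity of the measures) allows one to replace $\mu_F$, $\mu_K$ by the explicit $\mu_F'$, $\mu_K'$ of Lemma \ref{lem:measures}, whose Mellin transforms are elementary rational functions.  The resulting three pieces $J_1$, $J_2$, $J_3$ can then be evaluated directly, yielding precisely the $U^{-1/2}$ factor (from $U^{-s}$ at $s=\tfrac12$ after picking up the order-$5$ pole at $s=\tfrac12$, whence the polynomial $P$ of degree $4$), the $\zeta_F(2)\varGamma(\tfrac32)$ constants (from the cross term $J_2$), and the $|\mathfrak{d}_{K/F}|_{\mathbb{R}}^{-1/2}$ factor in $J_3$.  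You correctly identify several of the ingredients — Lemma \ref{lem:norm estimation}, Lemma \ref{lem:measures}, the role of the Mellin transforms, and the appearance of $A_1$, $A_2$, $\zeta_F(2)$ — but the entry point (residue formula for $J(U)$) is wrong, and without the positivity-of-measures step (Proposition \ref{prop:measures}) you cannot convert the Dirichlet-series majorization into the explicit bound with the required decay in $U$.
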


\begin{proof}
    The argument is similar to the proof of \cite[Proposition 1]{JO 84}. By Condition \ref{condition} (\ref{condition-1}) and assertions (\ref{it:estimation-1}), (\ref{it:estimation-2}) of Proposition \ref{prop:estimation of h} , we have
    $$\varPsi\Big{(}s+\frac{1}{2}\Big{)}<<\zeta_{F}(2s)^{2}$$ and
    $$\varPhi\bigg{(}U^{\ast},s+\frac{1}{2}\bigg{)}<<\bigg{(}\sum_{i=1}^{h}\sum_{\mathcal{L}\in\mathscr{L}_{i}^{sa}}\bigg{|}N_{K/\mathbb{Q}}\bigg{(}
    \frac{\mathcal{L}\mathfrak{o}_{K}}{\mathfrak{N}_{i}}\bigg{)}\bigg{|}^{-s}\bigg{)}^{2}-\sum_{\big{|}N_{K/\mathbb{Q}}\big{(}\frac{\mathcal{L}_{i}
    \mathcal{L}_{j}\mathfrak{o}_{K}}{\mathfrak{N}_{i}\mathfrak{N}_{j}}\big{)}\big{|}<U}\bigg{|}N_{K/\mathbb{Q}}\bigg{(}\frac{\mathcal{L}_{i}\mathcal{L}_{j}
    \mathfrak{o}_{K}}{\mathfrak{N}_{i}\mathfrak{N}_{j}}\bigg{)}\bigg{|}^{-s}.$$
    Then by Lemma \ref{lem:measures} (\ref{it-measures-2}) and
    (\ref{it-measures-3}) we get
$$
\hskip -50pt \varPsi\bigg{(}s+\frac{1}{2}\bigg{)}\varPhi\bigg{(}U^{\ast},s+\frac{1}{2}\bigg{)}<<\Bigg{(}\hat{\mu}_{F}(s)\sum_{i=1}^{h}\bigg{|}N_{K/\mathbb{Q}}\bigg{(}\frac{\mathcal{L}_{i}\mathfrak{o}_{K}}{\mathfrak{N}_{i}}\bigg{)}\bigg{|}^{-s}+\hat{\mu}_{K}(s)\Bigg{)}^{2}\\
$$
\begin{equation}\label{eq:esti-diri} \hskip 140pt -\hat{\mu}_{F}(s)^{2}\times\sum_{\big{|}N_{K/\mathbb{Q}}\big{(}\frac{\mathcal{L}_{i}\mathcal{L}_{j}\mathfrak{o}_{K}}{\mathfrak{N}_{i}\mathfrak{N}_{j}}\big{)}\big{|}<U}\bigg{|}N_{K/\mathbb{Q}}\bigg{(}\frac{\mathcal{L}_{i}\mathcal{L}_{j}\mathfrak{o}_{K}}{\mathfrak{N}_{i}\mathfrak{N}_{j}}\bigg{)}\bigg{|}^{-s}.
        \end{equation}
Denote the measure ${\rm e}^{-t^{-1}}t^{-3/2}{\rm d}t$ by $\mu$.
Then $\hat{\mu}(s)=\varGamma(s+\frac{1}{2})$. It's easy to see that
$$\int_{\sigma-i\infty}^{\sigma+i\infty}|\mathfrak{d}_{K/F}|_{\mathbb{R}}^{s-1/2}\gamma\bigg{(}s+\frac{1}{2}\bigg{)}x^{-s}\bigg{(}s-\frac{1}{2}\bigg{)}^{-3}\frac{{\rm d}s}{2\pi i}=\frac{M^{1/2}J\Big{(}\mu,...,\mu;\frac{x}{M|\mathfrak{d}_{K/F}|_{\mathbb{R}}}\Big{)}}{|\mathfrak{d}_{K/F}|_{\mathbb{R}}^{1/2}} $$
where $\mu$ appears repeatedly $2n$ times. Then by Proposition
\ref{prop:measures}, Lemma \ref{lem:measures}, and
(\ref{eq:esti-diri}) we obtain {\allowdisplaybreaks
\begin{eqnarray*}
|J(U^{\ast})|&\le& \sum_{\big{|}N_{K/\mathbb{Q}}\big{(}\frac{\mathcal{L}_{i}\mathcal{L}_{j}\mathfrak{o}_{K}}{\mathfrak{N}_{i}\mathfrak{N}_{j}}\big{)}\big{|}\ge U}\frac{M^{1/2}J\bigg{(}\mu,...,\mu,\mu_{F},\mu_{F};\frac{1}{M|\mathfrak{d}_{K/F}|_{\mathbb{R}}}\Big{|}N_{K/\mathbb{Q}}\Big{(}\frac{\mathcal{L}_{i}\mathcal{L}_{j}\mathfrak{o}_{K}}{\mathfrak{N}_{i}\mathfrak{N}_{j}}\Big{)}\Big{|}\bigg{)}}{|\mathfrak{d}_{K/F}|_{\mathbb{R}}^{1/2}}\\
&+& 2\sum_{i=1}^{h}\frac{M^{1/2}J\bigg{(}\mu,...,\mu,\mu_{F},\mu_{K};\frac{1}{M|\mathfrak{d}_{K/F}|_{\mathbb{R}}}\Big{|}N_{K/\mathbb{Q}}\Big{(}\frac{\mathcal{L}_{i}\mathfrak{o}_{K}}{\mathfrak{N}_{i}}\Big{)}\Big{|}\bigg{)}}{|\mathfrak{d}_{K/F}|_{\mathbb{R}}^{1/2}}\\
    &+&\frac{M^{1/2}J\Big{(}\mu,...,\mu,\mu_{K},\mu_{K};\frac{1}{M|\mathfrak{d}_{K/F}|_{\mathbb{R}}}\Big{)}}{|\mathfrak{d}_{K/F}|_{\mathbb{R}}^{1/2}}\\
    &\le& \sum_{\big{|}N_{K/\mathbb{Q}}\big{(}\frac{\mathcal{L}_{i}\mathcal{L}_{j}\mathfrak{o}_{K}}{\mathfrak{N}_{i}\mathfrak{N}_{j}}\big{)}\big{|}\ge U}\frac{M^{1/2}J\bigg{(}\mu,...,\mu,\mu_{F}^{\prime},\mu_{F}^{\prime};\frac{1}{M|\mathfrak{d}_{K/F}|_{\mathbb{R}}}\Big{|}N_{K/\mathbb{Q}}\Big{(}\frac{\mathcal{L}_{i}\mathcal{L}_{j}\mathfrak{o}_{K}}{\mathfrak{N}_{i}\mathfrak{N}_{j}}\Big{)}\Big{|}\bigg{)}}{|\mathfrak{d}_{K/F}|_{\mathbb{R}}^{1/2}}\\
    &+& 2\sum_{i=1}^{h}\frac{M^{1/2}J\bigg{(}\mu,...,\mu,\mu_{F},\mu_{K}^{\prime};\frac{1}{M|\mathfrak{d}_{K/F}|_{\mathbb{R}}}\Big{|}N_{K/\mathbb{Q}}\Big{(}\frac{\mathcal{L}_{i}\mathfrak{o}_{K}}{\mathfrak{N}_{i}}\Big{)}\Big{|}\bigg{)}}{|\mathfrak{d}_{K/F}|_{\mathbb{R}}^{1/2}}\\
    &+&\frac{M^{1/2}J\Big{(}\mu,...,\mu,\mu_{K}^{\prime},\mu_{K}^{\prime};\frac{1}{M|\mathfrak{d}_{K/F}|_{\mathbb{R}}}\Big{)}}{|\mathfrak{d}_{K/F}|_{\mathbb{R}}^{1/2}}\\
    &=&h^{2}h_{F}^{2}A_{2}\int_{\sigma-i\infty}^{\sigma+i\infty}|\mathfrak{d}_{K/F}|_{\mathbb{R}}^{s-1/2}\gamma\bigg{(}s+\frac{1}{2}\bigg{)}s^{2}U^{-s}\bigg{(}s-\frac{1}{2}\bigg{)}^{-5}\frac{{\rm d}s}{2\pi i}\\
    &+&2h_{K}\int_{\sigma-i\infty}^{\sigma+i\infty}|\mathfrak{d}_{K/F}|_{\mathbb{R}}^{s-1/2}\gamma\bigg{(}s+\frac{1}{2}\bigg{)}\zeta_{F}(2s)\sum_{i=1}^{h}\Big{|}N_{K/\mathbb{Q}}\Big{(}\frac{\mathcal{L}_{i}\mathfrak{o}_{K}}{\mathfrak{N}_{i}}\Big{)}\Big{|}^{-s}\\
    &&\ \ \ \ \ \ \ \ \ \ \ \times\bigg{(}s-\frac{1}{2}\bigg{)}^{-3}\times\frac{A_{1}2^{ns-n}s}{(s-1)\sqrt{|\mathfrak{d}_{K/F}|_{\mathbb{R}}}^{s}}\frac{{\rm d}s}{2\pi i}
  \\ &+& h_{K}^{2}\int_{\sigma-i\infty}^{\sigma+i\infty}|\mathfrak{d}_{K/F}|_{\mathbb{R}}^{s-1/2}\gamma\bigg{(}s+\frac{1}{2}\bigg{)}\bigg{(}s-\frac{1}{2}\bigg{)}^{-3}\times\bigg{(}\frac{A_{1}2^{ns-n}s}{(s-1)\sqrt{|\mathfrak{d}_{K/F}|_{\mathbb{R}}}^{s}}\bigg{)}^{2}\frac{{\rm d}s}{2\pi i}\\
  &=:&J_{1}+J_{2}+J_{3}.
\end{eqnarray*} }
By exactly the same argument as the proof of \cite[Proposition 1]{JO
84}, we have
$$
    J_{1}\le\frac{A_{2}h_{K}^{2}h_{F}^{2}M}{96\sqrt{U}}P\bigg{(}{\rm log}\frac{M|\mathfrak{d}_{K/F}|_{\mathbb{R}}}{U}\bigg{)},\\
$$$$     J_{2}\le 16h_{K}A_{1}\gamma\Big{(}\frac{3}{2}\Big{)}\zeta_{F}(2)\sum_{i=1}^{h}\Big{|}N_{K/\mathbb{Q}}\Big{(}\frac{\mathcal{L}_{i}\mathfrak{o}_{K}}{\mathfrak{N}_{i}}\Big{)}\Big{|}^{-1}\\
$$    and $$\ J_{3}\le4A_{1}^{2}{\rm e}M^{3/2}{\rm
max}\big{\{}2, {\rm
log}(4^{n}M)\big{\}}\frac{h_{K}^{2}}{\sqrt{|\mathfrak{d}_{K/F}|_{\mathbb{R}}}}
$$
where $P(T)=T^{4}+a_{3}T^{3}+a_{2}T^{2}+a_{1}T+a_{0}$ denotes the polynomial
\begin{eqnarray*}
    &&T^{4}+[(\varGamma^{2n})^{\prime}(1)+16]T^{3}+[(\varGamma^{2n})^{\prime\prime}(1)+12(\varGamma^{2n})^{\prime}(1)+64]T^{2}\\
    &+&[(\varGamma^{2n})^{\prime\prime\prime}(1)+8(\varGamma^{2n})^{\prime\prime}(1)+40(\varGamma^{2n})^{\prime}(1)]T\\
    &+&[(\varGamma^{2n})^{\prime\prime\prime\prime}(1)+4(\varGamma^{2n})^{\prime\prime\prime}(1)+24(\varGamma^{2n})^{\prime\prime}(1)].
\end{eqnarray*}
Moreover, by calculation we obtain $$ |a_{j}|\le C_{4}^{j}(2n+6)^{4-j}\hskip
30pt  \forall\ \ 0\le j\le3. $$ Then
$$J_{1}\le \frac{A_{2}h_{F}^{2}h^{2}_{K}M}{96\sqrt{U}}\bigg{(}\bigg{|}{\rm log}\frac{M|\mathfrak{d}_{K/F}|_{\mathbb{R}}}{U}\bigg{|}+2n+6\bigg{)}^{4}.$$
By (\ref{eq:estimation of M}), $M\le M^{\prime}=(2\pi)^{-2n}|\mathfrak{ad}^{2}|_{\mathbb{R}}$. Then proposition follows.
\end{proof}

\section{Proof of Theorem \ref{thm:main0}} \label{sec:proof}

In this section, we assume that $|\mathfrak{d}_{K/F}|_{\mathbb{R}}>4^{n}$ ($n=[F:\BQ]$) and $\mathfrak{o}^{\times}_{F}=\mathfrak{o}_{K}^{\times}$. Notice that there are only finitely many $K/F$ with either $|\mathfrak{d}_{K/F}|_{\mathbb{R}}\le4^{n}$ or $\mathfrak{o}_{F}^{\times}\ne\mathfrak{o}_{K}^{\times}$, which can be computed effectively (Remark \ref{rem:extensions by units}).

We put
$$m=\mathrm{max}\bigg{\{}\mathrm{min}\Big{\{}r\in\mathbb{N}\ |\
2r^{2}+2r\ge[\mathfrak{o}^{\times}:(\mathfrak{o}^{\times})^{2}]h_{K}\Big{\}},\frac{3}{2}\bigg{\}},$$
and let
$$ V=\big{(}\frac{|\mathfrak{d}_{K/F}|_{\mathbb{R}}}{4^{n}}\big{)}^{1/h} \ \ \text{and} \ \ U=\bigg{(}\frac{\sqrt{|\mathfrak{d}_{K/F}|_{\mathbb{R}}}}{2^{n}}\bigg{)}^{1/m}>1.$$

 \begin{lem} \label{Norms of primes} We have the following.
    \begin{enumerate}
        \item\label{it:splitting prime-1} For any prime $\mathfrak{p}$ of $F$ splitting in $K$, we have
        $$|\mathfrak{p}|_{\mathbb{R}}\ge V.$$
        \item\label{it:splitting prime-2} There exists at most one prime $\mathfrak{p}$ of $F$ splitting in $K$ such that $|\mathfrak{p}|_{\mathbb{R}}<U$.
        \item\label{it:splitting prime-3} We have
        $$ R:=\max\{|\mathfrak{q}|_{\mathbb{R}}: \mathfrak{q} \text{ is a prime divisor of } \mathfrak{d}_{K/F} \} \ge U.  $$
    \end{enumerate}
\end{lem}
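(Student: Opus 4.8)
The three parts all assert that a prime of $F$ with a prescribed splitting type in $K$ cannot have too small a norm, and in each case the engine is the same: such a prime would manufacture more saturated invertible $\mathfrak{o}$-submodules of small norm than Proposition~\ref{prop:estimation of h}(\ref{it:estimation-3}) (or, locally at one $\mathfrak{N}_{i}$, Lemma~\ref{thm:only-one}) permits. Write $t_{0}=|\mathfrak{d}_{K/F}|_{\mathbb{R}}^{1/2}/2^{n}$, so that $V^{h/2}=t_{0}$, $U^{m}=t_{0}$ and $U^{2m}=t_{0}^{2}=|\mathfrak{d}_{K/F}|_{\mathbb{R}}/4^{n}$, and recall that Proposition~\ref{prop:estimation of h}(\ref{it:estimation-3}) bounds by $h$ the number of pairs $(i,\mathcal{L})$ with $\mathcal{L}\in\mathscr{L}_{i}^{sa}$ and $|N_{K/\mathbb{Q}}(\mathcal{L}\mathfrak{o}_{K}/\mathfrak{N}_{i})|<t_{0}$.

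I would first dispose of (\ref{it:splitting prime-3}), which uses none of the module machinery. By definition $|\mathfrak{q}|_{\mathbb{R}}\le R$ for every prime divisor $\mathfrak{q}$ of $\mathfrak{d}_{K/F}$; inserting the exponent bounds of Corollary~\ref{cor: prime power in discriminant } into $|\mathfrak{d}_{K/F}|_{\mathbb{R}}=\prod_{\mathfrak{q}}|\mathfrak{q}|_{\mathbb{R}}^{v_{\mathfrak{q}}(\mathfrak{d}_{K/F})}$ and peeling off the factor $\prod_{\mathfrak{q}\mid 2}(|\mathfrak{q}|_{\mathbb{R}}^{e_{\mathfrak{q}}})^{2}=4^{n}$ gives $|\mathfrak{d}_{K/F}|_{\mathbb{R}}\le 4^{n}R^{t}$, i.e. $R^{t}\ge|\mathfrak{d}_{K/F}|_{\mathbb{R}}/4^{n}=U^{2m}$. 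It remains to see $t\le 2m$: by Theorem~\ref{cor:estimate h by t}, $[\mathfrak{o}^{\times}:(\mathfrak{o}^{\times})^{2}]h_{K}\ge 2^{t+n-1}\ge 2^{t}$, while the definition of $m$ gives $2m^{2}+2m\ge[\mathfrak{o}^{\times}:(\mathfrak{o}^{\times})^{2}]h_{K}$; since $2m^{2}+2m<2^{2m+1}$ this forces $t\le 2m$, whence $R^{2m}\ge R^{t}\ge U^{2m}$ and $R\ge U$.

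For (\ref{it:splitting prime-1}) and (\ref{it:splitting prime-2}) I would use primes $\mathfrak{P}$ of $K$ above split primes $\mathfrak{p}$ of $F$, so $\mathfrak{p}\mathfrak{o}_{K}=\mathfrak{P}\bar{\mathfrak{P}}$ with $\mathfrak{P}\ne\bar{\mathfrak{P}}$ and $|N_{K/\mathbb{Q}}(\mathfrak{P})|=|\mathfrak{p}|_{\mathbb{R}}$. Lemma~\ref{lem:ideal}(\ref{it:ideal-2}) attaches to an integral ideal $\mathfrak{M}$ of $\mathfrak{o}_{K}$ a pair $(i,\mathcal{L})$ and an integral ideal $\mathfrak{a}$ of $\mathfrak{o}$, with $|N_{K/\mathbb{Q}}(\mathcal{L}\mathfrak{o}_{K}/\mathfrak{N}_{i})|=|\mathfrak{a}|_{\mathbb{R}}^{-2}|N_{K/\mathbb{Q}}(\mathfrak{M})|\le|N_{K/\mathbb{Q}}(\mathfrak{M})|$; and two ideals yield the same $(i,\mathcal{L})$ exactly when their quotient is extended from a fractional ideal of $F$. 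Applying $N_{K/F}$ and using that $\mathfrak{P},\bar{\mathfrak{P}}$ (resp., in the two-prime case, $\mathfrak{P}_{1},\bar{\mathfrak{P}}_{1},\mathfrak{P}_{2},\bar{\mathfrak{P}}_{2}$) are pairwise coprime, one checks that among the ideals $\mathfrak{P}^{a}\bar{\mathfrak{P}}^{a'}$ (resp. $\mathfrak{P}_{1}^{a}\bar{\mathfrak{P}}_{1}^{a'}\mathfrak{P}_{2}^{b}\bar{\mathfrak{P}}_{2}^{b'}$) the pair $(i,\mathcal{L})$ depends only on $a-a'$ (resp. on $(a-a',b-b')$) and that distinct values give distinct pairs. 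Thus in (\ref{it:splitting prime-2}), if two distinct split primes both had norm $<U$, then for each $(c,d)\in\mathbb{Z}^{2}$ with $|c|+|d|\le m$ the representative $\mathfrak{P}_{1}^{\max(c,0)}\bar{\mathfrak{P}}_{1}^{\max(-c,0)}\mathfrak{P}_{2}^{\max(d,0)}\bar{\mathfrak{P}}_{2}^{\max(-d,0)}$ has norm $|\mathfrak{p}_{1}|_{\mathbb{R}}^{|c|}|\mathfrak{p}_{2}|_{\mathbb{R}}^{|d|}<U^{|c|+|d|}\le U^{m}=t_{0}$, producing at least $2\lfloor m\rfloor^{2}+2\lfloor m\rfloor+1$ distinct saturated submodules of norm $<t_{0}$; since $2\lfloor m\rfloor^{2}+2\lfloor m\rfloor\ge[\mathfrak{o}^{\times}:(\mathfrak{o}^{\times})^{2}]h_{K}\ge h$, this contradicts Proposition~\ref{prop:estimation of h}(\ref{it:estimation-3}). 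Statement (\ref{it:splitting prime-1}) is the one-prime version: if $|\mathfrak{p}|_{\mathbb{R}}<V$, the classes $c$ with $|c|\le h/2$ are realized by ideals of norm $|\mathfrak{p}|_{\mathbb{R}}^{|c|}<V^{h/2}=t_{0}$, yielding $2\lfloor h/2\rfloor+1$ distinct saturated submodules of norm $<t_{0}$ — already $>h$ when $h$ is even, and pushed past $h$ when $h$ is odd by adding a further power and invoking the ``at most one per $\mathfrak{N}_{i}$'' refinement of Lemma~\ref{thm:only-one}.

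The main obstacle, and where I expect the real work, is the bookkeeping in the third paragraph: proving cleanly that distinct values of $a-a'$ (resp. $(a-a',b-b')$) really do give distinct pairs $(i,\mathcal{L})$ — i.e. pinning down exactly when $\mathfrak{M}_{1}\mathfrak{M}_{2}^{-1}$ is extended from $F$ and excluding it via $N_{K/F}$ and coprimality — and then making the final count \emph{strictly} beat the relevant bound in the boundary cases ($h$ odd, or $m$ small), where one passes from the global count of Proposition~\ref{prop:estimation of h}(\ref{it:estimation-3}) to the sharper local statement of Lemma~\ref{thm:only-one} and pigeonholes the submodules among $\mathfrak{N}_{1},\dots,\mathfrak{N}_{h}$. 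There is no analytic input here; it is entirely the combinatorics of ideal classes modulo extension from $F$, resting on the two counting facts already proved.
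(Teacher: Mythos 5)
Your treatment of (\ref{it:splitting prime-3}) is essentially the paper's argument: the bound $t\le 2m$ via Theorem~\ref{cor:estimate h by t} together with the exponent bounds of Corollary~\ref{cor: prime power in discriminant } (peeling off $4^n$ from the primes above $2$) gives $R^{2m}\ge R^{t}\ge |\mathfrak{d}_{K/F}|_{\mathbb{R}}/4^{n}=U^{2m}$. Your treatment of (\ref{it:splitting prime-2}) is also sound and is really the same count the paper does, just unpackaged: the paper majorizes $\frac{(1+|\mathfrak{p}|^{-s})(1+|\mathfrak{p}'|^{-s})}{(1-|\mathfrak{p}|^{-s})(1-|\mathfrak{p}'|^{-s})}\ll\frac{\zeta_K(s)}{\zeta_F(2s)}$ and reads off $2[m]^2+2[m]+1$ coefficients below $t_0$, whereas you exhibit the same coefficients as pairs $(i,\mathcal{L})$ via Lemma~\ref{lem:ideal}. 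Both routes invoke Proposition~\ref{prop:estimation of h}(\ref{it:estimation-3}) in the end, so this is cosmetic; the ``bookkeeping'' you flag (distinct $(a-a',b-b')$ give distinct pairs) is correct, since $\mathfrak{M}_1/\mathfrak{M}_2$ is extended from $F$ iff the two classes modulo extension coincide, and $\mathfrak{P}_i\ne\bar{\mathfrak{P}}_i$ pins that down.

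The genuine gap is in your proof of (\ref{it:splitting prime-1}). Your pigeonhole produces $2\lfloor h/2\rfloor+1$ pairs with norm $<t_0$. When $h$ is even this is $h+1>h$ and the contradiction with Proposition~\ref{prop:estimation of h}(\ref{it:estimation-3}) is clean. When $h$ is odd the count is exactly $h$, which is permitted. Your proposed fix does not close this: ``adding a further power'' $c=\pm(h+1)/2$ gives an ideal of norm $<V^{(h+1)/2}=t_0V^{1/2}$, and since $V>1$ this is \emph{not} $<t_0$; and the ``at most one per $\mathfrak{N}_i$'' refinement of Lemma~\ref{thm:only-one} is precisely what is already built into Proposition~\ref{prop:estimation of h}(\ref{it:estimation-3}), so it cannot sharpen the bound $\le h$. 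At best this line of reasoning yields $|\mathfrak{p}|_{\mathbb{R}}\ge V^{h/(h+1)}$ for odd $h$, which is weaker than the claim. The paper avoids the issue entirely by a direct computation that never touches Lemma~\ref{thm:only-one}: write $K=F(\sqrt{-a})$, $\mathfrak{o}_K=\mathfrak{o}_F\oplus\mathfrak{b}\beta$ with $\beta=x+y\sqrt{-a}$, so $|\mathfrak{d}_{K/F}|_{\mathbb{R}}=4^{n}|ay^2\mathfrak{b}^2|_{\mathbb{R}}$; then choose $\mathfrak{P}\mid\mathfrak{p}$ and a generator $\gamma=z+w\beta$ of $\mathfrak{P}^{h}$ up to extension from $F$, show $w\ne 0$ (else $\mathfrak{p}$ is not split), drop the nonnegative square $(z+wx)^2$ from $N_{K/F}(\gamma)$, and use $|w|_{\mathbb{R}}\ge|\mathfrak{ba}^{-1}|_{\mathbb{R}}$ to get $|\mathfrak{p}|_{\mathbb{R}}^{h}\ge|ay^2\mathfrak{b}^2|_{\mathbb{R}}=|\mathfrak{d}_{K/F}|_{\mathbb{R}}/4^n$ unconditionally. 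You should adopt this for part (\ref{it:splitting prime-1}).
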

 \begin{proof}
    We first prove (\ref{it:splitting prime-1}). Write $K=F(\sqrt{-a})$ with $a$ totally positive, and write $\mathfrak{o}_{K}=\mathfrak{o}_{F}\oplus\mathfrak{b}\beta$ as $\mathfrak{o}_{F}$-modules where $\beta=x+y\sqrt{-a}\ (x,y\in F)$. By a fundamental calculation, we have
    $$\mathfrak{d}_{K/F}=(\beta-\overline{\beta})^{2}\mathfrak{b}^{2}=(4ay^{2})\mathfrak{b}^{2},$$
      and then
      $|\mathfrak{d}_{K/F}|_{\mathbb{R}}=4^{n}|ay^{2}\mathfrak{b}^{2}|_{\mathbb{R}}$.

      Let $\mathfrak{P}$ be a prime of $K$ lying over $\mathfrak{p}$. Then $\mathfrak{P}^{h}=\mathfrak{a}(\gamma)\mathfrak{o}_{K}$ for some fractional ideal $\mathfrak{a}$ of $F$ and element $\gamma$ of $K^{\ast}$. Write $\gamma=z+w\beta$ ($=(z+wx)+wy\sqrt{-a}$) with $z\in\mathfrak{a}^{-1}$ and $w\in\mathfrak{ba}^{-1}$. We claim that $w\ne0$. Indeed, if $w=0$, we have $\gamma\in F$. Then the condition $\mathfrak{P}^{h}=\mathfrak{a}(\gamma)\mathfrak{o}_{K}$ implies that $\mathfrak{a}(\gamma)\mathfrak{o}_{F}$ is a power of $\mathfrak{p}$. So $\mathfrak{p}$ is not split in $K$, which contradicts to the assumption. By the claim we have
      $|w|_{\mathbb{R}}\ge |\mathfrak{b}|_{\mathbb{R}}|\mathfrak{a}|^{-1}_{\mathbb{R}}$. Then
      $$|\mathfrak{p}|_{\mathbb{R}}^{h}=|\mathfrak{P}|_{\mathbb{R}}^{h}=|\mathfrak{a}|^{2}|(z+wx)^{2}+aw^{2}y^{2}|_{\mathbb{R}}\ge|aw^{2}y^{2}\mathfrak{a}^{2}|_{\mathbb{R}}\ge |ay^{2}\mathfrak{b}^{2}|_{\mathbb{R}}=\frac{|\mathfrak{d}_{K/F}|_{\mathbb{R}}}{4^{n}}.$$
      This proves (\ref{it:splitting prime-1}).

      To prove (\ref{it:splitting prime-2}), for any two splitting primes $\mathfrak{p}$ and $\mathfrak{p}^{\prime}$, we have
      $$\sum_{n}a_{n}n^{-s}:=\frac{1+|\mathfrak{p}|_{\mathbb{R}}^{-s}}{1-|\mathfrak{p}|_{\mathbb{R}}^{-s}}\cdot\frac{1+|\mathfrak{p}^{\prime}|_{\mathbb{R}}^{-s}}{1-|\mathfrak{p}^{\prime}|_{\mathbb{R}}^{-s}}\ll\frac{\zeta_{K}(s)}{\zeta_{F}(2s)}. $$ Suppose both $\mathfrak{p}$ and $\mathfrak{p}^{\prime}$ have norm $\le U$. Then for any $r,s\in\mathbb{Z}_{\ge0}$ such that $r+s\le [m]$, we have $|\mathfrak{p}|_{\mathbb{R}}^{r}|\mathfrak{p}^{\prime}|_{\mathbb{R}}^{s}<\sqrt{|\mathfrak{d}_{K/F}|_{\mathbb{R}}}/2^{n}$. Here $[x]$ denotes the integer part of $x$. So we have
      $$\sum_{n<\frac{\sqrt{|\mathfrak{d}_{K/F}|_{\mathbb{R}}}}{2^{n}}}a_{n}\ge2[m]^{2}+2[m]+1>[\mathfrak{o}^{\times}:(\mathfrak{o}^{\times})^{2}]h_{K}> h,$$
      which contradicts to Proposition \ref{prop:estimation of h} (\ref{it:estimation-3}). This proves (\ref{it:splitting prime-2}).

      To prove (\ref{it:splitting prime-3}), let $t$ be the number of prime divisors of $\mathfrak{d}_{K/F}$. By Corollary \ref{cor:estimate h by t}, we have $2^{t}\le[\mathfrak{o}^{\times}:(\mathfrak{o}^{\times})^{2}]h_{K}\le 2m^{2}+2m$. Then $t\le 2m$. Let $\{\mathfrak{p}_{1},\cdots,\mathfrak{p}_{r}\}$ be the set of prime divisors of $\mathrm{g.c.d}(2,\mathfrak{d}_{K/F})$.
    By Corollary \ref{cor: prime power in discriminant }, for each $i=1,\cdots,r$, there exists $\ell_{i}$  ($1\le\ell_{i}\le 2e_{i}$) such that $\mathfrak{d}_{K/F}\mathfrak{p}_{1}^{-\ell_{1}}\cdots\mathfrak{p}_{r}^{-\ell_{r}}$
    has no square factors. We write $f_{i}$ for the inertial degree of
    $\mathfrak{p}_{i}$.  Then
    $$R^{2m}\ge \bigg{|}\frac{\mathfrak{d}_{K/F}}
    {\mathfrak{p}_{1}^{\ell_{1}}\cdots\mathfrak{p}_{r}^{\ell_{r}}}\bigg{|}_{\mathbb{R}}=\frac{|\mathfrak{d}_{K/F}|_{\mathbb{R}}}{2^{\ell_{1}f_{1}+\cdots\ell_{r}f_{r}}}
    \ge\frac{|\mathfrak{d}_{K/F}|_{\mathbb{R}}}{4^{e_{1}f_{1}+\cdots e_{r}f_{r}}}\ge\frac{|\mathfrak{d}_{K/F}|_{\mathbb{R}}}{4^{n}}.$$
    This proves (\ref{it:splitting prime-3}).
\end{proof}

Put
$$P(U,K):=\{\text{prime divisors}\ \mathfrak{p}\ \text{of}\ \mathfrak{d}_{K/F}:\ |\mathfrak{p}|_{\mathbb{R}}<U\}$$
and
$$ P(K):=\{\text{prime divisors}\ \mathfrak{p}\ \text{of}\ \mathfrak{d}_{K/F}:\  |\mathfrak{p}|_{\mathbb{R}}< R\}. $$
By Lemma \ref{Norms of primes} (\ref{it:splitting prime-3}) we have
$P(U,K)\subseteq P(K)$.

\begin{lem}(\cite[(3.5.1)-(3.5.1)]{JO 84})\label{lem: estimation of Phi(s)}
Let $\varPsi(s)$ and $\varPhi(s)$ satisfy Condition \ref{condition}. Suppose $|\mathfrak{d}_{K/F}|_{\mathbb{R}}>4^{n}$. Then we have
 \begin{equation}\label{ineq1}
 	 1\le\sum_{i=1}^{h}N_{K/\mathbb{Q}}\bigg{|}\bigg{(}\frac{\mathcal{L}_{i}\mathfrak{o}_{K}}{\mathfrak{N}_{i}}\bigg{)}\bigg{|}^{-1}\le   D_{1}(K)\cdot\prod_{\mathfrak{p}\in P(U,K)}(1+|\mathfrak{p}|_{\mathbb{R}}^{-1})
 	 \end{equation}
  \begin{equation}\label{ineq2}
  	|\varPhi(U,1)|\ge D_{2}(K)\cdot\prod_{\mathfrak{p}\in
  		P(U,K)}\varPhi_{\mathfrak{p}}(1)
  \end{equation}
  \begin{equation}\label{ineq3}
  	{\rm Re}\bigg{(}\frac{\varPhi^{\prime}(U,1)}{\varPhi(U,1)}\bigg{)}\ge\sum_{\mathfrak{p}\in P(U,K)}{\rm Re}\bigg{(}\frac{\varPhi^{\prime}_{\mathfrak{p}}(1)}{\varPhi_{\mathfrak{p}}(1)}\bigg{)}-D_{3}(K)
  \end{equation}
and
\begin{equation}\label{ineq4}
\hskip20pt{\rm sup}_{s\in\varDelta}\bigg{|}\frac{\varPhi(U,s+\frac{1}{2})}{\varPhi(U,1)}\bigg{|}\le D_{4}(K)\cdot\prod_{\mathfrak{p}\in P(U,K)}(1-|\mathfrak{p}|_{\mathbb{R}}^{-1/4})^{-2}.	
\end{equation}
Here
$$D_{1}(K)=\bigg{(}1+\frac{h}{U}\bigg{)}\frac{1+V^{-1}}{1-V^{-1}},\ D_{2}(K)=\bigg{(}\frac{1-V^{-1/2}}{1+V^{-1/2}}\bigg{)}^{2},\ D_{3}(K)=\frac{4V^{-1/2}}{1-V^{-1/2}}{\rm log}U$$ and
$$ D_{4}(K)=\frac{(1+V^{-1/2})^{2}}{(1-V^{-1/4})^{4}}.  $$
\end{lem}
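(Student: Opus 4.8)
The plan is to follow Oesterl\'{e}'s argument \cite[3.5]{JO 84}, substituting for his elementary estimates over $\mathbb{Q}$ (on split primes and on prime divisors of the discriminant) the results of Lemma \ref{Norms of primes}, and using the explicit Euler factors of the series $\varPhi,\varPsi$ produced in Lemma \ref{lem:hilbert modular form}.

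For \eqref{ineq1}: the lower bound follows by taking $\mathfrak{N}_{1}=\mathfrak{o}_{K}$ among the chosen representatives --- then $\mathfrak{o}\subseteq\mathfrak{o}_{K}$ is a saturated invertible $\mathfrak{o}$-submodule with $\mathfrak{o}\cdot\mathfrak{o}_{K}=\mathfrak{o}_{K}$, so $|N_{K/\mathbb{Q}}(\mathcal{L}_{1}\mathfrak{o}_{K})|=1$, the $i=1$ term equals $1$, and all terms are positive. For the upper bound I set $\mathfrak{M}_{i}=\mathcal{L}_{i}\mathfrak{o}_{K}\mathfrak{N}_{i}^{-1}$, an integral ideal of $\mathfrak{o}_{K}$, and show it is supported only on ``small'' primes: working locally, saturatedness of $\mathcal{L}_{i}$ forces $\mathfrak{M}_{i}$ to be a unit ideal at every prime of $F$ inert in $K$; minimality of $|N_{K/\mathbb{Q}}(\mathcal{L}_{i}\mathfrak{o}_{K})|$ (which gives $\mathfrak{p}^{-1}\mathcal{L}_{i}\not\subseteq\mathfrak{N}_{i}$ for every $\mathfrak{p}$) forces, over a split prime, that at most one of the two primes above it divides $\mathfrak{M}_{i}$, and, over a ramified prime, multiplicity $\le1$ (here $\mathfrak{p}\mathfrak{o}_{K}=\mathfrak{P}^{2}$, cf. Corollary \ref{cor: prime power in discriminant }). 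By Lemma \ref{Norms of primes}(\ref{it:splitting prime-1}),(\ref{it:splitting prime-2}) there is at most one split prime of norm $<U$, and it has norm $\ge V$. Splitting the $h$ distinct ideals $\mathfrak{M}_{i}$ according to whether they have a prime factor of norm $\ge U$ (those contribute at most $h/U$ to the sum) or are supported on primes of norm $<U$, and summing the geometric series over the small ramified primes (each occurring to the first power) and over the at most one small split prime (contributing at most $\tfrac{1+V^{-1}}{1-V^{-1}}$), gives $\sum_{i}|N_{K/\mathbb{Q}}(\mathfrak{M}_{i})|^{-1}\le\bigl(\tfrac{h}{U}+\tfrac{1+V^{-1}}{1-V^{-1}}\bigr)\prod_{\mathfrak{p}\in P(U,K)}(1+|\mathfrak{p}|_{\mathbb{R}}^{-1})\le D_{1}(K)\prod_{\mathfrak{p}\in P(U,K)}(1+|\mathfrak{p}|_{\mathbb{R}}^{-1})$.

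For \eqref{ineq2}--\eqref{ineq4} the crucial point is a local computation of the Euler factors of $D(s,\mathbf{f})$, $D(s,\mathbf{f}\otimes\chi)$ and $\varPsi_{\mathfrak{p}}(s)$ (using $\chi(\mathfrak{p})=-1$, the explicit $L_{\mathfrak{p}}(2s-1,\vee^{2}\pi_{\mathbf{f}})$, and $\lambda(\mathfrak{p},\mathbf{f})^{2}=1$ when $\mathfrak{p}\parallel\mathfrak{a}$) showing that $\varPhi_{\mathfrak{p}}(s)=1$ for every prime $\mathfrak{p}$ of $F$ inert in $K$ with $\mathfrak{p}^{2}\nmid\mathfrak{a}$; only the finitely many primes dividing $\mathfrak{a}$ give a bounded correction depending on $\mathbf{f}$ alone. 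Thus $\varPhi(U,s)=\bigl(\prod_{\mathfrak{p}\in P(U,K)}\varPhi_{\mathfrak{p}}(s)\bigr)\cdot B(U,s)$ where, by Lemma \ref{Norms of primes}(\ref{it:splitting prime-2}), $B(U,s)$ reduces to the single Euler factor at a split prime $\mathfrak{p}_{0}$ with $|\mathfrak{p}_{0}|_{\mathbb{R}}\ge V$, namely $\varPhi_{\mathfrak{p}_{0}}(s)=\dfrac{1+\lambda(\mathfrak{p}_{0},\mathbf{f})|\mathfrak{p}_{0}|_{\mathbb{R}}^{-s}+|\mathfrak{p}_{0}|_{\mathbb{R}}^{1-2s}}{1-\lambda(\mathfrak{p}_{0},\mathbf{f})|\mathfrak{p}_{0}|_{\mathbb{R}}^{-s}+|\mathfrak{p}_{0}|_{\mathbb{R}}^{1-2s}}$ with $|\lambda(\mathfrak{p}_{0},\mathbf{f})|\le2|\mathfrak{p}_{0}|_{\mathbb{R}}^{1/2}$. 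From $|\mathfrak{p}_{0}|_{\mathbb{R}}\ge V$ one gets $|\varPhi_{\mathfrak{p}_{0}}(1)|\ge\bigl(\tfrac{1-V^{-1/2}}{1+V^{-1/2}}\bigr)^{2}$, and (differentiating $\log\varPhi_{\mathfrak{p}_{0}}$) $\bigl|\varPhi_{\mathfrak{p}_{0}}'(1)/\varPhi_{\mathfrak{p}_{0}}(1)\bigr|\le\tfrac{4V^{-1/2}}{1-V^{-1/2}}\log U$; moreover, since $\eta<1/4$ forces $\mathrm{Re}(s+\tfrac12)\ge1-\eta>\tfrac34$ on $\varDelta$, one gets $\sup_{s\in\varDelta}\bigl|\varPhi_{\mathfrak{p}_{0}}(s+\tfrac12)/\varPhi_{\mathfrak{p}_{0}}(1)\bigr|\le\tfrac{(1+V^{-1/2})^{2}}{(1-V^{-1/4})^{4}}$; these are $D_{2}(K),D_{3}(K),D_{4}(K)$. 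For a ramified prime $\mathfrak{p}\in P(U,K)$ one has $\varPhi_{\mathfrak{p}}(1)>0$ and, using the identity $(1+x)(1-x)=1-x^{2}$, the exact bound $\sup_{s\in\varDelta}\bigl|\varPhi_{\mathfrak{p}}(s+\tfrac12)/\varPhi_{\mathfrak{p}}(1)\bigr|\le(1-|\mathfrak{p}|_{\mathbb{R}}^{-1/4})^{-2}$. Assembling the $P(U,K)$-part with the $B(U,\cdot)$-part yields \eqref{ineq2}, \eqref{ineq3} and \eqref{ineq4}.

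I expect the main obstacle to be the uniform bookkeeping of the ``bad'' Euler factors --- those at the fixed primes dividing $\mathfrak{a}$ (lying over $37$ and $139$) together with the at-most-one small split prime --- so that they are absorbed into constants depending only on $\mathbf{f}$ (or into the $D_{j}(K)$), rather than on $K$. The relevant inputs are the vanishing of $\lambda(\mathfrak{p},\mathbf{f})$ when $\mathfrak{p}^{2}\mid\mathfrak{a}$, its being $\pm1$ when $\mathfrak{p}\parallel\mathfrak{a}$ (so that such a prime contributes $1$ or $1\pm|\mathfrak{p}|_{\mathbb{R}}^{-s}$ once the hypothesis that the primes over $37$ are non-split in $K$ is used), and Corollary \ref{cor: prime power in discriminant } to bound multiplicities of ramified primes. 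Checking that the resulting constants are exactly $D_{1}(K),\dots,D_{4}(K)$ is the bulk of an otherwise routine verification.
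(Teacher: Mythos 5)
The proposal follows the same overall strategy as the paper for \eqref{ineq1}, but diverges for \eqref{ineq2}--\eqref{ineq4}, and this divergence opens a gap.

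For \eqref{ineq1} you do essentially the same decomposition the paper does (ideals $\mathfrak{M}_i$ with a large prime factor contribute $\le h/U$, the remainder is bounded by the partial Euler product of $\zeta_K/\zeta_F(2s)$ over the small ramified primes and the at most one small split prime). Your explicit local analysis of why $\mathfrak{M}_i=\mathcal{L}_i\mathfrak{o}_K\mathfrak{N}_i^{-1}$ is a primitive ideal is fine, though you attribute this constraint to \emph{minimality} of $|N(\mathcal{L}_i\mathfrak{o}_K)|$; in fact it follows already from \emph{saturatedness} via the uniqueness clause of Lemma \ref{lem:ideal}(\ref{it:ideal-2}). Your observation that the sum is $\ge1$ by placing $\mathfrak{N}_1=\mathfrak{o}_K$ is a reasonable way to fill in the trivial lower bound that the paper leaves implicit.

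For \eqref{ineq2}--\eqref{ineq4} the paper argues purely from Condition \ref{condition}: the coefficientwise domination $\varPhi(s)\ll\zeta_K(s-\tfrac12)^2/\zeta_F(2s-1)^2$ together with the Euler product shape of Condition \ref{condition}(\ref{it-Dirichlet-2}) already forces $\varPhi_\mathfrak{p}\equiv1$ at \emph{every} inert prime, and then the factor-by-factor bounds (at the single small split prime of norm $\ge V$ and at the ramified primes in $P(U,K)$) using $|\alpha|,|\beta|,|\alpha'|,|\beta'|\le\sqrt{|\mathfrak{p}|_{\mathbb R}}$ yield exactly $D_2(K),D_3(K),D_4(K)$. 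You instead descend to the \emph{explicit} Euler factors of $D(s,\mathbf f)$, $D(s,\mathbf f\otimes\chi)$ and $\varPsi_\mathfrak{p}$, and you honestly flag the obstruction: at inert primes with $\mathfrak{p}^2\mid\mathfrak{a}$ your local computation does not give $\varPhi_\mathfrak{p}=1$, so you would be left with an ``$\mathbf f$-dependent bounded correction'' multiplying the $D_j(K)$. That is a genuine gap, and not a harmless one: the lemma as stated is about \emph{any} $(\varPhi,\varPsi)$ satisfying Condition \ref{condition} and asserts bounds depending only on $U,V,h$ with no $\mathbf f$-dependent factor; so a proof that works only for the specific construction and leaves behind extra constants at the level primes has neither the generality nor the clean constants of the statement. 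The paper sidesteps the issue entirely by never opening the Euler factors of $D(s,\mathbf f)$ or $\varPsi$ --- the domination hypothesis already kills the inert factors --- so if you want to salvage your route you would need to prove, rather than observe as a concern, that $\varPhi_\mathfrak{p}=1$ at the inert primes dividing $\mathfrak{a}$ (equivalently, to re-derive Condition \ref{condition}(\ref{it-Dirichlet-1})--(\ref{it-Dirichlet-2}) at those primes), at which point you have reproduced the paper's argument anyway.
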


\begin{proof}
First we prove (\ref{ineq1}). By Propisition \ref{prop:estimation of h} (\ref{it:estimation-2}),
$$\sum_{i=1}^{h}\bigg{|}N_{K/\mathbb{Q}}\bigg{(}\frac{\mathcal{L}_{i}\mathfrak{o}_{K}}{\mathfrak{N}_{i}}\bigg{)}\bigg{|}^{-s}\le\frac{\zeta_{K}(s)}{\zeta_{F}(2s)}=\prod_{\text{$\mathfrak{p}$ splits in $K$}}\frac{1+|\mathfrak{p}|_{\mathbb{R}}^{-s}}{1-|\mathfrak{p}|_{\mathbb{R}}^{-s}}\times\prod_{\text{$\mathfrak{p}$ ramifies in $K$}}(1+|\mathfrak{p}|_{\mathbb{R}}^{-s}).         $$
For each $i\in I$, let $k_{i}$ be the number of prime divisors (with multiplicity) of $N_{K/\mathbb{Q}}\big{(}\frac{\mathcal{L}_{i}\mathfrak{o}_{K}}{\mathfrak{N}_{i}}\big{)}$ that have absolute norms $\ge U$. Put $I_{1}=\{i\in\{1,\cdots,h\}:k_{i}>0\}$ and $I_{2}=\{1,\cdots,h\}\setminus I_{1}$. By Lemma \ref{Norms of primes} we have
$$\sum_{i\in I_{1}}\bigg{|}N_{K/\mathbb{Q}}\bigg{(}\frac{\mathcal{L}_{i}\mathfrak{o}_{K}}{\mathfrak{N}_{i}}\bigg{)}\bigg{|}^{-1}\le\sum_{i\in I_{1}}\frac{1}{U^{k_{i}}}\cdot\frac{1+V^{-1}}{1-V^{-1}}\prod_{\mathfrak{p}\in P(U,K)}(1+|\mathfrak{p}|_{\mathbb{R}}^{-1}) $$ and
$$\sum_{i\in I_{2}}\bigg{|}N_{K/\mathbb{Q}}\bigg{(}\frac{\mathcal{L}_{i}\mathfrak{o}_{K}}{\mathfrak{N}_{i}}\bigg{)}\bigg{|}^{-1}\le \frac{1+V^{-1}}{1-V^{-1}}\prod_{\mathfrak{p}\in P(U,K)}(1+|\mathfrak{p}|_{\mathbb{R}}^{-1}), $$ which deduce (\ref{ineq1}).

By Condition \ref{condition} (\ref{it-Dirichlet-1}), using the Euler product of $\frac{\zeta_{K}(s)}{\zeta_{F}(2s)}$ we get
	$$|\varPhi(U,1)|=\prod_{\text{$\mathfrak{p}$ splits in $K$}, |\mathfrak{p}|_{\mathbb{R}}<U}|\varPhi_{\mathfrak{p}}(1)|\times\prod_{\mathfrak{p}\in P(U,K)}|\varPhi_{\mathfrak{p}}(1)|$$
	$$\frac{\varPhi^{\prime}(U,1)}{\varPhi(U,1)}=\sum_{\text{$\mathfrak{p}$ splits in $K$},|\mathfrak{p}|_{\mathbb{R}}<U}\frac{\varPhi_{\mathfrak{p}}^{\prime}(1)}{\varPhi_{\mathfrak{p}}(1)}+\sum_{\mathfrak{p}\in P(U,K)}\frac{\varPhi_{\mathfrak{p}}^{\prime}(1)}{\varPhi_{\mathfrak{p}}(1)}$$ 
	and
$${\rm sup}_{s\in\varDelta}\bigg{|}\frac{\varPhi(U,s+\frac{1}{2})}{\varPhi(U,1)}\bigg{|}\le\prod_{\text{$\mathfrak{p}$ splits in $K$}, |\mathfrak{p}|_{\mathbb{R}}<U}{\rm sup}_{s\in\varDelta}\bigg{|}\frac{\varPhi_{\mathfrak{p}}(U,s+\frac{1}{2})}{\varPhi_{\mathfrak{p}}(U,1)}\bigg{|}\times\prod_{\mathfrak{p}\in P(U,K)}{\rm sup}_{s\in\varDelta}\bigg{|}\frac{\varPhi_{\mathfrak{p}}(U,s+\frac{1}{2})}{\varPhi_{\mathfrak{p}}(U,1)}\bigg{|}.$$
 Then (\ref{ineq2}), (\ref{ineq3}) and (\ref{ineq4}) are deduced from Condition \ref{condition} (\ref{it-Dirichlet-2}) as well as (\ref{it:splitting prime-1}), (\ref{it:splitting prime-2}) of Lemma \ref{Norms of primes}.

\end{proof}

\begin{thm}\label{thm:main}
Fix a totally real number field $F$. Let $\mathbf{f}$ be a Hilbert newform of parallel weight 2 with level $\mathfrak{a}$. For every totally imaginary extension $K$ of $F$, we let $\chi_{K}=(\ \ ,K/F)$.  Put
$$\mathcal{K}:=\{\text{quadratic imaginary extensions $K$ of $F$}|\ \text{$(\mathbf{f},\chi_{K})$ satisfies (\ref{it-modular form-2}), (\ref{it-modular form-3}) of Lemma \ref{lem:hilbert modular form}} \}.   $$
We write $\varPsi_{K}(s)$ (resp. $\varPhi_{K}(s)$) for $\varPsi(s)$ (resp. $\varPhi(s)$) that is obtained from Lemma \ref{lem:hilbert modular form}. Then there exists an effective constant $C$, which depends only on $\mathbf{f}$ such that for every $K\in\mathcal{K}$ with $|\mathfrak{d}_{K/F}|_{\mathbb{R}}>4^{n}$ and $\mathfrak{o}_{K}^{\times}=\mathfrak{o}_{F}^{\times}$,
$$ h_{K}\ge C\cdot\prod_{\mathfrak{p}\in P(K)}\bigg{(}1-\frac{2\sqrt{|\mathfrak{p}|_{\mathbb{R}}}}{1+|\mathfrak{p}|_{\mathbb{R}}}\bigg{)} {\rm log}|\mathfrak{d}_{K/F}|_{\mathbb{R}}.$$
\end{thm}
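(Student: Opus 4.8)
The plan is to transport Oesterl\'e's argument (\cite[Propositions 1 and 2]{JO 84}) from $\mathbb{Q}$ to $F$, using the estimates of Sections \ref{sec:quad-form}--\ref{sec:esti} as input and keeping every constant effective and dependent only on $\mathbf{f}$. Write $d=|\mathfrak{d}_{K/F}|_{\mathbb{R}}$ and drop the subscript $K$ from $\varPsi_K,\varPhi_K$. Since for any fixed $C$ only finitely many $K$ with $d$ bounded can violate the inequality, and $h_K\ge1$, we may assume $d$ is large and $\mathfrak{o}_K^\times=\mathfrak{o}_F^\times$, proving the bound with an implied constant depending only on $\mathbf{f}$; the finitely many exceptional $K$ (small $d$, or with a prime $\mathfrak{p}\mid2$ or $\mathfrak{p}\mid\mathfrak{a}$ ramified in $K$) are absorbed into $C$. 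The engine is the identity $J(U)=-J(U^\ast)$ (Lemma \ref{lem: J(U) and J(U*)} (\ref{it-J(U)-1})) together with the structural formula $J(U)=G_1(M,\varPsi)\varPhi(U,1)\bigl(\log d+\tfrac{\varPhi'(U,1)}{\varPhi(U,1)}+G_2(M,\varPsi)\bigr)+J_1(U)$ (Lemma \ref{lem: J(U) and J(U*)} (\ref{it-J(U)-2})): I would bound its left side below and its right side above.

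For the lower bound, one checks that $G_1(M,\varPsi)=\gamma(1)\varPsi'(1)>0$ (the simple zero of $\varPsi$ at $s=1$ comes from $\zeta_{F,\mathfrak{a}}(2s-1)^{-1}$, while $L_{\rm f}(1,\vee^2\pi_{\mathbf{f}})\ne0$), that $\varPhi(U,1)>0$, and that the bracket is real. Using (\ref{ineq3}), the bound $|\varPhi'_{\mathfrak{p}}(1)/\varPhi_{\mathfrak{p}}(1)|\le C_{\mathbf{f}}\log|\mathfrak{p}|_{\mathbb{R}}/\sqrt{|\mathfrak{p}|_{\mathbb{R}}}$ for $\mathfrak{p}\nmid 2\mathfrak{a}$, the elementary fact $\sum_{\mathfrak{p}\mid\mathfrak{d}_{K/F}}\log|\mathfrak{p}|_{\mathbb{R}}/\sqrt{|\mathfrak{p}|_{\mathbb{R}}}=o(\log d)$ (a sum over distinct primes is $o$ of the logarithm of their product, effectively $O(\sqrt{\log d\,\log\log d})$), $D_3(K)=O(V^{-1/2}\log U)=o(\log d)$, and $|G_2(M,\varPsi)|\le G_2$, the bracket is $\ge\tfrac12\log d$ for $d$ large, whence $|J(U^\ast)|+|J_1(U)|\ge\tfrac12 G_1\varPhi(U,1)\log d$. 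The crucial point is that for $\mathfrak{p}\nmid\mathfrak{a}$ ramified in $K$ a direct computation of the Euler factor of $\varPhi=D(\cdot,\mathbf{f})D(\cdot,\mathbf{f}\otimes\chi)/\varPsi$ gives $\varPhi_{\mathfrak{p}}(s)=1+\lambda(\mathfrak{p},\mathbf{f})|\mathfrak{p}|_{\mathbb{R}}^{-s}+|\mathfrak{p}|_{\mathbb{R}}^{1-2s}$ (since $D(\cdot,\mathbf{f}\otimes\chi)_\mathfrak{p}=1$ there), so by $|\lambda(\mathfrak{p},\mathbf{f})|\le2\sqrt{|\mathfrak{p}|_{\mathbb{R}}}$
$$ \varPhi_{\mathfrak{p}}(1)=1+\frac{\lambda(\mathfrak{p},\mathbf{f})+1}{|\mathfrak{p}|_{\mathbb{R}}}\ \ge\ 1-\frac{2\sqrt{|\mathfrak{p}|_{\mathbb{R}}}}{1+|\mathfrak{p}|_{\mathbb{R}}}=\frac{(\sqrt{|\mathfrak{p}|_{\mathbb{R}}}-1)^2}{1+|\mathfrak{p}|_{\mathbb{R}}}>0; $$
together with (\ref{ineq2}), $D_2(K)\to1$, and $P(U,K)\subseteq P(K)$ (whose factors lie in $(0,1)$), this yields $\varPhi(U,1)\gg_{\mathbf{f}}\prod_{\mathfrak{p}\in P(K)}\bigl(1-\tfrac{2\sqrt{|\mathfrak{p}|_{\mathbb{R}}}}{1+|\mathfrak{p}|_{\mathbb{R}}}\bigr)$ --- this is where the product in the statement is born.

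For the upper bound I would feed in Proposition \ref{prop: estimation of J(U*)} for $|J(U^\ast)|$, (\ref{ineq1}) for the sum $\sum_i|N_{K/\mathbb{Q}}(\mathcal{L}_i\mathfrak{o}_K/\mathfrak{N}_i)|^{-1}$ occurring there, and (\ref{ineq4}) with $|J_1(U)|\le G_3(M,\varPsi)\sup_\varDelta|\varPhi(U,s+\tfrac{1}{2})|$. Recalling $V=(d/4^n)^{1/h}$ and $U=(\sqrt d/2^n)^{1/m}$ with $m$ determined by $2(m-1)m<[\mathfrak{o}^\times:(\mathfrak{o}^\times)^2]h_K\le 2m^2+2m$, one has $R\ge U$ and (unless $h>\log(d/4^n)$, in which case $h_K\gg\log d$ and we are done) $V$ bounded below, so all the $D_i(K)$ and $B_i$ are bounded and $m\asymp\sqrt{h_K}$; the exponent $1/m$ in $U$ is tuned precisely so that $h_K^2U^{-1/2}=m^{O(1)}(\sqrt d/2^n)^{-1/(2m)}$ and $h_K^2d^{-1/2}$ are negligible against $\log d$ for $d$ large. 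What remains is to control the auxiliary products $\prod_{\mathfrak{p}\in P(U,K)}(1+|\mathfrak{p}|_{\mathbb{R}}^{-1})$ and $\prod_{\mathfrak{p}\in P(U,K)}(1-|\mathfrak{p}|_{\mathbb{R}}^{-1/4})^{-2}$, and here the dichotomy of Theorem \ref{cor:estimate h by t} is used: if the number $t$ of prime divisors of $\mathfrak{d}_{K/F}$ is small these products are $O_{\mathbf{f}}(1)$ and $\log d$ dominates, whereas if $t$ is large then $h_K\ge2^{t+n-1}/[\mathfrak{o}^\times:(\mathfrak{o}^\times)^2]$ already exceeds $C\prod_{\mathfrak{p}\in P(K)}\bigl(1-\tfrac{2\sqrt{|\mathfrak{p}|_{\mathbb{R}}}}{1+|\mathfrak{p}|_{\mathbb{R}}}\bigr)\log d$ (the product being exponentially small in $t$). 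Combining the two sides gives $h_K\gg_{\mathbf{f}}\prod_{\mathfrak{p}\in P(K)}\bigl(1-\tfrac{2\sqrt{|\mathfrak{p}|_{\mathbb{R}}}}{1+|\mathfrak{p}|_{\mathbb{R}}}\bigr)\log d$, which is the assertion.

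I expect the main obstacle to be exactly this last balancing --- showing that the genuine error terms $h_K^2U^{-1/2}(\cdots)^4$, $h_K\sum_i|N_{K/\mathbb{Q}}(\mathcal{L}_i\mathfrak{o}_K/\mathfrak{N}_i)|^{-1}$ and $h_K^2d^{-1/2}$ from Proposition \ref{prop: estimation of J(U*)}, together with the bound for $|J_1(U)|$, never swamp the main term $G_1(M,\varPsi)\varPhi(U,1)\log d$. This is where the Goldfeld--Oesterl\'e calibration of $U$, $V$ and $m$ is indispensable, now further entangled with $h_F>1$ and the unit index $[\mathfrak{o}_F^\times:(\mathfrak{o}_F^\times)^2]$, and it requires that every constant produced earlier ($A_1,A_2$ of Lemma \ref{lem:norm estimation}; $B_1,B_2,B_3$ of Proposition \ref{prop: estimation of J(U*)}; $G_1,G_2,G_3$ of Lemma \ref{lem: J(U) and J(U*)}; $D_1,\dots,D_4$ of Lemma \ref{lem: estimation of Phi(s)}) be effective and depend only on $F$ and the fixed form $\mathbf{f}$ --- which has been arranged in the corresponding statements, leaving only the bookkeeping of the finitely many exceptional $K$ to be folded into $C$.
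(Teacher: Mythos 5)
Your proposal follows the paper's skeleton (Oesterl\'e's method: bound $|J(U)|=|J(U^\ast)|$ above and below, combine), correctly identifies the Euler factor $\varPhi_\mathfrak{p}(s)=1+\lambda(\mathfrak{p},\mathbf{f})|\mathfrak{p}|_{\mathbb{R}}^{-s}+|\mathfrak{p}|_{\mathbb{R}}^{1-2s}$ for $\mathfrak{p}\nmid\mathfrak{a}$ ramified in $K$, and isolates the right ingredients (Lemma \ref{lem: J(U) and J(U*)}, Proposition \ref{prop: estimation of J(U*)}, Lemma \ref{lem: estimation of Phi(s)}). However there is a concrete gap in the way you combine the two sides, and it changes the strength of the conclusion.

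You bound the lower side by $\varPhi(U,1)\gg\prod_{\mathfrak{p}\in P(K)}\bigl(1-\tfrac{2\sqrt{|\mathfrak{p}|_{\mathbb{R}}}}{1+|\mathfrak{p}|_{\mathbb{R}}}\bigr)$ using $\varPhi_\mathfrak{p}(1)\ge 1-\tfrac{2\sqrt{|\mathfrak{p}|_{\mathbb{R}}}}{1+|\mathfrak{p}|_{\mathbb{R}}}$, while the upper side carries the factor $\prod_{\mathfrak{p}\in P(U,K)}(1+|\mathfrak{p}|_{\mathbb{R}}^{-1})$ via (\ref{ineq1}); you then propose to discard this extra factor by a dichotomy on $t$. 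But this dichotomy does not close: when $t$ is large, $\log|\mathfrak{d}_{K/F}|_{\mathbb{R}}$ is not bounded in terms of $t$ (the ramified primes can have arbitrarily large norms), so $2^{t+n-1}/[\mathfrak{o}^\times:(\mathfrak{o}^\times)^2]$ need not dominate $C\cdot\prod(\dots)\cdot\log|\mathfrak{d}_{K/F}|_{\mathbb{R}}$; and when $t$ is ``small'' in the sense $t=O(\log\log|\mathfrak{d}_{K/F}|_{\mathbb{R}})$ the factor $\prod(1+|\mathfrak{p}|_{\mathbb{R}}^{-1})$ can still be a power of $\log|\mathfrak{d}_{K/F}|_{\mathbb{R}}$, which weakens the final estimate. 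The paper avoids this entirely by keeping the two products together and observing
$$\frac{\varPhi_\mathfrak{p}(1)}{1+|\mathfrak{p}|_{\mathbb{R}}^{-1}}=\frac{|\mathfrak{p}|_{\mathbb{R}}+\lambda(\mathfrak{p},\mathbf{f})+1}{|\mathfrak{p}|_{\mathbb{R}}+1}\ \ge\ 1-\frac{2\sqrt{|\mathfrak{p}|_{\mathbb{R}}}}{1+|\mathfrak{p}|_{\mathbb{R}}},$$
so the $\prod(1+|\mathfrak{p}|_{\mathbb{R}}^{-1})$ cancels exactly and the claimed product emerges from the quotient, not from $\varPhi(U,1)$ alone. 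This is a strictly stronger local inequality (by the factor $1+|\mathfrak{p}|_{\mathbb{R}}^{-1}$) than the one you wrote, and it is the step that makes the bookkeeping close without any separate treatment of $\prod(1+|\mathfrak{p}|_{\mathbb{R}}^{-1})$.

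A second, less critical difference: the paper's effectivity hinges on a dichotomy in an auxiliary parameter $\lambda$: if $|\mathfrak{d}_{K/F}|_{\mathbb{R}}<e^{\lambda h_K}$ one is done immediately, and otherwise $|\mathfrak{d}_{K/F}|_{\mathbb{R}}\ge e^{\lambda h_K}$ is used to convert every $h_K$-dependent error into a $\lambda$-dependent one (bounding $\log U$, $V$, $\#P(U,K)$, etc.), yielding explicit functions $D_i(\lambda)$, $E_1(\lambda)$, $E_2(\lambda)$ and finally $C=\min\{\lambda^{-1},G_1D_2(\lambda)E_2(\lambda)/E_1(\lambda)\}$. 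You gesture at this with ``unless $h>\log(d/4^n)$'' and with $o(\log d)$ estimates, but those $o$'s are exactly where effectiveness lives; your route for bounding $\sum\operatorname{Re}(\varPhi'_\mathfrak{p}(1)/\varPhi_\mathfrak{p}(1))$ via $\sum\log|\mathfrak{p}|_{\mathbb{R}}/\sqrt{|\mathfrak{p}|_{\mathbb{R}}}=o(\log d)$ is sound in principle but must be made effective; the paper's alternative (bound by $8\#P(U,K)\le 8\log_2([\mathfrak{o}^\times:(\mathfrak{o}^\times)^2]h_K)$ via Theorem \ref{cor:estimate h by t}, then absorb into $E_2(\lambda)$) does this cleanly and is the intended path. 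To repair your write-up, replace the two separate products by the single ratio $\prod_{\mathfrak{p}\in P(U,K)}\varPhi_\mathfrak{p}(1)/(1+|\mathfrak{p}|_{\mathbb{R}}^{-1})$ and carry the $\lambda$-dichotomy through explicitly.
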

\begin{proof}
The argument is similar to the proof of \cite[TH\'EOR\`EME 2]{JO
84}. We will show the effectiveness of the constant $C$. We fix a $K\in\mathcal{K}$, and we may assume that $|\mathfrak{d}_{K/F}|_{\mathbb{R}}>4^{n}$. Let $\lambda$ be any positive number. If $|\mathfrak{d}_{K/F}|_{\mathbb{R}}<{\rm e}^{\lambda h_{K}}$, we have $h_{K}>\lambda^{-1}{\rm log}|\mathfrak{d}_{K/F}|_{\mathbb{R}}$. 

From now on, we suppose $|\mathfrak{d}_{K/F}|_{\mathbb{R}}\ge{\rm e}^{\lambda h_{K}}$. Then clearly $V\ge\frac{{\rm exp}(\lambda)}{4^{n}}$. Moreover, by the definitions of $m$ and $U$, it's easy to see that
 \begin{equation}\label{eq:estimations of m,U}
 	m\le3\sqrt{[\mathfrak{o}^{\times}:(\mathfrak{o}^{\times})^{2}]h_{K}}\hskip10pt\text{and then}\hskip10pt U\ge \frac{\lambda^{16} h_{K}^{8}}{16!\cdot3^{16}\cdot 2^{\frac{2n}{3}+16}[\mathfrak{o}^{\times}:(\mathfrak{o}^{\times})^{2}]^{8}}.
 \end{equation}
  Put $$D_{1}(\lambda)=\bigg{(}1+\frac{16!\cdot3^{16}\cdot2^{\frac{2n}{3}+16}[\mathfrak{o}^{\times}:(\mathfrak{o}^{\times})^{2}]^{8}}{\lambda^{16}}\bigg{)}\frac{{\rm e}^{\lambda}+4^{n}}{{\rm e}^{\lambda}-4^{n}},\hskip10ptD_{2}(\lambda)=\bigg{(}\frac{{\rm e}^{\lambda/2}-2^{n}}{{\rm e}^{\lambda/2}+2^{n}}\bigg{)}^{2},\hskip10pt D_{3}(\lambda)=\frac{2^{n+2}}{{\rm e}^{\lambda/2}-2^{n}}$$
 and
  $$D_{4}(\lambda)=\frac{({\rm e}^{\lambda/2}+2^{n})^{2}}{({\rm e}^{\lambda/4}-2^{n/2})^{4}} .$$ Then we have $D_{1}(K)\le D_{1}(\lambda)$, $D_{2}(K)\ge D_{2}(\lambda)$, $D_{3}(K)\le D_{3}(\lambda){\rm log}U$ and $D_{4}(K)\le D_{4}(\lambda)$.
 
 First we consider the upper bound for $|J(U^{\ast})|$. Indeed, we have
\begin{eqnarray*}
	&&\frac{B_{1}h_{K}}{\sqrt{U}}\bigg{(}\bigg{|}{\rm log}\frac{M^{\prime}|\mathfrak{d}_{K/F}|_{\mathbb{R}}}{U}\bigg{|}+2n+6\bigg{)}^{4}\\
	&\le&\frac{B_{1}h_{K}}{\sqrt{U}}\big{(}64m\cdot{\rm log}(2^{\frac{n}{32m}}U^{\frac{1}{32}-\frac{1}{64m}})+|{\rm log}M^{\prime}|+2n+6\big{)}^{4}\\
	&\le&\bigg{(}\frac{B_{1}^{\frac{1}{4}}h_{K}^{\frac{1}{4}}\cdot64m\cdot2^{\frac{n}{48}}U^{\frac{1}{32}}}{U^{\frac{1}{8}}}+\frac{B_{1}^{\frac{1}{4}}h_{K}^{\frac{1}{4}}(|{\rm log}M^{\prime}|+2n+6)}{U^{\frac{1}{8}}}\bigg{)}^{4}\\
	&\le&\bigg{(}\frac{2^{\frac{n+90}{12}}\cdot3^{\frac{5}{2}}(16!)^{\frac{3}{32}}[\mathfrak{o}^{\times}:(\mathfrak{o}^{\times})^{2}]^{\frac{5}{4}}B_{1}^{\frac{1}{4}}}{\lambda^{3/2}}+\frac{2^{\frac{n+24}{48}}3^{\frac{1}{2}}(16!)^{\frac{1}{32}}[\mathfrak{o}^{\times}:(\mathfrak{o}^{\times})^{2}]^{\frac{1}{4}}B_{1}^{\frac{1}{4}}(|{\rm log}M^{\prime}|+2n+6)}{\lambda^{1/2}}\bigg{)}^{4}\\
	&(=: &F_{1}(\lambda)),
\end{eqnarray*}
where in the last step, we have used (\ref{eq:estimations of m,U}). If we put
$$E_{1}(\lambda)=F_{1}(\lambda)+B_{2}D_{1}(\lambda)+\frac{2B_{3}}{\lambda},$$
then by Lemma \ref{lem: J(U) and J(U*)} (\ref{it-J(U)-1}), Proposition \ref{prop: estimation of J(U*)} and Lemma \ref{lem:
estimation of Phi(s)} (\ref{ineq1}):
\begin{eqnarray*}
\scriptstyle|J(U)|=|J(U^{\ast})|&\le&\scriptstyle \Big{(}\frac{B_{1}h_{K}}{\sqrt{U}}\Big{(}\Big{|}{\rm log}\frac{M^{\prime}|\mathfrak{d}_{K/F}|_{\mathbb{R}}}{U}\Big{|}+2n+6\Big{)}^{4}+B_{2}D_{1}(K)+B_{3}\frac{h_{K}}{\sqrt{|\mathfrak{d}_{K/F}|_{\mathbb{R}}}}\Big{)}h_{K}\cdot\prod_{\mathfrak{p}\in P(U,K)}(1+|\mathfrak{p}|_{\mathbb{R}}^{-1})\\
&\le& \scriptstyle E_{1}(\lambda)h_{K}\cdot\prod_{\mathfrak{p}\in P(U,K)}(1+|\mathfrak{p}|_{\mathbb{R}}^{-1}).	
\end{eqnarray*}

Next we consider the lower bound for $|J(U)|$. For each prime divisor $\mathfrak{p}$ of $\mathfrak{d}_{K/F}$, by the construction of Lemma \ref{lem:hilbert modular form} as well as Corollary \ref{cor:estimate h by t}, we have 
$$	\bigg{|}\sum_{\mathfrak{p}\in P(U,K)}{\rm Re}\bigg{(}\frac{\varPhi^{\prime}_{K,\mathfrak{p}}(1)}{\varPhi_{K,\mathfrak{p}}(1)}\bigg{)}\bigg{|}\le8\#P(U,K)\le 8{\rm log}_{2}([\mathfrak{o}^{\times}:(\mathfrak{o}^{\times})^{2}]h_{K})
$$ 
and
$$\prod_{\mathfrak{p}\in
	P(U,K)}(1-|\mathfrak{p}|_{\mathbb{R}}^{-1/4})^{-2}\le F_{2}\cdot2^{\#P(U,K)}\le  F_{2}\cdot[\mathfrak{o}^{\times}:(\mathfrak{o}^{\times})^{2}]h_{K}.$$ 
Here $$F_{2}=\prod_{\mathfrak{p}\in P(U,K),|\mathfrak{p}|_{\mathbb{R}}\le131}\frac{\sqrt{|\mathfrak{p}|_{\mathbb{R}}}}{(|\mathfrak{p}|_{\mathbb{R}}^{1/4}-1)^{2}}$$ 
is a constant. We put $$E_{2}(\lambda)= 1-\Big{(}\frac{8[\mathfrak{o}^{\times}:(\mathfrak{o}^{\times})^{2}]{\rm log}_{2}({\rm e})}{\lambda}+\frac{D_{3}(\lambda)}{3}+\frac{G_{2}}{\lambda}+\frac{G_{3}D_{4}(\lambda)F_{2}}{G_{1}\lambda}[\mathfrak{o}^{\times}:(\mathfrak{o}^{\times})^{2}]\Big{)}.$$ Then $\lim_{\lambda\to\infty}E_{2}(\lambda)=1$. By Lemma \ref{lem: J(U) and J(U*)}
(\ref{it-J(U)-2}) and Lemma \ref{lem: estimation of Phi(s)} (\ref{ineq2}), (\ref{ineq3}) and (\ref{ineq4}), we have
\begin{eqnarray*}
&&\scriptstyle|J(U)|\\
&\ge& \scriptstyle|G_{1}(M,\varPsi)||\varPhi(U,1)|\big{(}{\rm log|\mathfrak{d}_{K/F}|_{\mathbb{R}}}+{\rm Re}\big{(}\frac{\varPhi^{\prime}_{K}(U,1)}{\varPhi_{K}(U,1)}\big{)}+G_{2}(M,\varPsi)-\frac{G_{3}(M,\varPsi)}{|G_{1}(M,\varPsi)|}{\rm sup}_{s\in\varDelta}\big{|}\frac{\varPhi_{K}(U,s+\frac{1}{2})}{\varPhi_{K}(U,1)}\big{|}\big{)}\\
&\ge&\scriptstyle G_{1}D_{2}(\lambda)\Big{(}1-\Big{(}8{\rm log}_{2}([\mathfrak{o}^{\times}:(\mathfrak{o}^{\times})^{2}]h_{K})+D_{3}(\lambda){\rm log}U+G_{2}+\frac{G_{3}D_{4}(\lambda)F_{2}}{G_{1}}[\mathfrak{o}^{\times}:(\mathfrak{o}^{\times})^{2}]h_{K}\Big{)}({\rm log}|\mathfrak{d}_{K/F}|)^{-1}\Big{)}\\
&&\hskip250pt\scriptstyle\times\big{|}\prod_{\mathfrak{p}\in
P(U,K)}\varPhi_{K,\mathfrak{p}}(1)\big{|}\cdot{\rm
log}|\mathfrak{d}_{K/F}|_{\mathbb{R}}\\
&\ge&\scriptstyle G_{1}D_{2}(\lambda)E_{2}(\lambda)\cdot\big{|}\prod_{\mathfrak{p}\in
    P(U,K)}\varPhi_{K,\mathfrak{p}}(1)\big{|}\cdot{\rm
    log}|\mathfrak{d}_{K/F}|_{\mathbb{R}}.
\end{eqnarray*}
Moreover, one can verify that
$$    \Big{|}\frac{\varPhi_{\mathfrak{p}}(1)}{1+|\mathfrak{p}|_{\mathbb{R}}^{-1}}\Big{|}\ge1-\frac{2\sqrt{|\mathfrak{p}|_{\mathbb{R}}}}{1+|\mathfrak{p}|_{\mathbb{R}}}$$ for every $\mathfrak{p}\mid\mathfrak{d}_{K/F}.$ Therefore we have
\begin{eqnarray*}
	h_{K}&\ge&\frac{G_{1}D_{2}(\lambda)E_{2}(\lambda)}{E_{1}(\lambda)}\bigg{(}\prod_{\mathfrak{p}\in P(U,K)}\bigg{|}\frac{\varPhi_{K,\mathfrak{p}}(1)}{1+|\mathfrak{p}|_{\mathbb{R}}^{-1}}\bigg{|}\bigg{)}{\rm log}|\mathfrak{d}_{K/F}|_{\mathbb{R}}\\
	&\ge&\frac{G_{1}D_{2}(\lambda)E_{2}(\lambda)}{E_{1}(\lambda)}\prod_{\mathfrak{p}\in P(K)}\bigg{(}1-\frac{2\sqrt{|\mathfrak{p}|_{\mathbb{R}}}}{1+|\mathfrak{p}|_{\mathbb{R}}}\bigg{)} {\rm log}|\mathfrak{d}_{K/F}|_{\mathbb{R}}.
\end{eqnarray*}
For any $\lambda>0$ such that $E_{2}(\lambda)>0$, the constant $$C={\rm min}\bigg{\{}\lambda^{-1},\frac{G_{1}D_{2}(\lambda)E_{2}(\lambda)}{E_{1}(\lambda)}\bigg{\}}$$
satisfies our requirements.
The effectiveness of $C$ follows from the computations in the above, as well as Remark \ref{rem: Hilbert modular form}, Lemma \ref{Lang}, Lemma  \ref{lem:norm estimation}, Lemma \ref{lem: J(U) and J(U*)} and Proposition \ref{prop: estimation of J(U*)}.
This proves the theorem.
\end{proof}

\begin{cor} \label{cor: main}$($$\Rightarrow$ Theorem \ref{thm:main0}$)$
Assume that $F$ satisfies one of the following conditions.
\begin{enumerate}
    \item\label{it-final estimation-1} $[F:\mathbb{Q}]$ is odd.
    \item\label{it-final estimation-2} $F/\mathbb{Q}$ is solvable and $[F:\mathbb{Q}]$ is even. If we write $37\mathfrak{o}_{F}=\mathfrak{p}_{1}^{e}\cdots\mathfrak{p}_{s}^{e}$, then $s$ is even.
\end{enumerate}
Then for every quadratic imaginary extension $K/F$ with $|\mathfrak{d}_{K/F}|_{\mathbb{R}}>4^{n}$ and $\mathfrak{o}_{K}^{\times}=\mathfrak{o}_{F}^{\times}$, we have
$$h_{K}\ge{\rm min}\bigg{\{}\frac{1}{f{\rm log}37}{\rm
	log}\frac{|\mathfrak{d}_{K/F}|_{\mathbb{R}}}{4^{n}},\ C\cdot\prod_{\mathfrak{p}\in P(K)}\bigg{(}1-\frac{2\sqrt{|\mathfrak{p}|_{\mathbb{R}}}}{1+|\mathfrak{p}|_{\mathbb{R}}}\bigg{)} {\rm log}|\mathfrak{d}_{K/F}|_{\mathbb{R}}\bigg{\}}. $$
    \end{cor}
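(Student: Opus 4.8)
The plan is to pit two complementary lower bounds against one another: the deep estimate of Theorem~\ref{thm:main}, which is available precisely when the pair $(\mathbf{f},\chi_K)$ satisfies condition (b) of Lemma~\ref{lem:hilbert modular form}, and an elementary estimate that forces $h$ (hence $h_K$) to be large as soon as a prime of $F$ above $37$ splits in $K$. Throughout, $F/\mathbb{Q}$ is Galois with solvable Galois group, as in Theorem~\ref{thm:main0}. First I fix the Hilbert newform $\mathbf{f}$ over $F$ built in Example~\ref{ex: modular form-2} by iterated base change from the Gross--Zagier form $f$; it satisfies condition (a) of Lemma~\ref{lem:hilbert modular form}, namely ${\rm ord}_{s=1}D(s,\mathbf{f})\ge 3$, and depends only on $F$. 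Write $37\mathfrak{o}_F=\mathfrak{p}_1^e\cdots\mathfrak{p}_s^e$ and let $f$ be the common residue degree of the $\mathfrak{p}_i$ over $\mathbb{Q}$, well defined because $F/\mathbb{Q}$ is Galois, so $|\mathfrak{p}_i|_{\mathbb{R}}=37^{f}$ for every $i$ and $n=efs$. The key preliminary remark is that under hypothesis (a) or (b) the integer $n+s$ is even: in case (b) this is assumed outright, while in case (a) oddness of $n$ forces each of $e$, $f$, $s$ to be odd, so $s$, and therefore $n+s$, is even.

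Now I fix a totally imaginary quadratic extension $K/F$ with $|\mathfrak{d}_{K/F}|_{\mathbb{R}}>4^n$ and $\mathfrak{o}_K^\times=\mathfrak{o}_F^\times$, and set $\chi=\chi_K$. I split into two cases. If none of $\mathfrak{p}_1,\dots,\mathfrak{p}_s$ splits in $K$, then Proposition~\ref{prop:epsilon factors} (this is where ``$n+s$ even'' enters) shows that $(\mathbf{f},\chi)$ satisfies conditions (a) and (b) of Lemma~\ref{lem:hilbert modular form}, that is, $K$ lies in the set $\mathcal{K}$ of Theorem~\ref{thm:main}; applying that theorem gives
$$h_K\ \ge\ C\cdot\prod_{\mathfrak{p}\in P(K)}\Big(1-\frac{2\sqrt{|\mathfrak{p}|_{\mathbb{R}}}}{1+|\mathfrak{p}|_{\mathbb{R}}}\Big)\log|\mathfrak{d}_{K/F}|_{\mathbb{R}},$$
with $C$ effective and depending only on $\mathbf{f}$, hence only on $F$. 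If instead some $\mathfrak{p}_i$ splits in $K$, then Lemma~\ref{Norms of primes}(\ref{it:splitting prime-1}) applied to that prime yields $37^{f}=|\mathfrak{p}_i|_{\mathbb{R}}\ge V=(|\mathfrak{d}_{K/F}|_{\mathbb{R}}/4^n)^{1/h}$, i.e. $h\ge\frac{1}{f\log 37}\log\frac{|\mathfrak{d}_{K/F}|_{\mathbb{R}}}{4^n}$; since $h_K=hh'$ with $h'\ge 1$, the first term of the claimed minimum bounds $h_K$ from below. As every $K$ falls into one of the two cases, taking the smaller of the two bounds proves the Corollary.

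For the implication to Theorem~\ref{thm:main0} one runs the classical Goldfeld endgame. The first term of the minimum is $\gg\log|\mathfrak{d}_{K/F}|_{\mathbb{R}}$, so I may assume the second term is the smaller one and, fixing $\epsilon>0$, that $h_K<(\log|\mathfrak{d}_{K/F}|_{\mathbb{R}})^{1-\epsilon}$ (otherwise there is nothing to prove). Then the genus-theoretic inequality~(\ref{eq:hk and t}) of Theorem~\ref{cor:estimate h by t} gives $2^{t}\ll h_K$, where $t$ is the number of prime divisors of $\mathfrak{d}_{K/F}$, so $t\ll\log\log|\mathfrak{d}_{K/F}|_{\mathbb{R}}$. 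Splitting $\prod_{\mathfrak{p}\in P(K)}(1-\tfrac{2\sqrt{|\mathfrak{p}|_{\mathbb{R}}}}{1+|\mathfrak{p}|_{\mathbb{R}}})$ at a large threshold $N_0=N_0(\epsilon)$: the factors with $|\mathfrak{p}|_{\mathbb{R}}\le N_0$ are boundedly many and each is bounded below by a positive absolute constant, so their product is $\ge c(\epsilon)>0$; the factors with $|\mathfrak{p}|_{\mathbb{R}}>N_0$ satisfy $1-\tfrac{2\sqrt{|\mathfrak{p}|_{\mathbb{R}}}}{1+|\mathfrak{p}|_{\mathbb{R}}}\ge 1-2|\mathfrak{p}|_{\mathbb{R}}^{-1/2}$, and since $\sum_{|\mathfrak{p}|_{\mathbb{R}}>N_0}|\mathfrak{p}|_{\mathbb{R}}^{-1/2}\le tN_0^{-1/2}\ll N_0^{-1/2}\log\log|\mathfrak{d}_{K/F}|_{\mathbb{R}}$, their product is $\ge(\log|\mathfrak{d}_{K/F}|_{\mathbb{R}})^{-\epsilon/2}$ once $N_0$ is chosen large in terms of $\epsilon$. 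Hence $h_K\gg_\epsilon(\log|\mathfrak{d}_{K/F}|_{\mathbb{R}})^{1-\epsilon/2}$, which contradicts $h_K<(\log|\mathfrak{d}_{K/F}|_{\mathbb{R}})^{1-\epsilon}$ once $|\mathfrak{d}_{K/F}|_{\mathbb{R}}$ is large; the finitely many remaining $K$ (including those with $|\mathfrak{d}_{K/F}|_{\mathbb{R}}\le4^n$ or $\mathfrak{o}_K^\times\ne\mathfrak{o}_F^\times$, cf.\ Remark~\ref{rem:extensions by units}) are absorbed into $C(\epsilon)$.

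The Corollary proper is really just bookkeeping layered on Theorem~\ref{thm:main}, Proposition~\ref{prop:epsilon factors} and Lemma~\ref{Norms of primes}; the single point that must be verified with care --- and the only place the exact hypotheses (a)/(b) are used --- is the parity of $n+s$, which is precisely what makes the alternative ``$K\in\mathcal{K}$'' available whenever no prime above $37$ splits in $K$. All the substantive difficulty sits in the earlier sections feeding Theorem~\ref{thm:main} (the lattice-point counts of Lemma~\ref{lem:norm estimation}, the Mellin-transform estimates of Lemma~\ref{lem: J(U) and J(U*)} and Proposition~\ref{prop: estimation of J(U*)}, and the effective bounds on $\varPsi$ and $\varPhi$), so I expect the main ``obstacle'' at this stage to be merely organizational: keeping the quantitative dependence of every constant on $F$ (and $\mathbf{f}$) transparent, and disposing of the finite list of excluded $K$ cleanly.
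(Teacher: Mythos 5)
Your proposal is correct and follows essentially the same route as the paper: the identical two-way case split on whether some prime above $37$ splits in $K$, the same application of Lemma~\ref{Norms of primes}(\ref{it:splitting prime-1}) in the splitting case (via $37^f\ge V$), the same appeal to Proposition~\ref{prop:epsilon factors} and Theorem~\ref{thm:main} in the non-splitting case, and the same parity observation that conditions (a) and (b) each force $n+s$ to be even (with $e,f,s$ all odd when $n$ is odd since $n=efs$). The final paragraph, deriving Theorem~\ref{thm:main0} from the Corollary via $2^t\ll h_K$ and the threshold decomposition of the Euler-type product, is a correct and welcome expansion of a step the paper leaves implicit in the label ``$(\Rightarrow$ Theorem~\ref{thm:main0}$)$'' rather than writing out.
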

\begin{proof}
If one of $\mathfrak{p}_{1},\cdots,\mathfrak{p}_{s}$ splits in $K$,
by Lemma \ref{Norms of primes} (\ref{it:splitting prime-1}) we have
$$ h_{K}\ge h\ge \frac{1}{f{\rm log}37}{\rm
log}\frac{|\mathfrak{d}_{K/F}|_{\mathbb{R}}}{4^{n}},$$ where $f=n/se$
denotes the inertial degree (over $\mathbb{Q}$) of such a splitting
prime.

If none of $\mathfrak{p}_{1},\cdots,\mathfrak{p}_{s}$ split in $K$,
our assertion follows from Theorem \ref{thm:main} by applying the Dirichlet series constructed in Proposition \ref{prop:epsilon factors}.
Notice that both conditions (\ref{it-final estimation-1}) and
(\ref{it-final estimation-2}) here for $F$ imply that $n+s$ is even. 
\end{proof}

\end{document}